\newif\ifcol
\newcommand{\colorr}{\color[rgb]{0.8,0,0}}
\newcommand{\colorg}{\color[rgb]{0,0.5,0}}
\newcommand{\colord}{\color[rgb]{0.8,0.3,0}}
\newcommand{\colorlg}{\color[rgb]{0,0.8,0.3}}
\newcommand{\colorr}{\color{black}}
\newcommand{\colorg}{\color{black}}
\newcommand{\colord}{\color{black}}
\newcommand{\colorlg}{\color{black}}
\newtheorem{definition}{Definition}[section]
\newtheorem{lemma}{Lemma}[section]
\newtheorem{proposition}{Proposition}[section]
\newtheorem{theorem}{Theorem}[section]
\newtheorem{remark}{Remark}[section]
\newtheorem{example}{Example}[section]
\newtheorem{corollary}{Corollary}[section]
\begin{document}
\title{
Parametric Inference for Nonsynchronously Observed Diffusion Processes in the Presence of Market Microstructure Noise
}
\author{Teppei Ogihara\\
\begin{small}The Institute of Statistical Mathematics, 10-3 Midori-cho, Tachikawa, Tokyo 190--8562, Japan \end{small}\\
\begin{small}School of Multidisciplinary Sciences, SOKENDAI (The Graduate University for Advanced Studies), \end{small}\\
\begin{small}Shonan Village, Hayama, Kanagawa 240-0193, Japan\end{small}
}
\date{}
\maketitle

\noindent
{\bf Abstract.}
We study parametric inference for diffusion processes when observations occur nonsynchronously and are contaminated by market microstructure noise.
We construct {\colorg a quasi-likelihood function and study asymptotic mixed normality of maximum-likelihood- and Bayes-type estimators based on it.}
We also prove the local asymptotic normality of the model and asymptotic efficiency of our estimator when the diffusion coefficients are constant and noise follows a normal distribution.
We conjecture that our estimator is asymptotically efficient even when the latent process is a general diffusion process.
{\colorg An estimator for the quadratic covariation of the latent process is also constructed. Some numerical examples show that 
this estimator performs better compared to existing estimators of the quadratic covariation. }
\\

\noindent
{\bf Keywords.}
asymptotic efficiency, Bayes-type estimation, diffusion processes, local asymptotic normality, nonsynchronous observations, 
parametric estimation, maximum-likelihood-type estimation, market microstructure noise

\section{Introduction}

Analysis of volatility and covariation is one of the most important subjects in the study of risk management of financial assets.
Studies of high-frequency financial data are increasingly significant as high-frequency financial data become increasingly available and computing technology develops.
While realized volatility has been studied as a consistent estimator of integrated volatility at high-frequency limits,
estimators of covariation of two securities are also important. The realized covariance, a natural extension of the realized volatility, 
is a consistent estimator of integrated covariation in ideal settings.

{\colorg However}, there are two significant problems in empirical analysis, one of which is the existence of observation noise.
When we model stock price data by a continuous stochastic process, we should assume that the observations are contaminated by additional noise as a way
to explain empirical evidence.
Consistent estimators of volatility under the presence of microstructure noise are investigated---for example, in Zhang, Mykland, and A${\rm \ddot{\i}}$t-Sahalia~\cite{zha-etal05},
Barndorff-Nielsen et al.~\cite{bar-etal08}, and Podolskij and Vetter~\cite{pod-vet09}---by using various data-averaging or resampling methods to reduce the influence of noise.
The other significant problem is that of nonsynchronous observation, namely, that we observe prices of different securities at different time points.
The realized covariance has serious bias under models of nonsynchronous observations,
though we can calculate the estimator by using some simple '{\it synchronization}' methods such as linear interpolation or the `previous tick' methods.
Hayashi and Yoshida~\cite{hay-yos05, hay-yos08, hay-yos11} and Malliavin and Mancino~\cite{mal-man02, mal-man09}
independently constructed consistent estimators for statistical models of diffusion processes with nonsynchronous observations.
There are also studies of covariation estimation under the simultaneous presence of microstructure noise and nonsynchronous observations. We refer interested readers to
Barndorff-Nielsen et al.~\cite{bar-etal11} for a kernel based method;
Christensen, Kinnebrock, and Podolskij~\cite{chr-etal10}, Christensen, Podolskij, and Vetter~\cite{chr-etal13} for a pre-averaged Hayashi--Yoshida estimator; 
A${\rm \ddot{\i}}$t-Sahalia, Fan, and Xiu~\cite{ait-etal10} for a method using the maximum likelihood estimator of a model with constant diffusion coefficients; 
and Bibinger et al.~\cite{bib-etal14} for a technique employing the local method of moments.

While the above studies concern estimators under non- or semi-parametric settings,
there are also studies about parametric inference of diffusion processes with high-frequency observations.
Genon-Catalot and Jacod~\cite{gen-jac94} constructed quasi-likelihood function and studied an estimator that maximizes it.
Gloter and Jacod~\cite{glo-jac01b} studied an estimator based on a quasi-likelihood function with noisy observations. Ogihara and Yoshida~\cite{ogi-yos14}
studied a maximum-likelihood-type estimator and a Bayes-type estimator on nonsynchronous observations without market microstructure noise.

One advantage of maximum-likelihood- and Bayes-type estimators is that they are asymptotically efficient in many models.
If a statistical model has the local asymptotic mixed normality (LAMN) property, then the results in Jeganathan~\cite{jeg82,jeg83} ensure that 
asymptotic variance of estimators cannot be smaller than a certain lower bound.
When some estimator attains this bound, it is called asymptotically efficient. 
For parametric estimation of diffusion processes on fixed intervals, Gobet~\cite{gob01} proved the LAMN property 
of the statistical model having equidistant observations, and an estimator in~\cite{gen-jac94} is asymptotically efficient.
Ogihara~\cite{ogi14} proved the LAMN property and asymptotic efficiency of estimators for the setting of~\cite{ogi-yos14}.
Gloter and Jacod~\cite{glo-jac01a} proved the local asymptotic normality (LAN) property 
for a statistical model with market microstructure noise when diffusion coefficients are deterministic,
and the estimator by Gloter and Jacod~\cite{glo-jac01b} is asymptotically efficient.
There are few studies about the efficiency of estimators that assume the presence of market microstructure noise and nonsynchronous observations. 
One exception is Bibinger et al.~\cite{bib-etal14}, who showed a lower bound of asymptotic variance of estimators in semi-parametric Cram\'er-Rao sense.
We need the LAN or LAMN property of the statistical model to obtain asymptotic efficiency of a parametric model.
To the best of our knowledge, this has not been studied for statistical models of noisy, nonsynchronous observations. 

This paper examines consistency and asymptotic mixed normality of a maximum-likelihood-type estimator and a Bayes-type estimator based on a quasi-likelihood function,
under the simultaneous presence of market microstructure noise and nonsynchronous observations.
We also study the LAN property of this model when diffusion coefficients are constants, as well as the asymptotic efficiency of our estimators.
We expect that our estimators are asymptotically efficient in the general cases. 
{\colord However, it is further difficult to obtain LAMN properties for {\colorg models of general diffusions}.}
This does not seem to have been obtained even for noisy, equidistant observations, and is left as future work. 
We will see by simulation that sample variance of the estimation error of our estimator is better than that of existing estimators for some examples in Section~\ref{simulation-section}. 
These results ensure that our estimator not only is the theoretical best for asymptotic behavior,
but also works well in practical finite samplings.

Our study has several advantages in addition to the above arguments regarding asymptotic efficiency.
\begin{enumerate}
\renewcommand{\labelenumi}{\roman{enumi})}
\item Our model also allows observation noise that follows a non-Gaussian distribution. 
We use a quasi-likelihood function for Gaussian noise, but our method is robust enough to allow misspecification of the noise distribution.
\item Since we obtain the results regarding asymptotic behaviors of the quasi-likelihood function as a byproduct,
many applications become available from the theory of maximum-likelihood-type estimation. 
For example, we can construct a theory of the likelihood ratio test and one-step estimators as an immediate application.
Further, the theory of information criteria is expected to follow from our results of quasi-likelihood functions. 
\item Our settings contain random sampling schemes where {\colord the maximum length of observation intervals is not bounded by any constant multiplication of the minimum length}.
{\colord This is the case} for some significant random sampling schemes, such as samplings based on Poisson or Cox processes. 
Our model encompasses such natural sampling schemes.
\end{enumerate}
\begin{discuss}
{\colorr previous studyでもポアソンを扱えるものは恐らくあるからPodolskijとかReiss et al.だけ見てunlike previous studiesとは言えない}
\end{discuss}

To obtain asymptotic mixed normality of our estimator, we {\colorg investigate} asymptotic behaviors
of a quasi-likelihood function of noisy, nonsynchronous observations.
{\colorg To this end, we need to specify the limit of some matrix trace related to a ratio of covariance matrices for two different values of parameters, as appearing in (\ref{Hn-diff-eq1}).}
The inverse of the covariance matrix of observation noise has nontrivial off-diagonal elements, 
and so the inverse of the covariance matrix of observations is far from a diagonal matrix.
This phenomenon is essentially different from the case of {\it synchronous} observations without noise (where the covariance matrix of observations is diagonal),
and the case of {\it nonsynchronous} observations without noise (where the inverse of the covariance matrix is not a diagonal matrix but is `{\it close}' to being one).

In a model of noisy, {\it synchronous} observations, the covariance matrix of a latent process is asymptotically equivalent 
to a unit matrix of the appropriate size, and is therefore simultaneously diagonalizable with the noise covariance.
Gloter and Jacod~\cite{glo-jac01a,glo-jac01b} used these facts and closed expressions for the eigenvalues of the noise covariance to identify the limit of the quasi-likelihood function,
but we cannot apply their idea because our sampling scheme is irregular and so not well approximated by a unit matrix.
Further, the sizes of the covariance matrices are different for different components of the process, which follows from nonsynchronousness.
In this paper, we deduce an asymptotically equivalent transform of the trace of the ratio of covariance matrices.
This transform changes sizes of matrices and matrix elements into local averages,
and arises from specific properties of the noise covariance matrix. We will see these results in Sections~\ref{tildeH-section} and~\ref{Hn-limit-section}. 

The remainder of this paper is organized as follows. In Section~\ref{results-section}, we describe our detailed settings and main results.
We propose a quasi-likelihood function for models with noisy, nonsynchronous observations, and construct a maximum-likelihood-type estimator based on it.
We introduce asymptotic mixed normality of our estimator and results about asymptotic efficiency in Section~\ref{mixed-normality-subsection}.
Section~\ref{LAMN-subsection} contains results about the LAN property of our model and the asymptotic efficiency of our estimator,
and Section~\ref{PLD-section} is devoted to results about Bayes-type estimators and convergence of moments of estimators.
Polynomial-type large deviation inequalities, introduced in Yoshida~\cite{yos06,yos11}, are key to deducing these results.
In Section~\ref{simulation-section} we will examine simulation results of our estimator for a simple example where the latent process is a Wiener process.
We also construct an estimator of the quadratic covariation and compare the performance of our estimator with that of other estimators.
The remaining sections are devoted to a proof of the main results.
Section~\ref{tildeH-section} introduces an asymptotically equivalent expression of the quasi-likelihood function. 
This expression is useful for deducing asymptotic properties of the quasi-likelihood function in Section~\ref{Hn-limit-section}.
We also need some results on identifiability of the model {\colorg to obtain consistency of the maximum-likelihood-type estimator}. 
These are discussed in Section~\ref{identifiability-section}.
Section~\ref{mixed-normality-section} shows asymptotic mixed normality of our estimator.
The LAN property of the model for constant diffusion coefficients is obtained in Section~\ref{LAN-section}.
Section~\ref{PLD-proof-section} contains a proof of results regarding the Bayes-type estimator and the convergence of moments of estimators.

\section{Main results}\label{results-section}

\subsection{Settings and construction of the estimator}\label{setting-subsection}

Let $(\Omega^{(0)}, \mathcal{F}^{(0)}, P^{(0)})$ be a probability space with a filtration ${\bf F}^{(0)}=\{\mathcal{F}_t^{(0)}\}_{0\leq t\leq T}$.
We consider a two-dimensional ${\bf F}^{(0)}$-adapted process $Y=\{Y_t\}_{0\leq t\leq T}$ satisfying the stochastic integral equation:
\begin{equation}\label{SRM}
Y_t=Y_0+\int^t_0\mu_sds+\int^t_0b(s,X_s,\sigma_{\ast})dW_s, \quad t\in[0,T],
\end{equation}
where $\{W_t\}_{0\leq t\leq T}$ is a $d_1$-dimensional standard ${\bf F}^{(0)}$-Wiener process, $b=(b^{ij})_{1\leq i\leq 2,1\leq j\leq d_1}$ is a Borel function,
$\mu=\{\mu_t\}_{0\leq t\leq T}$ is a locally bounded ${\bf F}^{(0)}$-adapted process with values in $\mathbb{R}^2$,
and $X=\{X_t\}_{0\leq t\leq T}$ is a continuous ${\bf F}^{(0)}$-adapted processes with values in $O$,
an open subset of $\mathbb{R}^{d_2}$ with $d_2\in\mathbb{N}$.
We consider market microstructure noise $\{\epsilon^{n,k}_i\}_{n\in\mathbb{N},i\in\mathbb{Z}_+,k=1,2}$ as an independent sequence of random variables on another probability space $(\Omega^{(1)},\mathcal{F}^{(1)},P^{(1)})$.
We assume that $\mathcal{F}^{(1)}=\mathfrak{B}((\epsilon^{n,k}_i)_{n,k,i})$ and that the distribution of $\epsilon^{n,k}_j$ does not depend on $j$,
{\colord where $\mathfrak{B}(S)$ denotes the minimal $\sigma$-field such that any element of $S$ is $\mathfrak{B}(S)$-measurable for a set $S$ of random variables.
We use the same notation $\mathfrak{B}(S)$ for a similarly defined $\sigma$-field for a set $S$ of measurable sets.} 
\begin{discuss}
{\colorr identical distributionであることはLemma~\ref{ZSZ-est}で使う. どちらにしろ二次モーメントは一定である必要がある.}
\end{discuss}
We consider a product probability space $(\Omega, \mathcal{F},P)$, where $\Omega=\Omega^{(0)}\times \Omega^{(1)}$, $\mathcal{F}=\mathcal{F}^{(0)}\otimes \mathcal{F}^{(1)}$, and $P=P^{(0)}\otimes P^{(1)}$.

We assume that the observations of processes occur in a nonsynchronous manner and are contaminated by market microstructure noise, that is,
we observe the vectors $\{\tilde{Y}^k_i\}_{0\leq i\leq {\bf J}_{k,n},k=1,2}$ and $\{\tilde{X}^k_j\}_{0\leq j\leq {\bf J}'_{k,n},1\leq k\leq d_2}$, 
where $\{S^{n,k}_i\}_{i=0}^{{\bf J}_{k,n}}$ and $\{T^{n,k}_j\}_{j=0}^{{\bf J}'_{k,n}}$ are random times in $(\Omega^{(0)},\mathcal{F}^{(0)})$,
$\{\eta^{n,k}_j\}_{j\in\mathbb{Z}_+,1\leq k\leq d_2}$ is a random sequence on $(\Omega, \mathcal{F})$, and
\begin{equation}
\tilde{Y}^k_i=Y^k_{S^{n,k}_i}+\epsilon^{n,k}_i, \quad \tilde{X}^k_j=X^k_{T^{n,k}_j}+\eta^{n,k}_j. 
\end{equation}
Our goal is to estimate the true value $\sigma_{\ast}$ of the parameter from nonsynchronous, noisy observations \\
$\{S^{n,k}_i\}_{0\leq i\leq {\bf J}_{k,n},k=1,2}$, 
$\{T^{n,k}_j\}_{0\leq j\leq {\bf J}'_{k,n},1\leq k\leq d_2}$, $\{\tilde{Y}^k_i\}_{0\leq i\leq {\bf J}_{k,n},k=1,2}$,
and $\{\tilde{X}^k_j\}_{0\leq j\leq {\bf J}'_{k,n},1\leq k\leq d_2}$.

\begin{discuss}
{\colorr
$\{\eta^{n,k}_i\}$をプロセスや$\epsilon$と独立に定義したい場合も$(\Omega^{(0)},\mathcal{F}^{(0)})$に含めればいいので問題ない.
Stable convergenceで任意の$W$と直交するマルチンゲールとのクロスバリデーションが$0$になるために$\epsilon$はプロセスやサンプリングのランダムタイムと別の空間に定義する必要がある.

Jacod型で$\{\epsilon_t\}_t$をdefすると, $\epsilon_{S^{n,k}_i}$が可測かどうかが分からない. (恐らく一般には可測にならないか）
一方で違う$n$に対し, $\epsilon^{n,k}_i$が独立でないと, Stable Convergenceで必要な$n$に依らないフィルとレーションをうまく作れず,
$f(\epsilon_{t_1},\cdots, \epsilon_{t_q})$と書いた時の$\mathcal{G}_t$-conditional expectationがどうなるかわからない.
}
\end{discuss}

By setting $d_2=2$, $X_t\equiv Y_t$, $\mu_t=\mu(t,Y_t)$, $S^{n,k}_i\equiv T^{n,k}_j$, and $\eta^{n,k}_j\equiv \epsilon^{n,k}_i$, 
our model contains the case where the latent process $Y$ is a diffusion process satisfying a stochastic differential equation
\begin{equation}\label{SDE}
dY_t=\mu(t,Y_t)dt+b(t,Y_t,\sigma_{\ast})dW_t, \quad t\in [0,T],
\end{equation}
and $Y$ is observed in a nonsynchronous manner with noise.
This model is of particular interest, but our results are also be applied to more general models (\ref{SRM}).

{\colord
\begin{remark}\label{SV-remark}
Stochastic volatility models are significant models for modeling stock prices. 
Unfortunately, our settings are not applied to hidden Markov models including stochastic volatility models 
because we require (possibly noisy) observations of process $X$.
However, we hope that our results give an essential idea to deal with noisy, nonsynchronous observations, 
and therefore we can construct an estimator for stochastic volatility models by replacing our quasi-likelihood function.
We have left it for future works.
\end{remark}
}

For a vector $x=(x_1,\cdots, x_k)$, we denote $\partial_x^l=(\frac{\partial^l}{\partial_{x_{i_1}}\cdots \partial_{x_{i_l}}})_{i_1,\cdots,i_l=1}^k$.
We assume the true value $\sigma_{\ast}$ of the parameter is contained in a bounded open set $\Lambda\subset \mathbb{R}^d$
that satisfies Sobolev's inequality; that is, for any $p>d$, there exists $C>0$ such that 
$\sup_{\sigma\in\Lambda}|u(x)|\leq C\sum_{k=0,1}(\int_{\Lambda}|\partial_x^ku(x)|_pdx)^{1/p}$ for any $u\in C^1(\Lambda)$.
This is the case when $\Lambda$ has a Lipshitz boundary. See Adams and Fournier~\cite{ada-fou03} for more details.

\begin{discuss}
{\colorr ソボレフの不等式を使うにはモーメント評価が示されている必要があるので漸近混合正規を示すときには使えない.
一方で, 漸近混合正規を示すときにも$\sup_{\sigma}$評価は必要. パラメータが一次元のopen intervalでない場合は$\sup_{\sigma}$評価は自明でない.}
\end{discuss}

Let {\colord $\Pi_n=(\{S^{n,k}_i\}_{n,k,i},\{T^{n,k}_j\}_{k,j})$} and $\{\mathcal{G}_t\}_{0\leq t\leq T}$ be a filtration of $(\Omega, \mathcal{F},P)$ given by
\begin{equation*}
\mathcal{G}_t=\mathcal{F}^{(0)}_t\bigvee \mathfrak{B}(\{\Pi_n\}_n) \bigvee \mathfrak{B}(A\cap \{S^{n,k}_i\leq t\} ; A\in \mathfrak{B}(\epsilon^{n,k}_i), m\in \mathbb{N}, k\in \{1,2\}, i\in\mathbb{Z}_+, n\in\mathbb{N}),
\end{equation*}
{\colord where $\mathcal{H}_1\bigvee \mathcal{H}_2$ denotes the minimal $\sigma$-field which contains $\sigma$-fields $\mathcal{H}_1$ and $\mathcal{H}_2$.}
We assume that $(X_t,Y_t,W_t,\mu_t)_t$ and $(\{S^{n,k}_i\}_{n,k,i},\{T^{n,k}_j\}_{n,k,j})$ are independent.
Moreover, we assume that there exist positive constants $v_{1,\ast}$ and $v_{2,\ast}$ such that $\eta^{n,k}_j1_{\{T^{n,k}_j\leq t\} }$ is $\mathcal{G}_t$-measurable,
\begin{equation*}
E[\epsilon^{n,k}_i1_{\{S^{n,k}_i>t\}}|\mathcal{G}_t]=0, \quad E[\epsilon^{n,k}_i\epsilon^{n,k'}_{i'}1_{\{S^{n,k}_i\wedge S^{n,k'}_{i'}>t\}}|\mathcal{G}_t]=v_{k,\ast}\delta_{ii'}\delta_{kk'}
\end{equation*}
for any $n,k,k',i,i',j,t$, where $\delta_{ij}$ is Kronecker's delta and $E_{\Pi}[{\bf X}]=E[X|\{\Pi_n\}_n]$ for a random variable ${\bf X}$.
{\colord We also assume that the distribution of $Y_0$ does not depend on $\sigma_{\ast}$, $v_{1,\ast}$, nor $v_{2,\ast}$.}
\begin{discuss}
{\colorr おそらくこの書き方をしておけば$\epsilon^{n,k}_i$の独立性ははずせて, 多少の$\epsilon$間の依存関係は許容できるだろうが, それをチェックするのは簡単ではないので保留しておく.
一番の問題は, $\tilde{Z}_{2,m}^{\top}{\bf S}\tilde{Z}_{2,m}$のモーメント評価をしているところとstable convergenceのところ.

$b$の中身は$X_t$ではなく, $(t,X_t)$に拡張する. そうすると, $[0,T]$の境界での微分可能性に
煩わされなくてすむし, $t$と$X_t$は要求される微分の階数がおそらく異なるから.}
\end{discuss}

Now we construct the quasi-likelihood function. 
We apply the idea of Gloter and Jacod~\cite{glo-jac01b} to our {\colorg construction} of a quasi-likelihood function; 
that is, we divide the whole observation interval $[0,T]$ into {\colorg equidistant} subdivisions and construct quasi-likelihood functions for each interval as follows.
Let $\{b_n\}_{n\in\mathbb{N}}$ and $\{k_n\}_{n\in\mathbb{N}}$ be sequences of positive numbers satisfying
$b_n\geq 1$, $k_n\leq b_n$, $b_n\to\infty$, $k_nb_n^{-1/2-\epsilon}\to \infty$, and {\colord $k_nb_n^{-2/3+\epsilon}\to 0$} as $n\to\infty$ for some $\epsilon>0$.
\begin{discuss}
{\colorr
$k_nb_n^{-1/2-\epsilon}\to 0$はLemma~\ref{Hn-lim-lemma2}での$\delta$の選択のために必要. $k_nb_n^{-9/13+\epsilon}\to 0$はLemma~\ref{ZSZ-est}の改善で改良できるかも.
}
\end{discuss}
{\colorg We will assume in Condition $[A2]$ a relation between $b_n$ and our sampling scheme, which implies that $b_n$ represents the order of observation frequency.}
Let $\ell_n=[b_nk_n^{-1}]$, $s_0=0$, $s_m=T[b_nk_n^{-1}]^{-1}m$, $b^k(t,x,\sigma)=(b^{kj}(t,x,\sigma))_{j=1}^{d_1}$, $K^k_0=-1$, and $K^k_m=\#\{i\in\mathbb{N};S^{n,k}_i<s_m\}$ for $k\in\{1,2\}$ and $1\leq m\leq \ell_n$.
Moreover, let $k^j_m=K^j_m-K^j_{m-1}-1$, $\bar{k}_n=\max_{m,j}k^j_m$, $\underbar{k}_n=\min_{m,j}k^j_m$, $J^k_m=\max\{1\leq j\leq {\bf J}'_{k,n};T^{n,k}_j\leq s_{m-1}\}$,
$I^k_{i,m}=[S^{n,k}_{i+K^k_{m-1}},S^{n,k}_{i+1+K^k_{m-1}})$, $\tilde{Y}^k(I^k_{i,m})=\tilde{Y}^k_{i+1+K^k_{m-1}}-\tilde{Y}^k_{i+K^k_{m-1}}$,
$\hat{X}_m=(\#\{j;T^{n,k}_j\in [s_{m-1},s_m)\}^{-1}\sum_{j;T^{n,k}_j\in [s_{m-1},s_m)}\tilde{X}^k_j)_{1\leq k\leq d_2}$, and
$b^j_m(\sigma)=b^j(s_{m-1},\hat{X}_m,\sigma)$ for $1\leq m\leq \ell_n$, $j\in\{1,2\}$ and $1\leq i\leq k^j_m$.
Then we have the following approximations of conditional covariance of observations:
\begin{eqnarray}\label{covariance-approximate}
E[\tilde{Y}^k(I^k_{i,m})\tilde{Y}^k(I^k_{i',m})|\mathcal{G}_{s_{m-1}}] &\sim& (|b^k_m|^2|I^k_{i,m}|+2v_k)\delta_{ii'}-v_11_{\{|i-i'|=1\} }, \nonumber \\
E[\tilde{Y}^1(I^1_{i'',m})\tilde{Y}^2(I^2_{i''',m})|\mathcal{G}_{s_{m-1}}] &\sim& b^1_m\cdot b^2_m|I^1_{i'',m}\cap I^2_{i''',m}| 
\end{eqnarray}
for any intervals $I^k_{i,m}, I^k_{i',m}, I^1_{i'',m},I^2_{i''',m}$.

Let ${\top}$ denotes the transpose operator for matrices (and vectors), $M(l)=\{2\delta_{i_1,i_2}-\delta_{|i_1-i_2|=1}\}_{i_1,i_2=1}^l$ for $l\in\mathbb{N}$,
$M_{j,m}=M(k^j_m)$ for $1\leq j\leq 2$.
Based on the relation (\ref{covariance-approximate}), we define a quasi-log-likelihood function $H_n(\sigma,v)$ by
\begin{equation}
H_n(\sigma,v)=-\frac{1}{2}\sum_{m=2}^{\ell_n}Z_m^{\top}S_m^{-1}(\sigma,v)Z_m-\frac{1}{2}\sum_{m=2}^{\ell_n}\log\det S_m(\sigma,v),
\end{equation}
where $Z_m=((\tilde{Y}^1(I^1_{i,m}))_{1\leq i\leq k^1_m}^{\top}, (\tilde{Y}^2(I^2_{i,m}))_{1\leq i\leq k^2_m}^{\top})^{\top}$ and 
\begin{equation}
S_m(\sigma,v)=\left(
\begin{array}{ll}
\{|b^1_m|^2|I^1_{i,m}|\delta_{ii'}\}_{ii'} & \{b^1_m\cdot b^2_m|I^1_{i,m}\cap I^2_{j,m}|\}_{ij} \\
\{b^1_m\cdot b^2_m|I^1_{i,m}\cap I^2_{j,m}|\}_{ji} & \{|b^2_m|^2|I^2_{j,m}|\delta_{jj'}\}_{jj'} \\
\end{array}
\right)+\left(
\begin{array}{ll}
v_1 M_{1,m} & 0 \\
0 & v_2 M_{2,m} 
\end{array}
\right).
\end{equation}

\begin{discuss}
{\colorr $m=1$を除いているのは, $b^l_m$の中の$\tilde{X}$がaverageされず不安定になっているから.}
\end{discuss}

\begin{remark}
Though such a local Gaussian quasi-log-likelihood function seems valid only when observation noise $\epsilon^{n,k}_i$ follows a Gaussian distribution,
asymptotic properties of the maximum likelihood estimator are robust enough to allow non-Gaussian noise.
We can use the same quasi-likelihood function for general noise.
\end{remark}

\begin{remark}
We used subdivisions of $[0,T]$ for the construction of $H_n$ because of technical issues related to deducing the limit of $H_n$. 
Since the diffusion coefficient $b$ in $S_m$ is fixed, matrix properties of $M_{j,m}$ introduced in Section~\ref{noise-cov-property-subsection} can be used to deduce the limit of $H_n$. 
On the other hand, such a construction of $H_n$ also contributes to reducing the calculation time of the maximum-likelihood-type estimator
because the size of $S_m$ is $O(k_n)$ while the size of the covariance matrix of all observations is $O(b_n)$.
\end{remark}

\begin{remark}
In~\cite{glo-jac01b}, $k_n$ is taken so that {\colord $n^{1/2}k_n^{-1}\to 0$} and $k_nn^{-3/4}\to 0$. 
Our rate {\colord $b_n^{2/3}$} for the upper bound of $k_n$ is a little bit worse because of some technical issue (for equidistance observations, we have $b_n\equiv n$). 
When we investigate asymptotic behaviors of the maximum-likelihood-type estimator, we deal with some supremum estimates for the $\sigma$ of quasi-likelihood ratios.
Unlike the one-dimensional settings of~\cite{glo-jac01b}, our multidimensional setting requires some properties to deal with the supremum.
We use Sobolev's inequality here for this purpose. Then we need an additional moment estimate for quasi-likelihood ratios,
which causes a worse rate of $k_n$. See the proofs of Lemmas~\ref{ZSZ-est} and~\ref{Hn-diff-lemma} for details.
\end{remark}

\begin{discuss}
{\colorr 
\begin{equation*}
P[k^k_m=0 \ {\rm or} \ -1]\leq P[r_n\geq k_nb_n^{-1}/3]=P[(b_n^{1-\epsilon}r_n)(b_n^{\epsilon}k_n^{-1})\geq 1/3]\to 0
\end{equation*}
より, $k^k_m=0$ or $-1$は無視できて問題ない. $H_n$のdefでも$0$としてdefされているし, 後ろの評価で問題にならないだろうからRemarkは必要ない.}
\end{discuss}

To construct the maximum-likelihood-type estimator $\hat{\sigma}_n$ for the parameter $\sigma$, 
we need estimators for the unknown noise variance $v_{\ast}=(v_{1,\ast},v_{2,\ast})$. We assume the following condition.

\begin{description}
\item{[$V$]} There exist estimators $\{\hat{v}_n\}_{n\in\mathbb{N}}$ of $v_{\ast}$ such that $\hat{v}_n\geq 0$ almost surely and $\{b_n^{1/2}(\hat{v}_n-v_{\ast})\}_{n\in\mathbb{N}}$ is tight.
\end{description}

\begin{discuss}
{\colorr $\hat{v}_n\geq 0$がないと$S_m(\sigma,\hat{v}_s)$が可逆かどうかわからない.}
\end{discuss}

For example, $\hat{v}_n=(\hat{v}_{n,k})_{k=1}^2$ with $\hat{v}_{n,k}=(2{\bf J}_{k,n})^{-1}\sum_i(\tilde{Y}^k_i-\tilde{Y}^k_{i-1})^2$ satisfies $[V]$
if $\{b_n{\bf J}_{k,n}^{-1}\}_n$ is tight for $k=1,2$, $\sup_{n,k,i}E[(\epsilon^{n,k}_i)^4]<\infty$ and $\sup_{n,k,i\neq j}b_n^2E[((\epsilon^{n,k}_i)^2-v_{k,\ast})(\epsilon^{n,k}_j)^2-v_{k,\ast})]<\infty$.
\begin{discuss}
{\colorr
\begin{eqnarray}
\hat{v}_{n,k}&=&\frac{1}{2{\bf J}_{k,n}}\sum_i(\tilde{Y}^k_i-\tilde{Y}^k_{i-1})^2 \nonumber \\
&=&\frac{1}{2{\bf J}_{k,n}}\sum_i\left\{(\epsilon^{n,k}_i-\epsilon^{n,k}_{i-1})^2-2(\epsilon^{n,k}_i-\epsilon^{n,k}_{i-1})(Y^k_{S^{n,k}_i}-Y^k_{S^{n,k}_{i-1}})+(Y^k_{S^{n,k}_i}-Y^k_{S^{n,k}_{i-1}})^2\right\} \nonumber \\
&=&\frac{1}{2{\bf J}_{k,n}}\sum_i(\epsilon^{n,k}_i-\epsilon^{n,k}_{i-1})^2+O_p\bigg({\bf J}_{k,n}^{-1}\bigg(\sum_i\Delta S^{n,k}_i\bigg)^{1/2}\bigg) + O_p\bigg({\bf J}_{k,n}^{-1}\sum_i\Delta S^{n,k}_i\bigg) \nonumber \\
&=&{\bf J}_{k,n}^{-1}\sum_i(\epsilon^{n,k}_i)^2-2^{-1}{\bf J}_{k,n}^{-1}((\epsilon^{n,k}_1)^2+(\epsilon^{n,k}_{{\bf J}_{k,n}})^2)-{\bf J}_{k,n}^{-1}\sum_i\epsilon^{n,k}\epsilon^{n,k}_{i-1}+O_p({\bf J}_{k,n}^{-1}). \nonumber
\end{eqnarray}
ゆえに
\begin{equation*}
b_n^{1/2}(\hat{v}_{n,k}-v_{k,\ast})=b_n^{1/2}{\bf J}_{k,n}^{-1}\sum_i((\epsilon^{n,k}_i)^2-v_{k,\ast})+O_p({\bf J}_{k,n}^{-1/2}b_n^{1/2}).
\end{equation*}
}
\end{discuss}

{\colorg Let ${\rm clos}(A)$ be the closure of a set $A$.}
A maximum-likelihood-type estimator $\hat{\sigma}_n$ is a random variable satisfying {\colorg $H_n(\hat{\sigma}_n,\hat{v}_n)=\max_{\sigma\in{\rm clos}(\Lambda)}H_n(\sigma,\hat{v}_n)$}. 
We study asymptotic mixed normality and asymptotic efficiency of the estimator in the following subsections.

\begin{remark}
We can also construct a simultaneous maximum-likelihood-type estimator $(\bar{\sigma}_n,\bar{v}_n)$ satisfying $H_n(\bar{\sigma}_n,\bar{v}_n)=\max_{\sigma,v}H_n(\sigma,v)$.
However, it is valid only when the observation noise $\epsilon^{n,k}_i$ follows a normal distribution.
Our interest is on estimating the parameter $\sigma$ of the latent process,
and so the assumptions for observation noise should be reduced as much as possible.
Therefore, the nonparametric estimator $\hat{v}_n$ is more suitable for our purpose.
\end{remark}

\subsection{Asymptotic mixed normality of the maximum-likelihood-type estimator}\label{mixed-normality-subsection}

In the rest of this section, we state our main theorems. Proofs of these results are left to Sections~\ref{tildeH-section}--\ref{PLD-proof-section}.
In this subsection, we describe the asymptotic mixed normality of the maximum-likelihood-type estimator $\hat{\sigma}_n$. 

We first describe assumptions for the theorem.  
Condition $[A1]$ is a sequence of assumptions on the latent processes $Y$ and $X$ and observation noise $\epsilon^{n,k}_i$ and $\eta^{n,k}_j$.
{\colorg We denote by $\mathcal{E}_l$ the unit matrix of size $l$.} 

\begin{description}
\item{[$A1$]} 1. For $0\leq 2i+j\leq {\colord 4}$ and $0\leq k\leq 4$, the derivatives $\partial^i_t\partial^j_x\partial^k_{\sigma}b(t,x,\sigma)$ exist on $[0,T]\times O\times \Lambda$ 
and have continuous extensions on $[0,T]\times O\times {\rm clos}(\Lambda)$.
\begin{discuss}
{\colorr 滑らかさの条件はLemma \ref{ZSZ-est} 2で使う. $\partial_t^2\partial_x,\partial_t\partial_x^2,\partial_x^4$があればいいか? 後ろを見てチェックする}
\end{discuss}
\\
2. $bb^{\top}(t,x,\sigma)$ is positive definite for $(t,x,\sigma)\in [0,T]\times O \times {\rm clos}(\Lambda)$.\\
3. $\sup_{n,k,i}E[(\epsilon^{n,k}_i)^q]<\infty$ for any $q>0$. 
\begin{discuss}
{\colorr $\Pi$と独立だから$E_{\Pi}$評価も問題ない. $\epsilon^{n,k}$をi.i.d.でなくす時はここの表現を変える.}
\end{discuss}
\\
4. $\mu_t$ is locally bounded (locally in time).\\
5. $\sup_n(\ell_n^{q/2}\max_{m,k}(\#\{j;T^{n,k}_j\in [s_{m-1},s_m)\}^{-1}E_{\Pi}[|\sum_{j;T^{n,k}_j\in [s_{m-1},s_m)}\eta^{n,k}_j|^q]))<\infty$ almost surely for any $q>0$. 
\begin{discuss}
{\colorr ソボレフの不等式を使うため, Lemma~\ref{Hn-diff-lemma}でこの$q$次モーメントを使う.}
\end{discuss}
\\
6. There exist progressively measurable processes $\{b^{(j)}_t\}_{0\leq t\leq T,0\leq j\leq 1}$ and $\{\hat{b}^{(j)}_t\}_{0\leq t\leq T, 0\leq j\leq 1}$ such that
{\colord $b^{(j)}_t$, $\hat{b}^{(j)}_t$, and $\sup_{u<s\leq t}((|b^{(j)}_s-b^{(j)}_u|\vee |\hat{b}^{(j)}_s-\hat{b}^{(j)}_u|)/|s-u|^{1/2})$ are locally bounded processes for $0\leq j\leq 1$}, and
\begin{equation*}
X_t=X_0+\int^t_0b^{(0)}_sds+\int^t_0b^{(1)}_sdW_s, \quad b^{(1)}_t=b^{(1)}_0+\int^t_0\hat{b}^{(0)}_sds+\int^t_0\hat{b}^{(1)}_sdW_s
\end{equation*}
for $t\in [0,T]$. 
\begin{discuss}
{\colorr 
モーメント条件から確率積分が定義できることもわかる.
$X_t$で$Y_t$と独立なブラウン運動の成分を定義できるように, $W$の次元を一般にしておく. $\tilde{W}$として定義すると, $S,T$との独立性などの議論がごちゃごちゃしやすい.
}
\end{discuss}
\end{description}
\begin{detailed}

Condition $[A1]$ captures somewhat standard assumptions and whether it holds can easily verified in practical settings.
Roughly speaking, point 5 of $[A1]$ is satisfied if the summation of $\eta^{n,k}_j$ is {\colord of an order equivalent to the square root of the number of $\eta^{n,k}_j$}.
This is satisfied under certain independency, martingale conditions or mixing conditions of $\eta^{n,k}_j$.
If $\{\eta^{n,k}_j\}_j$ is a sequence of independent and identically distributed values and the sequence has finite moments, 
then $E_{\Pi}[|\sum_{j;T^{n,k}_j\in [s_{m-1},s_m)}\eta^{n,k}_j|^q]=O_p(\#\{j;T^{n,k}_j\in [s_{m-1},s_m)\}^{-q/2})$. Then, point 5 of $[A1]$ is satisfied if sampling frequency of $\{T^{n,k}_j\}$ is of order $b_n$.
Decomposition of $X$ in point 6 of $[A1]$ is used to deduce asymptotically equivalent representation of $H_n$
where the diffusion coefficient $b(t,X_t,\sigma_{\ast})$ is replaced by $b(s_{m-1},X_{s_{m-1}},\sigma_{\ast})$.
Detailed semimartingale decomposition is required to estimate the difference $b(t,X_t,\sigma_{\ast})-b(s_{m-1},X_{s_{m-1}},\sigma_{\ast})$.

\end{detailed}
\begin{discuss}
{\colorr
Stable convergenceを示すときでもSobolevの不等式は必要なので, $\epsilon,\eta$のモーメント条件は必要.

localizationのためには$\bar{\Lambda}$まで全ての微分が連続拡張される必要がある.
微分が多項式増大を仮定すれば$\sqrt{x}$のような例はいずれにせよ除外され, $\partial_{\sigma}^4$が
境界付近で$\sin (1/x)$のような動きをするもの以外は差がないので問題ない.
}
\end{discuss}

In the following, we assume some conditions about our sampling scheme.
For $\eta\in (0,1/2)$, let $\mathcal{S}_{\eta}$ be the set of all sequences $\{[s'_{n,l},s''_{n,l})\}_{n\in\mathbb{N},1\leq l\leq L_n}$ of intervals on $[0,T]$ 
satisfying $\{L_n\}_n\subset\mathbb{N}$, $[s'_{n,l_1},s''_{n,l_2})\cap [s'_{n,l_2},s''_{n,l_2})=\emptyset$ for $n,l_1\neq l_2$, 
$\inf_{n,l}(b_n^{1-\eta}(s''_{n,l}-s'_{n,l}))>0$, and $\sup_{n,l}(b_n^{1-\eta}(s''_{n,l}-s'_{n,l}))<\infty$.
Let $r_n=\max_{i,k,m}|I^k_{i,m}|$ and $\underbar{r}_n=\min_{i,k,m}|I^k_{i,m}|$.
\begin{description}
\item{[A2]} There exist $\eta \in (0,1/2)$, {\colord $\dot{\eta}\in (0,1]$} and positive-valued functions $\{a_t^j\}_{t\in[0,T],j=1,2}$ such that 
{\colord $\sup_{t\neq s}(|a^j_t-a^j_s|/|t-s|^{\dot{\eta}})<\infty$ almost surely, $b_n^{-1/2}k_n(b_n^{-1}k_n)^{\dot{\eta}}\to 0$} and
\begin{eqnarray}\label{A2-conv}
k_nb_n^{-1/2}\max_{1\leq l\leq L_n}\bigg|b_n^{-1}(s''_{n,l}-s'_{n,l})^{-1}\#\{i;[S^{n,j}_{i-1},S^{n,j}_i)\subset (s'_{n,l},s''_{n,l})\}-a^j_{s'_{n,l}}\bigg|\to^p 0  
\end{eqnarray}
as $n\to\infty$ for $j=1,2$ and $\{[s'_{n,l},s''_{n,l})\}_{1\leq l\leq L_n,n\in\mathbb{N}}\in \mathcal{S}_{\eta}$.  
Moreover, $(r_nb_n^{1-\epsilon}) \vee (b_n^{-1-\epsilon}\underbar{r}_n^{-1})\to^p 0$ for any $\epsilon>0$.
\end{description}
In particular, Condition $[A2]$ implies $b_n^{-1}{\bf J}_{j,m}\to^p \int^T_0a^j_tdt$ and $\max_m|T^{-1}k_n^{-1}k^j_m-a^j_{s_{m-1}}|\to^p 0$ as $n\to\infty$.
Roughly speaking, $[A2]$ shows the law of large numbers for sampling schemes in any local time intervals.
In the proof of Lemma~\ref{Hn-lim-lemma2}, we will see that some properties of $M_{j,m}$ enable us to replace $|I^j_k|$ in $S_m$
by the local average in asymptotics. Then $[A2]$ leads to the limit of $H_n$.

\begin{discuss}
{\colorr

\begin{equation*}
b_n^{-1}{\bf J}_{j,n}=b_n^{-1}\sum_{k=1}^{[b_n^{1-\eta}]}\#\{i;[S^{n,j}_{i-1},S^{n,j}_i)\subset [s'_{n,l},s''_{n,l})\}=b_n^{-1}\sum_kb_na^j_{s'_{n,l}}\Delta s'_{n,l}(1+o_p(1))\to^p \int^T_0a^j_tdt.
\end{equation*}
また、$\{\tilde{s}^m_l\}$を$[s_{m-1},s_m)$の分割で$S_{\eta}$に入るものとすると,
\begin{equation*}
\max_m \bigg|\frac{b_n^{-1}k^j_m}{T[b_nk_n^{-1}]^{-1}}-a^j_{s_{m-1}}\bigg|
\leq \max_m \sum_l\frac{\tilde{s}^m_l-\tilde{s}^m_{l-1}}{T[b_nk_n^{-1}]^{-1}}\bigg|b_n^{-1}\frac{\sum_{S^{n,j}_i\in[\tilde{s}^m_{l-1},\tilde{s}^m_l)}1}{\tilde{s}^m_l-\tilde{s}^m_{l-1}}-a^j_{s_{m-1}}\bigg|\to^p 0.
\end{equation*}

ある$\eta'\in (\eta,1/2)$があって
\begin{eqnarray}
\{(b_nr_n)^3(b_n\underbar{r}_n)^{-7/2}(b_n^{\eta'}k_n^{-1})\}\vee\{r_n^3\underbar{r}_n^{-3/2}b_n^{3/2-\eta}\} 
\vee \{b_n^{\eta-\eta'}(b_nr_n)^3(b_n\underbar{r}_n)^{-3/2}\} \vee \{b_n^{-1+\eta+\eta'}(b_nr_n)^4(b_n\underbar{r}_n)^{-7/2}\} \to^p 0. \nonumber
\end{eqnarray}
があれば十分か.

$a_t^j$の連続性は, Proposition~\ref{Hn-lim}の証明で, $a_t^j$の値を$m$に関して一様に左端で近似するために使う.
$\max$で評価するのはきついか? Lem~\ref{D1-change}などで必要だが, 条件が弱められたら弱める.
最後の式はLemma~\ref{D1-change}で使う. $\{b_n^{-1}({\bf J}_{1,n}+{\bf J}_{2,n})\}_n$のtightnessもLemma~\ref{D1-change}で使う.
Lemma~\ref{D1-change}を示すためには, $\underbar{r}_n$は$m$毎の$\inf$ではなく, $[0,T]$全体でとる必要がある.
}
\end{discuss}

{\colord
\begin{example}
Let $\{N^k_t\}_{t\geq 0}$ be an exponential $\alpha$-mixing point process with stationary increments for $k=1,2$.
Set $S^{m,k}_i=\inf\{t\geq 0 ; N^k_{b_nt}\geq i\}$.
Then Rosenthal-type inequalities (Theorem 3 and Lemma 7 in Doukhan and Louhichi \cite{dou-lou99}, or Theorem 4 in \cite{ogi-yos14}) 
and a similar argument to the proof of Proposition 6 in \cite{ogi-yos14} ensure $[A2]$ with $a^j_t\equiv E[N^j_1]$ (constants).
\end{example}
}
\begin{discuss}
{\colorr
まず$E[N^j_t]=tE[N^j_1]$. (∵$t$が有理数ならOK. 右連続性から$\{t_n\}_n\subset \mathcal{Q}, t_n\to t$に対し, $E[N^j_{t_n}]=t_nE[N^j_1]$なのでOK)

よって,(\ref{A2-conv})の左辺の絶対値の中は
\begin{eqnarray}
b_n^{-1}(s''_{n,l}-s'_{n,l})^{-1}(N^j_{b_ns''_{n,l}}-N^j_{b_ns'_{n,l}})-E[N^j_1]
=b_n^{-1}(s''_{n,l}-s'_{n,l})^{-1}(N^j_{b_ns''_{n,l}}-N^j_{b_ns'_{n,l}}-E[N^j_{b_ns''_{n,l}}-N^j_{b_ns'_{n,l}}]) \nonumber
\end{eqnarray}
となるのでRosenthal-type inequalitiesから
\begin{eqnarray}
&&E_{\Pi}\bigg[\bigg(k_nb_n^{-1/2}\max_{1\leq l\leq L_n}\bigg|b_n^{-1}(s''_{n,l}-s'_{n,l})^{-1}\sum_{i;[S^{n,j}_{i-1},S^{n,j}_i)\subset (s'_{n,l},s''_{n,l})}1-a^j_{s'_{n,l}}\bigg|\bigg)^q\bigg] \nonumber \\
&\leq &k_n^qb_n^{-q/2}\sum_lE_{\Pi}\bigg[\bigg|\cdot \bigg|^q\bigg]
\leq k_n^qb_n^{-q/2}L_n\max_l(b_n^{-1}(s''_{n,l}-s'_{n,l})^{-1})^qE_{\Pi}\bigg[\bigg|\cdot \bigg|^q\bigg]. \nonumber
\end{eqnarray}
\begin{eqnarray}
E_{\Pi}\bigg[\bigg|\cdot \bigg|^q\bigg]
&\leq& C_q\bigg\{\bigg(\sum_{i=1}^{[b_n^{\eta}]}\int^1_0(\alpha^{-1}(u)\vee n)^{q-1}Q^q_{X'_i}(u)du\bigg)
\vee \bigg(\sum_{i=1}^{[b_n^{\eta}]}\int^1_0(\alpha^{-1}(u)\vee n)Q^2_{X'_i}(u)du\bigg)^{q/2}\bigg\} \nonumber \\
&\leq &C_q[b_n^{-\eta}]^{q/2}\int^1_0(\alpha^{-1}(u))^{q-1}Q^q_{X'_i}(u)du\leq C_qb_n^{\eta q/2}
\bigg(\int^1_0(\alpha^{-1}(u))^{2q-2}du\bigg)^{1/2}\bigg(\int^1_0Q^{2q}_{X'_i}(u)du\bigg)^{1/2}. \nonumber
\end{eqnarray}
ただし, $\alpha^{-1}(u)=\sum_{k=0}^{\infty}1_{\{\alpha_k\geq u\} }$ and $Q_{X'}(s)=\inf\{t\geq 0, P[|X'|>t]\leq s\}$.
$\int^1_0Q^{2q}_{X'_i}(u)du=E[|X'_i|^{2q}]$, $\int^1_0(\alpha^{-1}(u))^{q'}du\leq q'\sum_{k=0}^{\infty}(k+1)^{q'-1}\alpha_k^n$.
よって与式$=O((k_nb_n^{-1/2})^qb_n^{1-\eta}b_n^{-\eta q/2})\to 0$.
}
\end{discuss}

Under the above conditions, we can show convergence of the quasi-likelihood ratio $H_n(\sigma,\hat{v}_n)-H_n(\sigma_{\ast},\hat{v}_n)$.
The limit function is rather complicated, so we prepare some functions.
Let $b_t=b(t,X_t,\sigma)$, $b_{t,\ast}=b(t,X_t,\sigma_{\ast})$,
{\colorlg $\tilde{a}^j_t=a^j_t/v_{j,\ast}$} for $j=1,2$, {\colorlg $\varphi(x,y)=\sqrt{x+\sqrt{x^2-4y}}+\sqrt{x-\sqrt{x^2-4y}}$ for $0\leq 4y\leq x^2$, and
\begin{eqnarray}
\mathcal{Y}_1(\sigma)&=&\int^T_0\bigg\{
\frac{\sum_{j=1}^2(|b^j_t|^2-|b^j_{t,\ast}|^2)(|b^{3-j}_t|^2\sqrt{\tilde{a}^1_t\tilde{a}^2_t}+\tilde{a}^j_t\sqrt{\det(b_tb_t^{\top})})-2(b^1_t\cdot b^2_t-b^1_{t,\ast}\cdot b^2_{t,\ast})b^1_t\cdot b^2_t\sqrt{\tilde{a}_t^1\tilde{a}_t^2}}
{2\sqrt{2}\sqrt{\det(b_tb_t^{\top})}\varphi(\tilde{a}^1_t|b^1_t|^2+\tilde{a}^2_t|b^2_t|^2,\tilde{a}^1_t\tilde{a}^2_t\det(b_tb_t^{\top}))} \nonumber \\
&&\quad \quad -\frac{\varphi(\tilde{a}^1_t|b^1_t|^2+\tilde{a}^2_t|b^2_t|^2,\tilde{a}^1_t\tilde{a}^2_t\det(b_tb_t^{\top}))-\varphi(\tilde{a}^1_t|b^1_{t,\ast}|^2+\tilde{a}^2_t|b^2_{t,\ast}|^2,\tilde{a}^1_t\tilde{a}^2_t\det(b_{t,\ast}b_{t,\ast}^{\top}))}{2\sqrt{2}}\bigg\}dt. \nonumber
\end{eqnarray}
}
\begin{proposition}\label{Hn-lim}
Assume $[A1]$,$[A2]$ and $[V]$. Then $\sup_{\sigma\in\Lambda}|b_n^{-1/2}\partial_{\sigma}^k(H_n(\sigma,\hat{v}_n)-H_n(\sigma_{\ast},\hat{v}_n))-\partial_{\sigma}^k\mathcal{Y}_1(\sigma)|\to^p 0$ as $n\to \infty$ 
for $0\leq k\leq 3$.
\end{proposition}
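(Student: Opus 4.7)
\textbf{Proof proposal for Proposition~\ref{Hn-lim}.}
The plan is to reduce the study of the quasi-likelihood ratio to a sum of local Kullback--Leibler-type functionals per subdivision $[s_{m-1},s_m)$, then pass to the Riemann-sum limit. First I would eliminate the nuisance parameter: by $[V]$ we have $b_n^{1/2}(\hat v_n-v_\ast)=O_p(1)$, so a one-term Taylor expansion of $H_n(\sigma,v)$ in $v$, combined with moment bounds for $\partial_v^j\partial_\sigma^k(Z_m^\top S_m^{-1}Z_m)$ and $\partial_v^j\partial_\sigma^k\log\det S_m$ (analogous to Lemma~\ref{ZSZ-est}), will show that
\begin{equation*}
\sup_{\sigma\in\Lambda}\bigl|b_n^{-1/2}\partial_\sigma^k\bigl(H_n(\sigma,\hat v_n)-H_n(\sigma,v_\ast)-(H_n(\sigma_\ast,\hat v_n)-H_n(\sigma_\ast,v_\ast))\bigr)\bigr|\to^p 0,
\end{equation*}
reducing the claim to $v=v_\ast$. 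Next I would replace $H_n$ by the asymptotically equivalent expression $\tilde H_n$ built in Section~\ref{tildeH-section}, which converts the sizes of $S_m$ and its entries $|I^j_{i,m}|$ into local averages using the spectral properties of $M_{j,m}$; this step is the key device that makes the subsequent trace computations tractable.

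With these reductions, the quantity to control becomes
\begin{equation*}
b_n^{-1/2}\sum_{m=2}^{\ell_n}\Bigl\{-\tfrac{1}{2}Z_m^\top\bigl(S_m^{-1}(\sigma,v_\ast)-S_m^{-1}(\sigma_\ast,v_\ast)\bigr)Z_m-\tfrac{1}{2}\log\det\bigl(S_m(\sigma,v_\ast)S_m^{-1}(\sigma_\ast,v_\ast)\bigr)\Bigr\}.
\end{equation*}
I would split the quadratic form into its $\mathcal{G}_{s_{m-1}}$-conditional expectation plus a martingale remainder. For the remainder, orthogonality of martingale increments and moment bounds (using $[A1]$.3 and $[A1]$.5) yield an $L^2$-estimate of order $b_n^{-1/2}\ell_n^{1/2}\cdot k_n^{1/2}b_n^{-1/2}=o_p(1)$ under the constraints on $k_n,b_n$. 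Using the approximation (\ref{covariance-approximate}) together with $[A1]$.6 to replace $b(t,X_t,\sigma_\ast)$ by $b_m^{\cdot}(\sigma_\ast)$ on each subdivision, the conditional expectation of the quadratic form equals $\tfrac{1}{2}\operatorname{tr}(S_m(\sigma_\ast,v_\ast)(S_m^{-1}(\sigma_\ast,v_\ast)-S_m^{-1}(\sigma,v_\ast)))$ up to negligible terms, so the summand becomes the per-block Kullback--Leibler divergence $\tfrac12\{\operatorname{tr}(S_m(\sigma_\ast)S_m^{-1}(\sigma))-\log\det(S_m(\sigma_\ast)S_m^{-1}(\sigma))-\dim S_m\}$.

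The main calculation is then to identify the asymptotics of $b_n^{-1/2}\sum_m$ of this Kullback--Leibler quantity. After the $\tilde H$-reduction the entries $|I^j_{i,m}|$ are replaced by $1/(b_na^j_{s_{m-1}})$ under $[A2]$, so the eigenvalue analysis of $S_m$ reduces, in the leading order, to that of a $2\times 2$ effective matrix with trace $\tilde a^1_{s_{m-1}}|b^1_m|^2+\tilde a^2_{s_{m-1}}|b^2_m|^2$ and determinant $\tilde a^1_{s_{m-1}}\tilde a^2_{s_{m-1}}\det(b_mb_m^\top)$. The square-root sums of these eigenvalues produce exactly $\varphi(\tilde a^1\cdot|b^1|^2+\tilde a^2\cdot|b^2|^2,\tilde a^1\tilde a^2\det(bb^\top))/\sqrt{2}$ via the identity $\sqrt{\lambda_1}+\sqrt{\lambda_2}=\varphi(\lambda_1+\lambda_2,\lambda_1\lambda_2)/\sqrt{2}$, and a Riemann-sum convergence $T\ell_n^{-1}\sum_m f(s_{m-1})\to\int_0^T f(t)dt$ (using the H\"older continuity of $a^j_t$ and $[A2]$) gives the integral representation $\mathcal{Y}_1(\sigma)$; the numerator structure in $\mathcal{Y}_1$ arises when one differentiates the $\varphi$-part with respect to the entries of $bb^\top$.

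The hardest step will be the spectral/trace computation in the previous paragraph, since $S_m$ is a large block matrix whose two diagonal blocks have unequal sizes and whose noise term $\mathrm{diag}(v_1M_{1,m},v_2M_{2,m})$ has no simple simultaneous-diagonalization structure with the signal block; the $\tilde H$-reduction of Section~\ref{tildeH-section} is what makes the eigenvalues converge to those of the explicit $2\times 2$ matrix above. Finally, to upgrade the pointwise-in-$\sigma$ convergence to uniform convergence for all $0\le k\le 3$, I would apply Sobolev's inequality on $\Lambda$ to the random field $\sigma\mapsto b_n^{-1/2}\partial_\sigma^k(H_n(\sigma,\hat v_n)-H_n(\sigma_\ast,\hat v_n))-\partial_\sigma^k\mathcal{Y}_1(\sigma)$, which requires $L^p$-bounds on its $\partial_\sigma^{k+1}$ derivative; these follow from Lemma~\ref{ZSZ-est} applied with one extra derivative, and this is precisely the moment estimate that forces the restriction $k_nb_n^{-2/3+\epsilon}\to 0$ imposed at the outset.
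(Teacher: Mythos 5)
Your skeleton matches the paper's: eliminate $\hat v_n$ by a Taylor expansion in $v$ using $[V]$, pass to $\tilde H_n$, split each block into its $\mathcal{G}_{s_{m-1}}$-conditional expectation plus a martingale remainder so that the ratio reduces to $-\tfrac12 b_n^{-1/2}\sum_m\partial_\sigma^k\{{\rm tr}(\tilde S_m^{-1}\tilde S_{m,\ast}-\mathcal{E})+\log\det(\tilde S_m\tilde S_{m,\ast}^{-1})\}$, and finish with a Riemann sum and Sobolev's inequality. That is exactly (\ref{Hn-diff-eq1}) and the surrounding argument. But there is a genuine gap at the core: you assert that "the eigenvalue analysis of $S_m$ reduces, in the leading order, to that of a $2\times 2$ effective matrix" and that the $\tilde H$-reduction of Section~\ref{tildeH-section} "converts the sizes of $S_m$ and its entries $|I^j_{i,m}|$ into local averages." Neither is the case. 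The construction of $\tilde H_n$ only freezes the diffusion coefficient at $s_{m-1}$ and replaces $Z_m$ by $\tilde Z_m$; the matrix $\tilde S_m$ still has the irregular entries $|I^j_{i,m}|$ and two diagonal blocks of \emph{unequal} sizes $k^1_m\neq k^2_m$, so no effective $2\times2$ reduction or asymptotic spectral decomposition is available at that stage, and you give no mechanism for producing one.

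The paper's mechanism is entirely different and is the actual content of Section~\ref{Hn-limit-section}: one expands $\tilde S_m^{-1}$ as a Neumann series in the off-diagonal block $\tilde L$ (equation (\ref{Hn-diff-eq2})), then for each power $p$ (i) replaces the diagonal blocks $\tilde D_{j,m}$ by locally averaged $\dot D_{j,m}$ using the explicit factorization (\ref{dotD-eq}) and the monotonicity/decay properties of the recursion $p_{j+1}=2+\epsilon-1/p_j$ (Lemma~\ref{ddotD-properties}, Lemma~\ref{D1-change}), (ii) changes the \emph{size} of one block to match the other by comparing products $\prod p_{k}(c)$ with $p_+(c)^{\#}$ (Lemma~\ref{Hn-lim-lemma2}), reducing each trace to $\pi^{-1}k^j_m I_{p,q}(c_1,c_2')$ via (\ref{tr-lim-est2}), and (iii) sums the resulting series over $p$ in closed form by residue calculus (Lemma~\ref{sum-integral-lemma}); only after this summation does $\varphi$ appear. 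Your identity $\sqrt{\lambda_1}+\sqrt{\lambda_2}=\varphi(\lambda_1+\lambda_2,\lambda_1\lambda_2)/\sqrt2$ correctly predicts the \emph{form} of the answer, but predicting the answer is not a derivation: without steps (i)--(iii), or some substitute for them, the limit of $b_n^{-1/2}{\rm tr}(\tilde S_m^{-1}\tilde S_{m,\ast})$ for a nonsynchronous, irregular sampling scheme is not established. You should also note that the final localization step (removing the boundedness assumptions $[A1']$ via Girsanov, as in Gloter--Jacod) is needed and is absent from your outline.
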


To show consistency and asymptotic normality of $\hat{\sigma}_n$, the limit function $\mathcal{Y}_1(\sigma)$ of the quasi-likelihood ratio should have the unique maximum point at $\sigma=\sigma_{\ast}$.
More precisely, we use the following as a kind of identifiability condition: $\inf_{\sigma\neq \sigma_{\ast}}(-\mathcal{Y}_1(\sigma))/|\sigma-\sigma_{\ast}|^2>0$ almost surely.
Though it is difficult to directly check this condition in general,
we can {\colorg check} it under a more tractable sufficient condition. Let
\begin{equation*}
\mathcal{Y}_0(\sigma)=-\frac{1}{2}\int^T_0\bigg\{{\rm tr}((b_tb_t^{\top})^{-1}(b_{t,\ast}b_{t,\ast}^{\top})-\mathcal{E}_2)+\log\frac{\det (b_tb_t^{\top})}{\det (b_{t,\ast}b_{t,\ast}^{\top})}\bigg\}dt.
\end{equation*}
Then $\mathcal{Y}_0$ is the probability limit $n^{-1/2}(H^0_n(\sigma)-H^0_n(\sigma_{\ast}))$, where $H^0_n$ represents a quasi-likelihood function 
for a statistical model of equidistant observations without noise. 

\begin{description}
\item{[$A3$]} $\inf_{\sigma\neq \sigma_{\ast}}((-\mathcal{Y}_0(\sigma))/|\sigma-\sigma_{\ast}|^2)>0$ almost surely.
\end{description}
\begin{discuss}
{\colorr Gloter and Jacodのような条件で定義するのは今のところは難しい. 彼らの場合は１次元だからフィッシャーインフォメーションが退化していないかどうかを見るのに$bb^{\top}$の微分が$0$にならないことを見ればよかったが,
$2$次元以上ではこのようなシンプルな判定はできない. 彼らと同じ形の条件でもMLEの一致性は恐らくいえる.}
\end{discuss}

We will show in Proposition~\ref{separability-prop} that $[A3]$ is sufficient for the identifiability condition of our model.
Moreover, the following condition is a simple sufficient condition for $[A3]$
(see Remark 4 in Ogihara and Yoshida~\cite{ogi-yos14} for the details):
\begin{description}
\item{[$A3'$]} $\inf_{\sigma_1\neq \sigma_2}(|bb^{\top}(t,x,\sigma_1)-bb^{\top}(t,x,\sigma_2)|/|\sigma_1-\sigma_2|)>0$ for any $t\in[0,T]$ and $x\in O$.
\end{description}

We denote by $\to^{s\mathchar`-\mathcal{L}}$ the stable convergence of random variables. Let 
\begin{equation}\label{Gamma1-def}
{\colorlg \hat{\Gamma}_{1,n}=-b_n^{-1/2}\partial_{\sigma}^2H_n(\hat{\sigma}_n,\hat{v}_n), \quad} \Gamma_1=-\partial_{\sigma}^2\mathcal{Y}_1(\sigma_{\ast}).
\end{equation}
Let $\mathcal{N}$ be a $d$-dimensional random variable on some extension $(\tilde{\Omega},\tilde{\mathcal{F}},\tilde{P})$ of $(\Omega,\mathcal{F},P)$
satisfying the condition that $\mathcal{N}$ is independent of $\mathcal{F}$ and $\mathcal{N}$ follows the $d$-dimensional standard normal distribution.
We denote the expectation with respect to $\tilde{P}$ by the same notation $E$. 

The following theorem is one of our main results.
\begin{theorem}\label{main}
Assume $[A1]$--$[A3]$ and $[V]$. Then $\Gamma_1$ is positive definite almost surely and 
$b_n^{1/4}(\hat{\sigma}_n-\sigma_{\ast})\to^{s\mathchar`-\mathcal{L}} \Gamma_1^{-1/2}\mathcal{N}$ as $n\to \infty$.
{\colorlg Moreover, $\hat{\Gamma}_{1,n}\to^p \Gamma_1$, and therefore 
$b_n^{1/4}\hat{\Gamma}_{1,n}^{1/2}1_{\{\hat{\Gamma}_{1,n} {\rm is \ p.d.}\}}(\hat{\sigma}_n-\sigma_{\ast})\to^{s\mathchar`-\mathcal{L}} \mathcal{N}$ as $n\to\infty$.
}
\end{theorem}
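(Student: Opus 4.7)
\medskip

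\noindent\textbf{Proof plan.}
My plan is to treat $\hat{\sigma}_n$ as a standard M-estimator, using Proposition~\ref{Hn-lim} to supply the uniform convergence of the quasi-likelihood ratio and its derivatives up to order three, and then supplementing it with a stable central limit theorem for the score $\partial_{\sigma}H_n(\sigma_{\ast},\hat{v}_n)$. First I would establish consistency: Proposition~\ref{Hn-lim} with $k=0$ gives $\sup_{\sigma\in\Lambda}|b_n^{-1/2}(H_n(\sigma,\hat{v}_n)-H_n(\sigma_{\ast},\hat{v}_n))-\mathcal{Y}_1(\sigma)|\to^p 0$, and the identifiability result (the promised Proposition~\ref{separability-prop}, which upgrades $[A3]$ to the separation condition $\inf_{\sigma\neq\sigma_{\ast}}(-\mathcal{Y}_1(\sigma))/|\sigma-\sigma_{\ast}|^2>0$ a.s.) together with a standard argmax argument yields $\hat{\sigma}_n\to^p\sigma_{\ast}$. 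The same separation bound also gives positive definiteness of $\Gamma_1=-\partial_{\sigma}^2\mathcal{Y}_1(\sigma_{\ast})$ almost surely, because $-\mathcal{Y}_1(\sigma)\gtrsim|\sigma-\sigma_{\ast}|^2$ around $\sigma_{\ast}$ forces the Hessian at $\sigma_{\ast}$ to be nonnegative with a strictly positive lower eigenvalue.

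Next I would run a Taylor expansion. Since $\hat{\sigma}_n$ eventually lies in $\Lambda$ by consistency, the first-order condition $\partial_{\sigma}H_n(\hat{\sigma}_n,\hat{v}_n)=0$ gives
\begin{equation*}
b_n^{1/4}(\hat{\sigma}_n-\sigma_{\ast})=-\bigg(\int_0^1 b_n^{-1/2}\partial_{\sigma}^2H_n(\sigma_{\ast}+s(\hat{\sigma}_n-\sigma_{\ast}),\hat{v}_n)\,ds\bigg)^{-1}b_n^{-1/4}\partial_{\sigma}H_n(\sigma_{\ast},\hat{v}_n).
\end{equation*}
The bracketed Hessian average converges in probability to $-\Gamma_1$: Proposition~\ref{Hn-lim} with $k=2$ gives uniform convergence of $b_n^{-1/2}\partial_{\sigma}^2H_n(\cdot,\hat{v}_n)$ to $\partial_{\sigma}^2\mathcal{Y}_1$, and continuity of $\partial_{\sigma}^2\mathcal{Y}_1$ together with consistency of $\hat{\sigma}_n$ handles the integral; in particular this also proves $\hat{\Gamma}_{1,n}\to^p\Gamma_1$.

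The key, and main, analytic step is the stable central limit theorem
\begin{equation*}
b_n^{-1/4}\partial_{\sigma}H_n(\sigma_{\ast},\hat{v}_n)\to^{s\mathchar`-\mathcal{L}}\Gamma_1^{1/2}\mathcal{N}.
\end{equation*}
I would first reduce $\partial_{\sigma}H_n(\sigma_{\ast},\hat{v}_n)$ to $\partial_{\sigma}H_n(\sigma_{\ast},v_{\ast})$ using $[V]$ and a Taylor expansion in $v$, controlling the remainder by the tightness of $b_n^{1/2}(\hat{v}_n-v_{\ast})$ together with sup-norm moment bounds on $\partial_{\sigma}\partial_vH_n$ derived from the same covariance-matrix analysis underlying Proposition~\ref{Hn-lim}. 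The resulting score decomposes as a telescoping sum over the $\ell_n$ subintervals $[s_{m-1},s_m)$; each summand is essentially $\mathcal{G}_{s_{m-1}}$-conditionally centered (up to negligible drift/trend contamination that I would isolate using the semimartingale decomposition in $[A1].6$ and the noise moment assumptions), so I can cast the normalized score as a sum of martingale-type increments with respect to the filtration $\{\mathcal{G}_{s_m}\}_m$. Applying Jacod's stable CLT for triangular arrays then requires (i) the predictable quadratic variation to converge to $\Gamma_1$, which will follow from the asymptotically equivalent transform of matrix traces developed in Sections~\ref{tildeH-section}--\ref{Hn-limit-section} and the local-average identity $[A2]$, and (ii) the usual Lindeberg and asymptotic orthogonality conditions against bounded martingales and against $W$, which are standard once the requisite moment bounds are in place. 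Finally, Slutsky combined with the Hessian convergence gives $b_n^{1/4}(\hat{\sigma}_n-\sigma_{\ast})\to^{s\mathchar`-\mathcal{L}}\Gamma_1^{-1/2}\mathcal{N}$, and the Studentized form follows because $\hat{\Gamma}_{1,n}^{1/2}1_{\{\hat{\Gamma}_{1,n}\text{ p.d.}\}}\to^p\Gamma_1^{1/2}$ and stable convergence is preserved under multiplication by such $\mathcal{F}$-measurable factors.

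The main obstacle is step (i) in the stable CLT: identifying the limit of the predictable quadratic variation of the score. The score involves derivatives of $Z_m^{\top}S_m(\sigma_{\ast},v_{\ast})^{-1}Z_m$ and of $\log\det S_m$, whose products cross different subintervals via the nondiagonal inverse of $S_m$; expressing the resulting conditional variance as a limiting time integral equal to $\Gamma_1$ is exactly the nonstandard matrix-trace identification flagged in the introduction, and it is where the local-averaging trick referred to in Sections~\ref{tildeH-section}--\ref{Hn-limit-section} must be invoked most carefully, in particular to ensure coherence with the formula for $\Gamma_1$ obtained by differentiating $\mathcal{Y}_1$ twice.
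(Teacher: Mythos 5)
Your proposal follows essentially the same route as the paper: consistency from Proposition~\ref{Hn-lim} plus the identifiability bound of Proposition~\ref{separability-prop}, positive definiteness of $\Gamma_1$ from the second-order Taylor expansion of $\mathcal{Y}_1$ at $\sigma_{\ast}$, the first-order condition and Hessian convergence to reduce to the score, and Jacod's stable martingale CLT for $b_n^{-1/4}\partial_{\sigma}H_n(\sigma_{\ast},\hat{v}_n)$ after replacing $\hat{v}_n$ by $v_{\ast}$ and freezing the coefficients (the paper's Lemma~\ref{Hn-diff-lemma} and Proposition~\ref{st-conv}). The quadratic-variation identification you flag as the main obstacle is resolved in the paper exactly as you anticipate, by relating the conditional variance of the score increments to the second derivative of the quasi-log-likelihood and invoking the already-established limit of the Hessian.
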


\begin{corollary}
Assume $[A1]$, $[A2]$, $[A3']$ and $[V]$. Then the results in Theorem~\ref{main} hold true.
\end{corollary}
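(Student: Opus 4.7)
The corollary reduces to showing that $[A3']$ implies $[A3]$ almost surely; the remaining hypotheses are shared, so Theorem~\ref{main} then applies verbatim. The strategy is to treat $-\mathcal{Y}_0(\sigma)$ as a pathwise Kullback--Leibler-type functional. Setting $A_t(\sigma) = b(t,X_t,\sigma)b(t,X_t,\sigma)^{\top}$ and $B_t = A_t(\sigma_{\ast})$, we have
\[
-\mathcal{Y}_0(\sigma) = \frac{1}{2}\int_0^T \psi(A_t(\sigma)^{-1}B_t)\,dt, \qquad \psi(M) := {\rm tr}(M - \mathcal{E}_2) - \log\det M,
\]
and for positive-definite $2\times 2$ matrices $M$ with eigenvalues $\lambda_1,\lambda_2 > 0$, $\psi(M) = \sum_i(\lambda_i - 1 - \log\lambda_i) \geq 0$ with equality iff $M = \mathcal{E}_2$. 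A second-order Taylor expansion gives $\psi(\mathcal{E}_2 + H) = \frac{1}{2}{\rm tr}(H^2) + O(|H|^3)$, which drives the local analysis.

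For $\sigma$ in a small ball around $\sigma_{\ast}$, I would substitute $H_t = A_t(\sigma)^{-1}(B_t - A_t(\sigma))$ into the Taylor expansion and invoke the lower bound ${\rm tr}(X B_t^{-1} X B_t^{-1}) \geq c|X|^2$ which holds since $B_t$ varies in a compact set of positive-definite matrices by $[A1]$-2 and the boundedness of $X$ on $[0,T]$. Combined with $[A3']$, which provides a pointwise estimate $|A_t(\sigma) - B_t| \geq c(t,X_t)|\sigma - \sigma_{\ast}|$ for some positive function $c$, this yields
\[
-\mathcal{Y}_0(\sigma) \geq c_1 |\sigma - \sigma_{\ast}|^2 \int_0^T c(t,X_t)^2\,dt - C|\sigma-\sigma_{\ast}|^3,
\]
where the time-integral is almost surely strictly positive by continuity of $t \mapsto (X_t,c(t,\cdot))$.

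For $\sigma$ outside the ball, I would use a compactness/continuity argument on ${\rm clos}(\Lambda)$: if $-\mathcal{Y}_0(\sigma) = 0$ for some $\sigma \neq \sigma_{\ast}$, then $A_t(\sigma) = B_t$ for all $t \in [0,T]$ by continuity, and in particular $A_0(\sigma) = A_0(\sigma_{\ast})$, contradicting $[A3']$ evaluated at $(0,X_0)$. Hence $-\mathcal{Y}_0$ is strictly positive and continuous on the compact set ${\rm clos}(\Lambda) \setminus B_\delta(\sigma_{\ast})$, yielding a uniform positive lower bound that gluess with the local quadratic bound to produce $[A3]$.

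The principal obstacle is the pointwise (rather than uniform) nature of $[A3']$: the constant $c(t,x)$ may degenerate along the path, so ensuring that $\int_0^T c(t,X_t)^2\,dt > 0$ almost surely requires extracting a quantitative continuity estimate for $\sigma\mapsto b b^{\top}(t,x,\sigma)$ from the infimum condition and combining it with sample-path continuity of $X$. Once this and the Taylor remainder control are made rigorous uniformly on $\mathrm{clos}(\Lambda)$, the two regimes glue together to give $\inf_{\sigma \neq \sigma_{\ast}}(-\mathcal{Y}_0(\sigma)/|\sigma-\sigma_{\ast}|^2) > 0$ a.s., which is precisely $[A3]$.
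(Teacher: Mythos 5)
The reduction you identify is exactly the one the paper intends: show $[A3']\Rightarrow[A3]$ and then invoke Theorem~\ref{main} verbatim. The paper itself does not spell this out but delegates the implication entirely to Remark~4 of Ogihara and Yoshida~\cite{ogi-yos14}; your proposal reconstructs a direct argument, and the reconstruction is essentially sound. Writing $-\mathcal{Y}_0(\sigma)=\tfrac12\int_0^T\psi(A_t(\sigma)^{-1}B_t)\,dt$ with $\psi(M)={\rm tr}(M-\mathcal{E}_2)-\log\det M$ (noting $A_t(\sigma)^{-1}B_t$ is similar to the symmetric positive-definite $A_t^{-1/2}B_tA_t^{-1/2}$, so the eigenvalue formula $\psi=\sum_i(\lambda_i-1-\log\lambda_i)$ is valid) and combining a local quadratic Taylor bound with a global strict-positivity/compactness argument on ${\rm clos}(\Lambda)\setminus B_\delta(\sigma_\ast)$ is a standard and correct way to derive the identifiability inequality. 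Two small remarks. First, the second-order term should be written with $A_t(\sigma)^{-1}$ rather than $B_t^{-1}$ (the relevant quantity is ${\rm tr}\bigl((A_t^{-1}(B_t-A_t))^2\bigr)\ge\lVert A_t\rVert^{-2}\lvert B_t-A_t\rvert^2$), though this only changes constants on a compact parameter set. Second, the ``principal obstacle'' you flag is not actually an obstacle: $[A3']$ makes $c(t,X_t)>0$ for \emph{every} $t\in[0,T]$, and since $t\mapsto c(t,X_t)$ is measurable (it is an infimum of continuous functions composed with the continuous path $X$), the Lebesgue integral of a strictly positive measurable function on $[0,T]$ is automatically positive; no quantitative modulus of continuity is needed. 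The only genuine care required is that the $O(\lvert H\rvert^3)$ remainder and the matrix-norm constants be controlled uniformly in $t$ and $\sigma$, which your appeal to compactness of $[0,T]\times\{X_t\}\times{\rm clos}(\Lambda)$ and $[A1]$-1,2 handles.
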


\subsection{On the LAMN property and asymptotic efficiency of the estimator}\label{LAMN-subsection}

In this subsection, we state some results on the so-called LAMN(LAN) property for our model and asymptotic efficiency of our estimator.
We also comment on some further studies.

Throughout this subsection, we assume that $X_t\equiv Y_t$, {\colorg $T^{n,k}_j\equiv S^{n,k}_i$}, 
{\colorg $\eta^{n,k}_j\equiv \epsilon^{n,k}_i$}, $\mu_t\equiv 0$ and $Y_0=\gamma$ for {\colord some known} $\gamma\in\mathbb{R}^2$.
Then the latent process $Y$ is a diffusion process satisfying the stochastic differential equation (\ref{SDE}) {\colorg with $\mu\equiv 0$}.
Let $P_{\sigma'_{\ast},v'_{\ast},n}$ be the distribution of $((S^{n,k}_i)_{k,i},(\tilde{Y}^k_i)_{k,i})$ with true values $(\sigma'_{\ast},v'_{\ast})$ of the parameters.
We denote 
\begin{equation*}
{\rm diag}(A,B)=\left(
\begin{array}{ll}
A & 0 \\
0 & B
\end{array}
\right)
\end{equation*} 
for square matrices $A$ and $B$. 
\begin{discuss}
{\colorr $\Gamma$のdef, Lemma~\ref{Sm-properties}の前のdefで使う. あとは${\bf K}_m$,$M_m$のdef}
\end{discuss}
Let $\mathcal{Y}_2(v)=-\int^T_0\sum_{j=1}^2a^j_t\{(v_{j,\ast}/v_j)-1+\log(v_j/v_{j,\ast})\}dt/2$, 
\begin{equation}\label{Gamma-def}
\Gamma_2=-\partial_v^2\mathcal{Y}_2(v_{\ast}) \quad {\rm and} \quad \Gamma={\rm diag}(\Gamma_1,\Gamma_2).
\end{equation}

{\colord We adopt the following definition of the LAMN property from Jeganathan \cite{jeg83}.}
\begin{definition}\label{LAMN-def}
Let $P_{\theta,n}$ be a probability measure on some measurable space $(\mathcal{X}_n,\mathcal{A}_n)$ for each $\theta\in{\bf \Theta}$ and $n\in\mathbb{N}$,
where $\Theta$ is a bounded open subset of $\mathbb{R}^d$.
Then the family $\{P_{\theta,n}\}_{\theta,n}$ satisfies the local asymptotic mixed normality (LAMN) property at $\theta=\theta_{\ast}$
if there exist a sequence $\{\delta_n\}_{n\in\mathbb{N}}$ of $d\times d$ positive definite matrices, 
$d\times d$ symmetric random matrices $\Gamma_n,\Gamma$ and $d$-dimensional random vectors $\mathcal{N}_n,\mathcal{N}$
such that $\Gamma$ is positive definite a.s.,  $P_{\theta_{\ast},n}[\Gamma_n \ {\rm is \ positive \ definite} ]=1 \ (n\in\mathbb{N})$, $\left\lVert \delta_n\right\rVert\to 0$, {\colorg and}
\begin{equation*}
\log \frac{dP_{\theta_{\ast}+\delta_n u,n}}{dP_{\theta_{\ast},n}}- \bigg(u^{\top}\sqrt{\Gamma_n}\mathcal{N}_n-\frac{1}{2}u^{\top}\Gamma_n u \bigg)\to 0
\end{equation*}
in $P_{\theta_{\ast},n}$-probability as $n\to \infty$ for any $u\in\mathbb{R}^d$. 
Moreover, $\mathcal{N}$ follows the $d$-dimensional standard normal distribution, $\mathcal{N}$ is independent of $\Gamma$ and
$\mathcal{L}(\mathcal{N}_n,\Gamma_n|P_{\theta_{\ast},n})\to \mathcal{L}(\mathcal{N},\Gamma)$ as $n\to\infty$.

If further the limit matrix $\Gamma$ is non-random, we say $\{P_{\theta,n}\}_{\theta,n}$ has the local asymptotic normality (LAN) property.
\end{definition}

{\colord To prove the LAMN property of our model, we assume the following additional condition.}

\begin{description}
\item{[$A1''$]} $[A1]$ is satisfied, $\mu_t\equiv 0$, $b(t,x,\sigma)$ does not depend on $(t,x)$
and $\epsilon^{n,k}_i$ follows a normal distribution for any $n,k,i$.
\end{description}
\begin{discuss}
{\colorr
$\mu$の条件($|\mu(t,x,\sigma)-\mu(t,y,\sigma)|\leq K|x-y|$とlocally bddくらいか)
$\mu_t=\mu(t,X_t,\sigma_{\ast})$は$P_{\sigma,v}$のdefで必要だから早めに書くか.
}
\end{discuss}

\begin{theorem}\label{LAN-theorem}
Assume $[A1'']$, $[A2]$ and $[A3]$. Then the family of distributions $\{P_{\sigma_{\ast},v_{\ast},n}\}_{\sigma_{\ast},v_{\ast},n}$
has the LAN property with $\Gamma$ in (\ref{Gamma-def}) and $\delta_n={\rm diag}(b_n^{-1/4}\mathcal{E}_d,b_n^{-1/2}\mathcal{E}_2)$.
\end{theorem}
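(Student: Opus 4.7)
The plan is to exploit the Gaussian structure given by $[A1'']$: since $b(\sigma)$ does not depend on $(t,x)$ and $\mu\equiv 0$, the latent process is $Y_t=\gamma+b(\sigma_{\ast})W_t$, so the observation vector $Z_n=(\tilde{Y}^k_i)_{k,i}$ is jointly Gaussian conditionally on the sampling scheme $\Pi_n$, with mean $\gamma$ and conditional covariance $\Sigma_n(\sigma,v)$ depending smoothly on the parameter $\theta=(\sigma,v)$. The likelihood ratio is then exactly
\begin{align*}
\Lambda_n(u)=\log\frac{dP_{\theta_{\ast}+\delta_n u,n}}{dP_{\theta_{\ast},n}}=-\tfrac{1}{2}(Z_n-\gamma)^{\top}\bigl(\Sigma_n(\theta_{\ast}+\delta_n u)^{-1}-\Sigma_n(\theta_{\ast})^{-1}\bigr)(Z_n-\gamma)-\tfrac{1}{2}\log\tfrac{\det\Sigma_n(\theta_{\ast}+\delta_n u)}{\det\Sigma_n(\theta_{\ast})}.
\end{align*}
Taylor-expanding $\Lambda_n(u)$ in $u$ around $0$, one obtains $\Lambda_n(u)=u^{\top}\xi_n-\tfrac{1}{2}u^{\top}\Gamma_n u+R_n(u)$, with score $\xi_n=\delta_n\nabla_{\theta}\log p_n(\theta_{\ast})$, observed information $\Gamma_n=-\delta_n\nabla_{\theta}^2\log p_n(\theta_{\ast})\delta_n$, and a cubic remainder $R_n(u)$.

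Then I would verify three ingredients: (i) the remainder $R_n(u)$ tends to $0$ in $P_{\theta_{\ast},n}$-probability for each fixed $u$; (ii) $\Gamma_n\to^p\Gamma=\mathrm{diag}(\Gamma_1,\Gamma_2)$; (iii) $\xi_n$ converges stably to $\sqrt{\Gamma}\mathcal{N}$ with $\mathcal{N}$ standard Gaussian and independent of $\mathcal{F}$. For (iii), the score decomposes as $\xi_{n,i}=\tfrac{1}{2}(Z_n-\gamma)^{\top}A_{i,n}(Z_n-\gamma)-\tfrac{1}{2}\mathrm{tr}(\Sigma_n^{-1}\partial_{\theta_i}\Sigma_n)$ with $A_{i,n}=\Sigma_n^{-1}(\partial_{\theta_i}\Sigma_n)\Sigma_n^{-1}$, which is a centered quadratic form in Gaussian variables; I would establish stable convergence by approximating $\xi_n$ by a sum of block-wise conditionally centered contributions and applying a martingale CLT. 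For (ii), the Fisher-information block for $\sigma$ is $\tfrac{1}{2}\mathrm{tr}(\Sigma_n^{-1}\partial_{\sigma}\Sigma_n\Sigma_n^{-1}\partial_{\sigma}\Sigma_n)$, and I would show that after normalization by $b_n^{-1/2}$ it has the same limit as the block-diagonal quantity $\tfrac{1}{2}\sum_m\mathrm{tr}(S_m^{-1}\partial_{\sigma}S_m S_m^{-1}\partial_{\sigma}S_m)$ already identified in the proof of Proposition~\ref{Hn-lim}; the $v$-block is analogous with normalization $b_n^{-1}$ and limit $\Gamma_2=-\partial_v^2\mathcal{Y}_2(v_{\ast})$. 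For (i), I would use the uniform eigenvalue bounds on $\Sigma_n$ and $\Sigma_n^{-1}$ implicit in Section~\ref{tildeH-section} together with the smoothness of $\sigma\mapsto b(\sigma)$ coming from point 1 of $[A1]$.

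The main obstacle is passing from the block-diagonal covariance $\mathrm{diag}(S_m)_m$ underlying $H_n$ to the true covariance $\Sigma_n$ in the information and score computations. These differ only through the cross-block noise correlations: adjacent blocks share the single boundary observation $\tilde{Y}^k_{K^k_m}$ whose noise $\epsilon^{n,k}_{K^k_m}$ couples the adjacent increments $\tilde{Y}^k(I^k_{k^k_m,m})$ and $\tilde{Y}^k(I^k_{1,m+1})$, so one can write $\Sigma_n=\mathrm{diag}(S_m)_m+E_n$ where $E_n$ is a sparse perturbation of rank $O(\ell_n)$ localized at the block boundaries. I would control the effect of $E_n$ via a Sherman--Morrison--Woodbury expansion of $\Sigma_n^{-1}$ around $\mathrm{diag}(S_m^{-1})$ and show that the corresponding corrections to the relevant traces are strictly of lower order than the leading contributions (which are of order $b_n^{1/2}$ for $\sigma$ and $b_n$ for $v$), so that the block-diagonal analysis suffices asymptotically. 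Assembling (i)--(iii) with the resulting deterministic limit $\Gamma$ yields $\Lambda_n(u)=u^{\top}\sqrt{\Gamma_n}\mathcal{N}_n-\tfrac{1}{2}u^{\top}\Gamma_n u+o_p(1)$ with $(\mathcal{N}_n,\Gamma_n)\to(\mathcal{N},\Gamma)$, which is exactly the LAN conclusion at the claimed rate $\delta_n=\mathrm{diag}(b_n^{-1/4}\mathcal{E}_d,b_n^{-1/2}\mathcal{E}_2)$.
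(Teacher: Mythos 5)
Your high-level framework -- exploit the exact Gaussian log-likelihood under $[A1'']$, Taylor-expand, and verify the three standard LAN ingredients -- is sound. However, your proposed route diverges substantially from the paper's, and it rests on a structural misidentification that would cause the perturbation step to fail as written.

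First, the structure is not what you describe. In $Z_m$ the increments $\tilde{Y}^k(I^k_{i,m})$ use noise indices $K^k_{m-1}+1,\dots,K^k_m$, while $Z_{m+1}$ uses indices $K^k_m+1,\dots,K^k_{m+1}$. These index ranges are disjoint: adjacent blocks do not share the observation $\tilde{Y}^k_{K^k_m}$ nor its noise. What is actually missing from $\{Z_m\}_m$ are the boundary increments $\tilde{Y}^k_{K^k_m+1}-\tilde{Y}^k_{K^k_m}$, which do not appear in any $Z_m$. Consequently the blocks $Z_m$ are conditionally \emph{independent} under $[A1'']$, so $\mathrm{diag}(S_m)_m$ is the exact covariance of this sub-sample rather than a perturbation of the full covariance; the relation $\Sigma_n = \mathrm{diag}(S_m)_m + E_n$ has no meaning because the two matrices are not even of the same size. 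The true obstacle is that the original experiment observes strictly more data (those dropped boundary increments), so its covariance $\Sigma_n$ is a larger block-tridiagonal object whose off-diagonal coupling entries are of order one (noise variances), not small. Showing that a Sherman--Morrison--Woodbury correction around an enlarged block-diagonal is negligible at the precision $o(b_n^{1/2})$ (for $\sigma$) and $o(b_n)$ (for $v$) is far from immediate, and you do not work it out.

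The paper avoids exactly this issue. It explicitly remarks after (\ref{log-likelihood-eq2}) that the full likelihood cannot be handled by the trace asymptotics of Section~\ref{Hn-limit-section}, because those lemmas require $\ell_n\to\infty$ whereas the full covariance corresponds to a single big block. The workaround, borrowed from Gloter and Jacod~\cite{glo-jac01a}, is the sandwich argument: one defines a subexperiment $\mathcal{H}^{n,0}$ (drop the boundary increments) and a superexperiment $\mathcal{H}^{n,2}$ (additionally observe the noiseless latent values $Y^j_{{\bf R}_m}$, which conditionally decouple the blocks), proves LAN with the same limit $\Gamma$ for both via the block-diagonal machinery (Lemmas~\ref{LAN-lemma} and~\ref{LAN-lemma2}), and then invokes the likelihood-ratio sandwich Theorem~\ref{glo-jac-thm} to conclude LAN for the original experiment $\mathcal{H}^{n,1}$. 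This sidesteps any direct control of $\Sigma_n^{-1}$. If you wish to pursue a direct perturbative route instead, you would need at minimum to (a) set up the correct block-tridiagonal decomposition including the boundary increments as their own mini-blocks, (b) obtain uniform norm bounds for $\Sigma_n^{-1}$ over the parameter set, and (c) prove that the trace and quadratic-form corrections from the cross-block entries are $o(b_n^{1/2})$ and $o(b_n)$ respectively, none of which is supplied by the results already in the paper.
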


\begin{remark}
Jeganathan~\cite{jeg82} studied lower bounds of estimation errors for any estimator of parameters.
They showed a version of H\'ajek's convolution theorem and that the optimal asymptotic variance of errors {\colord for regular estimators} is $\Gamma^{-1}$, where $\Gamma$ is in Definition~\ref{LAMN-def}.
Therefore, Theorems~\ref{main} and~\ref{LAN-theorem} ensures that our estimator {\colorg $\hat{\sigma}_n$} of the parameter $\sigma$ is asymptotically efficient {\colord in this sense}
under the assumptions of both theorems.
\end{remark}

\begin{remark}
The assumptions of Theorem~\ref{LAN-theorem} are rather strong conditions. 
We are also interested in the LAMN property in more general settings.
In particular, we are interested in the case that $\mu_t=\mu(t,X_t)$ 
and $\mu$ and $b$ are general functions with suitable conditions.
However, we need further analysis using Malliavin calculus to deal with the LAMN property of general diffusion processes,
as seen in Gobet~\cite{gob01} and Ogihara~\cite{ogi14}.
To the best of author's knowledge, such a result has not been obtained even for models with noisy, synchronous observations.
We have left this for future works.
\end{remark}

\begin{discuss}
{\colorr 
$\Gamma_n,\mathcal{N}_n$は$\mu$によらず, 確率収束は絶対連続なmeasureの交換で保たれる.
また, proof of Theorem \ref{main}の赤字の議論からstable convergenceも保たれるのでよい.

もしGirsanovでドリフトを消すときはLangevinの時と似たような問題が出る.
$\mu_t$がdeterministicで$x$によらないなら
\begin{equation*}
\mu(\sigma)=\left(
\begin{array}{l}
(\int_{I^1_i}\mu^1_t(\sigma)dt)_i \\
(\int_{I^2_j}\mu^2_t(\sigma)dt)_j
\end{array}
\right), \quad 
((\Delta Y^1_i)_i,(\Delta Y^2_j)_j)\sim N(\mu(\sigma_{\ast}),S_m)
\end{equation*}
対数尤度比は
\begin{eqnarray}
&&-\frac{1}{2}\{(Z-\mu(\sigma))^{\top}S(\sigma,v)^{-1}(Z-\mu(\sigma))-(Z-\mu_{\ast})^{\top}S_{\ast}^{-1}(Z-\mu_{\ast})\}-\frac{1}{2}\log\frac{\det S}{\det S_{\ast}} \nonumber \\
&=&-\frac{1}{2}\{(Z-\mu(\sigma))^{\top}(S(\sigma,v)-S_{\ast})^{-1}(Z-\mu(\sigma))+(2Z-\mu(\sigma)-\mu_{\ast})S_{\ast}^{-1}(\mu(\sigma)-\mu_{\ast})\}-\frac{1}{2}\log\frac{\det S}{\det S_{\ast}}. \nonumber 
\end{eqnarray}
$\mu_t\equiv 0$の時との差は
\begin{eqnarray}
&&-\frac{1}{2}\{\mu(\sigma)^{\top}(S(\sigma,v)-S_{\ast})^{-1}(-2Z+\mu(\sigma))+(2Z-\mu(\sigma)-\mu_{\ast})S_{\ast}^{-1}(\mu(\sigma)-\mu_{\ast})\} 
\end{eqnarray}
確率変数は$Z$だけなので, マルチんげーるパートとそれ以外の項に分けて評価すればできそうだが,
評価式が複雑になりそうだし, 一般のドリフトならOUが入るがそうでないのであまり得られるものがかわらないのでやめておくか.
Remarkで$\mu_t=\mu(t,\sigma)$ならできると書いてもよいが.

}
\end{discuss}

\subsection{A Bayes-type estimator and covergence of moments of estimation errors}\label{PLD-section}

Polynomial-type large deviation theory by Yoshida~\cite{yos06,yos11} enables us to address the asymptotic properties of a Bayes-type estimator
and the convergence of moments of estimation errors, which is a stronger result than asymptotic mixed normality.
Convergence of moments is useful when we investigate the theory of information criteria, minimax inequality and asymptotic expansion of estimators.
See Uchida~\cite{uch10} for a theory of contrast-based information criteria for ergodic diffusion processes with equidistant observations.
{\colord We also see asymptotic efficiency of our estimator in the sense of minimax inequality.}

We first assume following stronger conditions than $[A1]$--$[A3]$ and $[V]$.

\begin{description}
\item{[$B1$]}
\begin{enumerate}
\item $[A1]$ holds true with $O=\mathbb{R}^{d_2}$. 
\item There exists a positive constant $C$ such that
$\sup_{t\in [0,T],\sigma\in\Lambda}|\partial_t^i\partial_x^j\partial_{\sigma}^kb(t,x,\sigma)|\leq C(1+|x|)^C$
for $0\leq 2i+j\leq 4$, $0\leq k\leq 4$ and $x\in\mathbb{R}^{d_2}$.
\item $\inf_{t,x,\sigma}\det bb^{\top}(t,x,\sigma)>0$.
\item $E[|Y_0|^q]<\infty$ for any $q>0$.
\item $\sup_tE[|\mu_t|^q]<\infty$, $\sup_{s<t}(E[|\mu_t-\mu_s|^q]^{1/q}(t-s)^{-1/2})<\infty$ and $\sup_{s<t}E[(E[\mu_t-\mu_s|\mathcal{G}_s]/(t-s))^q]<\infty$.
\item For any $q>0$, $\max_j\sup_tE[|b^{(j)}_t|^q\vee |\hat{b}^{(j)}_t|^q]<\infty$ and $\max_j\sup_{s<t}(E[|b^{(j)}_t-b^{(j)}_s|^q \vee |\hat{b}^{(j)}_t-\hat{b}^{(j)}_s|^q]^{1/q}(t-s)^{1/2})<\infty$
for any $q>0$.

\end{enumerate}
\item{[$B2$]} There exist $\eta\in(0,1/2)$, {\colord $\dot{\eta}\in (0,1]$}, $\delta>0$ and positive-valued functions $\{a^j_t\}_{t\in [0,T],j=1,2}$ such that 
{\colord $b_n^{-1/2}k_n(b_n^{-1}k_n)^{\dot{\eta}}\to 0$ as $n\to\infty$, $E[\sup_{j,t>s}(|a^j_t-a^j_s|^q|t-s|^{-q\dot{\eta}})]<\infty$}, $E[\sup_{j,t}|a^j_t|^q] \vee E[\sup_{j,t}(|a^j_t|^{-q})]<\infty$, and
\begin{equation*}
\sup_n\sup_{\{[s'_{n,l},s''_{n,l})\}\in \mathcal{S}_{\eta}}E\bigg[\bigg(k_nb_n^{-1/2+\delta}\max_{1\leq l\leq L_n}\bigg|b_n^{-1}(s''_{n,l}-s'_{n,l})^{-1}\#\{i;[S^{n,j}_{i-1},S^{n,j}_i)\subset (s'_{n,l},s''_{n,l})\}-a^j_{s'_{n,l}}\bigg|\bigg)^q\bigg]<\infty
\end{equation*}
for any $q>0$. Moreover, there exists a positive constant $\gamma$ such that $k_nb_n^{-4/7+\gamma}\to 0$ and 
$E[((r_nb_n^{1-\epsilon}) \vee (\underbar{r}_n^{-1}b_n^{-1-\epsilon}))^q]\to 0$ as $n\to\infty$ for any $q>0$ and $\epsilon>0$.
\item{[$B3$]} For any $q>0$, there exists a positive constant $c_q$ such that $P[\inf_{\sigma\neq \sigma_{\ast}}((-\mathcal{Y}_0(\sigma))/|\sigma-\sigma_{\ast}|^2)\leq r^{-1}]\leq c_q/r^q$ for any $r>0$.
\item{[$B4$]} There exist estimators $\{\hat{v}_n\}_{n\in\mathbb{N}}$ of $v_{\ast}$ such that $\hat{v}_n> 0$ almost surely, {\colord $\limsup_nE[\hat{v}_n^{-q}]<\infty$,} and $\sup_nE[|b_n^{1/2}(\hat{v}_n-v_{\ast})|^q]<\infty$ for any $q>0$.
\end{description}
\begin{discuss}
{\colorr 例の$\hat{v}_n$では$\epsilon$で下からカットオフすればよい}
\end{discuss}
Though Condition $[B3]$ is rather difficult to check in a practical setting, Uchida and Yoshida~\cite{uch-yos13} investigated sufficient conditions for $[B3]$. 
The simplest condition is that $[B3]$ is satisfied if there exists $\epsilon>0$ such that
$|bb^{\top}(t,x,\sigma_1)-bb^{\top}(t,x,\sigma_2)|\geq \epsilon|\sigma_1-\sigma_2|$ for any $t\in [0,T]$, $x\in O$ and $\sigma_1,\sigma_2\in\Lambda$.
See Remark 4 in~\cite{ogi-yos14} for details.

Let $U_n=\{u\in \mathbb{R}^d;\sigma_{\ast}+b_n^{-1/4}u\in\Lambda\}$, $V_n(r)=\{|u|\geq r\}\cap U_n$, 
and ${\bf Z}_n(u)=\exp(H_n(\sigma_{\ast}+b_n^{-1/4}u,\hat{v}_n)-H_n(\sigma_{\ast},\hat{v}_n))$ for $u\in U_n$.
\begin{proposition}[Polynomial-type large deviation inequalities]\label{PLD-prop}
Assume $[B1]$--$[B4]$. Then for any $L>0$, there exists a positive constant $c_L$ such that
$P[\sup_{u\in V_n(r)}{\bf Z}_n(u)\geq e^{-r/2}]\leq c_L/r^L$
for any $n\in\mathbb{N}$ and $r>0$.
\end{proposition}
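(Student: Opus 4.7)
The plan is to apply the abstract polynomial-type large deviation framework of Yoshida~\cite{yos11}, which reduces the claim to verifying three groups of estimates: a polynomial-type separation (identifiability) bound for the limit function $\mathcal{Y}_1$, uniform $L^q$-convergence of $\mathbb{Y}_n(\sigma) := b_n^{-1/2}(H_n(\sigma,\hat{v}_n) - H_n(\sigma_{\ast},\hat{v}_n))$ together with its low-order $\sigma$-derivatives to $\mathcal{Y}_1$, and moment bounds on the score and Hessian of $H_n$ at $\sigma_{\ast}$.

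First I would upgrade the qualitative identifiability in $[B3]$ (which is stated for $\mathcal{Y}_0$) to a polynomial-type identifiability for $\mathcal{Y}_1$. Proposition~\ref{separability-prop} shows that $[A3]$ suffices for $\inf_{\sigma\neq\sigma_{\ast}}(-\mathcal{Y}_1(\sigma))/|\sigma-\sigma_{\ast}|^2 > 0$ almost surely; by tracking the constants in its proof, using the pathwise bounds on $b$, $a^j$, $v_{\ast}$ available from $[B1]$ and $[B2]$, I expect a pointwise comparison of the form
$$ \inf_{\sigma\neq \sigma_{\ast}}\frac{-\mathcal{Y}_1(\sigma)}{|\sigma-\sigma_{\ast}|^2} \;\geq\; C(\omega)\inf_{\sigma\neq \sigma_{\ast}}\frac{-\mathcal{Y}_0(\sigma)}{|\sigma-\sigma_{\ast}|^2}, $$
where $C(\omega)$ has negative moments of every order. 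Combined with $[B3]$, this gives $P[\inf_{\sigma}(-\mathcal{Y}_1(\sigma))/|\sigma-\sigma_{\ast}|^2 \leq r^{-1}] \leq c_q r^{-q}$ for every $q>0$.

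Next I would strengthen Proposition~\ref{Hn-lim} to an $L^q$-estimate with a power-rate decay,
$$ E\Bigl[\sup_{\sigma\in\Lambda}\bigl|\partial_\sigma^k(\mathbb{Y}_n(\sigma)-\mathcal{Y}_1(\sigma))\bigr|^q\Bigr] \;\leq\; c_q b_n^{-q\kappa}, \qquad 0\leq k\leq 2, $$
for some $\kappa>0$. Pointwise $L^q$-bounds follow by revisiting the arguments of Sections~\ref{tildeH-section}--\ref{Hn-limit-section}, replacing in-probability statements by their moment counterparts using the polynomial growth in $[B1]$, the moment version of the sampling law of large numbers in $[B2]$, and the $L^q$-control of $\hat{v}_n-v_{\ast}$ in $[B4]$; Sobolev's inequality on $\Lambda$ then converts the pointwise bound to a uniform one, provided one also has an $L^q$-bound on $\partial_\sigma^3 \mathbb{Y}_n$, which is produced by the same machinery. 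In parallel I would establish $\sup_n E[|b_n^{-1/4}\partial_\sigma H_n(\sigma_{\ast},\hat{v}_n)|^q]<\infty$ and $\sup_n E[|b_n^{-1/2}\partial_\sigma^2 H_n(\sigma_{\ast},\hat{v}_n)+\Gamma_1|^q]<\infty$ by a martingale decomposition of the score analogous to that in the proof of Theorem~\ref{main}, using $[B4]$ to absorb the contribution from $\hat{v}_n - v_{\ast} = O_{L^q}(b_n^{-1/2})$.

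With these ingredients in hand, Yoshida's criterion applies: one splits $V_n(r)$ at $|u|=r^{\beta}$ for a suitable $\beta\in(0,1)$. On the large-$|u|$ region, combining the uniform $L^q$-approximation $\mathbb{Y}_n\approx \mathcal{Y}_1$ with the polynomial-type separation from Step~1 yields $\log {\bf Z}_n(u) \leq -c|u|^2$ outside an event of probability $O(r^{-L})$. On the intermediate region $r \leq |u| \leq r^\beta$, one Taylor expands $H_n$ at $\sigma_{\ast}$, absorbs the cubic remainder using the derivative moment bound, and uses positive-definiteness of $\Gamma_1$ together with the Gaussian-type tail of the score to control $\log {\bf Z}_n(u)$. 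The main obstacle will be the uniform-in-$\sigma$ $L^q$-approximation in Step~2: the quasi-likelihood involves inverses of the random block matrices $S_m(\sigma,\hat{v}_n)$ whose sizes depend on the sampling scheme, so one must propagate the pointwise trace computations of Sections~\ref{tildeH-section}--\ref{Hn-limit-section} through Hölder's inequality with careful book-keeping of $k_n$, $b_n$, $r_n$ and $\underbar{r}_n$ under the strengthened tail conditions in $[B2]$, and check that the exponent $\kappa>0$ survives the choice $k_n \asymp b_n^{4/7-\gamma}$.
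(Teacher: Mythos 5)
Your proposal follows essentially the same route as the paper: the proof is a direct application of Theorem 2 in Yoshida~\cite{yos11}, whose hypotheses are exactly your three groups of estimates — the polynomial tail bound for $\inf_{\sigma\neq\sigma_\ast}(-\mathcal{Y}_1(\sigma))/|\sigma-\sigma_\ast|^2$ obtained by combining the explicit comparison $-\mathcal{Y}_1\geq\chi\mathcal{R}(-\mathcal{Y}_0)$ of Proposition~\ref{separability-prop} with $[B3]$, the uniform-in-$\sigma$ $L^q$ bounds with a polynomial rate coming from the $\underbar{R}_n$-versions of Lemmas~\ref{Hn-diff-lemma} and~\ref{Hn-lim-lemma2} under $[B2]$ and $[B4]$ plus Sobolev's inequality, and the score moment bound via the Burkholder--Davis--Gundy inequality. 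The only cosmetic difference is that you sketch the internal mechanism of Yoshida's criterion (the splitting of $V_n(r)$), which the paper simply cites.
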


Since ${\bf Z}_n(0)=1$, Proposition~\ref{PLD-prop} immediately yields
\begin{equation}\label{PLD-ineq}
E[|b_n^{1/4}(\hat{\sigma}_n-\sigma_{\ast})|^p]=\int^{\infty}_0pt^{p-1}P[|b_n^{1/4}(\hat{\sigma}_n-\sigma_{\ast})|\geq t]dt
\leq \int^{\infty}_0pt^{p-1}P[\sup_{u\in V_n(t)}{\bf Z}_n(u)\geq e^{-t/2}]dt<\infty
\end{equation}
for any $p>0$. Moreover, we obtain the following convergence of moments of the estimation error.

\begin{theorem}\label{moment-conv-thm}
Assume $[B1]$--$[B4]$. Then $E[{\bf Y}f(b_n^{1/4}(\hat{\sigma}_n-\sigma_{\ast}))]\to E[{\bf Y}f(\Gamma_1^{-1/2}\mathcal{N})]$
as $n\to\infty$ for any bounded random variable ${\bf Y}$ on $(\Omega, \mathcal{F})$ and any continuous function $f$ of at most polynomial growth.
\end{theorem}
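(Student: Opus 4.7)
The plan is to derive Theorem~\ref{moment-conv-thm} from Theorem~\ref{main} by combining the stable convergence already established there with the uniform $L^p$-bound coming from Proposition~\ref{PLD-prop}. The basic template is: stable convergence plus uniform integrability implies convergence of expectations against polynomially growing continuous test functions.

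First I would check that $[B1]$--$[B4]$ do imply the hypotheses $[A1]$--$[A3]$ and $[V]$ of Theorem~\ref{main}; this is a quick matter of inspection (for instance $[B3]$ gives $[A3]$ almost surely by letting $r\to\infty$, $[B4]$ clearly implies $[V]$, and $[B1]$--$[B2]$ contain $[A1]$--$[A2]$). Consequently $b_n^{1/4}(\hat{\sigma}_n-\sigma_{\ast})\to^{s\mathchar`-\mathcal{L}}\Gamma_1^{-1/2}\mathcal{N}$, and by definition of stable convergence this convergence is joint with every bounded $\mathcal{F}$-measurable $\mathbf{Y}$. Therefore, for any bounded continuous $g$ on $\mathbb{R}^d$,
\begin{equation*}
E\bigl[\mathbf{Y}\,g\bigl(b_n^{1/4}(\hat{\sigma}_n-\sigma_{\ast})\bigr)\bigr]\longrightarrow E\bigl[\mathbf{Y}\,g(\Gamma_1^{-1/2}\mathcal{N})\bigr].
\end{equation*}

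Next I would upgrade the test function from bounded continuous to continuous of polynomial growth by a standard truncation argument. Suppose $|f(x)|\leq C(1+|x|^K)$ for some $K>0$. Fix $M>0$ and choose a bounded continuous cutoff $\chi_M$ with $\chi_M(x)=1$ for $|x|\leq M$, $\chi_M(x)=0$ for $|x|\geq M+1$, and $0\leq \chi_M\leq 1$. Then $f\chi_M$ is bounded continuous, so stable convergence gives $E[\mathbf{Y}(f\chi_M)(b_n^{1/4}(\hat{\sigma}_n-\sigma_{\ast}))]\to E[\mathbf{Y}(f\chi_M)(\Gamma_1^{-1/2}\mathcal{N})]$. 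It remains to control the tail $f(1-\chi_M)$ uniformly in $n$. Using $|f(x)(1-\chi_M(x))|\leq C(1+|x|^K)\mathbf{1}_{\{|x|\geq M\}}$ and H\"older's inequality with some exponent $q>1$,
\begin{equation*}
\Bigl|E\bigl[\mathbf{Y}f(b_n^{1/4}(\hat{\sigma}_n-\sigma_{\ast}))(1-\chi_M(b_n^{1/4}(\hat{\sigma}_n-\sigma_{\ast})))\bigr]\Bigr|\leq \|\mathbf{Y}\|_{\infty}C\,E\bigl[(1+|b_n^{1/4}(\hat{\sigma}_n-\sigma_{\ast})|^K)^q\bigr]^{1/q}P\bigl[|b_n^{1/4}(\hat{\sigma}_n-\sigma_{\ast})|\geq M\bigr]^{1-1/q}.
\end{equation*}
By Proposition~\ref{PLD-prop} and estimate (\ref{PLD-ineq}), $\sup_nE[|b_n^{1/4}(\hat{\sigma}_n-\sigma_{\ast})|^p]<\infty$ for every $p>0$, so the first factor is bounded uniformly in $n$; the second factor is $O(M^{-(1-1/q)L})$ for arbitrarily large $L$ by Markov's inequality applied to the same bound. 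The analogous tail for the Gaussian-mixture limit is handled directly since $\Gamma_1^{-1/2}\mathcal{N}$ has moments of all orders (because $\Gamma_1$ is a.s.\ positive definite by Theorem~\ref{main} and $\mathcal{N}$ is standard normal).

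Putting everything together in the usual three-epsilon fashion, given $\varepsilon>0$ choose $M$ large enough that both tails are below $\varepsilon$ uniformly in $n$, then choose $n$ large enough that $|E[\mathbf{Y}(f\chi_M)(b_n^{1/4}(\hat{\sigma}_n-\sigma_{\ast}))]-E[\mathbf{Y}(f\chi_M)(\Gamma_1^{-1/2}\mathcal{N})]|<\varepsilon$; this yields the claimed convergence. I do not expect any genuine obstacle: the whole point of having proved Proposition~\ref{PLD-prop} is to give exactly the uniform-in-$n$ moment control needed here, and Theorem~\ref{main} provides the stable limit. The only minor care is to verify that $\Gamma_1$ being a.s.\ positive definite suffices to ensure $E[|\Gamma_1^{-1/2}\mathcal{N}|^p]<\infty$ for the limit side; this follows from independence of $\mathcal{N}$ and $\Gamma_1$ together with almost sure finiteness of $\Gamma_1^{-1}$, combined with dominated convergence applied to the same cutoff argument on the limit side.
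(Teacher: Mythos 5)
Your argument is correct and reaches the conclusion, but it takes a different route from the paper's. The paper does not re-derive stable convergence of the estimator: it invokes Theorem~5 and Remark~5 of Yoshida (2011), which reduce the claim to (i) the uniform $L^p$-bound $\limsup_n E[|b_n^{1/4}(\hat{\sigma}_n-\sigma_{\ast})|^p]<\infty$ (supplied by (\ref{PLD-ineq})) and (ii) stable convergence in $C(B(R))$ of the quasi-likelihood random field ${\bf Z}_n$, which is established via Lemma~\ref{Hn-diff-lemma}, Propositions~\ref{Hn-lim} and~\ref{st-conv}, and a tightness criterion. You instead quote the already-proved stable convergence $b_n^{1/4}(\hat{\sigma}_n-\sigma_{\ast})\to^{s\mathchar`-\mathcal{L}}\Gamma_1^{-1/2}\mathcal{N}$ from Theorem~\ref{main} (after checking $[B1]$--$[B4]\Rightarrow[A1]$--$[A3],[V]$, which is correct) and upgrade the test-function class from bounded continuous to polynomially growing via truncation and the same moment bound. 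That is a perfectly legitimate and more self-contained route for $\hat\sigma_n$, but note it would not carry over to the Bayes-type estimator: there is no analogue of Theorem~\ref{main} stating stable convergence of $\tilde\sigma_n$, and the paper's uniform treatment via Yoshida's random-field framework is precisely what makes Theorems~\ref{moment-conv-thm} and~\ref{bayes-thm} go through with essentially one proof.

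One small inaccuracy to fix in your closing remark: almost sure positive definiteness of $\Gamma_1$ does \emph{not} by itself imply $E[|\Gamma_1^{-1/2}\mathcal{N}|^p]<\infty$ (e.g.\ if $d=1$ and $\Gamma_1$ were the square of a standard normal, the inverse square root would have infinite first moment). The finiteness of the moments of the limit follows instead from convergence in distribution together with $\sup_n E[|b_n^{1/4}(\hat{\sigma}_n-\sigma_{\ast})|^p]<\infty$, by Fatou's lemma (lower semicontinuity of moments under weak convergence). With that correction, the three-$\varepsilon$ argument is airtight.
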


In particular, we obtain convergence of moments where $E[|b_n^{1/4}(\hat{\sigma}_n-\sigma_{\ast})|^q]\to E[|\Gamma_1^{-1/2}\mathcal{N}|^q]$ for any $q>0$.
This property is used when we study the theory of information criteria and asymptotic expansion of estimators.

We also obtain results for a Bayes type estimator.
Let a prior density $\pi:\Lambda \to (0,\infty)$ be a continuous function satisfying $0<\inf_{\sigma}\pi(\sigma)\leq \sup_{\sigma}\pi(\sigma)<\infty$.
Then a Bayes-type estimator $\tilde{\sigma}_n$ for the quadratic loss function is defined by
\begin{equation*}
\tilde{\sigma}_n=\bigg(\int_{\Lambda}\exp(H_n(\sigma))\pi(\sigma)d\sigma\bigg)^{-1}\int_{\Lambda}\sigma\exp(H_n(\sigma))\pi(\sigma)d\sigma.
\end{equation*}
Since the Bayes-type estimator $\tilde{\sigma}_n$ contains integrals with respect to $\sigma$, we need to deal with tail behaviors of likelihood ratio $H_n(\sigma)-H_n(\sigma_{\ast})$. 
{\colorg Hence} Proposition~\ref{PLD-prop} is essential to deduce asymptotic properties of a Bayes-type estimator.
Since the Bayes-type estimator can be calculated using Markov-Chain Monte Carlo methods, 
it is often easier to calculate than the maximum-likelihood-type estimator.
For the Bayes-type estimator $\tilde{\sigma}_n$, we obtain similar results to the ones for the maximum-likelihood-type estimator.

\begin{theorem}\label{bayes-thm}
Assume $[B1]$--$[B4]$. Then $E[{\bf Y}f(b_n^{1/4}(\tilde{\sigma}_n-\sigma_{\ast}))]\to E[{\bf Y}f(\Gamma_1^{-1/2}\mathcal{N})]$
as $n\to\infty$ for any bounded random variable ${\bf Y}$ on $(\Omega, \mathcal{F})$ and any continuous function $f$ of at most polynomial growth.
\end{theorem}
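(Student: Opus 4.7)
The plan is to follow Yoshida's polynomial-type large deviation (PLD) framework \cite{yos06,yos11}. Once the PLD inequality (Proposition~\ref{PLD-prop}) and the stable convergence of the score $\Delta_n := b_n^{-1/4}\partial_{\sigma}H_n(\sigma_{\ast},\hat{v}_n)$ and the observed information $\hat{\Gamma}_{1,n}$ are in hand from the proof of Theorem~\ref{main}, this framework delivers moment convergence for both the maximum-likelihood-type and the Bayes-type estimator in a unified way. After the change of variables $u=b_n^{1/4}(\sigma-\sigma_{\ast})$, I would write
\[
b_n^{1/4}(\tilde{\sigma}_n-\sigma_{\ast})=\frac{\int_{U_n}u\,{\bf Z}_n(u)\,\pi(\sigma_{\ast}+b_n^{-1/4}u)\,du}{\int_{U_n}{\bf Z}_n(u)\,\pi(\sigma_{\ast}+b_n^{-1/4}u)\,du}.
\]

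First I would establish uniform $L^q$-boundedness of $b_n^{1/4}(\tilde{\sigma}_n-\sigma_{\ast})$ for every $q>0$. The denominator is bounded below with arbitrarily high probability by restricting integration to a small Euclidean ball on which ${\bf Z}_n\geq e^{-1}$ (using the quadratic expansion from the next step together with the assumed bound $0<\inf_{\sigma}\pi\leq \sup_{\sigma}\pi<\infty$), while $E[|b_n^{1/4}(\tilde{\sigma}_n-\sigma_{\ast})|^p]$ is controlled via a tail integral of $P[\sup_{u\in V_n(t)}{\bf Z}_n(u)\geq e^{-t/2}]$, exactly as in (\ref{PLD-ineq}).

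Next, for convergence in distribution I would split the integration at $|u|=R$. On $\{|u|\leq R\}$ I would use the Taylor expansion $\log{\bf Z}_n(u)=u^{\top}\Delta_n-\frac{1}{2}u^{\top}\hat{\Gamma}_{1,n}u+R_n(u)$, where $R_n(u)=\frac{1}{6}b_n^{-3/4}\partial_{\sigma}^3H_n(\sigma^{\dagger},\hat{v}_n)[u,u,u]$ for some $\sigma^{\dagger}$ on the segment from $\sigma_{\ast}$ to $\sigma_{\ast}+b_n^{-1/4}u$; the third-derivative estimates built from Proposition~\ref{Hn-lim} together with Sobolev's inequality furnish $\sup_{|u|\leq R}|R_n(u)|\to^p 0$. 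Since $(\Delta_n,\hat{\Gamma}_{1,n})\to^{s\mathchar`-\mathcal{L}}(\Gamma_1^{1/2}\mathcal{N},\Gamma_1)$ by the proof of Theorem~\ref{main}, and Laplace-type integrals against the approximately Gaussian kernel $\exp(u^{\top}\Delta_n-\frac{1}{2}u^{\top}\hat{\Gamma}_{1,n}u)$ are continuous functionals of $(\Delta_n,\hat{\Gamma}_{1,n})$, the truncated ratio converges to $\hat{\Gamma}_{1,n}^{-1}\Delta_n\to^{s\mathchar`-\mathcal{L}}\Gamma_1^{-1/2}\mathcal{N}$, while the $\{|u|>R\}$ contributions to both integrals can be made uniformly small in $n$ as $R\to\infty$ by Proposition~\ref{PLD-prop} together with the upper bound on $\pi$.

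Finally, uniform integrability from the first step combined with stable convergence from the second step yields the asserted moment convergence $E[{\bf Y}f(b_n^{1/4}(\tilde{\sigma}_n-\sigma_{\ast}))]\to E[{\bf Y}f(\Gamma_1^{-1/2}\mathcal{N})]$ for any bounded $\mathcal{F}$-measurable ${\bf Y}$ and continuous $f$ of at most polynomial growth. The main obstacle is making the uniform quadratic expansion rigorous while $\hat{v}_n$ is a random, data-dependent input to $H_n(\cdot,\hat{v}_n)$: one must propagate $|b_n^{1/2}(\hat{v}_n-v_{\ast})|=O_p(1)$ through the Sobolev-type moment controls on $\partial_{\sigma}^k H_n$ that underlie Proposition~\ref{Hn-lim}, uniformly in $\sigma$ over the shrinking neighborhood $\{\sigma_{\ast}+b_n^{-1/4}u:|u|\leq R\}$.
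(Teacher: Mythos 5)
Your plan is correct in spirit and operates within the same Yoshida polynomial-type-large-deviation framework as the paper, but you are essentially re-deriving what the paper delegates to a black-box result. After establishing Proposition~\ref{PLD-prop} and (in the proof of Theorem~\ref{moment-conv-thm}) the convergence $\log{\bf Z}_n\to^{s\mathchar`-\mathcal{L}}\log{\bf Z}$ in $C(B(R))$ together with the uniform moment bound for the maximum-likelihood-type estimator, the paper simply invokes Theorem~10 in \cite{yos11}, which packages your integral-ratio decomposition, the truncation at $|u|=R$, the Laplace-type limit, and the uniform integrability argument into a single statement. The only \emph{new} work the paper then does for the Bayes estimator is to verify the one hypothesis of that theorem not already covered, namely the uniform integrability of the reciprocal of the denominator, $\sup_n E[(\int_{U_n}{\bf Z}_n(u)\pi(\sigma_{\ast}+b_n^{-1/4}u)\,du)^{-1}]<\infty$; this is checked by bounding $E[|H_n(\sigma_{\ast}+b_n^{-1/4}u)-H_n(\sigma_{\ast})|^p]$ near $u=0$ via Proposition~\ref{Hn-lim} and then applying Lemma~2 in \cite{yos11}.

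One genuine imprecision in your sketch is the control of the denominator in your ``first step.'' You write that the denominator is ``bounded below with arbitrarily high probability,'' but a high-probability lower bound is not enough to conclude $L^q$-boundedness of $b_n^{1/4}(\tilde{\sigma}_n-\sigma_{\ast})$: since $\tilde{\sigma}_n$ is a ratio of integrals, you need an actual moment bound on the reciprocal of $\int{\bf Z}_n\pi\,du$, uniformly in $n$, before the PLD tail estimate can be converted into a moment estimate for the Bayes estimator. This is precisely the condition the paper verifies as (\ref{bayes-thm-eq1}), and its proof is where Lemma~2 in \cite{yos11} is needed; a restriction to a ball where ${\bf Z}_n\geq e^{-1}$ does not by itself produce the required $L^1$-bound on the inverse. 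Fixing that and otherwise your argument is a valid, if more labor-intensive, route to the same conclusion.
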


\begin{remark}
If the assumptions of Theorem~\ref{LAN-theorem} are satisfied, asymptotic minimax theorem (Theorem 4 in~\cite{jeg83}) holds for our model, so
\begin{equation*}
\lim_{\alpha\to\infty}\liminf_{n\to\infty}\sup_{|u|\leq \alpha}E_{\sigma_{\ast}+b_n^{-1/4}u}[l(|b_n^{1/4}(V_n-\sigma_{\ast}-b_n^{-1/4}u)|)]\geq E[l(|\Gamma_1\mathcal{N}|)]
\end{equation*}
for any estimators $\{V_n\}_n$ of the parameter and any function $l:[0,\infty)\to [0,\infty)$ which is nondecreasing and $l(0)=0$,
where $E_{\sigma}$ denotes expectation with respect to $P_{\sigma,v_{\ast},n}$. 
Using Theorems~\ref{moment-conv-thm} and~\ref{bayes-thm} and a similar argument in Theorem 2.2 of Ogihara~\cite{ogi14},
we can see that $\hat{\sigma}_n$ and $\tilde{\sigma}_n$ attain the lower bound of the above inequality
{\colord for continuous $l$ of at most polynomial growth, if further $[B2]$ and uniform versions of $[B3]$ and $[B4]$} with respect to the true value $(\sigma_{\ast},v_{\ast})$ {\colord are satisfied}.
{\colord Hence our estimators are asymptotically efficient in this sense as well.}
\end{remark}
\begin{discuss}
{\colorr 
真値fixに対するモーメント収束は上で示されている. $P[\sup_t|Y_t^{(\sigma_u)}-Y_t^{(\sigma_{\ast})}|]\leq Cb_n^{-1/2}|u|$も成り立つ.
最後の$b_n^{1/4}(\partial_{\sigma}H_n(\sigma_u;\sigma_u)-H_n(\sigma_{\ast};\sigma_{\ast}))$の評価も
$\sum_i(M_i(\sigma)-M_{i,\ast})\int_{I_i}\Delta W_tdW_t$の形になるからBurkholderを使えばよさそう.
一様なPLDも$H_n$の極限への収束と分離性のパラメータに関する一様性が証明を見直すと示せるのでOK
}
\end{discuss}

\section{Simulation results}\label{simulation-section}

In this section, we examine some simulation results of our estimator.

First, we consider the case where the latent process $Y$ is a Brownian motion, that is, 
$Y$ satisfies the following stochastic differential equation:
\begin{equation*}
\left\{
\begin{array}{ll}
dY^1_t & = \sigma_{1,\ast} dW_t^1 \\
dY^2_t & = \sigma_{3,\ast} dW_t^1 + \sigma_{2,\ast} dW_t^2, \\
\end{array}
\right.
\end{equation*}
where $\sigma_{\ast}=(\sigma_{1,\ast},\sigma_{2,\ast},\sigma_{3,\ast})\in (\epsilon,R)\times (-R,R)\times (\epsilon,R)$ {\colorg for some $0<\epsilon<R$}.
Moreover, let $\{N^1_t\}_{0\leq t\leq T}$ and $\{N^2_t\}_{0\leq t\leq T}$ be two independent Poisson processes with parameters $\lambda_1$ and $\lambda_2$, respectively.
We give sampling times by $S^{n,j}_i=\inf\{N^j_{nt}\geq j\} \wedge T$ for $j=1,2$.
Let $\{\epsilon^{n,j}_i\}_{i\in\mathbb{Z}_+,j=1,2}$ be independent normal random variables with $E[\epsilon^{n,j}_i]=0$ and $E[(\epsilon^{n,j}_i)^2]=v_{j,\ast}$. 

Then we can see that this example satisfies $[A1'']$, $[A2]$ and $[A3']$. So the maximum-likelihood-type estimator $\hat{\sigma}_n$ is asymptotically mixed normal 
and asymptotically efficient with asymptotic variance $\Gamma_1^{-1}$.
For the estimator $\hat{v}_n$ of $v_{\ast}$ we first use a simple estimator $\hat{v}_n=(2{\bf J}_{k,n})^{-1}\sum_i(\tilde{Y}^k_i-\tilde{Y}^k_{i-1})^2$,
which means that our estimator is calculated by $\hat{\sigma}_n={\rm argmax}_{\sigma}H_n(\sigma,\hat{v}_n)$.
We also consider a plug-in estimator $\hat{v}'_{k,n}=(\hat{v}_{k,n}-|b^k(\hat{\sigma}_n)|^2T/(2{\bf J}_{k,n})) \vee 0$ of $v_{k,\ast}$,
and $\hat{\sigma}'_n={\rm argmax}_{\sigma}H_n(\sigma,\hat{v}'_n)$.
Let $\hat{\sigma}''_n={\rm argmax}_{\sigma}H_n(\sigma,v_{\ast})$. Then $\hat{\sigma}''_n$ cannot be calculated by observed data,
but we can use it for comparison.
{\colorg Though} these estimators have the same asymptotic variance, their performances for finite samples are different.
In particular, we cannot ignore the bias of $\hat{v}_n$ since $v$ is relatively small compared with $\sigma$ in practical data.

\begin{table}
\caption{Simulation results for estimators of parameters}
\label{table1}
\begin{center}
\footnotesize
\begin{tabular}{c|c|c|c|c|c|c|c|c|c|c|c}
 &  & \multicolumn{5}{|c|}{Results with $v_{\ast}=(0.001,0.001)$}  & \multicolumn{5}{|c}{Results with $v_{\ast}=(0.005,0.005)$} \\ \cline{3-12}
$n$ &  & $\sigma_1$ & $\sigma_2$ & $\sigma_3$ & $v_1$ & $v_2$ & $\sigma_1$ & $\sigma_2$ & $\sigma_3$ & $v_1$ & $v_2$ \\ \hline
$1000$ & $(\hat{\sigma}_n,\hat{v}_n)$ & 0.897 & 0.776 & 0.451 & 0.001504 & 0.001500 & 0.957 & 0.818 & 0.481 & 0.005515 & 0.005501 \\
 &  & (0.040) & (0.042) & (0.062) & (0.000079) & (0.000080) & (0.086) & (0.143) & (0.094) & (0.000293) & (0.000296) \\
 & $(\hat{\sigma}'_n,\hat{v}'_n)$ & 0.971 & 0.840 & 0.487 & 0.001100 & 0.001094 & 0.991 & 0.850 & 0.498 & 0.005053 & 0.005035 \\
 &  & (0.046) & (0.047) & (0.067) & (0.000075) & (0.000078) & (0.092) & (0.139) & (0.098) & (0.000298) & (0.000306) \\
 & $\hat{\sigma}''_n$ & 0.999 & 0.863 & 0.501 & - & - & 0.997 & 0.861 & 0.499 & - & - \\
 &  & (0.045) & (0.046) & (0.068) & - & - & (0.069) & (0.070) & (0.096) & - & - \\ \hline
$5000$ & $(\hat{\sigma}_n,\hat{v}_n)$ & 0.964 & 0.833 & 0.481 & 0.001099 & 0.001099 & 0.990 & 0.854 & 0.495 & 0.005095 & 0.005096 \\
 &  & (0.028) & (0.029) & (0.040) & (0.000026) & (0.000026) & (0.044) & (0.044) & (0.061) & (0.000121) & (0.000123) \\
 & $(\hat{\sigma}'_n,\hat{v}'_n)$ & 0.997 & 0.862 & 0.498 & 0.001006 & 0.001006 & 0.999 & 0.862 & 0.499 & 0.004996 & 0.004998 \\
 &  & (0.031) & (0.031) & (0.041) & (0.000027) & (0.000027) & (0.045) & (0.045) & (0.062) & (0.000123) & (0.000125) \\
 & $\hat{\sigma}''_n$ & 0.999 & 0.864 & 0.499 & - & - & 0.998 & 0.862 & 0.499 & - & - \\
 &  & (0.029) & (0.030) & (0.041) & - & - & (0.043) & (0.044) & (0.062) & - & - \\ \hline
 \multicolumn{2}{c|}{true values} & 1 & 0.866 & 0.5 & 0.001 & 0.001 & 1 & 0.866 & 0.5 & 0.005 & 0.005 \\  \hline
\end{tabular} 
\end{center}
\end{table}

Table~\ref{table1} shows results of $1000$ estimations. Each cell represents the average of estimators,
with sample standard deviations given in parentheses. 
We set the values of parameters as {\colorg $k_n=[n^{5/8}]$,}
$T=1$, $(\lambda_1,\lambda_2)=(1,1)$, $(\sigma_{1,\ast},\sigma_{2,\ast},\sigma_{3,\ast})=(1,\sqrt{1-0.5^2},0.5)$,
and {\colord consider two cases of the noise variances : $v_{\ast}=(0.001,0.001)$ for the left-hand side of the table and $v_{\ast}=(0.005,0.005)$ for the right-hand side.
In both cases, we}
can see that $\hat{v}_n$ has an upper bias for $n=1000$, and causes a lower bias of $\hat{\sigma}_n$
because $\hat{v}_n$ contains variance of the latent process, which is always positive.
These biases can be moderated by using {\colorg the} plug-in estimator.
For $n=5000$, the plug-in estimator $\hat{\sigma}'_n$ performs as well as $\hat{\sigma}''_n$.
{\colord In the case of $v_{\ast}=(0.005,0.005)$,} the biases of $\hat{v}_n$ and $\hat{v}'_n$ are relatively small, 
so the performance of $\hat{\sigma}_n$ and $\hat{\sigma}'_n$ are better.

We can also construct an estimator $\hat{\sigma}'_{1,n}\hat{\sigma}'_{3,n}T$ of the quadratic covariation $\langle Y^1,Y^2\rangle_T=\sigma_{1,\ast}\sigma_{3,\ast}T$.
We see that {\colorlg 
\begin{equation}\label{qcv-conv}
n^{1/4}(\hat{\sigma}'_{1,n}\hat{\sigma}'_{3,n}T-\langle Y^1,Y^2\rangle_T)\to^d N(0,\sigma_{3,\ast}^2(\Gamma_1^{-1})_{11}+2\sigma_{1,\ast}\sigma_{3,\ast}(\Gamma_1^{-1})_{13}+\sigma_{1,\ast}^2(\Gamma_1^{-1})_{33})
\end{equation}
as $n\to\infty$
by the delta method, and the estimator is asymptotically efficient} since we can reparameterize the model using $\sigma_{1,\ast}\sigma_{3,\ast}$.
We therefore compared the performance of the estimator (MLE) with existing estimators of the quadratic covariation.
We used the pre-averaged Hayashi--Yoshida estimator (PHY) and modulated realized covariance (MRC) by Christensen, Kinnebrock, and Podolskij~\cite{chr-etal10},
the local method of moments (LMM) by Bibinger et al.~\cite{bib-etal14},
{\colord and an estimator based on maximum likelihood estimator of a model of constant diffusion coefficients (QMLE) by A${\rm \ddot{\i}}$t-Sahalia, Fan, and Xiu~\cite{ait-etal10} for comparison.}
Except LMM these estimators can be calculated using the `cce' function in the `yuima' R package (http://r-forge.r-project.org/projects/yuima).
We used the default values of the `cce' function or values used in corresponding papers for parameters of estimators 
($\theta=0.15$ for PHY, $\theta=1$ for MRC$_1$, $J=30$, $h^{-1}=10$ for LMM).
{\colord Here we use {\it the oracle estimator} defined in \cite{bib-etal14} for LMM to avoid a complicated calculation.
For the modulated realized covariance, we also examine an estimator MRC$_2$ with $\theta=1/3$ which is used in Jacod et al. \cite{jac-etal09}.}
Table~\ref{table3} shows the results of $1000$ estimations. We used the same parameter values as above.
Then the true value of the quadratic covariation becomes $\langle Y^1,Y^2\rangle_T=0.5$.
For both cases of observation noise variance, we can see that sample standard deviations of our estimator are the best in large samples. 
{\colorlg The theoretical (asymptotic) minimum of standard deviations for all estimators is calculated as 
$n^{-1/4}(\sigma_{3,\ast}^2(\Gamma_1^{-1})_{11}+2\sigma_{1,\ast}\sigma_{3,\ast}(\Gamma_1^{-1})_{13}+\sigma_{1,\ast}^2(\Gamma_1^{-1})_{33})^{1/2}$.
Table~\ref{table3} also shows that the sample standard deviations of MLE are close to the minima in large samples.
}

\begin{table}
\caption{Comparison of estimators of $\langle Y^1,Y^2\rangle_T$}
\label{table3}
\begin{center}
\footnotesize
\begin{tabular}{c|c|c|c|c|c|c|c}
  \multicolumn{8}{c}{Resuls with $v_{\ast}=(0.001,0.001)$}   \\
  $n$ & MLE & PHY & MRC$_1$ & MRC$_2$ & QMLE & LMM & Theoretical minimum \\ \hline
$1000$ & 0.474 & 0.499 & 0.508 & 0.501 & 0.501 & 0.463 & \\
 & (0.073) & (0.121) & (0.182) & (0.110) & (0.095) & (0.082) & (0.066) \\ \hline
$5000$ & 0.496 & 0.497 & 0.504 & 0.499 & 0.498 & 0.497 & \\
 & (0.046) & (0.081) & (0.124) & (0.073) & (0.056) & (0.069) & (0.044) \\
 \multicolumn{8}{c}{} \\
  \multicolumn{8}{c}{Results with $v_{\ast}=(0.005,0.005)$} \\
$n$ & MLE & PHY & MRC$_1$ & MRC$_2$ & QMLE & LMM & Theoretical minimum \\ \hline
$1000$ & 0.496 & 0.497 & 0.508 & 0.5000 & 0.5000 & 0.518 & \\
 & (0.109) & (0.148) & (0.185) & (0.124) & (0.120) & (0.112) & (0.099) \\ \hline
$5000$ & 0.499 & 0.497 & 0.505 & 0.499 & 0.499 & 0.514 & \\
 & (0.069) & (0.098) & (0.126) & (0.083) & (0.079) & (0.083) & (0.066) \\
\end{tabular} 
\end{center}
\end{table}

{\colord
$ $

In the next, we consider the model with random diffusion coefficients and non-Gaussian noise. 
As mentioned in Remark \ref{SV-remark}, we cannot directly apply our results to stochastic volatility models.
Here we consider the Cox-Ingersoll-Ross (CIR) process derived in \cite{cox-etal85} as a latent process with random diffusion coefficients.
Let the latent process $Y$ satisfy
\begin{equation*}
dY_t=\left(
\begin{array}{l}
\alpha_1-\beta_1Y^1_t\\
\alpha_2-\beta_2Y^2_t
\end{array}
\right)dt+\left(
\begin{array}{ll}
\sigma_{1,\ast}\sqrt{Y^1_t} & 0 \\
\sigma_{3,\ast}\sqrt{Y^2_t} & \sigma_{2,\ast}\sqrt{Y^2_t}
\end{array}
\right)dW_t,
\end{equation*}
where $\sigma_{\ast}=(\sigma_{1,\ast},\sigma_{2,\ast},\sigma_{3,\ast})\in (\epsilon',R')\times (-R',R')\times (\epsilon',R')$.
We assume Conditions $2\alpha_1>\sigma_{1,\ast}^2$ and $2\alpha_2>\sigma_{2,\ast}^2+\sigma_{3,\ast}^2$ which ensure $Y^1_t>0$ and $Y^2_t>0$ for $t\in [0,T]$ almost surely.
Let $\{\epsilon^{n,j}_i\}_{i\in\mathbb{Z}}$ be i.i.d. random variables following a centered Gamma distribution with a shape parameter $k_j$ and a scale parameter $\theta_j$ for $j=1,2$. 
We define $\{N^j_t\}$, $\hat{v}_n$, $\hat{v}'_n$, $\hat{\sigma}_n$, and $\hat{\sigma}'_n$ similarly to the first example.
We set the values of parameters as {\colorg $k_n=[n^{5/8}]$,} $T=1$, $(\lambda_1,\lambda_2)=(1,1)$, $(\sigma_{1,\ast},\sigma_{2,\ast},\sigma_{3,\ast})=(1,\sqrt{1-0.5^2},0.5)$,
$(\alpha_1,\alpha_2,\beta_1,\beta_2)=(1,1,1,1)$, and $(k_1,k_2,\theta_1,\theta_2)=(2,2,\sqrt{0.0005},\sqrt{0.0005})$ which implies $v_{\ast}=(0.001,0.001)$.
Table~\ref{cir-table} shows averages and sample standard deviations of $T_n-\langle Y^1,Y^2\rangle_T$
for each estimator $T_n$ of the quadratic covariation $\langle Y^1,Y^2\rangle_T$ in $1000$ simulations.
$\langle Y^1,Y^2\rangle_T$ is random in this model since the diffusion coefficients are random.
So we use extra-{\colorg high-frequency} observations $\{Y^l_{k/100000}\}_{k=0}^{100000}$ of $Y$ to calculate the approximated true value of $\langle Y^1,Y^2\rangle_T$.
In this model, we have not obtained the LAMN property nor asymptotic efficiency of our estimator though we expect to obtain them.
However, we still see that our estimator achieves the best error variance in large samples.
}

\begin{table}
\caption{Estimation errors of estimators of $\langle Y^1,Y^2\rangle_T$ for the CIR process}
\label{cir-table}
\begin{center}
\footnotesize
\begin{tabular}{c|c|c|c|c|c|c}
  $n$ & MLE & PHY & MRC$_1$ & MRC$_2$ & QMLE & LMM \\ \hline
1000 & -0.0267 & -0.0063 & -0.0058 & -0.0036 & -0.0008 & -0.0348 \\
 & (0.0733) & (0.1286) & (0.1867) & (0.1162) & (0.1013) & (0.0844) \\ \hline
5000 & -0.0023 & -0.0036 & -0.0022 & -0.0016 & -0.0005 & -0.0033 \\
 & (0.0456) & (0.0858) & (0.1305) & (0.0768) & (0.0580) & (0.0719) \\
\end{tabular} 
\end{center}
\end{table}

\section{Asymptotically equivalent representation of the quasi-likelihood function}\label{tildeH-section}

We will prove our main results in the rest of this paper.
In this section, we introduce an asymptotically equivalent representation $\tilde{H}_n(\sigma,v)$ of the quasi-likelihood function $H_n(\sigma,v)$,
and prove the equivalence. $\tilde{H}_n$ is a useful function for deducing the limit of $H_n$. 

\subsection{Some notations}

We denote $E_m$ as the $\mathcal{G}_{s_{m-1}}$-conditional expectation and $\bar{E}_m[{\bf X}]={\bf X}-E_m[{\bf X}]$ for a random variable ${\bf X}$.
We use the symbol $C$ for a generic positive constant that can vary from line to line.

For a sequence {\colorg $c_n$} of positive-valued $\mathfrak{B}(\Pi_n)$-measurable random variables, 
let us denote by $\{\bar{R}_n(c_n)\}_{n\in\mathbb{N}}$, $\{\underbar{R}_n(c_n)\}_{n\in\mathbb{N}}$ and $\{\dot{R}_n(c_n)\}_{n\in\mathbb{N}}$
sequences of random variables (which may depend on {\colorg $1\leq m\leq \ell_n$} and $\sigma$) satisfying
\begin{equation*}
E[(c_n^{-1}(r_n/b_n)^{-p_1}(b_n/\underbar{r}_n)^{-p_2}(\bar{k}_n/k_n)^{-p_3}(k_n/\underbar{k}_n)^{-p_4}b_n^{-\delta}\sup_{\sigma,m}E_{\Pi}[|\bar{R}_n(c_n)|^q]^{1/q})^{q'}]\to 0,
\end{equation*}
\begin{equation*}
E[(c_n^{-1}(r_n/b_n)^{q_1}(b_n/\underbar{r}_n)^{q_2}(\bar{k}_n/k_n)^{q_3}(k_n/\underbar{k}_n)^{q_4}b_n^{\delta'}\sup_{\sigma,m}E_{\Pi}[|\underbar{R}_n(c_n)|^q]^{1/q})^{q'}]\to 0,
\end{equation*}
and
\begin{equation*}
c_n^{-1}(r_n/b_n)^{q_1}(b_n/\underbar{r}_n)^{q_2}(\bar{k}_n/k_n)^{q_3}(k_n/\underbar{k}_n)^{q_4}\sup_{\sigma,m}|\dot{R}_n(c_n)|\to^p0,
\end{equation*}
respectively, as $n\to \infty$ for any $\delta,q,q',q_1,\cdots,q_4>0$ with some constants $\delta',p_1,\cdots,p_4\geq 0$.

Let $M_m(v)={\rm diag}(v_1M_{1,m},v_2M_{2,m})$ for $v=(v_1,v_2)$, $\tilde{b}^k_m=b^k(s_{m-1},X_{s_{m-1}},\sigma)$, $\tilde{b}^k_{m,\ast}=b^k(s_{m-1},X_{s_{m-1}},\sigma_{\ast})$,
\begin{equation*}
\tilde{Z}_m=(((\tilde{b}^1_{m,\ast}\cdot (W_{S^{n,1}_i}-W_{S^{n,1}_{i-1}})+\epsilon^{n,1}_i-\epsilon^{n,1}_{i-1})_{i=K^1_{m-1}+2}^{K^1_m})^{\top}, ((\tilde{b}^2_{m,\ast}\cdot (W_{S^{n,2}_j}-W_{S^{n,2}_{j-1}})+\epsilon^{n,2}_j-\epsilon^{n,2}_{j-1})_{j=K^2_{m-1}+2}^{K^2_m})^{\top})^{\top},
\end{equation*}
\begin{equation}
\tilde{S}_m(\sigma,v)=\left(
\begin{array}{ll}
{\rm diag}((|\tilde{b}^1_m|^2|I^1_{i,m}|)_i) & \{\tilde{b}^1_m\cdot \tilde{b}^2_m|I^1_{i,m}\cap I^2_{j,m}|\}_{ij} \\
\{\tilde{b}^1_m\cdot \tilde{b}^2_m|I^1_{i,m}\cap I^2_{j,m}|\}_{ji} & {\rm diag}((|\tilde{b}^2_m|^2|I^2_{j,m}|)_j) \\
\end{array}
\right)+M_m(v),
\end{equation}
and
\begin{equation*}
\tilde{H}_n(\sigma,v)=-\frac{1}{2}\sum_{m=2}^{\ell_n}\tilde{Z}_m^{\top}\tilde{S}_m^{-1}(\sigma,v)\tilde{Z}_m-\frac{1}{2}\sum_{m=2}^{\ell_n}\log \det \tilde{S}_m(\sigma,v).
\end{equation*}

The diffusion coefficients $b$ in $\tilde{Z}_m$ and $\tilde{S}_m$ are either $b(s_{m-1},X_{s_{m-1}},\sigma)$ or $b(s_{m-1},X_{s_{m-1}},\sigma_{\ast})$.
Hence we do not need to consider the {\colorg time-dependent} structure of $b$ when we study asymptotics of the summands in $\tilde{H}_n$.
In particular, we obtain $E_m[\tilde{Z}_m^{\top}\partial_{\sigma}\tilde{S}_m(\sigma_{\ast},v_{\ast})^{-1}\tilde{Z}_m+\partial_{\sigma}\log \det \tilde{S}_m(\sigma_{\ast},v_{\ast})]=0$ by $\partial_{\sigma}\log\det \tilde{S}_m(\sigma,v)=-{\rm tr}(\partial_{\sigma}\tilde{S}_m\tilde{S}_m^{-1})(\sigma,v)$.
We will prove the asymptotic equivalence of $H_n$ and $\tilde{H}_n$ and then investigate asymptotic properties of $\tilde{H}_n$ instead of $H_n$.

Similarly to the approach of Gloter and Jacod~\cite{glo-jac01b}, we first show our results under the following condition $[A1']$, which is stronger than $[A1]$.
Then localization techniques and Girsanov's theorem enable us to replace $[A1']$ with $[A1]$.
\begin{description}
\item{[$A1'$]} Condition $[A1]$ is satisfied, $O=\mathbb{R}^{d_2}$, $\sup_{t,x,\sigma}\left\lVert (bb^{\top})^{-1}\right\rVert(t,x,\sigma) <\infty$, 
$\mu_t\equiv 0$ and $Y_0$, $\sup_t|b^{(l)}_t|$, $\partial_t^i\partial_x^j\partial_{\sigma}^kb$, and
$\sup_{t>s}((|b^{(l)}_t-b^{(l)}_s|\vee |\hat{b}^{(l)}_t-\hat{b}^{(l)}_s|)/(t-s))$
are all bounded for $l=0,1$, $0\leq 2i+j\leq 4$ and $0\leq k\leq 4$. 
\end{description}
We can also see that $[A1']$ implies $[B1]$.
\begin{discuss}
{\colorr
統計モデルを一度変えたり, $X_n-X_n^{(p)}=O_p(1)$となる$X_n^{(p)}$をdefしたりするので, 一般的なLemとして「$[A1']$である命題が成り立つなら$[A1]$の下成り立つ」
というようなことを示すのは難しい.
}
\end{discuss}

\subsection{Fundamental properties of the noise covariance matrix}\label{noise-cov-property-subsection}

In the following subsection we will show the asymptotic equivalence of $\tilde{H}_n$ and $H_n$, 
namely that  
\begin{equation*}
{\colorg b_n^{-1/2}\sup_{\sigma\in\Lambda}|\partial_{\sigma}^j(H_n(\sigma,\hat{v}_n)-H_n(\sigma_{\ast},\hat{v}_n))-\partial_{\sigma}^j(\tilde{H}_n(\sigma,v_{\ast})-\tilde{H}_n(\sigma_{\ast},v_{\ast}))| \to^p 0}
\end{equation*}
{\colorg as $n\to\infty$ for $0\leq j\leq 3$.}
To that end, we first show fundamental properties of $S_m$ and $\tilde{S}_m$.
These matrices inherit some properties of $M_{j,m}$, that are necessary to deduce the limit of $H_n$ and $\tilde{H}_n$.
The first property (\ref{tr-lim-est}) concerns the trace of a matrix related to $M_{j,m}$ investigated by~\cite{glo-jac01b}.
In the one-dimensional model with noisy, equidistance observations, this property can be directly applied to the quasi-likelihood function
because the covariance matrix of the latent process is the unit matrix.
However, this is insufficient for our purpose because our covariance matrix $S_m-M_m(v)$ of the latent process is rather complicated.
Therefore, we investigate further matrix properties related to $M_{j,m}$.

First, we consider the results in~\cite{glo-jac01b}. 
For any positive constants {\colorlg $p$, $q$, $a$ and $b$}, {\colord eigenvalues of $(a\mathcal{E}+M_{j,m})^{-1}$ are $\{(a+2(1-\cos (i\pi(k^j_m+1)^{-1}))\}_{i=1}^{k^j_m}$ and }we obtain
\begin{equation}\label{tr-lim-est}
\pi^{-1}k^j_mI_p(a)-a^{-p}\leq {\rm tr}((a\mathcal{E}+M_{j,m})^{-p})\leq \pi^{-1}k^j_mI_p(a),
\end{equation}
\begin{equation}\label{tr-lim-est2}
{\colorlg \pi^{-1}k^j_mI_{p,q}(a,b)-a^{-p}b^{-q}\leq {\rm tr}((a\mathcal{E}+M_{j,m})^{-p}(b\mathcal{E}+M_{j,m})^{-q})\leq \pi^{-1}k^j_mI_{p,q}(a,b)},
\end{equation}
where $I_p(a)=\int^{\pi}_0(a+2(1-\cos x))^{-p}dx$ 
{\colorlg and $I_{p,q}(a,b)=\int^{\pi}_0(a+2(1-\cos x))^{-p}(b+2(1-\cos x))^{-q}dx$}. 
Simple calculations show that $I_1(a)=\pi/\sqrt{a(4+a)}$, $I_2(a)=\pi(2+a)a^{-3/2}(4+a)^{-3/2}$
{\colord and $\int^{\pi}_0\{\log(a+2(1-\cos x))-\log(b+2(1-\cos x))\}dx=2\pi(\log(\sqrt{a}+\sqrt{4+a})-\log(\sqrt{b}+\sqrt{4+b}))$}. 
See Section 4.1 in~\cite{glo-jac01b} for the details.
{\colord Moreover, differentiation with respect to $a$ yields 
\begin{equation*}
I_p(a)=\frac{(-1)^{p-1}}{(p-1)!}\left(\frac{d}{da}\right)^{p-1}\left(\frac{\pi}{\sqrt{a(4+a)}}\right).
\end{equation*}
In particular, if $a={\bf X}_nb_n^{-1}$ for some tight random variables $\{{\bf X}_n\}_n$, then we have
\begin{equation*}
I_p({\bf X}_nb_n^{-1})=\frac{\pi(2p-3)!!}{2^p(p-1)!}({\bf X}_nb_n^{-1})^{-p+1/2}+O_p(b_n^{p-3/2}).
\end{equation*}
}

For $\epsilon\geq 0$, let $\{p_j(\epsilon)\}_{j\in\mathbb{N}}$ and $\{p'_j(\epsilon)\}_{j\in\mathbb{N}}$ be sequences of positive numbers satisfying $p_1(\epsilon)=2+\epsilon$, {\colorg $p'_1(\epsilon)=1+\epsilon$},
$p_{j+1}(\epsilon)=2+\epsilon-1/p_j(\epsilon)$, and $p'_{j+1}(\epsilon)=2+\epsilon-1/p'_j(\epsilon)$ for $j\in\mathbb{N}$.
Let $E_{i,j}(a)$ be a $k^j_m\times k^j_m$ matrix satisfying $(E_{i,j}(a))_{k,l}=\delta_{k,l}+a\delta_{(i,j)}(k,l)$ for $a\in\mathbb{R}$.
Then we have 
\begin{equation*}
E_{k^j_m,k^j_m-1}(p_{k^j_m-1}(\epsilon)^{-1})\cdots E_{2,1}(p_1(\epsilon)^{-1})(\epsilon \mathcal{E}+M_{j,m})E_{1,2}(p_1(\epsilon)^{-1})\cdots E_{k^j_m-1,k^j_m}(p_{k^j_m-1}(\epsilon)^{-1})
={\rm diag}((p_j(\epsilon))_{j=1}^{k^j_m}),
\end{equation*}
\begin{equation*}
(\epsilon \mathcal{E}+M_{j,m})^{-1}=E_{1,2}(p_1(\epsilon)^{-1})\cdots E_{k^j_m-1,k^j_m}(p_{k^j_m-1}(\epsilon)^{-1}){\rm diag}((p_j(\epsilon)^{-1})_{j=1}^{k^j_m})E_{k^j_m,k^j_m-1}(p_{k^j_m-1}(\epsilon)^{-1})\cdots E_{2,1}(p_1(\epsilon)^{-1}),
\end{equation*}
and hence 
\begin{equation}\label{dotD-eq}
(\epsilon \mathcal{E}+M_{j,m})^{-1}=\bigg\{\prod_{k+1\leq i\leq l}p_{i-1}(\epsilon)^{-1}1_{\{k\leq l\}}\bigg\}_{k,l}{\rm diag}((p_j(\epsilon)^{-1})_{j=1}^{k^j_m})
\bigg\{\prod_{l+1\leq i\leq k}p_{i-1}(\epsilon)^{-1}1_{\{l\leq k\}}\bigg\}_{k,l}.
\end{equation}
\begin{discuss}
{\colorr
$\ddot{D}$の逆行列の成分は, Ait-Sahalia, Mykland, Zhang 2005のP368にある. $\eta \sim -1-b_n^{-1/2}$.
\begin{eqnarray}
v^{ij}&=&(1-\eta^2)^{-1}(1-\eta^{2N+2})^{-1}\Big\{(-\eta)^{|i-j|}-(-\eta)^{i+j}-(-\eta)^{2N-i-j+2} \nonumber \\
&&-(-\eta)^{2N+|i-j|+2}+(-\eta)^{2N+i-j+2}+(-\eta)^{2N-i+j+2}\Big\}. \nonumber 
\end{eqnarray}
これはオーダーを見たり, averaging効果を見るには少し見づらい形か.
}
\end{discuss}
Moreover, we have the following lemma.
\begin{lemma}\label{ddotD-properties}
Let $\epsilon\in [0,1)$ and $p_+(\epsilon)=1+\epsilon/2+\sqrt{\epsilon+\epsilon^2/4}$. Then
\begin{enumerate}
\item $1\leq p'_j(\epsilon)\leq p_+(\epsilon)<p_j(\epsilon)\leq 1+1/j+j\epsilon$ for $j \in \mathbb{N}$, $\{p_j(\epsilon)\}_j$ is monotone decreasing,
and $\{p'_j(\epsilon)\}_j$ is monotone nondecreasing.
\item $\{((\epsilon\mathcal{E}+M_{j,m})^{-1})_{kk}\}_{k=1}^{[k^j_m/2]}$ is monotone increasing.
\item $p_j-p_+\leq (1+\sqrt{\epsilon})^{-(j-2)}$ and $p_+-p'_j\leq \sqrt{\epsilon}(1+\sqrt{\epsilon})^{-(j-2)}$ for $j\geq 2$.
\item $\prod_{j=1}^kp'_j(\epsilon)=(p_k(\epsilon)-1)\prod_{j=1}^{k-1}p_j(\epsilon)$ for any $k\geq 2$.
\end{enumerate}
\end{lemma}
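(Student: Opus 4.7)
The map $f(x)=2+\epsilon-1/x$ drives both recurrences, and its fixed points $p_\pm$ are the two roots of $x^2-(2+\epsilon)x+1=0$, so $p_+p_-=1$ and $p_++p_-=2+\epsilon$. The single identity $f(x)-p_+=(x-p_+)/(x p_+)$ produces essentially every subsequent one-step estimate. For Part 1, $p_1=2+\epsilon>p_+$ (since $p_->0$), together with $f((p_+,\infty))\subset(p_+,\infty)$, keeps $p_j>p_+$ for all $j$; $p_2-p_1=-1/(2+\epsilon)<0$ propagates through $p_{j+1}-p_j=(p_j-p_{j-1})/(p_j p_{j-1})$ to give monotone decrease. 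Similarly $p'_1=1+\epsilon\in[p_-,p_+]\subset[1,p_+]$ (check $p_-\le 1\le 1+\epsilon\le p_+$) stays in $[p_-,p_+]$ under $f$, and $p'_2-p'_1=\epsilon/(1+\epsilon)\ge 0$ propagates to monotone nondecrease. For the sharper bound $p_j(\epsilon)\le 1+1/j+j\epsilon$ I compare with the explicit sequence $p_j(0)=(j+1)/j$ and prove $p_j(\epsilon)-p_j(0)\le j\epsilon$ by induction via
\[
p_{j+1}(\epsilon)-p_{j+1}(0)=\epsilon+\frac{p_j(\epsilon)-p_j(0)}{p_j(\epsilon)p_j(0)}\le \epsilon+(p_j(\epsilon)-p_j(0)),
\]
using $p_j(\epsilon),p_j(0)\ge 1$.

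For Part 2 I use the standard cofactor formula for a symmetric tridiagonal inverse: setting $n=k^j_m$ and letting $a_j$ denote the leading principal minor of $\epsilon\mathcal{E}+M_{j,m}$, which satisfies $a_j=(2+\epsilon)a_{j-1}-a_{j-2}$ with $a_0=1$, $a_1=2+\epsilon$, persymmetry of $\epsilon\mathcal{E}+M_{j,m}$ identifies the trailing $(n-k)\times(n-k)$ minor with $a_{n-k}$, so $((\epsilon\mathcal{E}+M_{j,m})^{-1})_{kk}=a_{k-1}a_{n-k}/a_n$. Since $a_j/a_{j-1}=p_j$, consecutive diagonal entries satisfy
\[
\frac{((\epsilon\mathcal{E}+M_{j,m})^{-1})_{k+1,k+1}}{((\epsilon\mathcal{E}+M_{j,m})^{-1})_{kk}}=\frac{p_k}{p_{n-k}},
\]
which exceeds $1$ as soon as $k<n-k$; the condition $k\le [n/2]-1$ guarantees this, and strict monotonicity of $p_\bullet$ from Part 1 completes the proof. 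For Part 3 I iterate $p_{j+1}-p_+=(p_j-p_+)/(p_jp_+)$ using $p_jp_+\ge p_+^2\ge(1+\sqrt{\epsilon})^2$ (since $\sqrt{\epsilon+\epsilon^2/4}\ge\sqrt{\epsilon}$), which gives $p_j-p_+\le p_-(1+\sqrt{\epsilon})^{-2(j-1)}\le(1+\sqrt{\epsilon})^{-(j-2)}$ because $p_-=1/p_+\le 1$. Symmetrically $p_+-p'_{j+1}=(p_+-p'_j)/(p'_jp_+)$ with $p'_jp_+\ge p_+\ge 1+\sqrt{\epsilon}$ yields $p_+-p'_j\le(p_+-p'_1)(1+\sqrt{\epsilon})^{-(j-1)}$, and the rationalization $p_+-p'_1=\sqrt{\epsilon+\epsilon^2/4}-\epsilon/2=\epsilon/(\sqrt{\epsilon+\epsilon^2/4}+\epsilon/2)\le\sqrt{\epsilon}$ closes the bound.

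For Part 4 I introduce the partial products $a_j:=\prod_{i=1}^j p_i$ and $b_j:=\prod_{i=1}^j p'_i$ (with $a_0=b_0=1$). From the defining one-step relations both $a_j$ and $b_j$ satisfy the second-order recurrence $u_j=(2+\epsilon)u_{j-1}-u_{j-2}$; they differ only in that $a_1=2+\epsilon$ while $b_1=1+\epsilon$. The identity to prove becomes $b_k=(p_k-1)a_{k-1}=a_k-a_{k-1}$, so it suffices to show $b_j=a_j-a_{j-1}$. Setting $a_{-1}:=0$ (the value forced by $a_1=(2+\epsilon)a_0-a_{-1}$), the auxiliary sequence $c_j:=a_j-a_{j-1}$ satisfies the same recurrence, and $c_0=1=b_0$, $c_1=1+\epsilon=b_1$, so by uniqueness $b_j\equiv c_j$. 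The main obstacle throughout is the bookkeeping: in Part 2 one must recognize that persymmetry of $\epsilon\mathcal{E}+M_{j,m}$ collapses the two continuants required for the cofactor formula into the single sequence $a_j$, and in Part 4 one must spot that the partial products satisfy the same linear recurrence as the continuants, so that $b_k$ is literally the forward difference $a_k-a_{k-1}$; once those identifications are made, every remaining estimate is a one-step manipulation of $f(x)-p_+=(x-p_+)/(xp_+)$.
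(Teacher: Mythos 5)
Your proof is correct, and in Parts 3 and 4 it is in places tighter than what the paper claims. The overall strategy for Parts 1--3 parallels the paper's (induction from the fixed-point relation $f(x)-p_+=(x-p_+)/(xp_+)$, and the cofactor/continuant formula $((\epsilon\mathcal{E}+M_{j,m})^{-1})_{kk}=a_{k-1}a_{n-k}/a_n$), but Part 4 takes a genuinely different route: the paper evaluates the determinant of $\epsilon\mathcal{E}+M(k)-(E_{11}(1)-\mathcal{E})$ in two ways, exploiting the symmetry of the tridiagonal matrix under simultaneous row/column reversal, whereas you observe that $a_k=\prod_{j=1}^kp_j$ and $b_k=\prod_{j=1}^kp'_j$ satisfy the same three-term continuant recurrence $u_k=(2+\epsilon)u_{k-1}-u_{k-2}$ and hence $b_k=a_k-a_{k-1}$ once the initial data match. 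Both are one-line proofs once set up; the paper's is a determinant identity, yours is linear-algebra-free. A smaller divergence is your Part 1 upper bound: you compare with the explicit $\epsilon=0$ sequence $p_j(0)=(j+1)/j$ and bound the drift $p_j(\epsilon)-p_j(0)\le j\epsilon$, which is cleaner than the paper's direct estimate of $p_{j+1}\le 2+\epsilon-\tfrac{j}{j+1}(1+j^2\epsilon/(j+1))^{-1}$. In Part 2 you phrase the conclusion as $((\epsilon\mathcal{E}+M_{j,m})^{-1})_{k+1,k+1}/((\epsilon\mathcal{E}+M_{j,m})^{-1})_{kk}=p_k/p_{n-k}>1$, which is an equivalent but slightly more transparent form of the paper's argument from (3.7). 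No gaps.
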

\begin{proof}
{\it 1}. We simply denote $p_j=p_j(\epsilon)$. We will prove $p_+(\epsilon)<p_j(\epsilon)\leq 1+1/j+j\epsilon$ for $j \in \mathbb{N}$ by induction.
The results obviously hold for $j=1$. Assume the results hold for all values in $\mathbb{N}$ up to $j$. Then since $p_+=2+\epsilon-1/p_+$, 
we obtain $p_{j+1}-p_+=1/p_+-1/p_j>0$, and 
\begin{equation*}
p_{j+1}\leq 2+\epsilon-(j/(j+1))(1+j^2\epsilon/(j+1))^{-1}\leq 2+\epsilon-(j/(j+1))(1-j^2\epsilon/(j+1))\leq 1+1/(j+1)+(j+1)\epsilon.
\end{equation*}
Hence, we have $p_+(\epsilon)<p_j(\epsilon)\leq 1+1/j+j\epsilon$ for $j \in \mathbb{N}$.
Moreover, we can inductively deduce 
$p_{j+1}-p_j=1/p_{j-1}-1/p_j>0$.
The results for $\{p'_j(\epsilon)\}_j$ are obtained similarly.	

\noindent 
{\it 2}. By considering the cofactor matrix and (\ref{dotD-eq}), we have
\begin{equation}\label{invD-diagonal-est}
((\epsilon\mathcal{E}+M_{j,m})^{-1})_{kk}=\frac{\det (\epsilon\mathcal{E}_{k-1}+M(k-1))\det(\epsilon\mathcal{E}_{k^j_m-k}+M(k^j_m-k))}{\det (\epsilon\mathcal{E}+M_{j,m})}
=\frac{\prod_{l=1}^{k-1}p_l\prod_{l=1}^{k^j_m-k}p_l}{\prod_{l=1}^{k^j_m}p_l}.
\end{equation}  
Therefore we obtain the result by monotonicity of $p_j$.

\noindent
{\it 3}. This is easy since $p_j-p_+=(p_{j-1}-p_+)/p_+p_{j-1}\leq (p_1-p_+)/p_+^{j-1}\leq p_+^{-j+2}$.

\noindent
{\it 4}. 
\begin{equation*}
\prod_{j=1}^kp'_j(\epsilon)=\det(\epsilon\mathcal{E}+M(k)-(E_{11}(1)-\mathcal{E}))=\det(\epsilon\mathcal{E}+M(k)-(E_{kk}(1)-\mathcal{E}))=(p_k(\epsilon)-1)\prod_{j=1}^{k-1}p_j(\epsilon).
\end{equation*}

\end{proof}

\begin{discuss}
{\colorr ここの結果は一応(\ref{tr-lim-est})の評価をすぐ使うからこの位置に書いておく}
\end{discuss}

Let $\bar{\rho}=\sup_{t,\sigma}(|b^1\cdot b^2||b^1|^{-1}|b^2|^{-1})(t,X_t,\sigma)$, $\tilde{D}_m=(\tilde{D}_{1,m},\tilde{D}_{2,m})$, $\tilde{D}_{j,m}={\rm diag}((|\tilde{b}^j_m|^2|I^j_{i,m}|)_i)+v_{j,\ast}M_{j,m}$,
$D'_m=(D'_{1,m},D'_{2,m})$, $D'_{j,m}={\rm diag}((|I^j_{i,m}|)_i)$, and $\check{D}_{j,m}=|\tilde{b}^j_m|^2r_n\mathcal{E}+v_{j,\ast}M_{j,m}$.

\begin{lemma}\label{Sm-properties}
Assume $[B1]$. 
Then ${\rm tr}(\tilde{S}_m^{-1}(\sigma,v_{\ast}))=\bar{R}_n(b_n^{1/2}k_n)$.
\end{lemma}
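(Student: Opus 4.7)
The plan is to dominate $\tilde S_m$ from below by a block-diagonal matrix, so that ${\rm tr}(\tilde S_m^{-1})$ is controlled by two scalar traces of the form ${\rm tr}((a\mathcal{E}+M_{j,m})^{-1})$, to which (\ref{tr-lim-est}) applies. The naive bound $\tilde S_m \succeq M_m(v_{\ast})$ only yields ${\rm tr}(\tilde S_m^{-1}) = O(k_n^2)$, which under $k_n \gg b_n^{1/2+\epsilon}$ is strictly worse than the target $b_n^{1/2}k_n$; one must also exploit the contribution of $|\tilde b^j_m|^2 D'_{j,m}$.

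First I will decouple the two blocks via Cauchy--Schwarz. For $v=(v^1,v^2)\in\mathbb{R}^{k^1_m+k^2_m}$, expanding $v^\top \tilde S_m v$ and writing the off-diagonal entry $A_{i i'} := |I^1_{i,m}\cap I^2_{i',m}|$ as $\int_{s_{m-1}}^{s_m}\mathbf{1}_{I^1_{i,m}}(t)\mathbf{1}_{I^2_{i',m}}(t)\,dt$, the Cauchy--Schwarz inequality in $L^2([s_{m-1},s_m])$ yields $|(v^1)^\top A v^2|\leq \sqrt{(v^1)^\top D'_{1,m}v^1}\sqrt{(v^2)^\top D'_{2,m}v^2}$. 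Combined with $|\tilde b^1_m\cdot\tilde b^2_m|\leq \bar\rho|\tilde b^1_m||\tilde b^2_m|$ and AM--GM (with $a = |\tilde b^1_m|\sqrt{(v^1)^\top D'_{1,m}v^1}$, $b = |\tilde b^2_m|\sqrt{(v^2)^\top D'_{2,m}v^2}$ in $2ab\leq a^2+b^2$), the cross term is bounded in absolute value by $\bar\rho\sum_{j=1,2}|\tilde b^j_m|^2(v^j)^\top D'_{j,m}v^j$, giving
\[
\tilde S_m \succeq \mathrm{diag}\bigl((1-\bar\rho)|\tilde b^j_m|^2 D'_{j,m} + v_{j,\ast}M_{j,m}\bigr)_{j=1,2}.
\]
Using $D'_{j,m}\succeq\underbar{r}_n\mathcal{E}$, inverting, and applying (\ref{tr-lim-est}) with $p=1$ together with the elementary $\sqrt{a(4+a)}\geq 2\sqrt{a}$ gives
\[
{\rm tr}(\tilde S_m^{-1})\leq C\,\bar k_n\,\underbar{r}_n^{-1/2}\,Z_m,\qquad Z_m:=(1-\bar\rho)^{-1/2}\max_{j=1,2}|\tilde b^j_m|^{-1}.
\]

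To match this against $\bar R_n(b_n^{1/2}k_n)$, the identity $\bar k_n/\sqrt{\underbar{r}_n}=b_n^{1/2}k_n\cdot(\bar k_n/k_n)(b_n/\underbar{r}_n)^{1/2}\cdot b_n^{-1}$ shows that the ratio factors $(\bar k_n/k_n)$ and $(b_n/\underbar{r}_n)^{1/2}$ are absorbed by the allowed factors with $p_2=1/2$ and $p_3=1$ in the definition, while the leftover $b_n^{-1}$ overwhelms any $b_n^{\delta}$. What remains is to verify $E[(\sup_{\sigma,m}Z_m)^q]<\infty$ for every $q>0$; since $Z_m\in\mathcal{F}^{(0)}$ is independent of $\Pi_n$, we have $E_\Pi[Z_m^q]=E[Z_m^q]$, which also controls $\sup_{\sigma,m}E_\Pi[Z_m^q]^{1/q}$. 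Under $[B1]$, $\inf\det bb^\top>0$ combined with $|b^j|^2\geq\det bb^\top/|b^{3-j}|^2$ and the polynomial growth $|b|\leq C(1+|x|)^C$ gives $|\tilde b^j_m|^{-1}\leq C'(1+\sup_t|X_t|)^{C'}$, and similarly $(1-\bar\rho)^{-1/2}\leq C''(1+\sup_t|X_t|)^{C''}$ (since $1-\bar\rho^2\geq\inf\det bb^\top/\sup|b|^4$); all moments of $\sup_{t\leq T}|X_t|$ are finite by the SDE representation of $X$ in $[B1].6$ together with Burkholder--Davis--Gundy.

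The main obstacle is the first step: securing a sufficiently tight block-diagonal lower bound for $\tilde S_m$. The Cauchy--Schwarz trick crucially relies on the continuous-time representation of $A$ as an integral of indicator products, and the strict inequality $\bar\rho<1$ (a consequence of $[B1].3$ applied along sample paths of $X$) is what makes the decoupling effective; without it, the off-diagonal blocks would destroy the cancellation. Everything downstream reduces to the one-dimensional trace estimate (\ref{tr-lim-est}) borrowed from Gloter and Jacod \cite{glo-jac01b}.
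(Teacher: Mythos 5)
Your proof is correct and follows essentially the same strategy as the paper's: decouple the two blocks using $\bar\rho<1$ together with the $L^2$ Cauchy--Schwarz bound on the overlap matrix $\{|I^1_{i,m}\cap I^2_{j,m}|\}$ (which is exactly the content of the norm estimate from Lemma 2 of \cite{ogi-yos14} that the paper invokes), then reduce to ${\rm tr}((a\mathcal{E}+M_{j,m})^{-1})$ and apply (\ref{tr-lim-est}). The only difference is cosmetic: you phrase the decoupling as a Loewner-order lower bound by a block-diagonal matrix with $|I^j_{i,m}|$ replaced by $\underbar{r}_n$, whereas the paper uses a multiplicative factorization with a perturbation of norm $\leq\bar\rho$ followed by a comparison of $\tilde D_{j,m}$ with $\check D_{j,m}$ (using $r_n$); both yield a bound of the same order.
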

\begin{discuss}
{\colorr (\ref{Sm-properties-lemma-eq1})は後ろでも使うからこの証明はAppには回さない.}
\end{discuss}

\begin{proof}
Let $D''_m={\rm diag}(|\tilde{b}^1_m|^2D'_{1,m},|\tilde{b}^2_m|^2D'_{2,m})$ and $D'''_m=(D''_m)^{-1/2}\tilde{D}_m(D''_m)^{-1/2}$, then we have
\begin{equation*}
\tilde{S}_m=(D''_m)^{1/2}(D'''_m)^{1/2}(\mathcal{E}+(D'''_m)^{-1/2}(D''_m)^{-1/2}(\tilde{S}_m-\tilde{D}_m)(D''_m)^{-1/2}(D'''_m)^{-1/2})(D'''_m)^{1/2}(D''_m)^{1/2}.
\end{equation*} 
Moreover, Lemma~\ref{inverse-norm-est}, $[B1]$, and Lemma 2 in~\cite{ogi-yos14} yield
\begin{eqnarray}
&&\left\lVert (D'''_m)^{-1/2}(D''_m)^{-1/2}(\tilde{S}_m-\tilde{D}_m)(D''_m)^{-1/2}(D'''_m)^{-1/2}\right\rVert \nonumber \\
&\leq &\left\lVert (D''_m)^{-1/2}(\tilde{S}_m-\tilde{D}_m)(D''_m)^{-1/2}\right\rVert
\leq \bar{\rho}\bigg\{\bigg\lVert\bigg\{\frac{|I^1_{i,m}\cap I^2_{j,m}|}{|I^1_{i,m}|^{1/2}|I^2_{j,m}|^{1/2}}\bigg\}_{i,j}\bigg\rVert \vee \bigg\lVert\bigg\{\frac{|I^1_{i,m}\cap I^2_{j,m}|}{|I^1_{i,m}|^{1/2}|I^2_{j,m}|^{1/2}}\bigg\}_{j,i}\bigg\rVert\bigg\}\leq \bar{\rho}<1. \nonumber
\end{eqnarray}
Therefore, we obtain
\begin{eqnarray}
{\rm tr}(\tilde{S}_m^{-1})&\leq &{\rm tr}((D'''_m)^{-1/2}(D''_m)^{-1}(D'''_m)^{-1/2})\rVert (\mathcal{E}+(D'''_m)^{-1/2}(D''_m)^{-1/2}(\tilde{S}_m-\tilde{D}_m)(D''_m)^{-1/2}(D'''_m)^{-1/2})^{-1}\lVert \nonumber \\
&\leq &{\rm tr}(\tilde{D}_m^{-1})/(1-\bar{\rho})\leq \sum_{j=1}^2{\rm tr}(\check{D}_{j,m}^{-1})r_n\underbar{r}_n^{-1}(1-\bar{\rho})^{-1}, \nonumber
\end{eqnarray}
by Lemma~\ref{tr-est}, the equation $\tilde{D}_{j,m}=\check{D}_{j,m}^{1/2}(\mathcal{E}-\check{D}_{j,m}^{-1/2}(\check{D}_{j,m}-\tilde{D}_{j,m})\check{D}_{j,m}^{-1/2})\check{D}_{j,m}^{1/2}$, and that
\begin{equation}\label{Sm-properties-lemma-eq1}
\lVert \check{D}_{j,m}^{-1/2}(\check{D}_{j,m}-\tilde{D}_{j,m})\check{D}_{j,m}^{-1/2}\lVert \leq (|\tilde{b}^j_m|^2r_n)^{-1}|\tilde{b}^j_m|^2(r_n-\underbar{r}_n)=1-\underbar{r}_n/r_n.
\end{equation}
We thus obtain the results by (\ref{tr-lim-est}).
\end{proof}
\begin{discuss}
$b$:constなら$\bar{\rho}$, $\check{D}_{j,m}$, $\tilde{D}_{j,m}$は$Y$によらないから$\sigma_{\ast}$の一様な評価も言える.
\end{discuss}

\subsection{Asymptotic equivalence of $H_n$ and $\tilde{H}_n$}

In this section, we prove the asymptotic equivalence of $H_n$ and $\tilde{H}_n$.
We provide the following lemma about estimates of moments of the quantities related to $H_n$ and $\tilde{H}_n$.
The proof is given in the appendix; it is obtained based on the properties of $M_{j,m}$ in Section~\ref{noise-cov-property-subsection},
standard It${\rm \hat{o}}$ calculus, and some results from linear algebra.

Let $\tilde{S}_{m,\ast}=\tilde{S}_m(\sigma_{\ast},v_{\ast})$ and 
\begin{equation*}
{\bf S}(t,x,\sigma,v)=\left(
\begin{array}{ll}
\{|b^1(t,x,\sigma)|^2|I^1_{i,m}|\delta_{ii'}\}_{ii'} + v_1M_{1,m} & \{b^1\cdot b^2(t,x,\sigma)|I^1_{i,m}\cap I^2_{j,m}|\}_{ij} \\
\{b^1\cdot b^2(t,x,\sigma)|I^1_{i,m}\cap I^2_{j,m}|\}_{ji} & \{|b^2(t,x,\sigma)|^2|I^2_{j,m}|\delta_{jj'}\}_{jj'} + v_2M_{2,m}
\end{array}
\right).
\end{equation*}

\begin{lemma}\label{ZSZ-est}
Assume $[B1]$.
Let $\sigma\in \Lambda$, $k_1,k_2,k_3\in \mathbb{Z}_+$, $k_1+k_2\geq 1$, {\colorg $k_1\leq 4$, $k_2\leq 4$,} ${\bf X}_m$ be a $\mathcal{G}_{s_{m-1}}$-measurable random variable,
and ${\bf S}'=\partial_{\sigma}^{k_1}\partial_x^{k_2}\partial_v^{k_3} {\bf S}^{-1}(s_{m-1},{\bf X}_m,\sigma,v_{\ast})$. Then
\begin{enumerate}
\item 
$E_m[(\tilde{Z}_m^{\top}{\bf S}'\tilde{Z}_m)^2]=2{\rm tr}(({\bf S}'\tilde{S}_{m,\ast})^2)+{\rm tr}({\bf S}'\tilde{S}_{m,\ast})^2+\bar{R}_n(1)$,
$E_m[(\tilde{Z}_m^{\top}{\bf S}'\tilde{Z}_m)^4]=\bar{R}_n((b_n^{-4}k_n^7)\vee (b_n^{-2}k_n^4))$ and $E_m[(\tilde{Z}_m^{\top}{\bf S}'\tilde{Z}_m)^q]=\bar{R}_n(b_n^{-q}k_n^{2q})$ for $q>4$.
\item $E_{\Pi}[|\sum_m(Z_m-\tilde{Z}_m)^{\top}{\bf S}'(Z_m+\tilde{Z}_m)|^q]={\colord \bar{R}_n((b_n^{-3}k_n^7)^{q/4})}$ {\colord for $q\geq 4$}.
\item 
$E_{\Pi}[|\sum_m(Z_m-\tilde{Z}_m)^{\top}{\bf S}'(Z_m+\tilde{Z}_m)|^2]=\bar{R}_n((b_n^{-1}k_n^2)\vee (b_n^{-2}k_n^{7/2}))$.
\end{enumerate}
\end{lemma}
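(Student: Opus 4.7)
My approach is to split $\tilde Z_m=\xi_m+\zeta_m$ into its Brownian part $\xi_m$ (entries $\tilde b^k_{m,\ast}\cdot(W_{S^{n,k}_i}-W_{S^{n,k}_{i-1}})$) and its noise part $\zeta_m$ (entries $\epsilon^{n,k}_i-\epsilon^{n,k}_{i-1}$). Since $\tilde b^k_{m,\ast}$ and $\mathbf S'$ are $\mathcal G_{s_{m-1}}$-measurable, conditionally on $\mathcal G_{s_{m-1}}$ the vector $\xi_m$ is centred Gaussian with covariance $\tilde S_{m,\ast}-M_m(v_\ast)$, independent of $\zeta_m$, while $\zeta_m$ is centred with covariance $M_m(v_\ast)$ and higher moments controlled by [A1].3. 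This clean split is what lets us treat $\tilde Z_m$ as ``conditionally Gaussian plus a small non-Gaussian correction.''

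For part~1 I would expand $(\tilde Z_m^\top\mathbf S'\tilde Z_m)^2=\sum (\mathbf S')_{ij}(\mathbf S')_{kl}\tilde Z_i\tilde Z_j\tilde Z_k\tilde Z_l$ and evaluate each term. Pairings involving only $\xi_m$ (together with the Gaussian part of the covariance structure of $\zeta_m$) produce the leading Isserlis contribution $2\operatorname{tr}((\mathbf S'\tilde S_{m,\ast})^2)+\operatorname{tr}(\mathbf S'\tilde S_{m,\ast})^2$. The non-Gaussian residue is a sum of fourth cumulants of $\epsilon^{n,k}_i$ weighted by $(\mathbf S')_{ii}(\mathbf S')_{jj}$-type quantities on the tridiagonal support of $\zeta_m$; the bound $\|\mathbf S'\|=O(1)$ and the spectral estimates in Section~\ref{noise-cov-property-subsection} collapse this into $\bar R_n(1)$. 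For the fourth and higher moments I would apply a Rosenthal-type (or Hanson--Wright-type) inequality for inhomogeneous quadratic forms, which bounds $E_m[(\tilde Z_m^\top\mathbf S'\tilde Z_m)^q]$ in terms of $\operatorname{tr}((\mathbf S'\tilde S_{m,\ast})^p)$ for $p=1,\dots,q$; using Lemma~\ref{Sm-properties}, (\ref{tr-lim-est}) and (\ref{tr-lim-est2}) these traces split into two regimes---``bulk'' eigenvalues of $M_{j,m}$ of order $b_n^{-1}$ giving the $b_n^{-4}k_n^7$ piece and ``edge'' eigenvalues of order $1$ giving the $b_n^{-2}k_n^4$ piece---whose maximum is the fourth-moment bound, while for $q>4$ the dominant term is $\operatorname{tr}((\mathbf S'\tilde S_{m,\ast})^2)^{q/2}$, producing the stated $b_n^{-q}k_n^{2q}$.

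For parts~2 and~3 the key pathwise identity is that the $\epsilon$ contributions cancel in $Z_m-\tilde Z_m$, leaving
\[
(Z_m-\tilde Z_m)^{(k)}_i=\int_{S^{n,k}_{i-1}}^{S^{n,k}_i}\mu^k_s\,ds+\int_{S^{n,k}_{i-1}}^{S^{n,k}_i}\bigl(b^k(s,X_s,\sigma_\ast)-\tilde b^k_{m,\ast}\bigr)dW_s.
\]
Using the semimartingale decomposition of $b^{(1)}$ supplied by [A1].6, I would apply It\^o's formula to $b^k(s,X_s,\sigma_\ast)$ on each interval $[s_{m-1},s]$, separating a $\mathcal G_{s_m}$-martingale-difference contribution from a predictable remainder one factor $r_n^{1/2}$ smaller. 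Plugging this expansion into $\sum_m(Z_m-\tilde Z_m)^\top\mathbf S'(Z_m+\tilde Z_m)$ produces a discrete $\{\mathcal G_{s_m}\}$-martingale sum plus a negligible bias; BDG handles $q\ge4$ in part~2 and reduces the problem to bounding the predictable bracket $\sum_m E_m[(\cdot)^2]$, which is itself a matrix trace of the type controlled in part~1, with extra $|I^k_{i,m}|$ factors absorbed through Lemma~\ref{Sm-properties}. For part~3 the martingale orthogonality alone yields the sharper rate $b_n^{-1}k_n^2\vee b_n^{-2}k_n^{7/2}$, because the leading Wiener--Wiener term cancels in expectation over $m$ and only the Wiener--noise cross piece survives.

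The main obstacle will be pinning down the exponents in part~2. The worst contributions arise from mixed traces of the form $\operatorname{tr}(\mathbf S'D'_{j,m}\mathbf S'D'_{j',m})$ and $\operatorname{tr}(\mathbf S'\tilde S_{m,\ast}\mathbf S'D'_{j,m})$, which are not directly covered by (\ref{tr-lim-est})--(\ref{tr-lim-est2}) because $D'_{j,m}$ does not commute with $M_{j,m}$ in the nonsynchronous setting. My strategy is to write $D'_{j,m}=r_n\mathcal E+(D'_{j,m}-r_n\mathcal E)$: the scalar piece reduces to standard spectral integrals in $a=|\tilde b^j_m|^2 r_n/v_{j,\ast}$, handled via Lemma~\ref{ddotD-properties} together with (\ref{tr-lim-est}), while the deviation is controlled through $\|D'_{j,m}-r_n\mathcal E\|\le r_n-\underbar r_n$ in the spirit of (\ref{Sm-properties-lemma-eq1}); the resulting relative factor $1-\underbar r_n/r_n$ is then absorbed into the polynomial slack in $r_n/b_n$ and $b_n/\underbar r_n$ built into the $\bar R_n$ notation, which is exactly the flexibility the definition was designed for.
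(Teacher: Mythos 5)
Your high-level plan---split $\tilde Z_m$ into Brownian and noise parts, use Isserlis for the Gaussian part, and reduce parts~2 and~3 to a discrete martingale plus remainders via BDG---matches the skeleton of the paper's proof ($\tilde Z_m=\tilde Z_{1,m}+\tilde Z_{2,m}$, $(\ref{normal-prod-eq})$, and the $\Psi_{m,1}+\Psi_{m,2}+\Psi_{m,3}$ decomposition). However, there are two genuine gaps.

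First, the moment estimates for the noise quadratic form $\tilde Z_{2,m}^\top{\bf S}'\tilde Z_{2,m}$ do not follow from a generic Rosenthal or Hanson--Wright inequality in the way you sketch. The entries $(\tilde Z_{2,m})_i=\epsilon^{n,k}_i-\epsilon^{n,k}_{i-1}$ are \emph{dependent} (tridiagonal covariance) and the noise is only assumed to have bounded moments, not Gaussian or sub-Gaussian tails, so the standard concentration tools do not apply directly; and even when they do, they do not give the sharp $\bar R_n((b_n^{-4}k_n^7)\vee(b_n^{-2}k_n^4))$ needed in point~1. The crux of the paper's argument is the change of basis by the triangular matrix ${\bf A}_m$, which turns $\tilde Z_{2,m}$ into the vector $(\tilde\epsilon_{i,m}-\dot\epsilon_{i,m})_i$ whose entries are conditionally independent across $i$ up to the single anchor $\dot\epsilon$, and then the explicit elementwise bounds on $\hat{\bf S}=({\bf A}_m^{\top})^{-1}M_{m,\ast}^{-2}{\bf A}_m^{-1}$ and $({\bf A}_m^\top)^{-1}{\bf S}'{\bf A}_m^{-1}$ ((\ref{ZSZ4-est-eq3})--(\ref{ZSZ4-est-eq4})), combined with the counting argument in Lemma~\ref{ZSZ-est-aux-lemma}, that pin down the exponent $k_n^7$ rather than the na\"{i}ve $k_n^8$. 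Your statement that ``the spectral estimates in Section~\ref{noise-cov-property-subsection} collapse [the fourth-cumulant sum] into $\bar R_n(1)$'' is precisely the part that is hard; the paper does this in (\ref{ZSZ-est-eq2})--(\ref{ZSZ-est-eq3}) only after the ${\bf A}_m$ change of basis.

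Second, your explanation of why part~3 is sharper---``the leading Wiener--Wiener term cancels in expectation over $m$ and only the Wiener--noise cross piece survives''---is incorrect. The iterated Wiener--Wiener integral does not cancel; it lives inside $\Psi_{m,1}$ and contributes to the variance. The improvement for $q=2$ comes simply from the martingale $L^2$-isometry, which requires only $\sum_m E_m[\Psi_{m,1}^2]$, whereas $q\geq 4$ forces the BDG route through $E_\Pi[(\sum_m\Psi_{m,1}^2)^{q/2}]$ and hence the fourth-moment bound from part~1. Your proposed split $D'_{j,m}=r_n\mathcal E+(D'_{j,m}-r_n\mathcal E)$ addresses the trace estimates but does not touch either of these two issues.
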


\begin{proof}
See the appendix.
\end{proof}

\begin{discuss}
{\colorr
$k_n=o_p(b_n^{-3/4})$の下ではきついオーダーの評価が必要なので$q=2$の特殊な性質を使ってやるから分ける.
}
\end{discuss}

Now we obtain the {\colorg asymptotic} equivalence of $H_n$ and $\tilde{H}_n$.

\begin{lemma}\label{Hn-diff-lemma}
Assume $[B1]$, $[A2]$, and $[V]$. Then
\begin{equation*}
b_n^{-1/2}\sup_{\sigma\in\Lambda}|\partial_{\sigma}^j(H_n(\sigma,\hat{v}_n)-H_n(\sigma_{\ast},\hat{v}_n))-\partial_{\sigma}^j(\tilde{H}_n(\sigma,v_{\ast})-\tilde{H}_n(\sigma_{\ast},v_{\ast}))| \to^p 0,
\end{equation*}
and $b_n^{-1/4}(\partial_{\sigma}H_n(\sigma_{\ast},\hat{v}_n)-\partial_{\sigma}\tilde{H}_n(\sigma_{\ast},v_{\ast}))\to^p 0$
as $n\to\infty$ for $0\leq j\leq 3$. If $[B4]$ holds as well, then 
\begin{equation*}
E_{\Pi}\bigg[\bigg(b_n^{-1/2}\sup_{\sigma\in\Lambda}|\partial_{\sigma}^j(H_n(\sigma,\hat{v}_n)-H_n(\sigma_{\ast},\hat{v}_n))-\partial_{\sigma}^j(\tilde{H}_n(\sigma,v_{\ast})-\tilde{H}_n(\sigma_{\ast},v_{\ast}))|\bigg)^q\bigg]={\colord \bar{R}_n((b_n^{-5}k_n^7)^{q/4})}
\end{equation*}
for any $0\leq j\leq 3$ and $q>0$.
\end{lemma}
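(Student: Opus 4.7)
The plan is to interpolate by first swapping $\hat v_n$ for $v_\ast$ inside $H_n$ and then swapping $(Z_m, S_m)$ for $(\tilde Z_m, \tilde S_m)$. Writing $F_n(\sigma, v) := H_n(\sigma, v) - H_n(\sigma_\ast, v)$ and $\tilde F_n(\sigma) := \tilde H_n(\sigma, v_\ast) - \tilde H_n(\sigma_\ast, v_\ast)$, I would split
\[
\partial_\sigma^j\bigl(F_n(\sigma,\hat v_n) - \tilde F_n(\sigma)\bigr) = \partial_\sigma^j\bigl(F_n(\sigma,\hat v_n) - F_n(\sigma, v_\ast)\bigr) + \partial_\sigma^j\bigl(F_n(\sigma, v_\ast) - \tilde F_n(\sigma)\bigr),
\]
and treat the two pieces separately. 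In order to pass from pointwise (in $\sigma$) estimates to the uniform bound over $\sigma \in \Lambda$, I would invoke the Sobolev inequality $\sup_\sigma|G(\sigma)| \leq C\sum_{k=0,1}\bigl(\int_\Lambda |\partial_\sigma^k G|^p d\sigma\bigr)^{1/p}$ with $p > d$, which is why Lemma \ref{ZSZ-est} is formulated up to four $\sigma$-derivatives and for arbitrarily high moment order.

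For the first piece, a Taylor expansion in $v$ gives
\[
F_n(\sigma,\hat v_n) - F_n(\sigma,v_\ast) = \int_0^1 (\hat v_n - v_\ast)^\top \partial_v F_n(\sigma, v_\ast + t(\hat v_n - v_\ast))\,dt.
\]
Under $[V]$, $|\hat v_n - v_\ast| = O_p(b_n^{-1/2})$. Expanding $\partial_\sigma^{j'}\partial_v F_n$ into a sum over $m$ of quadratic-in-$Z_m$ expressions plus trace terms, Lemma \ref{ZSZ-est}(1) controls each summand and summation over $m \leq \ell_n$ followed by the Sobolev step yields $\sup_\sigma|\partial_\sigma^{j'}\partial_v F_n(\sigma,\tilde v)| = O_p(b_n^{1/2})$ uniformly in $\tilde v$ near $v_\ast$, so the first piece is of order $b_n^{-1/2}\cdot O_p(b_n^{-1/2})\cdot O_p(b_n^{1/2}) = o_p(1)$. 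For the second piece, I would use
\[
Z_m^\top S_m^{-1} Z_m - \tilde Z_m^\top \tilde S_m^{-1} \tilde Z_m = (Z_m - \tilde Z_m)^\top S_m^{-1}(Z_m + \tilde Z_m) + \tilde Z_m^\top S_m^{-1}(\tilde S_m - S_m)\tilde S_m^{-1}\tilde Z_m
\]
together with $\log\det S_m - \log\det \tilde S_m = \int_0^1 \mathrm{tr}\bigl((\tilde S_m + u(S_m - \tilde S_m))^{-1}(S_m - \tilde S_m)\bigr)du$ and their $\sigma$-derivatives. The matrix difference $S_m - \tilde S_m$ arises only from replacing $\hat X_m$ by $X_{s_{m-1}}$ in $b$ and is controlled by smoothness of $b$ (condition 1 of $[A1]$), the decomposition of $X$ (condition 6 of $[A1]$), and the noise-averaging bound (condition 5 of $[A1]$). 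The vector difference $Z_m - \tilde Z_m$ collects the drift $\int_{I^k_{i,m}}\mu_s\,ds$ and the oscillation of $b(s,X_s,\sigma_\ast)$ across each $I^k_{i,m}$, whose $L^q$-sizes are bounded by It\^o applied to $b^{(1)}_t$. The $L^q$-moments of $\sum_m$ of these contributions are then exactly what Lemma \ref{ZSZ-est}(2)(3) deliver.

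The main obstacle is the delicate balancing between the exponents produced by Sobolev and those from Lemma \ref{ZSZ-est}: part (2) gives $E_\Pi[|\sum_m(Z_m - \tilde Z_m)^\top\mathbf{S}'(Z_m + \tilde Z_m)|^q] = \bar R_n((b_n^{-3}k_n^7)^{q/4})$, so after the $b_n^{-1/2}$ normalization and Sobolev with large $p$ the resulting error is $O_p(b_n^{-5/4}k_n^{7/4})$, which vanishes precisely under the standing hypothesis $k_n b_n^{-2/3+\epsilon}\to 0$; this is exactly the source of the worsened rate compared to \cite{glo-jac01b} noted in the earlier remark. For the sharper pointwise assertion at $\sigma = \sigma_\ast$ with normalization $b_n^{-1/4}$, I would avoid Sobolev and instead use the tighter second-moment bound of Lemma \ref{ZSZ-est}(3), combined with the centering identity $E_m[\tilde Z_m^\top \partial_\sigma \tilde S_m(\sigma_\ast,v_\ast)^{-1}\tilde Z_m + \partial_\sigma \log\det \tilde S_m(\sigma_\ast,v_\ast)] = 0$ noted just after the definition of $\tilde H_n$, so that the $m$-sum behaves like a discrete martingale of the required $b_n^{-1/4}$ size via Burkholder. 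Finally, the moment statement under $[B4]$ follows by repeating the same chain of estimates while tracking $L^q$-norms of $\hat v_n - v_\ast$ rather than tightness; the $b_n^{-1/2}$ normalization absorbed into the $q$-th power yields exactly the stated rate $\bar R_n((b_n^{-5}k_n^7)^{q/4})$.
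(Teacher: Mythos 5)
Your decomposition and toolkit coincide with the paper's: the same split into a $v$-swap and a $(Z_m,S_m)\to(\tilde Z_m,\tilde S_m)$ swap, the same algebraic identity $(Z_m-\tilde Z_m)^{\top}S_m^{-1}(Z_m+\tilde Z_m)+\tilde Z_m^{\top}(S_m^{-1}-\tilde S_m^{-1})\tilde Z_m$ (the paper's $\hat{\Psi}_{1,n}$ and $\hat{\Psi}_{2,n}$, applied to the $\sigma$-differenced quantities), the same Sobolev device to pass to the supremum over $\sigma$, the same appeal to Lemma~\ref{ZSZ-est}(2) for the dominant term and to Lemma~\ref{ZSZ-est}(3) for the $b_n^{-1/4}$-normalized score difference at $\sigma_{\ast}$, and the same bookkeeping for the $[B4]$ moment version, including the identification of $(b_n^{-5}k_n^7)^{q/4}$ as the dominant rate.

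The one place where your argument does not close as written is the $v$-swap. You expand to first order and evaluate $\partial_v\partial_{\sigma}^{j'}F_n$ at the random intermediate point $v_{\ast}+t(\hat v_n-v_{\ast})$, then invoke Lemma~\ref{ZSZ-est}(1); but that lemma only controls ${\bf S}'=\partial_{\sigma}^{k_1}\partial_x^{k_2}\partial_v^{k_3}{\bf S}^{-1}$ evaluated at $v=v_{\ast}$, so it gives nothing uniform in $\tilde v$. Moreover the claimed order $O_p(b_n^{1/2})$ for $\sup_{\sigma}|\partial_{\sigma}^{j'}\partial_vF_n|$ is optimistic: summing the per-block bound $\bar R_n(b_n^{-1}k_n^2)$ over $\ell_n$ blocks gives $O_p(k_n)$, which exceeds $b_n^{1/2}$ for the admissible $k_n$ (this would still suffice for the conclusion, since $b_n^{-1}k_n\to 0$, but only once uniformity in $v$ is secured). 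The paper sidesteps both issues by Taylor-expanding in $v$ to \emph{fourth} order around $v_{\ast}$: the coefficients of $(\hat v_n-v_{\ast})^j/j!$ for $j\le 3$ are evaluated at $v_{\ast}$, where Lemma~\ref{ZSZ-est} applies and the factors $(\hat v_n-v_{\ast})^j$ are $\sigma$-independent, while the intermediate-point evaluation is confined to the remainder, which is bounded crudely by $\bar R_n(b_nk_n\ell_n)$ and crushed by $(\hat v_n-v_{\ast})^4=O_p(b_n^{-2})$ (with $\limsup_nE[\hat v_n^{-q}]<\infty$ from $[B4]$ supplying the moment control needed in the $L^q$ version). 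You should either adopt this higher-order expansion or supply a separate uniform-in-$v$ moment bound; everything else in your outline matches the paper's proof.
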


\begin{discuss}
{\colorr
$\partial_v^i$は, LANを示すときに使うが, 代入している$v$の値が違うし, 時間ずらしは必要ないのでここに書くよりは後ろに書いて
このLemmaの一部と同様に示せると書く.
}
\end{discuss}

\begin{proof}
We first obtain
\begin{eqnarray}
&&H_n(\sigma,v_{\ast})-H_n(\sigma_{\ast},v_{\ast})-(\tilde{H}_n(\sigma,v_{\ast})-\tilde{H}_n(\sigma_{\ast},v_{\ast})) \nonumber \\
&=&-\frac{\sigma-\sigma_{\ast}}{2}\sum_m\bigg\{(Z_m-\tilde{Z}_m)^{\top}\int^1_0\partial_{\sigma}S^{-1}_m(\sigma_t,v_{\ast})dt(Z_m+\tilde{Z}_m)
+\tilde{Z}_m\int^1_0(\partial_{\sigma}S^{-1}_m(\sigma_t,v_{\ast})-\partial_{\sigma}\tilde{S}^{-1}_m(\sigma_t,v_{\ast}))dt\tilde{Z}_m \nonumber \\
&&\quad +\int^1_0\partial_{\sigma}\log\frac{\det S_m(\sigma_t,v_{\ast})}{\det \tilde{S}_m(\sigma_t,v_{\ast})}dt\bigg\} 
=:\hat{\Psi}_{1,n}(\sigma)+\hat{\Psi}_{2,n}(\sigma)+\hat{\Psi}_{3,n}(\sigma). \nonumber
\end{eqnarray}

We will give estimates for these quantities. Point 2 of Lemma~\ref{ZSZ-est} yields $\sup_{\sigma}E_{\Pi}[|b_n^{-1/2}\partial_{\sigma}^j\hat{\Psi}_{1,n}|^q]={\colord \bar{R}_n((b_n^{-5}k_n^7)^{q/4})}$ for $0\leq j\leq 4$ and $q>0$,
and consequently by Sobolev's inequality $E_{\Pi}[\sup_{\sigma}|b_n^{-1/2}\partial_{\sigma}^j\hat{\Psi}_{1,n}|^q]={\colord \bar{R}_n((b_n^{-5}k_n^7)^{q/4})}$ as $n\to\infty$ for $0\leq j\leq 3$ and $q>0$.

Let ${\bf S}^{(1)}=\int^1_0\int^1_0\partial_x\partial_{\sigma}{\bf S}^{-1}(s_{m-1},s\hat{X}_m+(1-s)X_{s_{m-1}},\sigma_t,v_{\ast})dsdt$,
${\bf S}^{(2)}=\int^1_0\partial_x\partial_{\sigma}{\bf S}^{-1}(s_{m-1},X_{s_{m-2}},\sigma_t,v_{\ast})dt$ and ${\bf S}^{(3)}={\bf S}(s_{m-1},X_{s_{m-2}},\sigma_{\ast},v_{\ast})$.
Then we obtain
\begin{eqnarray}
E_{\Pi}[|\hat{\Psi}_{2,n}|^q] &=&2^{-q}E_{\Pi}\bigg[\bigg(\sum_m\tilde{Z}_m^{\top}{\bf S}^{(1)}\tilde{Z}_m(\hat{X}_m-X_{s_{m-1}})(\sigma-\sigma_{\ast})\bigg)^q\bigg] \nonumber \\
&\leq & CE_{\Pi}\bigg[\bigg(\sum_m{\rm tr}({\bf S}^{(1)}E_m[\tilde{Z}_m\tilde{Z}_m^{\top}])(\hat{X}_m-X_{s_{m-1}})\bigg)^q\bigg] \nonumber \\
&&+CE_{\Pi}\bigg[\bigg(\sum_m{\rm tr}({\bf S}^{(1)}\bar{E}_m[\tilde{Z}_m\tilde{Z}_m^{\top}])^2(\hat{X}_m-X_{s_{m-1}})^2\bigg)^{q/2}\bigg] \nonumber \\
&\leq & CE_{\Pi}\bigg[\bigg(\sum_m{\rm tr}({\bf S}^{(2)}{\bf S}^{(3)})(\hat{X}_m-X_{s_{m-1}})\bigg)^q\bigg]+\bar{R}_n((\ell_nb_n^{-1/2}k_n\ell_n^{-1})^q) +\bar{R}_n(\ell_n^{q/2}b_n^{-q/2}k_n^{q}\ell_n^{-q/2}) \nonumber \\
&=&\bar{R}_n(\ell_n^{q/2}b_n^{-q/2}k_n^q\ell_n^{-q/2})+\bar{R}_n(\ell_n^qb_n^{-q/2}k_n^q\ell_n^{-q})+\bar{R}_n(b_n^{-q/2}k_n^q)=\bar{R}_n(b_n^{-q/2}k_n^q) \nonumber
\end{eqnarray}
for any $q>0$, by the Burkholder--Davis--Gundy {\colorg inequality}, $E_{m-1}[\hat{X}_m-X_{s_{m-1}}]=\bar{R}_n(\ell_n^{-1})$ and $E_{\Pi}[|\hat{X}_m-X_{s_{m-1}}|^q]^{1/q}=\bar{R}_n(\ell_n^{-1/2})$.
\begin{discuss}
{\colorr ${\rm tr}({\bf S}^{(1)}\tilde{Z}_m\tilde{Z}_m^{\top})=\tilde{Z}_m^{\top}{\bf S}^{(1)}\tilde{Z}_m$も使う.}
\end{discuss}
Similar estimates for $\partial_{\sigma}^j\hat{\Psi}_{2,n}$ and Sobolev's inequality yield $E_{\Pi}[\sup_{\sigma}|b_n^{-1/2}\partial_{\sigma}^j\hat{\Psi}_{2,n}|^q]=\bar{R}_n(b_n^{-q}k_n^q)$ for $0\leq j\leq 3$ and $q>0$.

Similarly, we have $E_{\Pi}[\sup_{\sigma}|b_n^{-1/2}\partial_{\sigma}^j\hat{\Psi}_{3,n}|^q]=\bar{R}_n(b_n^{-q}k_n^q)$,
and therefore we obtain $E_{\Pi}[(b_n^{-1/2}\sup_{\sigma}|\partial_{\sigma}^j(H_n(\sigma,v_{\ast})-H_n(\sigma_{\ast},v_{\ast}))-\partial_{\sigma}^j(\tilde{H}_n(\sigma,v_{\ast})-\tilde{H}_n(\sigma_{\ast},v_{\ast}))|)^q]={\colord \bar{R}_n((b_n^{-5}k_n^7)^{q/4})}$ for $0\leq j\leq 3$.

Taylor's formula yields
\begin{eqnarray}
&&H_n(\sigma,\hat{v}_n)-H_n(\sigma_{\ast},\hat{v}_n)-(H_n(\sigma,v_{\ast})-H_n(\sigma_{\ast},v_{\ast})) \nonumber \\
&=&-\frac{1}{2}\sum_m\int^1_0\bigg\{Z_m^{\top}(\partial_{\sigma}S_m^{-1}(\sigma_t,\hat{v}_n)-\partial_{\sigma}S_m^{-1}(\sigma_t,v_{\ast}))Z_m+\partial_{\sigma}\log\frac{\det S(\sigma_t,\hat{v}_n)}{\det S(\sigma_t,v_{\ast})}\bigg\}dt(\sigma-\sigma_{\ast}) \nonumber \\
&=&-\frac{1}{2}\sum_m\sum_{j=1}^3\int^1_0\bigg\{Z_m^{\top}\partial_v^j\partial_{\sigma}S_m^{-1}(\sigma_t,v_{\ast})Z_m+\partial_v^j\partial_{\sigma}\log \det S(\sigma_t,v_{\ast})\bigg\}dt(\sigma-\sigma_{\ast})\frac{(\hat{v}_n-v_{\ast})^j}{j!} \nonumber \\
&&-\frac{1}{2}\sum_m\int^1_0\int^1_0\bigg\{Z_m^{\top}\partial_v^4\partial_{\sigma}S_m^{-1}(\sigma_t,v_s)Z_m+\partial_v^4\partial_{\sigma}\log \det S(\sigma_t,v_s)\bigg\}dsdt(\sigma-\sigma_{\ast})\frac{(\hat{v}_n-v_{\ast})^4}{4!}, \nonumber 
\end{eqnarray}
where $v_s=s\hat{v}_n+(1-s)v_{\ast}$.

Then we obtain 
\begin{eqnarray}
&&b_n^{-1/4}|H_n(\sigma,\hat{v}_n)-H_n(\sigma_{\ast},\hat{v}_n)-(H_n(\sigma,v_{\ast})-H_n(\sigma_{\ast},v_{\ast}))| \nonumber \\
&=&\bar{R}_n(b_n^{-1/4}b_n^{-1}k_n^2\ell_n)\times O_p(b_n^{-1/2})+\bar{R}_n(b_n^{-1/4}(b_nk_n\ell_n))\times O_p(b_n^{-2}) \to^p0, \nonumber
\end{eqnarray}
by Lemma~\ref{ZSZ-est}, $[V]$, and the equation $\partial_{\sigma}\log\det S=-{\rm tr}(\partial_{\sigma}SS^{-1})$.
Similarly, we obtain \\
$b_n^{-1/4}|\partial_{\sigma}^jH_n(\sigma,\hat{v}_n)-\partial_{\sigma}^jH_n(\sigma,v_{\ast})|\to^p 0$ for $1\leq j\leq 4$. 
Sobolev's inequality yields \\ $\sup_{\sigma}(b_n^{-1/4}|\partial_{\sigma}^j(H_n(\sigma,\hat{v}_n)-H_n(\sigma_{\ast},\hat{v}_n))-\partial_{\sigma}^j(H_n(\sigma,v_{\ast})-H_n(\sigma_{\ast},v_{\ast}))|) \to^p0$ for $0\leq j\leq 3$ and consequently we obtain
$\sup_{\sigma}(b_n^{-1/2}|\partial_{\sigma}^j(H_n(\sigma,\hat{v}_n)-H_n(\sigma_{\ast},\hat{v}_n))-\partial_{\sigma}^j(\tilde{H}_n(\sigma,v_{\ast})-\tilde{H}_n(\sigma_{\ast},v_{\ast}))|) \to^p0$ for $0\leq j\leq 3$.

Moreover, point 3 of Lemma~\ref{ZSZ-est} yields $E_{\Pi}[|b_n^{-1/4}\partial_{\sigma}\hat{\Psi}_{1,n}(\sigma_{\ast})|^2]=\bar{R}_n((b_n^{-3/2}k_n^2)\vee (b_n^{-5/2}k_n^{7/2}))\to^p0$,
and consequently $b_n^{-1/4}(\partial_{\sigma}H_n(\sigma_{\ast},\hat{v}_n)-\partial_{\sigma}\tilde{H}_n(\sigma_{\ast},v_{\ast}))\to^p0$.

If further $[B4]$ is satisfied, then for any $q>0$, we obtain 
\begin{eqnarray}
&&\sup_{\sigma}E_{\Pi}[b_n^{-q/2}|H_n(\sigma,\hat{v}_n)-H_n(\sigma_{\ast},\hat{v}_n)-(H_n(\sigma,v_{\ast})-H_n(\sigma_{\ast},v_{\ast}))|^q] \nonumber \\
&=&\bar{R}_n(b_n^{-q/2}b_n^{-q}k_n^{2q}\ell_n^qb_n^{-q/2})+\bar{R}_n(b_n^{-q/2}(b_nk_n\ell_n)^qb_n^{-2q}) =\bar{R}_n(b_n^{-q}k_n^q), \nonumber
\end{eqnarray}
by Lemma~\ref{ZSZ-est}, $[V]$, and the equation $\partial_{\sigma}\log\det S=-{\rm tr}(\partial_{\sigma}SS^{-1})$.
Similarly, we obtain \\
$\sup_{\sigma}E_{\Pi}[b_n^{-q/2}|\partial_{\sigma}^jH_n(\sigma,\hat{v}_n)-\partial_{\sigma}^jH_n(\sigma,v_{\ast})|^q]=\bar{R}_n(b_n^{-q}k_n^q)$ for $1\leq j\leq 4$. 
\begin{discuss}
{\colorr
$(\hat{v}_n-v_{\ast})^j/j!$は$\sigma$に依らないから祖母レスを使うときのモーメント評価をする必要がない.
$\partial_v\partial_{\sigma}S_m^{-1}$にも$\hat{v}_n^{-1}$がでてくるが, 収束するから$O_p(1)$なのでOK. PLDでは$\hat{v}_n^{-1}$のモーメント評価が必要.
}
\end{discuss}
Sobolev's inequality yields \\ $E_{\Pi}[\sup_{\sigma}(b_n^{-1/2}|\partial_{\sigma}^j(H_n(\sigma,\hat{v}_n)-H_n(\sigma_{\ast},\hat{v}_n))-\partial_{\sigma}^j(H_n(\sigma,v_{\ast})-H_n(\sigma_{\ast},v_{\ast}))|)^q] =\bar{R}_n(b_n^{-q}k_n^q)$ for $0\leq j\leq 3$, which completes the proof.
\end{proof}
\begin{discuss}
{\colorr
$b$: constの時は$\sigma_{\ast}$に対する一様評価も得られる$(H\equiv \tilde{H})$
}
\end{discuss}

\section{The limit of the quasi-likelihood function}\label{Hn-limit-section}

We complete the proof of Proposition~\ref{Hn-lim} in this section.
To do so, it is essential to specify the asymptotic behavior of some functions of approximate covariance matrix $\tilde{S}_m$, as seen in (\ref{Hn-diff-eq1}).
Unlike previous studies by Gloter and Jacod~\cite{glo-jac01a,glo-jac01b}, the eigenvalues of the diagonal blocks $\tilde{D}_{1,m}$ and $\tilde{D}_{2,m}$ of $\tilde{S}_m$ are not identified because of the irregular sampling,
and even the sizes of $\tilde{D}_{1,m}$ and $\tilde{D}_{2,m}$ are different.
These problems make it difficult to deduce asymptotic behaviors of the right-hand side of (\ref{Hn-diff-eq1}).
To solve these problems, in Lemma~\ref{D1-change}, we approximate $\tilde{D}_{j,m}$ by $\dot{D}_{j,m}$, which is a kind of local averaged versions of $\tilde{D}_{j,m}$ and has similar properties to the covariance matrix of equidistant sampling scheme. 
Moreover, we can also change the sizes of $\tilde{D}_{j,m}$ using some specific properties of $\dot{D}_{j,m}$. 
We deal with this in Lemma~\ref{Hn-lim-lemma2}, and show convergence of some trace functions that appear in a decomposition of $H_n$.
The decomposition (\ref{dotD-eq}) and the nice properties of $p_i$ in Lemma~\ref{ddotD-properties} are essential in the proofs.

Lemma~\ref{Hn-diff-lemma} yields
\begin{eqnarray}
&&b_n^{-1/4}\partial_{\sigma}^jH_n(\sigma,\hat{v}_n) \nonumber \\
&=&b_n^{-1/4}\partial_{\sigma}^j\tilde{H}_n(\sigma,v_{\ast})+o_p(1) \nonumber \\
&=&-\frac{1}{2}b_n^{-\frac{1}{4}}\sum_m(E_m[\tilde{Z}_m^{\top}\partial_{\sigma}^j\tilde{S}_m^{-1}\tilde{Z}_m]+\partial_{\sigma}^j\log \det \tilde{S}_m)
-\frac{1}{2}b_n^{-\frac{1}{4}}\sum_m(\tilde{Z}_m^{\top}\partial_{\sigma}^j\tilde{S}_m^{-1}\tilde{Z}_m-E_m[\tilde{Z}_m^{\top}\partial_{\sigma}^j\tilde{S}_m^{-1}\tilde{Z}_m])+o_p(1) \nonumber 
\end{eqnarray}
for $1\leq j\leq 4$. Together with the relation $E_m[\tilde{Z}_m^{\top}\partial_{\sigma}^j\tilde{S}_m^{-1}\tilde{Z}_m]={\rm tr}(\partial_{\sigma}^j\tilde{S}_m^{-1}\tilde{S}_{m,\ast})$, we obtain
\begin{eqnarray}\label{Hn-diff-eq1}
b_n^{-\frac{1}{2}}\partial_{\sigma}^j(H_n(\sigma,\hat{v}_n)-H_n(\sigma_{\ast},\hat{v}_n))
&=&-\frac{1}{2}b_n^{-\frac{1}{2}}\sum_m\partial_{\sigma}^j\bigg({\rm tr}(\tilde{S}_m^{-1}\tilde{S}_{m,\ast}-\mathcal{E})+\log \frac{\det \tilde{S}_m}{\det \tilde{S}_{m,\ast}}\bigg)+o_p(1), 
\end{eqnarray}
since the residual terms are $o_p(1)$ by Lemma~\ref{ZSZ-est}.
\begin{discuss}
{\colorr
\begin{eqnarray}
E[|(\mbox{residual \ terms})|^2]&\leq &Cb_n^{-1}\sum_mE[(\tilde{Z}_m^{\top}\partial_{\sigma}^j(\tilde{S}_m^{-1}-\tilde{S}^{-1}_{m,\ast})\tilde{Z}_m)^2] \nonumber \\
&=&3Cb_n^{-1}\sum_mE[{\rm tr}(\partial_{\sigma}^j(\tilde{S}_m^{-1}-\tilde{S}^{-1}_{m,\ast})\tilde{S}_{m,\ast}\partial_{\sigma}^v(\tilde{S}_m^{-1}-\tilde{S}^{-1}_{m,\ast})\tilde{S}_{m,\ast})]=o(1). \nonumber
\end{eqnarray}
}
\end{discuss}

We first investigate asymptotics of ${\rm tr}(\tilde{S}_m^{-1}\tilde{S}_{m,\ast}-\mathcal{E})$.
Let $\tilde{L}=\{\tilde{b}^1_m\cdot \tilde{b}^2_m|I^1_{i,m}\cap I^2_{j,m}|\}_{i,j}$ and $\tilde{G}=\{|I^1_{i,m}\cap I^2_{j,m}|\}_{i,j}$. Then since
\begin{eqnarray}
\tilde{S}_m^{-1}&=&
\tilde{D}_m^{-1/2}\sum_{p=0}^{\infty}(-1)^p\left(
\begin{array}{ll}
0 & \tilde{D}_{1,m}^{-1/2}\tilde{L}\tilde{D}_{2,m}^{-1/2} \\
\tilde{D}_{2,m}^{-1/2}\tilde{L}^{\top}\tilde{D}_{1,m}^{-1/2} & 0 \\
\end{array}
\right)^p\tilde{D}_m^{-1/2} \nonumber \\
&=&\sum_{p=0}^{\infty}\left(
\begin{array}{ll}
\tilde{D}_{1,m}^{-1/2}(\tilde{D}_{1,m}^{-1/2}\tilde{L}\tilde{D}_{2,m}^{-1}\tilde{L}^{\top}\tilde{D}_{1,m}^{-1/2})^p\tilde{D}_{1,m}^{-1/2} & -\tilde{D}_{1,m}^{-1}\tilde{L}\tilde{D}_{2,m}^{-1/2}(\tilde{D}_{2,m}^{-1/2}\tilde{L}^{\top}\tilde{D}_{1,m}^{-1}\tilde{L}\tilde{D}_{2,m}^{-1/2})^p\tilde{D}_{2,m}^{-1/2} \\
-\tilde{D}_{2,m}^{-1}\tilde{L}^{\top}\tilde{D}_{1,m}^{-1/2}(\tilde{D}_{1,m}^{-1/2}\tilde{L}\tilde{D}_{2,m}^{-1}\tilde{L}^{\top}\tilde{D}_{1,m}^{-1/2})^p\tilde{D}_{1,m}^{-1/2} & \tilde{D}_{2,m}^{-1/2}(\tilde{D}_{2,m}^{-1/2}\tilde{L}^{\top}\tilde{D}_{1,m}^{-1}\tilde{L}\tilde{D}_{2,m}^{-1/2})^p\tilde{D}_{2,m}^{-1/2}
\end{array} 
\right), \nonumber
\end{eqnarray}
we have
\begin{eqnarray}\label{Hn-diff-eq2}
&&{\rm tr}(\tilde{S}_m^{-1}\tilde{S}_{m,\ast}-\mathcal{E}) \nonumber \\
&=&\sum_{p=0}^{\infty}\big\{(|\tilde{b}^1_{m,\ast}|^2-|\tilde{b}^1_m|^2){\rm tr}((\tilde{D}_{1,m}^{-1}\tilde{L}\tilde{D}_{2,m}^{-1}\tilde{L}^{\top})^p\tilde{D}_{1,m}^{-1}D'_{1,m})
+(|\tilde{b}^2_{m,\ast}|^2-|\tilde{b}^2_m|^2){\rm tr}((\tilde{D}_{2,m}^{-1}\tilde{L}^{\top}\tilde{D}_{1,m}^{-1}\tilde{L})^p\tilde{D}_{2,m}^{-1}D'_{2,m}) \nonumber \\
&&-2(\tilde{b}^1_{m,\ast}\cdot \tilde{b}^2_{m,\ast}-\tilde{b}^1_m\cdot \tilde{b}^2_m){\rm tr}(\tilde{D}_{1,m}^{-1}\tilde{L}\tilde{D}_{2,m}^{-1}(\tilde{L}^{\top}\tilde{D}_{1,m}^{-1}\tilde{L}\tilde{D}_{2,m}^{-1})^p\tilde{G}^{\top})\big\}.
\end{eqnarray}
{\colord Note that $\lVert \tilde{G}\rVert \vee \lVert \tilde{G}^{\top}\rVert\leq r_n$ by Lemma 2 in~\cite{ogi-yos14}.}

We will see the limit of each term on the right-hand side.
Let $\hat{a}_m^j=a_{s_{m-1}}^j$ and $\dot{D}_{j,m}=|\tilde{b}^j_m|^2b_n^{-1}(\hat{a}_m^j)^{-1}\mathcal{E}+v_{j,\ast}M_{j,m}$.
It is difficult to calculate each element or eigenvalue of $\tilde{D}_{j,m}^{-1}$.
However, we can apply (\ref{dotD-eq}) to $\dot{D}_{j,m}^{-1}$, and hence we can calculate its elements.
Therefore, we replace $\tilde{D}_{j,m}$ by $\dot{D}_{j,m}^{-1}$ {\colorg using} the following lemma.

\begin{lemma}\label{D1-change}
Let $j\in\{1,2\}$ and $A_{n,m}$ be a $k^j_m\times k^j_m$ matrix for $1\leq m\leq \ell_n$. 
Assume $[B1]$, $[A2]$ and that all elements of $A_{n,m}$ are nonnegative 
and $\lVert A_{n,m}\rVert\leq 1$ for any $m$. Then
\begin{equation*}
{\rm tr}(\partial_{\sigma}^k\tilde{D}_{j,m}^{-1}A_{n,m})={\rm tr}(\partial_{\sigma}^k\dot{D}_{j,m}^{-1}A_{n,m})+\dot{R}_n(b_n^{3/2}\ell_n^{-1})
\end{equation*}
for $0\leq k\leq 3$. {\colord If further $[B2]$ is satisfied, then
\begin{equation*}
\sup_{\sigma}|{\rm tr}(\partial_{\sigma}^k\tilde{D}_{j,m}^{-1}A_{n,m})-{\rm tr}(\partial_{\sigma}^k\dot{D}_{j,m}^{-1}A_{n,m})|=\underbar{R}_n(b_n^{3/2}\ell_n^{-1})
\end{equation*}
for $0\leq k\leq 3$.}
\end{lemma}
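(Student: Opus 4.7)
My plan is to reduce the trace difference to a sum against the fluctuations $d_i := |I^j_{i,m}| - b_n^{-1}(\hat{a}^j_m)^{-1}$ and then exploit the local averaging in $[A2]$ via summation by parts, using the explicit off-diagonal decay of the entries of $\dot{D}_{j,m}^{-1}$ furnished by (\ref{dotD-eq}) and Lemma~\ref{ddotD-properties}. First, for $k=0$, since $\tilde{D}_{j,m}-\dot{D}_{j,m} = |\tilde{b}^j_m|^2 {\rm diag}(d_i)$, the resolvent identity $\tilde{D}_{j,m}^{-1}-\dot{D}_{j,m}^{-1} = -\dot{D}_{j,m}^{-1}(\tilde{D}_{j,m}-\dot{D}_{j,m})\tilde{D}_{j,m}^{-1}$ and the cyclic property of trace give
\[ {\rm tr}((\tilde{D}_{j,m}^{-1}-\dot{D}_{j,m}^{-1})A_{n,m}) = -|\tilde{b}^j_m|^2\sum_i d_i F_i,\quad F_i := (\tilde{D}_{j,m}^{-1}A_{n,m}\dot{D}_{j,m}^{-1})_{ii}. \]

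Next, cover $[s_{m-1},s_m)$ by a partition $\{[s'_{n,l},s''_{n,l})\}\in \mathcal{S}_{\eta}$ of subintervals of length $\sim b_n^{-1+\eta}$ and group indices $i$ according to which subinterval contains $S^{n,j}_{i-1+K^j_{m-1}}$. The block sum $\sum_{i\in l}d_i$ equals the block length (up to boundary error $O(r_n)$) minus the block count times $b_n^{-1}(\hat{a}^j_m)^{-1}$. Approximating the count by $b_n(s''_{n,l}-s'_{n,l})a^j_{s'_{n,l}}$ via $[A2]$, and using the H\"older continuity $|a^j_{s'_{n,l}}-a^j_{s_{m-1}}|\le C(k_nb_n^{-1})^{\dot{\eta}}$, gives a block-level estimate of order $b_n^{-1+\eta}((k_nb_n^{-1})^{\dot{\eta}} + k_n^{-1}b_n^{1/2}) + r_n$. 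Summing over the $\sim k_nb_n^{-\eta}$ blocks in $[s_{m-1},s_m)$ bounds the prefix sums $D_k := \sum_{i\le k}d_i$ by $k_nb_n^{-1}(k_nb_n^{-1})^{\dot{\eta}} + b_n^{-1/2} + k_nb_n^{-\eta}r_n$, which is $\dot{R}_n$-small of an order well below $k_n$.

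Now perform Abel summation: $\sum_i d_iF_i = D_{k^j_m}F_{k^j_m} - \sum_k D_k(F_{k+1}-F_k)$. The boundary term is controlled by the smallness of $D_{k^j_m}$ and the pointwise bound $|F_i|\le\|(\tilde{D}_{j,m}^{-1})_{:,i}\|\,\|(\dot{D}_{j,m}^{-1})_{:,i}\|$, whose columnwise norms are estimated by (\ref{tr-lim-est}) and the off-diagonal decay of $\dot{D}_{j,m}^{-1}$ entries from Lemma~\ref{ddotD-properties}. For the telescoping part, write $F_{k+1}-F_k$ in terms of row and column shifts of the two inverses against $A_{n,m}$, and bound $\sum_k|F_{k+1}-F_k|$ using the nonnegativity $A_{n,m}\ge 0$ together with $\|A_{n,m}\|\le 1$ and the decay rate $(1+\sqrt{\epsilon_m})^{-|k-l|}$ (with $\epsilon_m = |\tilde{b}^j_m|^2(v_{j,\ast}\hat{a}^j_m)^{-1}b_n^{-1}$) from part 3 of Lemma~\ref{ddotD-properties}, which provides a telescoping structure with total variation controlled by trace-type norms of the two inverses. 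Multiplying the $|D_k|$ rate by this variation bound and by $|\tilde{b}^j_m|^2 = O(1)$ yields $\dot{R}_n(b_n^{3/2}\ell_n^{-1})$.

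For $k\ge 1$, iterate the resolvent identity: $\partial_{\sigma}\tilde{D}_{j,m}^{-1} = -(\partial_{\sigma}|\tilde{b}^j_m|^2)\tilde{D}_{j,m}^{-1}{\rm diag}(|I^j_{i,m}|)\tilde{D}_{j,m}^{-1}$, and using ${\rm diag}(|I^j_{i,m}|) = |\tilde{b}^j_m|^{-2}(\tilde{D}_{j,m}-v_{j,\ast}M_{j,m})$ (with the analogous identity for $\dot{D}_{j,m}$), the $\partial_{\sigma}^k$-difference reduces to a linear combination of $k=0$-type traces against modified matrices such as $M_{j,m}^a\tilde{D}_{j,m}^{-1}M_{j,m}^{a'}A_{n,m}$; these inherit nonnegativity and controlled norms after suitable rescaling, so the $k=0$ argument applies uniformly in $\sigma$. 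The principal obstacle is the variation estimate $\sum_k|F_{k+1}-F_k|$: since $A_{n,m}$ is only assumed nonnegative with $\|A_{n,m}\|\le 1$, a naive Lipschitz bound in $i$ (using $\max|F_i|=O(b_n^{3/2})$ times $k_n$) is much too large, and the successful estimate requires carefully combining the exponential off-diagonal decay of $\dot{D}_{j,m}^{-1}$ via the $p_j$-products in Lemma~\ref{ddotD-properties} with the nonnegativity hypothesis on $A_{n,m}$ to obtain an $\ell^1$-type bound of the right order.
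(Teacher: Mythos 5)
Your reduction to $\sum_i d_iF_i$ via the first-order resolvent identity, and your use of $[A2]$ to make the prefix sums $D_k$ small, are sound in outline, but the proof does not close: the whole weight of the argument is shifted onto the total-variation estimate for $F_i=(\tilde{D}_{j,m}^{-1}A_{n,m}\dot{D}_{j,m}^{-1})_{ii}$, which you explicitly flag as ``the principal obstacle'' and then do not prove. This is not a detail one can wave at. A crude bound gives $\max_i|F_i|=O(b_n^{3/2})$ and hence $\sum_k|F_{k+1}-F_k|=O(k_nb_n^{3/2})$, which overshoots the required order by a factor $b_n^{1/2}$; even the sharper entrywise computation (increments of $\dot{D}_{j,m}^{-1}$ columns are $O(b_n^{1/2})$ in $\ell^2$) only improves this to $O(k_nb_n^{5/4})$, still short by $b_n^{1/4}$. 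To do better one needs genuine cancellation in the increments of $F_i$ exploiting both the nonnegativity of $A_{n,m}$ and quantitative off-diagonal decay of \emph{both} inverses --- and here lies a second gap: Lemma~\ref{ddotD-properties} and (\ref{dotD-eq}) give explicit entries only for matrices of the form $a\mathcal{E}+M_{j,m}$, i.e.\ for $\dot{D}_{j,m}^{-1}$ and $\check{D}_{j,m}^{-1}$, not for $\tilde{D}_{j,m}^{-1}$, whose diagonal part is the irregular ${\rm diag}(|I^j_{i,m}|)$. Your appeal to ``(\ref{tr-lim-est}) and the off-diagonal decay of $\dot{D}_{j,m}^{-1}$'' to control columns of $\tilde{D}_{j,m}^{-1}$ therefore has no support in the stated lemmas; establishing entrywise decay for $\tilde{D}_{j,m}^{-1}$ requires its own expansion.

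This is in fact exactly where the paper's proof diverges from yours: instead of a single resolvent step, it expands $\tilde{D}_{j,m}^{-1}$ as the full Neumann series around $\check{D}_{j,m}=|\tilde{b}^j_m|^2r_n\mathcal{E}+v_{j,\ast}M_{j,m}$ (valid since $\lVert\check{D}_{j,m}^{-1/2}(\check{D}_{j,m}-\tilde{D}_{j,m})\check{D}_{j,m}^{-1/2}\rVert\leq 1-\underbar{r}_n/r_n<1$), so that every matrix in every term has explicit entries given by products of the $p_i$'s from (\ref{dotD-eq}). The local averaging from $[A2]$ is then performed directly on the scalar weights $1/P_{l,l,i,i'}$: because all summands are nonnegative and the ratios $P_{\max\mathcal{I}_m(\tilde{l}),\cdot}/P_{l_q,\cdot}$ are uniformly $1+\dot{R}_n(\cdot)$ over each block (point 3 of Lemma~\ref{ddotD-properties}), the block sums of $(\tilde{D}_{j,m}-\dot{D}_{j,m})_{ll}$ can be replaced by their averages with two-sided multiplicative errors $\mathcal{T}^{n,p}_{m,i}$, summable over $p$. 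Your summation-by-parts idea is an attractive shortcut, but as written it replaces the paper's controlled, term-by-term averaging with an unproved $\ell^1$ variation bound that is precisely the hard analytic content of the lemma; until you supply that estimate (or revert to an expansion in which $\tilde{D}_{j,m}^{-1}$ never appears as an opaque factor), the proof has a genuine gap. The same issue propagates to $k\geq1$, where your reduction additionally produces traces against $M_{j,m}\tilde{D}_{j,m}^{-1}A_{n,m}$, which is not entrywise nonnegative, so the $k=0$ case cannot be invoked verbatim as you claim.
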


\begin{proof}
We first consider the case where $k=0$. (\ref{dotD-eq}),
(\ref{Sm-properties-lemma-eq1}), and the equation above it yield
\begin{eqnarray}\label{trD-eq}
&&{\rm tr}(\tilde{D}_{j,m}^{-1}A_{n,m}) \nonumber \\
&=&\sum_{p=0}^{\infty}{\rm tr}\left(\check{D}_{j,m}^{-1/2}(\check{D}_{j,m}^{-1/2}(\check{D}_{j,m}-\tilde{D}_{j,m})\check{D}_{j,m}^{-1/2})^p\check{D}_{j,m}^{-1/2}A_{n,m}\right) \nonumber \\
&=&\frac{1}{v_{j,\ast}^{p+1}}\sum_{p=0}^{\infty}\sum_{i_1,\cdots,i_{p+1}}\frac{1}{p_{i_1}\cdots p_{i_{p+1}}(|\tilde{b}^j_m|^2r_nv_{j,\ast}^{-1})} 
\sum_{\substack{l_q\leq i_q\wedge i_{q+1}\\ l'\leq i_{p+1},l''\leq i_1}}\frac{(A_{n,m})_{l',l''}}{P_{l',l'',i_{p+1},i_1}}\prod_{q=1}^p\frac{(\check{D}_{j,m}-\tilde{D}_{j,m})_{l_q,l_q}}{P_{l_q,l_q,i_q,i_{q+1}}}, 
\end{eqnarray}
where $P_{k_1,k_2,l_1,l_2}=\prod_{m_1;k_1\leq m_1\leq l_1-1} p_{m_1} \prod_{m_2;k_2\leq m_2\leq l_2-1} p_{m_2}(|\tilde{b}^j_m|^2r_nv_{j,\ast}^{-1})$.

Then the nice properties of $p_i$ in Lemma~\ref{ddotD-properties} will lead us to the desired results.
Roughly speaking, we have $1\leq p_i(|\tilde{b}^j_m|^2r_nv_{j,\ast}^{-1})\sim 1+Cb_n^{-1/2}$ for sufficiently large $i$.
This means that $P_{k_1,k_2,l_1,l_2}$ and $P_{k'_1,k'_2,l_1,l_2}$ are asymptotically equivalent if $|k_1-k'_1|$ and $|k_2-k'_2|$ are of order less than $b_n^{1/2}$. Then we can replace $\tilde{D}_{j,m}$ in the right-hand side of (\ref{trD-eq}) by $\dot{D}_{j,m}$ 
since the diagonal elements of both matrices have the same local average.
We will verify these rough sketches by the following.

We first see that terms containing small $l_q$ can be ignored.
Let $\eta$ be the one in $[A2]$, $\eta'\in (\eta,1/2)$, 
$t_{\tilde{l},m}=s_{m-1}+T[b_nk_n^{-1}]^{-1}b_n^{\eta'}/k_n + T[b_nk_n^{-1}]^{-1}(k_n-b_n^{\eta'})[(k_n-b_n^{\eta'})b_n^{-\eta}]^{-1}\tilde{l}/k_n$ for $0\leq \tilde{l}\leq [(k_n-b_n^{\eta'})b_n^{-\eta}]$,
$\mathcal{I}_m(\tilde{l})=\{l;I^j_{l,m}\subset [t_{\tilde{l}-1,m},t_{\tilde{l},m})\}$, and $\mathcal{E}'=\{\delta_{i_1,i_2}1_{\{\inf I^j_{i_1,m}<t_0\} }\}_{1\leq i_1,i_2\leq k^j_m}$.
Then the absolute value of summation involving terms with $l_q$ satisfying $\inf I^j_{l_q,m}<t_0$ for some $1\leq q\leq p$ on the right-hand side of the above equation
is less than 
\begin{eqnarray}\label{D1-change-lemma-eq2}
&&\lVert A_{n,m}\rVert \sum_{p=1}^{\infty}p\lVert \check{D}_{j,m}^{-1/2}(\check{D}_{j,m}-\tilde{D}_{j,m})\check{D}_{j,m}^{-1/2}\rVert^{p-1}
\lVert \check{D}_{j,m}^{-1/2}\rVert^2|\tilde{b}^j_m|^2{\rm tr}(\check{D}_{j,m}^{-1/2}(r_n-\underbar{r}_n)\mathcal{E}'\check{D}_{j,m}^{-1/2})
\leq r_n^2\underbar{r}_n^{-2}{\rm tr}(\check{D}_{j,m}^{-1}\mathcal{E}'), \nonumber \\
\end{eqnarray}
\begin{discuss}
{\colorr 
\begin{equation*}
\mbox{途中式}=\sum_{p=1}^{\infty}p(1-\underbar{r}_n/r_n)^{p-1}(|\tilde{b}^j_m|^2r_n)^{-1}|\tilde{b}^j_m|^2{\rm tr}(\check{D}_{j,m}^{-1/2}(r_n-\underbar{r}_n)\mathcal{E}'\check{D}_{j,m}^{-1/2})
\end{equation*}
}
\end{discuss}
by (\ref{Sm-properties-lemma-eq1}), Lemma~\ref{tr-est}, and the assumptions.
Moreover, point 2 of Lemma~\ref{ddotD-properties} ensures that ${\rm tr}(\check{D}_{j,m}^{-1}\mathcal{E}')$ is less than $[[k^j_m/2]/[Tb_n^{-1+\eta'}\underbar{r}_n^{-1}/2]]^{-1}{\rm tr}(\check{D}_{j,m}^{-1})$ 
{\colord if $[k^j_m/2]\geq 2[Tb_n^{-1+\eta'}\underbar{r}_n^{-1}]$},
and hence the right-hand side of (\ref{D1-change-lemma-eq2}) is $\bar{R}_n(\mathcal{V}_n)$
by {\colorlg ${\rm tr}(\check{D}_{j,m}^{-1})=\bar{R}_n(r_n^{-1/2}k_n)$,
where $\mathcal{V}_n=b_n^{1/2+\eta'}1_{\{[\bar{k}_n/2]\geq 2[Tb_n^{-1+\eta'}\underbar{r}_n^{-1}]\}}+b_n^{1/2}k_n1_{\{[\underline{k}_n/2]< 2[Tb_n^{-1+\eta'}\underbar{r}_n^{-1}]\}}$}.

Then for $\tilde{l}$, $i_q,i_{q+1}$ and $l_q$ satisfying $l_q\in\mathcal{I}_m(\tilde{l})$ and $\max \mathcal{I}_m(\tilde{l})\leq i_q\wedge i_{q+1}$, Lemma~\ref{ddotD-properties} yields that\\
$P_{\max \mathcal{I}_m(\tilde{l}),\max \mathcal{I}_m(\tilde{l}),i_q,i_{q+1}}/P_{l_q,l_q,i_q,i_{q+1}}$ is less than $1$ and greater than
\begin{equation*}
(1+C^{-1}b_n^{1-\eta'}\underbar{r}_n+C|\tilde{b}^j_m|^2b_n^{-1+\eta'}r_n\underbar{r}_n^{-1})^{-Cb_n^{-1+\eta}\underbar{r}_n^{-1}}\geq 1-\bar{R}_n(b_n^{\eta-\eta'}+b_n^{-2+\eta'+\eta}\underbar{r}_n^{-2}r_n)
\end{equation*}
{\colord for sufficiently large $n$}. 
\begin{discuss}
{\colorr $P[\max_{j,m}|\tilde{b}^j_m|>b_n^{\epsilon}]<cb_n^{-q}$ for any $q$と, $\epsilon,x\epsilon$:十分小の時$(1+\epsilon)^{-x}\geq 1-2\epsilon x$を使う.}
\end{discuss}
Moreover, $\max_{\tilde{l}}|\sum_{l\in \mathcal{I}_m(\tilde{l})}(\tilde{D}_{j,m}-\dot{D}_{j,m})_{l,l}|$ is {\colord $\dot{R}_n(b_n^{-1+\eta})$ by $[A2]$}.	
{\colord We also have $\sup_{\sigma,m}\max_{\tilde{l}}|\sum_{l\in \mathcal{I}_m(\tilde{l})}(\tilde{D}_{j,m}-\dot{D}_{j,m})_{l,l}|=\underbar{R}_n(b_n^{-1+\eta})$ if $[B2]$ is satisfied.}
\begin{discuss}
{\colorr
\begin{eqnarray}
&&\max_{\tilde{l}}|\sum_{l\in \mathcal{I}_m(\tilde{l})}(\tilde{D}_{j,m}-\dot{D}_{j,m})_{l,l}| \nonumber \\
&=&|\tilde{b}^j_m|^2\max_{\tilde{l}}|\sum_{l\in \mathcal{I}_m(\tilde{l})}(|I^j_{l,m}|-b_n^{-1}(\hat{a}_m^j)^{-1})| 
\leq |\tilde{b}^j_m|^2\max_{\tilde{l}}|t_{\tilde{l}}-t_{\tilde{l}-1}-\sum_{l\in \mathcal{I}_m(\tilde{l})}b_n^{-1}(\hat{a}_m^j)^{-1}|+C|\tilde{b}^j_m|^2r_n \nonumber \\
&=&|\tilde{b}^j_m|^2(\hat{a}_m^j)^{-1}(t_{\tilde{l}}-t_{\tilde{l}-1})
|(t_{\tilde{l}}-t_{\tilde{l}-1})^{-1}\sum_{l\in \mathcal{I}_m(\tilde{l})}b_n^{-1}-\hat{a}_m^j|+C|\tilde{b}^j_m|^2r_n=\dot{R}_n(b_n^{-1+\eta}). \nonumber 
\end{eqnarray}
$a^j_t$の時刻ずらしに$\dot{\eta}$-H${\rm \ddot{o}}$lder連続性を使う. $\sup_t (a^j_t)^{-1}<\infty$ a.s.も使う.
}
\end{discuss}

Therefore we obtain
\begin{eqnarray}\label{averaging-eq}
&&{\rm tr}(\tilde{D}_{j,m}^{-1}A_{n,m}) \nonumber \\
&=&\sum_{p=0}^{\infty}\sum_{i_1,\cdots,i_{p+1}}\frac{1}{p_{i_1}\cdots p_{i_{p+1}}(|\tilde{b}^j_m|^2r_n)} 
\sum_{\substack{l_q\leq i_q\wedge i_{q+1},\inf I^j_{l_q,m}\geq t_0\\l'\leq i_{p+1},l''\leq i_1}}\frac{(A_{n,m})_{l',l''}}{P_{l',l'',i_{p+1},i_1}}\prod_{q=1}^p\frac{(\check{D}_{j,m}-\tilde{D}_{j,m})_{l_q,l_q}}{P_{l_q,l_q,i_q,i_{q+1}}}+\bar{R}_n(\mathcal{V}_n) \nonumber \\
&=&\sum_{p=0}^{\infty}\mathcal{T}^{n,p}_{m,1}\sum_{i_1,\cdots,i_{p+1}}\frac{1}{p_{i_1}\cdots p_{i_{p+1}}(|\tilde{b}^j_m|^2r_n)}\sum_{l'\leq i_{p+1},l''\leq i_1}\frac{(A_{n,m})_{l',l''}}{P_{l',l'',i_{p+1},i_1}} \nonumber \\
&&\times \prod_{q=1}^p\sum_{1\leq \tilde{l}_q\leq [b_n^{\eta}]^{-1}(k_n-[b_n^{\eta'}])}\frac{\sum_{l_q\in\mathcal{I}_m(\tilde{l}_q)}(\check{D}_{j,m}-\dot{D}_{j,m}+\mathcal{T}^{n,p}_{m,3}\mathcal{E})_{l_q,l_q}}{P_{\max \mathcal{I}_m(\tilde{l}_q),\max \mathcal{I}_m(\tilde{l}_q),i_q,i_{q+1}}}+\bar{R}_n(\mathcal{V}_n) \nonumber \\
&=&\sum_{p=0}^{\infty}\mathcal{T}^{n,p}_{m,1}(\mathcal{T}^{n,p}_{m,2})^{-1}{\rm tr}(\check{D}_{j,m}^{-1/2}(\check{D}_{j,m}^{-1/2}(\check{D}_{j,m}-\dot{D}_{j,m}+\mathcal{T}^{n,p}_{m,3}\mathcal{E})\check{D}_{j,m}^{-1/2})^p\check{D}_{j,m}^{-1/2}A_{n,m}) + \dot{R}_n(b_n^{3/2}\ell_n^{-1}), 
\end{eqnarray}
where $\mathcal{T}^{n,p}_{m,i}$ is a random variable which does not depend on $l_q,\tilde{l}_q,i_q,i_{q+1}$ and satisfies 
\begin{equation*}
(1-\bar{R}_n(b_n^{\eta-\eta'}+b_n^{-2+\eta'+\eta}\underbar{r}_n^{-2}r_n))^p\leq \mathcal{T}^{n,p}_{m,i}\leq 1
\end{equation*}
for $i=1,2$ and {\colord $\sup_p|\mathcal{T}^{n,p}_{m,3}|=\dot{R}_n(b_n^{-1})$}.
\begin{discuss}
{\colorr $\mathcal{T}^{n,p}_{m,3}$は各成分に分配しているから, $\max_{\tilde{l}}|\sum_{l\in \mathcal{I}_m(\tilde{l})}(\tilde{D}_{j,m}-\dot{D}_{j,m})_{l,l}|$の評価を
$\sum_{l\in \mathcal{I}_m(\tilde{l})}1$で割ったもので評価できる. $\sum 1$は$[A2]$より$O_p(b_n^{\eta})$.
こういう$\mathcal{T}$が取れるのは, すべての要素がnonnegativeで$\check{D}-\dot{D}+t\mathcal{E}$の対角成分がすべて等しいから. 
$b_n(r_n-(\hat{a}^j_m)^{-1}b_n^{-1})$が小さいときは$\mathcal{T}$の値を連続的に動かせば対角成分が等しいことから右辺を$0$にできる. 小さくないときも$\mathcal{T}$を小さくしていけば, 
左辺より小さくできる. また左辺より各項を大きくすることもできるので途中のどこかでイコールになる.
$P_{\max \mathcal{I}_m,\max \mathcal{I}_m,,,}$をもとに戻せることも右辺の各対角が（すべて等しいため）正なので明らか. $\inf I^j_{i,m}<t_0$をもとに戻すのも(\ref{D1-change-lemma-eq2})に
$\mathcal{T}^{n,p}_{m,1}(\mathcal{T}^{n,p}_{m,2})^{-1}$を付けたものが引き続き成り立つからよい.
（$\bar{R}_n(b_n^{\eta-\eta'}+b_n^{-1+\eta+\eta'})$は$r_n\underbar{r}_n^{-1}$よりも速く$0$に収束.）
}
\end{discuss}

Let $F_p(t)={\rm tr}(\check{D}_{j,m}^{-1/2}(\check{D}_{j,m}^{-1/2}(\check{D}_{j,m}-\dot{D}_{j,m}+t\mathcal{T}^{n,p}_{m,3}\mathcal{E})\check{D}_{j,m}^{-1/2})^p\check{D}_{j,m}^{-1/2}A_{n,m})$.
Then 
\begin{equation*}
|F_p(1)-F_p(0)|\leq \int^1_0|F'_p(t)|dt\leq p|\tilde{b}^j_m|^{-2}|\mathcal{T}^{n,p}_{m,3}|(1-b_n^{-1}(\hat{a}^j_m)^{-1}r_n^{-1}+|\tilde{b}^j_m|^{-2}\mathcal{T}^{n,p}_{m,3}r_n^{-1})^{p-1}\bar{R}_n(r_n^{-1/2}b_n^2\ell_n^{-1}),
\end{equation*}
and hence $\sum_{p=0}^{\infty}|\mathcal{T}^{n,p}_{m,1}(\mathcal{T}^{n,p}_{m,2})^{-1}||F_p(1)-F_p(0)|\leq \sup_p|\mathcal{T}^{n,p}_{m,3}|\cdot \dot{R}_n(b_n^{3/2}k_n)=\dot{R}_n(b_n^{3/2}\ell_n^{-1})$.
\begin{discuss}
{\colorr $\bar{R}_n(b_n^{\eta-\eta'}+b_n^{-1+eta+\eta'})$は$b_n\underbar{r}_n^{-1}$より速く$0$に収束するから$(1+x)^p(1-y)^p\leq (1-y/2)^p$とできる.}
\end{discuss}
Therefore we obtain the desired conclusion by
\begin{eqnarray}
&&\sum_{p=0}^{\infty}|\mathcal{T}^{n,p}_{m,1}(\mathcal{T}^{n,p}_{m,2})^{-1}-1|F_p(0) \nonumber \\
&\leq &C\sum_pp\bar{R}_n(b_n^{\eta-\eta'}+b_n^{-1+\eta+\eta'})(1+\bar{R}_n(b_n^{\eta-\eta'}+b_n^{-1+\eta+\eta'}))^{p-1}(1-b_n^{-1}(\hat{a}^j_m)^{-1}r_n^{-1})^pr_n^{-1/2}k^j_m=\dot{R}_n(b_n^{3/2}\ell_n^{-1}). \nonumber 
\end{eqnarray}
\begin{discuss}
{\colorr $(1+x)^p-1=\int^1_0px(1+tx)^{p-1}dt\leq px(1+x)^{p-1}$より, 
}
\end{discuss}

{\colord For the case $k=1$, we have 
${\rm tr}(\partial_{\sigma}\tilde{D}_{j,m}^{-1}A_{n,m})={\rm tr}(\dot{D}_{j,m}^{-1}\partial_{\sigma}\tilde{D}_{j,m}\dot{D}_{j,m}^{-1}A_{n,m})+\dot{R}_n(b_n^{3/2}\ell_n^{-1})$
by using the result for $k=0$, $\partial_{\sigma}\tilde{D}_{j,m}^{-1}=\tilde{D}_{j,m}^{-1}\partial_{\sigma}\tilde{D}_{j,m}\tilde{D}_{j,m}^{-1}$, $\lVert \tilde{D}_{j,m}^{-1}\partial_{\sigma}\tilde{D}_{j,m}\rVert=\bar{R}_n(1)$
and all elements of $\tilde{D}_{j,m}^{-1}$ are nonnegative by a similar argument to (\ref{dotD-eq}).
\begin{discuss}
{\colorr 各要素が非負であることを言うには, $p_j$に対応する量が$1$より大きいことを言えばよいが, これは$\epsilon$が$j$に依存していても帰納的に言える.}
\end{discuss}
Then a similar argument to (\ref{averaging-eq}) enables us to replace $\partial_{\sigma}\tilde{D}_{j,m}$ by $\partial_{\sigma}\dot{D}_{j,m}$.
Similarly, we obtain ${\rm tr}(\partial_{\sigma}^k\tilde{D}_{j,m}^{-1}A_{n,m})={\rm tr}(\partial_{\sigma}^k\dot{D}_{j,m}^{-1}A_{n,m})+\dot{R}_n(b_n^{3/2}\ell_n^{-1})$ for $k=2,3$.

If further $[B2]$ is satisfied, then similarly we have $\sup_{\sigma}|{\rm tr}(\partial_{\sigma}^k\tilde{D}_{j,m}^{-1}A_{n,m})-{\rm tr}(\partial_{\sigma}^k\dot{D}_{j,m}^{-1}A_{n,m})|=\underbar{R}_n(b_n^{3/2}\ell_n^{-1})$.
}


\end{proof}

{\colorlg
\begin{remark}
The proof shows that there are upperbounds of the absolute values of residual terms 
in the statement of Lemma \ref{D1-change} which do not depend on $A_{n,m}$.
\end{remark}
}
	
\begin{discuss}
{\colorr マルチンゲール性などから評価を出していた四章とは違って, pathwiseの議論から評価しているのでソボレフの不等式を使うことなくsupの評価が得られる.}
\end{discuss}

{\colorlg
Let $c_j=|\tilde{b}^j_m|^2b_n^{-1}(\hat{a}_m^j)^{-1}/v_{j,\ast}$ and $c'_j=c_j(\hat{a}_m^j/\hat{a}^{3-j}_m)^2$ for $j=1,2$. 
\begin{lemma}\label{Hn-lim-lemma2}
Assume $[B1]$ and $[A2]$. Then 
\begin{eqnarray}
\sup_{\sigma,m}\bigg|\partial_{\sigma}^k{\rm tr}((\tilde{D}_{1,m}^{-1}\tilde{G}\tilde{D}_{2,m}^{-1}\tilde{G}^{\top})^p\tilde{D}_{1,m}^{-1}D'_{1,m})
-\frac{T\hat{a}_m^1k_n}{\pi}\frac{(\hat{a}_m^2)^p\partial_{\sigma}^kI_{p+1,p}(c_1,c'_2)}{b_n^{2p+1}(\hat{a}_m^1)^{3p+1}v_{1,\ast}^{p+1}v_{2,\ast}^p}\bigg|&=&o_p(b_n^{1/2}\ell_n^{-1}), \label{tr-eq1} \\
\sup_{\sigma,m}\bigg|\partial_{\sigma}^k{\rm tr}((\tilde{D}_{2,m}^{-1}\tilde{G}^{\top}\tilde{D}_{1,m}^{-1}\tilde{G})^p\tilde{D}_{2,m}^{-1}D'_{2,m})
-\frac{T\hat{a}_m^1k_n}{\pi}\frac{(\hat{a}_m^2)^{p+1}\partial_{\sigma}^kI_{p,p+1}(c_1,c'_2)}{b_n^{2p+1}(\hat{a}_m^1)^{3p+2}v_{1,\ast}^pv_{2,\ast}^{p+1}}\bigg|&=&o_p(b_n^{1/2}\ell_n^{-1}),\label{tr-eq2} \\
\sup_{\sigma,m}\bigg|\partial_{\sigma}^k{\rm tr}((\tilde{D}_{1,m}^{-1}\tilde{G}\tilde{D}_{2,m}^{-1}\tilde{G}^{\top})^{p+1})
-\frac{T\hat{a}_m^1k_n}{\pi}\frac{(\hat{a}_m^2)^{p+1}\partial_{\sigma}^kI_{p+1,p+1}(c_1,c'_2)}{(b_n^2(\hat{a}_m^1)^3v_{1,\ast}v_{2,\ast})^{p+1}}\bigg|&=&o_p(b_n^{1/2}\ell_n^{-1}) \label{tr-eq3}
\end{eqnarray}
for $0\leq k\leq 3$ and $p\in \mathbb{Z}_+$. {\colord If further $[B2]$ is satisfied, then $o_p(b_n^{1/2}\ell_n^{-1})$ in (\ref{tr-eq1})-(\ref{tr-eq3}) can be replaced by $\underbar{R}_n(b_n^{1/2}\ell_n^{-1})$.}
\end{lemma}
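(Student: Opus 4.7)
The plan is a three-stage reduction, carried out in detail for (\ref{tr-eq1}); equations (\ref{tr-eq2}) and (\ref{tr-eq3}) follow by the same arguments after interchanging the roles of the two components. First, use Lemma~\ref{D1-change} iteratively to replace every $\tilde{D}_{j,m}^{-1}$ factor by its locally averaged version $\dot{D}_{j,m}^{-1}$. Second, replace the irregular overlap matrix $\tilde{G}$ by an idealized overlap matrix $\dot{G}$ obtained from two equidistant sampling grids of rates $b_n\hat{a}_m^j$. Third, diagonalize in the sine eigenbasis of $M_{j,m}$ and convert the resulting sum to a Riemann sum that converges to the integral $I_{p+1,p}(c_1,c'_2)$.

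\textbf{Stage 1 (replacing $\tilde{D}_{j,m}$).} Write $B_p=(\tilde{D}_{1,m}^{-1}\tilde{G}\tilde{D}_{2,m}^{-1}\tilde{G}^{\top})^p\tilde{D}_{1,m}^{-1}D'_{1,m}$ and peel off one $\tilde{D}_{j,m}^{-1}$ at a time via the cyclic property of the trace. At each peel the remaining factor $A_{n,m}$ has nonnegative entries (by (\ref{dotD-eq}) together with the positivity of $p_j$ from Lemma~\ref{ddotD-properties}) and its operator norm is bounded by $C(r_n/\underbar{r}_n)^{2p}r_n$. Rescaling so that $\lVert A_{n,m}\rVert\leq 1$ and invoking Lemma~\ref{D1-change} produces a residual of order $(r_n/\underbar{r}_n)^{2p}r_n\cdot\dot{R}_n(b_n^{3/2}\ell_n^{-1})=\dot{R}_n(b_n^{1/2}\ell_n^{-1})$, using $r_n=\dot{R}_n(b_n^{-1})$ and $\underbar{r}_n^{-1}=\dot{R}_n(b_n)$ from $[A2]$. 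Iterating $2p+1$ times gives ${\rm tr}(B_p)={\rm tr}(\dot{B}_p)+\dot{R}_n(b_n^{1/2}\ell_n^{-1})$, where $\dot{B}_p$ is the analogue with each $\tilde{D}$ replaced by $\dot{D}$. Derivatives $\partial_\sigma^k$ are handled by expanding $\partial_\sigma^k\tilde{D}_{j,m}^{-1}$ via the product rule before peeling; each resulting term fits Lemma~\ref{D1-change} (covered for $0\leq k\leq 3$). Under $[B2]$, the strengthening in Lemma~\ref{D1-change} upgrades the conclusion to $\underbar{R}_n(b_n^{1/2}\ell_n^{-1})$.

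\textbf{Stage 2 (replacing $\tilde{G}$).} Let $\dot{G}$ be the overlap matrix of two equidistant grids with spacings $(b_n\hat{a}_m^j)^{-1}$. The entries of $\tilde{G}-\dot{G}$ sum locally to $o_p((b_n\ell_n)^{-1})$ uniformly by $[A2]$, and each row/column of $\tilde{G}$ has only $O(1)$ nonzeros when the $a^j_t$ are bounded. Combined with $\lVert\dot{D}_{j,m}^{-1}\rVert\leq Cb_n/|\tilde{b}_m^j|^2$ from (\ref{tr-lim-est}), the contribution of $\tilde{G}-\dot{G}$ to ${\rm tr}(\dot{B}_p)$ is also $\dot{R}_n(b_n^{1/2}\ell_n^{-1})$. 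With this, ${\rm tr}(\dot{B}_p)$ is reduced to a quantity expressible purely in terms of $M_{j,m}$ (via $\dot{D}_{j,m}$) and the structured matrix $\dot{G}$.

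\textbf{Stage 3 (spectral computation).} Diagonalize $\dot{D}_{j,m}$ in the sine basis of $M_{j,m}$, whose eigenvalues are $v_{j,\ast}(c_j+2(1-\cos(\pi k/(k_m^j+1))))$ for $k=1,\dots,k_m^j$. The Fourier coefficients $\langle\sin(\pi\cdot k/(k_m^1+1)),\dot{G}\sin(\pi\cdot l/(k_m^2+1))\rangle$ concentrate near the diagonal $k/(k_m^1+1)\approx l/(k_m^2+1)$, as a direct computation with overlap integrals between equidistant grids shows. Substituting, ${\rm tr}(\dot{B}_p)$ becomes a Riemann sum in $x=\pi k/(k_m^1+1)\in(0,\pi)$ converging to $\pi^{-1}\int_0^\pi(c_1+2(1-\cos x))^{-(p+1)}(c'_2+2(1-\cos x))^{-p}\,dx=\pi^{-1}I_{p+1,p}(c_1,c'_2)$, with the claimed prefactor arising from the scaling constants together with $k_m^1\approx T\hat{a}_m^1k_n$ given by $[A2]$. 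Since $c_j$ depends smoothly on $\sigma$ and enters only as parameters of the integrand, derivative versions $\partial_\sigma^k$ follow from differentiating the Riemann sum and applying the same bound to each resulting term. The main obstacle is Stage 2--3: $\tilde{G}$ couples the two distinct sampling grids, so the sine-basis alignment between $M_{1,m}$ and $M_{2,m}$ through $\dot{G}$ must be made quantitative and uniform in $m$, $\sigma$, and the eigenindex $k$; unlike Lemma~\ref{D1-change}, which exploits only the internal Toeplitz structure of $\tilde{D}_{j,m}$, here one must control cross-grid Fourier coefficients with sufficient precision to justify the Riemann-sum limit at the required rate $o_p(b_n^{1/2}\ell_n^{-1})$.
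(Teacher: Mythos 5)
Your Stage 1 is essentially the paper's first step: Lemma~\ref{D1-change} is applied (with the rescaling and nonnegativity bookkeeping you describe) to pass from $\tilde{D}_{j,m}$ to $\dot{D}_{j,m}$, yielding (\ref{Hn-lim-lemma2-eq0}). The problem is Stages 2--3, which you yourself flag as "the main obstacle": controlling the coupling between the two grids through $\tilde{G}$ is not a technical loose end of this lemma, it \emph{is} the lemma, and your proposal leaves it unresolved. The paper does not diagonalize $M_{1,m}$ and $M_{2,m}$ in their sine bases and estimate cross-grid Fourier coefficients of an idealized overlap matrix. Instead it expands ${\rm tr}((\dot{D}_{1,m}^{-1}\tilde{G}\dot{D}_{2,m}^{-1}\tilde{G}^{\top})^p)$ entrywise via the explicit product representation (\ref{dotD-eq}), discards the boundary terms ($\Lambda_1$), and then uses point 3 of Lemma~\ref{ddotD-properties} --- $p_i(\epsilon)\to p_+(\epsilon)$ geometrically fast, so $\tilde{P}_{\alpha,\beta,i,j}\sim p_+(c_1)^{i-\alpha}p_+(c_2)^{j-\beta}$ --- to perform a change of index from the second grid to the first: $p_{j(\alpha)}\cdots p_{j(\beta)-1}(c_2)=p_{i(\alpha)}\cdots p_{i(\beta)-1}(c_2')(1+\dot{R}_n(1))$ with $c_2'=c_2(\hat{a}_m^2/\hat{a}_m^1)^2$. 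This converts $\dot{D}_{2,m}$ (size $k_m^2$, parameter $c_2$) into $\ddot{D}_{2,m}$ (size $k_m^1$, parameter $c_2'$) and collapses $\tilde{G}$ to a multiple of the identity; only after this size change are both remaining factors simultaneously diagonalizable as functions of the single matrix $M_{1,m}$, so that the closed-form trace identity (\ref{tr-lim-est2}) produces $I_{p+1,p}(c_1,c_2')$. Your Fourier route would have to prove the quantitative near-diagonal concentration of $\langle u_k,\dot{G}v_l\rangle$ uniformly in $m$, $\sigma$ and the eigenindex at the rate $o_p(b_n^{1/2}\ell_n^{-1})$, which is a computation of comparable difficulty to the one you are trying to avoid, and you have only asserted it.

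Two subsidiary points. First, your claim in Stage 2 that each row of $\tilde{G}$ has $O(1)$ nonzeros is false under $[A2]$: the paper explicitly allows $r_n/\underbar{r}_n$ to be unbounded (sampling by Poisson or Cox processes), so a single $I^1_{i,m}$ may intersect a diverging number of $I^2_{j,m}$'s; any argument must absorb powers of $r_n/\underbar{r}_n$, which is why the paper works with local block averages of size $b_n^{\eta}$ rather than entrywise comparisons. Second, even granting an entrywise-averaged bound on $\tilde{G}-\dot{G}$, passing it through the trace requires the surrounding factors (entries of products of $\dot{D}^{-1}$'s) to be locally nearly constant over such blocks; this is again exactly what the $p_i\to p_+$ estimates provide and what your proposal does not establish.
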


\begin{discuss}
{\colorr 残差は$p$に関して一様に抑えられているわけではない.
$\dot{R}_n$ではなく, $o_p$を使っているのは, 
$n^{\delta}$の隙間をとることができないから$r_nb_n$の評価ができないからか.
できるような気もするが、とりあえずこのまま保留
}
\end{discuss}
}

\begin{proof}

For any $p\in\mathbb{N}$, Lemma~\ref{D1-change} yields 
\begin{equation}\label{Hn-lim-lemma2-eq0}
{\colorlg b_n^{-1/2}\partial_{\sigma}^k{\rm tr}((\tilde{D}_{1,m}^{-1}\tilde{G}\tilde{D}_{2,m}^{-1}\tilde{G}^{\top})^p) 
=b_n^{-1/2}\partial_{\sigma}^k{\rm tr}((\dot{D}_{1,m}^{-1}\tilde{G}\dot{D}_{2,m}^{-1}\tilde{G}^{\top})^p) + \dot{R}_n(\ell_n^{-1})}. 
\end{equation}

{\colorlg Moreover, we have}
\begin{equation}\label{Hn-lim-lemma2-eq1}
b_n^{-1/2}{\rm tr}((\dot{D}_{1,m}^{-1}\tilde{G}\dot{D}_{2,m}^{-1}\tilde{G}^{\top})^p) 
=b_n^{-1/2}\frac{1}{v_{1,\ast}^pv_{2,\ast}^p}\sum_{\substack{i_1,\cdots,i_p \\ j_1,\cdots,j_p}} \sum_{\substack{\alpha_{2q-1}\leq i_q, \beta_{2q-1}\leq j_q \\ \alpha_{2q}\leq i_{q+1}, \beta_{2q}\leq j_q \ (1\leq q\leq p)}}
\prod_{q=1}^p\frac{\tilde{G}_{\alpha_{2q-1},\beta_{2q-1}}}{\tilde{P}_{\alpha_{2q-1},\beta_{2q-1},i_q+1,j_q+1}}\frac{\tilde{G}_{\alpha_{2q},\beta_{2q}}}{\tilde{P}_{\alpha_{2q},\beta_{2q},i_{q+1},j_q}}
\end{equation}
by (\ref{dotD-eq}), where $i_{p+1}=i_1$ and $\tilde{P}_{\alpha,\beta,i,j}=\prod_{k_1=\alpha}^{i-1}p_{k_1}(c_1)\prod_{k_2=\beta}^{j-1}p_{k_2}(c_2)$.

We will apply (\ref{tr-lim-est}) to obtain the limit of the traces.
To do so, we need to change the size of matrices $\tilde{G}$ and $\dot{D}_{2,m}^{-1}$.
This is again achieved by the nice properties of $p_i$.
The essential idea is that point 3 of Lemma~\ref{ddotD-properties} ensures $p_i\sim p_+$ for sufficiently large $i$,
and therefore $\tilde{P}_{\alpha,\beta,i,j}\sim p_+(c_1)^{i-\alpha}p_+(c_2)^{j-\beta}\sim \exp(\sqrt{c_1}(i-\alpha)+\sqrt{c_2}(j-\beta))\sim \acute{P}_{k\alpha,k\beta,ki,kj}$,
{\colord where $k\in\mathbb{N}$ and $\acute{P}_{\alpha',\beta',i',j'}=\prod_{k_1=\alpha'}^{i'-1}p_{k_1}(c_1/k^2)\prod_{k_2=\beta'}^{j'-1}p_{k_2}(c_2/k^2)$}.
The size of $\dot{D}_{2,m}^{-1}$ decides the ranges of summation of $j_1,\cdots, j_p$ in (\ref{Hn-lim-lemma2-eq1}). 
By changing these ranges using the above relation on $\tilde{P}_{\alpha,\beta,i,j}$ {\colord and $\acute{P}_{k\alpha,k\beta,ki,kj}$}, we can change the size of matrices $\tilde{G}$ and $\dot{D}_{2,m}^{-1}$.

\begin{discuss}
{\colorr 最初から$p_i\sim p_+$の近似を使ってLemma~\ref{D1-change}も示せば良かったように見えるが, $\Lambda_1$の評価は$\dot{D}^{-1}_{j,m}$じゃないとできない}
\end{discuss}

Now we verify the above idea. First, we see that the terms involving small $\alpha_q$ or $\beta_q$ in (\ref{Hn-lim-lemma2-eq1}) can be ignored.
Let $\eta\in (0,1/2)$ be the one in $[A2]$, $\delta\in (1/2,1)$ such that $b_n^{\delta}k_n^{-1}\to 0$, $\tilde{s}_{l'}=s_{m-1}+T[b_nk_n^{-1}]^{-1}[k_nb_n^{-\eta}]^{-1}((l'+[b_n^{\delta -\eta}])\wedge [k_nb_n^{-\eta}])$ for $0\leq l'\leq ([k_nb_n^{-\eta}]-[b_n^{\delta-\eta}])\vee 0$, 
$\dot{D}_{3,m}=(c_1\wedge c_2)(v_{1,\ast}\wedge v_{2,\ast})\mathcal{E}_{k^1_m\vee k^2_m}+(v_{1,\ast}\wedge v_{2,\ast})M(k^1_m\vee k^2_m)$, $G'=\{|I^1_{i,m}\cap I^2_{j,m}|1_{\{\inf I^1_{i,m}\wedge \inf I^2_{j,m}<\tilde{s}_0\} }\}_{1\leq i,j\leq k^1_m\vee k^2_m}$,
$\hat{G}=\{|I^1_{i,m}\cap I^2_{j,m}|1_{\{i\leq k^1_m \ {\rm and } \ j\leq k^2_m\} }\}_{1\leq i,j\leq k^1_m\vee k^2_m}$, and
$\mathcal{E}''=\{\delta_{ij}1_{\{\inf I^1_{i,m}\wedge \inf I^2_{i,m}<\tilde{s}_0\} }\}_{1\leq i,j\leq k^1_m\vee k^2_m}$.
Similarly to the proof of Lemma~\ref{D1-change}, the absolute value $\Lambda_1$ of a summation involving the terms with $(\alpha_q,\beta_q)$ satisfying $\inf I^1_{\alpha_q,m}\wedge \inf I^2_{\beta_q,m}<\tilde{s}_0$
is less than  
$pb_n^{-1/2}{\rm tr}(\dot{D}_{3,m}^{-1}(G'\dot{D}_{3,m}^{-1}\hat{G}^{\top}+\hat{G}\dot{D}_{3,m}^{-1}(G')^{\top})(\dot{D}_{3,m}^{-1}\hat{G}\dot{D}_{3,m}^{-1}\hat{G}^{\top})^{p-1})$.
\begin{discuss}
{\colorr この評価は(\ref{Hn-lim-lemma2-eq1})の表現を$\dot{D}_{3,m}^{-1}$の場合で考えると分子が全てnonnegativeであることからわかる.}
\end{discuss}
Lemma 3 in~\cite{ogi-yos14} implies $\lVert G'+(G')^{\top}\rVert\leq 2r_n$ and hence all the eigenvalues of $G'+(G')^{\top}$ are greater than or equal to $-2r_n$. Therefore 
$G'+(G')^{\top}+2r_n\mathcal{E}''$ is nonnegative definite, and hence Lemma~\ref{tr-est} yields
\begin{eqnarray}
\Lambda_1&\leq &p(r_n/\underbar{r}_n)^{2p-1}b_n^{-1/2}{\rm tr}(\dot{D}_{3,m}^{-1}(G'+(G')^{\top}+2r_n\mathcal{E}''))
\leq 2p(r_n/\underbar{r}_n)^{2p-1}b_n^{-1/2}({\rm tr}(\dot{D}_{3,m}^{-1}G')+r_n{\rm tr}(\dot{D}_{3,m}^{-1}\mathcal{E}'')). \nonumber 
\end{eqnarray}
\begin{discuss}
{\colorr モーメント評価の時は$\sum_p$の評価をして$b_n^{-\epsilon}$の隙間を作れることを示す必要があるが, 後ろの議論と同様にして, $(\dot{D}_{3,m}^{-1}\hat{G}\dot{D}_{3,m}^{-1}\hat{G}^{\top})^{p-1}$の中の
$\hat{G}$を$\tilde{G}$に戻すことで評価の中の$(r_n/\underbar{r}_n)^p$を消せる. ($i',j',\alpha',i(\alpha'),j(\alpha')$などはそのまま採用するが,スタートの時刻は$b_n^{\delta}$よりも早い)}
\end{discuss}
Let ($\dot{G})_{i,j}=(\sum_{l\leq i}G'_{l,i}+\sum_{m<i}G'_{i,m})\delta_{i,j}$ and $\dot{k}=\max\{i;\dot{G}_{ii}>0\}$. Then Lemma~\ref{ddotD-properties} yields
\begin{eqnarray}
{\rm tr}(\dot{D}_{3,m}^{-1}G')&=&\frac{1}{v_{1,\ast}\wedge v_{2,\ast}}\sum_i\sum_{\alpha,\beta\leq i}\frac{G'_{\alpha,\beta}}{p_{\alpha}\cdots p_ip_{\beta}\cdots p_{i-1}} 
\leq \frac{1}{v_{1,\ast}\wedge v_{2,\ast}}\sum_i\sum_{\alpha\leq i}\frac{\dot{G}_{\alpha,\alpha}}{p_{\alpha}\cdots p_ip_{\alpha}\cdots p_{i-1}} \nonumber \\
&=&{\rm tr}(\dot{D}_{3,m}^{-1}\dot{G})\leq (\dot{D}_{3,m}^{-1})_{\dot{k},\dot{k}}(\tilde{s}_0-s_{m-1}+r_n) 
\leq \frac{Cb_n^{-1+\delta}+r_n}{[(k^1_m\vee k^2_m)/2]-\dot{k}}{\rm tr}(\dot{D}_{3,m}^{-1}). \nonumber
\end{eqnarray}
Therefore we obtain $\Lambda_1=\dot{R}_n(\ell_n^{-1})$, {\colord and $\Lambda_1=\underbar{R}_n(\ell_n^{-1})$ if $[B2]$ is satisfied}.

Let $\ddot{D}_{2,m}={\colorlg v_{2,\ast}}c_2(\hat{a}_m^2/\hat{a}_m^1)^2\mathcal{E}+v_{2,\ast}M_{1,m}$, $i(\alpha')=\min\{i;S^{n,1}_i\geq \tilde{s}_{\alpha'-1}\}$, and $j(\alpha')=\min\{j;S^{n,2}_j\geq \tilde{s}_{\alpha'-1}\}$. 
{\colord We will show that $b_n^{-1/2}{\rm tr}((\dot{D}_{1,m}^{-1}\tilde{G}\dot{D}_{2,m}^{-1}\tilde{G}^{\top})^p)$ is approximated by $b_n^{-1/2-2p}(\hat{a}_m^2)^p(\hat{a}_m^1)^{-3p}{\rm tr}((\dot{D}_{1,m}^{-1}\ddot{D}_{2,m})^p)$.}
A similar argument to the proof of Lemma~\ref{D1-change} yields $|\tilde{P}_{\alpha,\beta,i,j}/\tilde{P}_{i(\alpha'),j(\alpha'),i(i'),j(j')}-1|=\dot{R}_n(1)$
for $i(\alpha')\leq \alpha<i(\alpha'+1)$, $j(\alpha')\leq \beta<j(\alpha'+1)$, $i(i')\leq i<i(i'+1)$, and $j(j')\leq j<j(j'+1)$. 
Therefore repeated use of $[A2]$ yields 
\begin{eqnarray}\label{Lambda1-equ}
&&b_n^{-1/2}{\rm tr}((\dot{D}_{1,m}^{-1}\tilde{G}\dot{D}_{2,m}^{-1}\tilde{G}^{\top})^p) \nonumber \\
&=&b_n^{-1/2}\frac{\mathcal{T}^{n,p}_{m,4}}{v_{1,\ast}^pv_{2,\ast}^p}\sum_{\substack{i'_1,\cdots,i'_p \\ j'_1,\cdots,j'_p}} \sum_{\substack{\alpha'_{2q-1}\leq i'_q\wedge j'_q \\ \alpha'_{2q}\leq i'_{q+1}\wedge j'_q \ (1\leq q\leq p)}}
\prod_{q=1}^p\frac{(Tb_n^{-1+\eta})^2\# \{i_q;I^1_{i_q,m}\subset [\tilde{s}_{i'_q-1},\tilde{s}_{i'_q})\}\# \{j_q;I^2_{j_q,m}\subset [\tilde{s}_{j'_q-1},\tilde{s}_{j'_q})\}}{\tilde{P}_{i(\alpha'_{2q-1}),j(\alpha'_{2q-1}),i(i'_q),j(j'_q)}\tilde{P}_{i(\alpha'_{2q}),j(\alpha'_{2q}),i(i'_{q+1}),j(j'_q)}} \nonumber \\
&&+\dot{R}_n(l_n^{-1}) \nonumber \\
&=&b_n^{-1/2}\frac{\mathcal{T}^{n,p}_{m,5}(Tb_n^{-1+\eta})^{2p}(\hat{a}_m^2)^p}{v_{1,\ast}^pv_{2,\ast}^p(\hat{a}_m^1)^p} \nonumber \\
&&\times \sum_{\substack{i'_1,\cdots,i'_p \\ j'_1,\cdots,j'_p}} \sum_{\substack{\alpha'_{2q-1}\leq i'_q\wedge j'_q \\ \alpha'_{2q}\leq i'_{q+1}\wedge j'_q \ (1\leq q\leq p)}}
\prod_{q=1}^p\frac{\#\{\alpha;I^1_{\alpha,m}\subset [\tilde{s}_{\alpha'_{2q-1}-1},\tilde{s}_{\alpha'_{2q-1}})\}\#\{\alpha;I^1_{\alpha,m}\subset [\tilde{s}_{\alpha'_{2q}-1},\tilde{s}_{\alpha'_{2q}})\} }{\tilde{P}_{i(\alpha'_{2q-1}),j(\alpha'_{2q-1}),i(i'_q),j(j'_q)}\tilde{P}_{i(\alpha'_{2q}),j(\alpha'_{2q}),i(i'_{q+1}),j(j'_q)}}
+\dot{R}_n(l_n^{-1}), \nonumber \\
\end{eqnarray}
{\colord where $\mathcal{T}^{n,p}_{m,i}$ is a random variable satisfying} $\sup_{\sigma,m}|\mathcal{T}^{n,p}_{m,i}-1|=\dot{R}_n(1)$ for $i=4,5$.
\begin{discuss}
{\colorr $l,m$が異なるブロックで$\tilde{G}_{l,m}>0$となるものがあるが, $\Lambda_1$の議論と同様m $b_n^{\eta}$間隔で$0$以外が現れる行列を$b_n^{-\eta}$で上から抑えればいい.
($\{(\dot{D}_{j,m})_{k,k}\}_{k=[k_m^j/2]+1}^{k_m^j}$の単調減少性も使う.)
もしくは分子が高々$r_n$で分母の違いも無視できるから$\mathcal{T}^{n,p}_{m,4}$に入れられる.
また, $l'=i'\wedge j'$の時, 項の数は分子と等しくならないが, 分子で抑えられていて, $i'\wedge j'-[b_n^{1/4-\eta/2}]\leq l'\leq i'\wedge j'$となる$l'$に対し,
分母分子が同じオーダーなので, $l'=i'\wedge j'$の項は無視できる. $a^j_t$の係数ずらしがヘルダー連続性より$\dot{R}_n$で言える. 

モーメント評価の時は二つ目の等式を得る時に$(1+x)^p-1\leq px(1+x)^{p-1}$を使う. $x$には[A2]の収束量が入る. $\sup|\mathcal{T}^{n,p}_{m,i}-1|\leq p(1+x)^{p-1}\dot{R}_n(1)$となる.
}
\end{discuss}

Since Lemma~\ref{ddotD-properties} and $[A2]$ yield
\begin{eqnarray}
p_{j(\alpha)}\cdots p_{j(\beta)-1}(c_2)&=&(p_+(c_2))^{j(\beta)-j(\alpha)}(1+\dot{R}_n(1))=\exp((b_n\hat{a}^2_m(\tilde{s}_{\beta}-\tilde{s}_{\alpha})+\dot{R}_n(b_n^{1/2}))\log p_+(c_2))(1+\dot{R}_n(1)) \nonumber \\
&=&\exp(\hat{a}^2_m(\hat{a}^1_m)^{-1}(i(\beta)-i(\alpha)))\log p_+(c_2))(1+\dot{R}_n(1))
=p_{i(\alpha)}\cdots p_{i(\beta)-1}({\colorlg c'_2})(1+\dot{R}_n(1)), \nonumber
\end{eqnarray}
\begin{discuss}
{\colorr $j(\beta)-j(\alpha)\sim \sum_{\gamma=\alpha+1}^{\beta}\hat{a}^2_m(\tilde{s}_{\gamma-1})b_n(\tilde{s}_{\gamma}-\tilde{s}_{\gamma-1})$なので, 
$\sum_{\gamma}(\hat{a}^2_m(\tilde{s}_{\gamma-1})-\hat{a}^2_m(\tilde{s}_{\alpha}))b_n(\tilde{s}_{\gamma}-\tilde{s}_{\gamma-1})=o_p(b_n^{1/2})$が必要より, $b_n^{-1/2}k_n(b_n^{-1}k_n)^{\dot{\eta}}\to 0$を使う.
また, $\tilde{s}_{\beta}-\tilde{s}_{\alpha}$は最大で$k_n$オーダーだからここで$[A2]$の$k_nb_n^{-1/2}$の条件を使う.
$c\log(1+\sqrt{c_2})=\log(1+\sqrt{c^2c_2})+O_p(b_n^{-1})$. $a_t$のヘルダー連続性も使う.}
\end{discuss}
we may replace $\tilde{P}_{i(\alpha'_{2q-1}),j(\alpha'_{2q-1}),i(i'_q),j(j'_q)}$ and $\tilde{P}_{i(\alpha'_{2q}),j(\alpha'_{2q}),i(i'_{q+1}),j(j'_q)}$ in the right-hand side of (\ref{Lambda1-equ}) by \\
$\hat{P}_{i(\alpha'_{2q-1}),i(\alpha'_{2q-1}),i(i'_q),i(j'_q)}$ and $\hat{P}_{i(\alpha'_{2q}),i(\alpha'_{2q}),i(i'_{q+1}),i(j'_q)}$, respectively, 
{\colorlg where $\hat{P}_{\alpha,\beta,i,j}=\prod_{k_1=\alpha}^{i-1}p_{k_1}(c_1)\prod_{k_2=\beta}^{j-1}p_{k_2}(c'_2)$.}
Therefore, we obtain 
\begin{equation*}
\sup_{\sigma,m}\bigg|b_n^{-1/2}{\rm tr}((\dot{D}_{1,m}^{-1}\tilde{G}\dot{D}_{2,m}^{-1}\tilde{G}^{\top})^p)-b_n^{-1/2-2p}(\hat{a}_m^2)^p(\hat{a}_m^1)^{-3p}{\rm tr}((\dot{D}_{1,m}^{-1}\ddot{D}_{2,m}^{-1})^p)\bigg|=o_p(\ell_n^{-1}),
\end{equation*}
by a similar argument to (\ref{Lambda1-equ}). 
\begin{discuss}
{\colorr $\Lambda_1$型の評価も$b_n^{-2p}$があるからできる.}
\end{discuss}
Since {\colorlg $\partial_{\sigma}^l\dot{D}_{j,m}=\partial_{\sigma}^lc_jv_{j,\ast}\mathcal{E}$} for $1\leq l\leq 3$, we similarly obtain
\begin{equation}\label{key-eq}
{\colorlg \sup_{\sigma,m}\bigg|b_n^{-1/2}\partial_{\sigma}^k{\rm tr}((\dot{D}_{1,m}^{-1}\tilde{G}\dot{D}_{2,m}^{-1}\tilde{G}^{\top})^p)
-b_n^{-1/2-2p}(\hat{a}_m^2)^p(\hat{a}_m^1)^{-3p}\partial_{\sigma}^k{\rm tr}((\dot{D}_{1,m}^{-1}\ddot{D}_{2,m}^{-1})^p)\bigg|=o_p(\ell_n^{-1})}.
\end{equation}
{\colorlg Then (\ref{tr-lim-est2}), (\ref{Hn-lim-lemma2-eq0}) and (\ref{key-eq}) yield (\ref{tr-eq3}).}

\begin{discuss}
{\colorr ($j'$の和を$i'$に直すときに$a^2/a^1$, $\mathcal{E}_{k^2_m}$を$\mathcal{E}_{k^1_m}$に直すときに$a^2/a^1$.)

$\dot{D}_{2,m}^{-1}$を微分すると, $(\hat{a}^2_m)^2/(\hat{a}^1_m)^2$が余計に出てくるがそれが$\partial_{\sigma}c'_2$に吸収される.
同様にして,
\begin{equation*}
b_n^{-1/2}{\rm tr}(\partial_{\sigma}^k((\dot{D}_{1,m}^{-1}\tilde{G}\dot{D}_{2,m}^{-1}\tilde{G}^{\top})^p\dot{D}_{1,m}^{-1}D'_{1,m}))
=b_n^{-1/2-2p-1}(\hat{a}_m^2)^p(\hat{a}_m^1)^{-3p-1}{\rm tr}(\partial_{\sigma}^k((\dot{D}_{1,m}^{-1}\ddot{D}_{2,m}^{-1})^p\dot{D}_{1,m}^{-1})) + o_p(\ell_n^{-1})
\end{equation*}
and 
\begin{equation*}
b_n^{-1/2}{\rm tr}(\partial_{\sigma}^k((\dot{D}_{2,m}^{-1}\tilde{G}^{\top}\dot{D}_{1,m}^{-1}\tilde{G})^p\dot{D}_{2,m}^{-1}D'_{2,m}))
=b_n^{-1/2-2p-1}(\hat{a}_m^2)^{p+1}(\hat{a}_m^1)^{-3p-2}{\rm tr}(\partial_{\sigma}^k((\dot{D}_{1,m}^{-1}\ddot{D}_{2,m}^{-1})^p\ddot{D}_{2,m}^{-1})) + o_p(\ell_n^{-1})
\end{equation*}
($\tilde{G}$を$\mathcal{E}$に直すときに$b_n^{-1}(\hat{a}^1_m)^{-1}$が出てくる. $\dot{D}_{2,m}^{-1}$を$\ddot{D}_{2,m}^{-1}$に直すときに$\hat{a}^2_m(\hat{a}^1_m)^{-1}$がでてくる)
and $o_p(\ell_n^{-1})$ in the right-hand side of (\ref{})-(\ref{}) can be replaced by $\underbar{R}_n(\ell_n^{-1})$ if $[B2]$ is satisfied.
}
\end{discuss}

\begin{discuss}
{\colorr

(\ref{tr-lim-equ})の最初の等式は, ${\rm tr}((\dot{D}_1^{-1}\ddot{D}_2^{-1})^p)=\cdots $が恒等式だから両辺を微分できてOK.
二つ目の等式は, $\Sigma$を積分に直しただけで, 被積分関数の微分が$x$に関して連続で同じオーダーの上からの評価が成り立つからOK.
三つ目の等式は, $I_p$の等式の両辺を$\sigma$で微分した時, $a$の微分が$\sqrt{a}^{-1}$に一つでもかからなかったらオーダーが下がり, $\sigma$の微分も$\sqrt{4+a}^{-1}$にかかると$\partial_{\sigma}a=O_p(b_n^{-1})$がでて
オーダーがさがるので, $b_n^{p-1/2}\partial_{\sigma}I_p(a)=(2p-3)!!\pi2^{-p}/(p-1)!\partial_{\sigma}^k(b_na)^{-p+1/2}$よりOK.
四つ目の等式は, 恒等式だから微分できる.
}
\end{discuss}

We also have (\ref{tr-eq1}) and (\ref{tr-eq2}) by a similar argument. 

{\colord Similar arguments enable us to replace $o_p(b_n^{1/2}\ell_n^{-1})$ by $\underbar{R}_n(b_n^{1/2}\ell_n^{-1})$ in (\ref{tr-eq1})-(\ref{tr-eq3}) if $[B2]$ is satisfied.}

\end{proof}

\noindent
{\bf Proof of Proposition~\ref{Hn-lim}.}

We first prove the results under the additional condition $[A1']$.
\begin{discuss}
{\colorr $[B1]$で$\mu_t$に仮定したHolder連続性の条件などは$[A1]$を局所化しただけではでないから, $\mu_t\equiv 0$とするため$[A1']$を使う.
($\mu_t\equiv 0$とするためには係数は多項式増大だけでなく, boundedでないといけない)
}
\end{discuss}

{\colorlg
Since $\lVert \tilde{D}_{1,m}^{-1/2}\tilde{G}\tilde{D}_{2,m}^{-1}\tilde{G}^{\top}\tilde{D}_{1,m}^{-1/2}\rVert\leq |\tilde{b}^1_m|^{-2}|\tilde{b}^2_m|^{-2}$ by Lemma 3 in \cite{ogi-yos14},
for any $\epsilon,\delta>0$, there exists $P_1\in\mathbb{N}$ such that
\begin{equation*}
\sup_nP\bigg[\sup_{\sigma}b_n^{-\frac{1}{2}}\sum_m\sum_{p=P+1}^{\infty}\big|\partial_{\sigma}^k\big((\tilde{b}^1_m\cdot \tilde{b}^2_m)^{2p-1}{\rm tr}((\tilde{D}_{1,m}^{-1}\tilde{G}\tilde{D}_{2,m}^{-1}\tilde{G}^{\top})^p)\big)\big|
\geq \delta\bigg]<\epsilon,
\end{equation*}
\begin{equation*}
\sup_nP\bigg[\sup_{\sigma}b_n^{-\frac{1}{2}}\sum_m\sum_{p=P+1}^{\infty}
\bigg|\partial_{\sigma}^k\bigg(\frac{(\tilde{b}^1_m\cdot\tilde{b}^2_m)^{2p-1}\hat{a}_m^1k_n(\hat{a}_m^2)^pI_{p,p}(c_1,c'_2)}{(b_n^2(\hat{a}_m^1)^3v_{1,\ast}v_{2,\ast})^p}\bigg)\bigg|\geq \delta\bigg]<\epsilon
\end{equation*}
for $P\geq P_1$.
\begin{discuss}
{\colorr $b_n^{-1/2}{\rm tr}(\tilde{D}_m^{-1}(\tilde{S}_{m,\ast}-\tilde{S}_m))=b_n^{-1/2}{\rm tr}(\dot{D}_m^{-1}(\tilde{S}_{m,\ast}-\tilde{S}_m))+o_p(\ell_n^{-1})=b_n^{-3/2}\sum_jc_j{\rm tr}(\dot{D}_{j,m}^{-1})+o_p(\ell_n^{-1})$と
$\tilde{D}_{1,m}^{-1/2}\tilde{L}\tilde{D}_{2,m}^{-1/2}$の$\sigma$微分がおとなしいことも使う(??).}
\end{discuss}
Together with Lemma \ref{Hn-lim-lemma2}, we obtain
\begin{equation*}
\sup_{\sigma}\bigg|b_n^{-1/2}\partial_{\sigma}^k\sum_m
\sum_{p=1}^{\infty}(\tilde{b}^1_m\cdot \tilde{b}^2_m)^{2p-1}{\rm tr}((\tilde{D}_{1,m}^{-1}\tilde{G}\tilde{D}_{2,m}^{-1}\tilde{G}^{\top})^p)
- \frac{T\hat{a}_m^1k_n}{\pi b_n^{1/2}}\partial_{\sigma}^k\sum_m\sum_{p=1}^{\infty}\frac{(\tilde{b}_m^1\cdot \tilde{b}_m^2)^{2p-1}(\hat{a}_m^2)^pI_{p,p}(c_1,c'_2)}{(b_n^2(\hat{a}_m^1)^3v_{1,\ast}v_{2,\ast})^p}
\bigg|\to^p0.
\end{equation*}

Let $\dot{a}_m^j=\tilde{a}_{s_{m-1}}^j$, $\mathfrak{C}_m=|\tilde{b}^1_m|^2|\tilde{b}^2_m|^2-(\tilde{b}^1_m\cdot \tilde{b}^2_m)^2$,
$\mathfrak{A}_t=\varphi(\tilde{a}^1_t|b^1_t|^2+\tilde{a}^2_t|b^2_t|^2,\tilde{a}^1_t\tilde{a}^2_t\det(b_tb_t^{\top}))$,
and 
\begin{equation*}
P_n=\varphi(c_1+c'_2,b_n^{-2}\hat{a}_m^2(\hat{a}_m^1)^{-3}v_{1,\ast}^{-1}v_{2,\ast}^{-1}\mathfrak{C}_m)
=b_n^{-1/2}(\hat{a}_m^1)^{-1}\varphi(\dot{a}^1_m|\tilde{b}^1_m|^2+\dot{a}^2_m|\tilde{b}^2_m|^2,\dot{a}^1_m\dot{a}^2_m\mathfrak{C}_m).
\end{equation*}
Then Lemma \ref{sum-integral-lemma} yields
\begin{eqnarray}
&&\frac{T\hat{a}_m^1k_n}{\pi b_n^{1/2}}\partial_{\sigma}^k\sum_m\sum_{p=1}^{\infty}\frac{(\tilde{b}^1_m\cdot\tilde{b}^2_m)^{2p-1}(\hat{a}_m^2)^pI_{p,p}(c_1,c'_2)}{(b_n^2(\hat{a}_m^1)^3v_{1,\ast}v_{2,\ast})^p} \nonumber \\
&=&\partial_{\sigma}^k\sum_m\frac{Tb_n^{1/2}\hat{a}_m^1\ell_n^{-1}\frac{\hat{a}_m^2\tilde{b}_m^1\cdot \tilde{b}_m^2}{(\hat{a}_m^1)^3b_n^2v_{1,\ast}v_{2,\ast}}}
{b_n^{-1/2}(\hat{a}_m^1)^{-1}\sqrt{2}\varphi(\dot{a}^1_m|\tilde{b}_m^1|^2+\dot{a}^2_m|\tilde{b}_m^2|^2,\dot{a}^1_m\dot{a}^2_m\mathfrak{C}_m)
\sqrt{\dot{a}^1_m\dot{a}^2_m}\sqrt{\mathfrak{C}_m}b_n^{-1}(\hat{a}_m^1)^{-2}}+o_p(1) \nonumber \\
&=&\partial_{\sigma}^k\sum_m\frac{T\ell_n^{-1}\sqrt{\dot{a}^1_m\dot{a}^2_m}\tilde{b}_m^1\cdot \tilde{b}_m^2}{\sqrt{2\mathfrak{C}_m}
\varphi(\dot{a}^1_m|\tilde{b}_m^1|^2+\dot{a}^2_m|\tilde{b}_m^2|^2,\dot{a}^1_m\dot{a}^2_m\mathfrak{C}_m)}+o_p(1) 
= \partial_{\sigma}^k \int^T_0\frac{\sqrt{\tilde{a}^1_t\tilde{a}^2_t}b_t^1\cdot b_t^2}{\sqrt{2\det (b_tb_t^{\top})}\mathfrak{A}_t}dt +o_p(1). \nonumber
\end{eqnarray}

Therefore, we have
\begin{equation}\label{Hn-lim-prop-eq1}
\sup_{\sigma}\bigg|b_n^{-1/2}\partial_{\sigma}^k\sum_m
\sum_{p=1}^{\infty}(\tilde{b}^1_m\cdot \tilde{b}^2_m)^{2p-1}{\rm tr}((\tilde{D}_{1,m}^{-1}\tilde{G}\tilde{D}_{2,m}^{-1}\tilde{G}^{\top})^p)
- \partial_{\sigma}^k \int^T_0\frac{\sqrt{\tilde{a}^1_t\tilde{a}^2_t}b_t^1\cdot b_t^2}{\sqrt{2\det (b_tb_t^{\top})}\mathfrak{A}_t}dt
\bigg|\to^p0.
\end{equation}

Similarly, we obtain
\begin{equation}\label{Hn-lim-prop-eq15}
\sup_{\sigma}\bigg|b_n^{-1/2}\partial_{\sigma}^k\sum_m
\sum_{p=1}^{\infty}{\rm tr}((\tilde{D}_{1,m}^{-1}\tilde{L}\tilde{D}_{2,m}^{-1}\tilde{L}^{\top})^p\tilde{D}_{1,m}^{-1}D'_{1,m})
- \partial_{\sigma}^k \int^T_0\frac{|b^2_t|^2\sqrt{\tilde{a}^1_t\tilde{a}^2_t}+\tilde{a}^1_t\sqrt{\det(b_tb_t^{\top})}}{\sqrt{2\det (b_tb_t^{\top})}\mathfrak{A}_t}dt\bigg|\to^p0, 
\end{equation}
\begin{equation}\label{Hn-lim-prop-eq18}
\sup_{\sigma}\bigg|b_n^{-1/2}\partial_{\sigma}^k\sum_m
\sum_{p=1}^{\infty}{\rm tr}((\tilde{D}_{2,m}^{-1}\tilde{L}^{\top}\tilde{D}_{1,m}^{-1}\tilde{L})^p\tilde{D}_{2,m}^{-1}D'_{2,m})
- \partial_{\sigma}^k \int^T_0\frac{|b^1_t|^2\sqrt{\tilde{a}^1_t\tilde{a}^2_t}+\tilde{a}^2_t\sqrt{\det(b_tb_t^{\top})}}{\sqrt{2\det (b_tb_t^{\top})}\mathfrak{A}_t}dt\bigg|\to^p0.
\end{equation}

\begin{discuss}
{\colorr 
\begin{eqnarray}
b_n^{-1/2}{\rm tr}(\tilde{D}_{j,m}^{-1}D'_{j,m})&=&b_n^{-3/2}(\hat{a}_m^j)^{-1}{\rm tr}(\dot{D}_{j,m}^{-1})+o_p(\ell_n^{-1})=\frac{b_n^{-3/2}k^j_m}{2c_{j,\ast}^{1/2}\hat{a}_m^jv_{j,\ast}}+o_p(\ell_n^{-1})
=\frac{T\ell_n^{-1}(\hat{a}_m^j)^{1/2}}{2|\tilde{b}^j_m|v_{j,\ast}^{1/2}}+o_p(\ell_n^{-1}). \nonumber
\end{eqnarray}
和を積分に直すところは全て$a^j_t$と$b^j_t$の連続性と, 被和項のtightnessから言える. また, $a^j_t$, $b^j_t$のHolder連続性より, $[B2]$の時に$b_n^{\delta}$の隙間をとれることもわかる.
}
\end{discuss}

Furthermore, Lemma~\ref{log-det-exp} and a similar argument yield
\begin{eqnarray}
\partial_{\sigma}^k\log \det (\tilde{S}_m\tilde{D}_m^{-1})
&=&\partial_{\sigma}^k\log\det \bigg(\mathcal{E}+\left(
\begin{array}{ll}
0 & \tilde{D}_{1,m}^{-1/2}\tilde{L}\tilde{D}_{2,m}^{-1/2} \\
\tilde{D}_{2,m}^{-1/2}\tilde{L}^{\top}\tilde{D}_{1,m}^{-1/2} & 0
\end{array}
\right)\bigg) \nonumber \\
&=&-\sum_{p=1}^{\infty}\frac{1}{p}\partial_{\sigma}^k{\rm tr}((\tilde{D}_{1,m}^{-1}\tilde{L}\tilde{D}_{2,m}^{-1}\tilde{L}^{\top})^p) \nonumber \\
&=&-\frac{T\hat{a}_m^1k_n}{\pi}\sum_{p=1}^{\infty}\frac{(\hat{a}_m^2)^p(\tilde{b}_m^1\cdot \tilde{b}_m^2)^{2p}I_{p,p}(c_1,c'_2)}{p(\hat{a}_m^1)^{3p}b_n^{2p}v_{1,\ast}^pv_{2,\ast}^p}
+ o_p(b_n^{1/2}\ell_n^{-1}). \nonumber 
\end{eqnarray}
Then Lemma~\ref{sum-integral-lemma} yields
\begin{eqnarray}\label{Hn-lim-prop-eq2}
&&\partial_{\sigma}^k\log \det (\tilde{S}_m\tilde{D}_m^{-1}) \nonumber \\
&=&-T\hat{a}_m^1k_n\bigg(\sqrt{\frac{|\tilde{b}_m^1|^2}{b_n\hat{a}_m^1v_{1,\ast}}}+\sqrt{\frac{\hat{a}_m^2|\tilde{b}_m^2|^2}{b_n(\hat{a}_m^1)^2v_{2,\ast}}}\bigg)
+\frac{T\hat{a}_m^1k_nb_n^{-1/2}}{\sqrt{2}\hat{a}_m^1}
\varphi(\dot{a}^1_m|\tilde{b}_m^1|^2+\dot{a}^2_m|\tilde{b}_m^2|^2,\dot{a}^1_m\dot{a}^2_m\det(\tilde{b}_m\tilde{b}_m^{\top})) \nonumber \\
&&+o_p(b_n^{1/2}\ell_n^{-1}) \nonumber \\
&=&Tb_n^{1/2}\ell_n^{-1}\bigg(\frac{1}{\sqrt{2}}\varphi(\dot{a}^1_m|\tilde{b}_m^1|^2+\dot{a}^2_m|\tilde{b}_m^2|^2,\dot{a}^1_m\dot{a}^2_m\det(\tilde{b}_m\tilde{b}_m^{\top}))
-\sqrt{\dot{a}^1_m|\tilde{b}_m^1|^2}-\sqrt{\dot{a}^2_m|\tilde{b}_m^2|^2}\bigg)+o_p(b_n^{1/2}\ell_n^{-1}). \nonumber \\
\end{eqnarray}
}{\colord
Moreover, Lemmas~\ref{log-det-exp} and~\ref{D1-change} yield
\begin{eqnarray}\label{Hn-lim-prop-eq3}
&&\partial_{\sigma}^k\log\det (\tilde{D}_{j,m}\tilde{D}_{j,m,\ast}^{-1}) \nonumber \\
&=&\partial_{\sigma}^k\log\det (\mathcal{E}+\tilde{D}_{j,m,\ast}^{-1/2}(\tilde{D}_{j,m}-\tilde{D}_{j,m,\ast})\tilde{D}_{j,m,\ast}^{-1/2})
=\sum_{p=1}^{\infty}\frac{(-1)^{p-1}}{p}\partial_{\sigma}^k{\rm tr}((\tilde{D}_{j,m,\ast}^{-1}(\tilde{D}_{j,m}-\tilde{D}_{j,m,\ast}))^p) \nonumber \\
&=&\sum_{p=1}^{\infty}\frac{(-1)^{p-1}}{p}\partial_{\sigma}^k{\rm tr}((\dot{D}_{j,m,\ast}^{-1}(\dot{D}_{j,m}-\dot{D}_{j,m,\ast}))^p)+o_p(b_n^{1/2}\ell_n^{-1})=\partial_{\sigma}^k\log\det (\dot{D}_{j,m} \dot{D}_{j,m,\ast}^{-1})+o_p(b_n^{1/2}\ell_n^{-1}) \nonumber \\
\end{eqnarray}
when $|\tilde{b}^j_{m,\ast}|\geq |\tilde{b}^j_m|$, where $\tilde{D}_{j,m,\ast}$ and $\dot{D}_{j,m,\ast}$ are obtained by substituting $\sigma=\sigma_{\ast}$ in $\tilde{D}_{j,m}$ and $\dot{D}_{j,m}$, respectively. 
Similarly, we have $\partial_{\sigma}^k\log \det (\tilde{D}_{j,m}\tilde{D}_{j,m,\ast}^{-1})=\partial_{\sigma}^k\log \det (\dot{D}_{j,m}\dot{D}_{j,m,\ast}^{-1})+o_p(b_n^{1/2}\ell_n^{-1})$ when $|\tilde{b}^j_{m,\ast}|< |\tilde{b}^j_m|$.
}

On the other hand, {\colord results in Section~\ref{noise-cov-property-subsection} yield}
\begin{eqnarray}\label{Hn-lim-prop-eq4}
\partial_{\sigma}^k\log\frac{\det \dot{D}_{j,m}}{\det \dot{D}_{j,m,\ast}}&=&\frac{k^j_m}{\pi}\partial_{\sigma}^k\int^{\pi}_0\log \frac{c_j+2(1-\cos x)}{c_{j,\ast}+2(1-\cos x)}dx+o_p(b_n^{1/2}\ell_n^{-1}) \nonumber \\
&=&2k^j_m\partial_{\sigma}^k\log\frac{\sqrt{c_j}+\sqrt{4+c_j}}{\sqrt{c_{j,\ast}}+\sqrt{4+c_{j,\ast}}} +o_p(b_n^{1/2}\ell_n^{-1}) \nonumber \\
&=& k^j_m\partial_{\sigma}^k(\sqrt{c_j}-\sqrt{c_{j,\ast}})+o_p(b_n^{1/2}\ell_n^{-1})=Tb_n^{1/2}\ell_n^{-1}\sqrt{\dot{a}_m^j}\partial_{\sigma}^k(|b^j_m|-|b^j_{m,\ast}|)+o_p(b_n^{1/2}\ell_n^{-1}). \nonumber \\
\end{eqnarray}
\begin{discuss}
{\colorr 積分と和の差はlogを差に分解して評価すればよい.}
\end{discuss}
{\colord The residuals are bounded uniformly with respect to $\sigma$ and $m$.}
Then we obtain $\sup_{\sigma}|b_n^{-1/2}\partial_{\sigma}^k(H_n(\sigma,\hat{v}_n)-H_n(\sigma_{\ast},\hat{v}_n))- \partial_{\sigma}^k\mathcal{Y}_1(\sigma)|\to^p0$ as $n\to\infty$ for any $\sigma\in\Lambda$ and $0\leq k\leq 3$
by (\ref{Hn-diff-eq1}) and (\ref{Hn-lim-prop-eq1})--(\ref{Hn-lim-prop-eq4}).
\begin{discuss}
{\colorr 微分は$\log $分子の状態で計算して評価.}
\end{discuss}

Finally, we obtain the results without $[A1']$ by using the arguments in Proposition 3.1 of Gloter and Jacod~\cite{glo-jac01b}.

\qed

\begin{discuss}
{\colorr
証明:
条件$[A1'']$を条件$[A1]$に加え, $\sup_{t,x,\sigma}\lVert (bb^{\top})^{-1}\rVert(t,x,\sigma) <\infty$かつ
$0\leq 2i+j\leq 3$と$0\leq k\leq 4$に対し, $Y_0$と$\partial_t^i\partial_x^j\partial_{\sigma}^kb$と$\mu_t$が有界とすると,
通常の局所化の議論から$[A1]$を$[A1'']$に置き換えて証明してよいことがわかる.

$[A1'']$の下, $\int^T_0|b^{\top}(bb^{\top})^{-1}\mu_s|^2ds=\int^T_0\mu_s^{\top}(bb^{\top})^{-1}\mu_sds$は有界より,
Girsanovの定理から$W'_t=W_t+\int^t_0b^{\top}_s(b_sb^{\top}_s)^{-1}\mu_sds$は$dQ=\exp(\int^T_0\mu_t^{\top}(b_tb^{\top}_t)^{-1}b_tdW_t-\int^T_0\mu_t^{\top}(b_tb_t^{\top})^{-1}\mu_tdt/2)dP$
の下で標準ブラウン運動になる.
$Q$の空間と$P$の空間の$L^2$収束は保たれているので, $Q$の空間において,$Y_t$は
\begin{eqnarray}
\int^t_0\mu_sds+\sum_kb_{t_{k-1}}(W_{t\wedge t_k}-W_{t\wedge t_{k-1}})&=&\int^t_0\mu_sds+\sum_kb_{t_{k-1}}(W'_{t\wedge t_k}-W'_{t\wedge t_{k-1}}+\int^{t\wedge t_k}_{t\wedge t_{k-1}}b_s^{\top}(b_sb_s^{\top})^{-1}\mu_sds) \nonumber 
\end{eqnarray}
の$L^2$収束極限となるので, $Y_t=\int^t_0b_sdW'_s$となる.
$Q$の空間でノイズやサンプリングは独立だから引き続き条件を満たす.
絶対連続測度間でtightnessが保たれることもわかり, $[V]$が成立する. $E[|dQ/dP|^q]<\infty$より $[A1]$ 5.も引き続き成立する.

よって$\mathcal{G}_t=\mathcal{F}_t\vee \sigma(\{\epsilon^{n,k}_i\}_{n,k,i})\vee \sigma(\{\Pi_n\}_n)$とおくと,
$Z$: $\mathcal{F}$-measurable bounded random variableと$f$:有界連続関数に対して,
\begin{eqnarray}
E_P[Zf(b_n^{1/2}(\hat{\sigma}_n-\sigma_{\ast}))]&=&E_P[E_P[Z|\mathcal{G}_T]f(b_n^{1/2}(\hat{\sigma}_n-\sigma_{\ast}))] \nonumber \\
&=&E_Q[E_P[Z|\mathcal{G}_T]f(b_n^{1/2}(\hat{\sigma}_n-\sigma_{\ast}))(dP/dQ)] \nonumber \\
&\to & E_Q[E_P[Z|\mathcal{G}_T]f(\Gamma_1 \mathcal{N})(dP/dQ)] \nonumber \\
&=& E_P[E_P[Z|\mathcal{G}_T]f(\Gamma_1 \mathcal{N})]=E_P[E_P[E_P[Z|\mathcal{G}_T]f(\Gamma_1 z)]|_{z=\mathcal{N}}] \nonumber \\
&=&E_P[E_P[Zf(\Gamma_1 z)]|_{z=\mathcal{N}}]=E_P[Zf(\Gamma_1 \mathcal{N})]. \nonumber 
\end{eqnarray}
安定収束の際は有界性が言えないがカットオフした時の残差が$n$によらずいくらでも小さくできるので良い.
limitの空間でもmeasure changeができるのは, $(\Omega,\mathcal{F},P)$に独立正規分布をのっけた直積空間と思ってよいから, $\tilde{P}(A\cap B)=\tilde{P}(A)\tilde{P}(B)=E_{\tilde{Q}}[1_A(dP/dQ)]\tilde{P}[B]$
で定めれば元の$P$の空間に直積空間をつけたものと同値になる.

$b$:constの時は$bb^{\top}$の上から$\sigma_{\ast}$-一様評価があるので基本的に$\bar{R}_n()$の評価はすぐに$\sigma_{\ast}$-一様評価に変えられる.
期待値の外に$\sup_{\sigma_{\ast}}$をとるので問題は起きない. $Y_t^{\sigma}$の評価がでても問題は起きないだろうが
constantだとない.
}
\end{discuss}

\section{Identifiability of the model}\label{identifiability-section}

In this section, we check the identifiability condition, $\inf_{\sigma\neq \sigma_{\ast}}((-\mathcal{Y}_1(\sigma))/|\sigma-\sigma_{\ast}|^2)>0$ almost surely.
This condition is necessary to deduce consistency of the maximum-likelihood-type estimator, as seen in Proposition~\ref{consistency}.
In general, it is not easy to check this condition directly because $\mathcal{Y}_1(\sigma)$ is a complicated function of $b_t$ and $a^j_t$.
On the other hand, Ogihara and Yoshida~\cite{ogi-yos14} proved that the identifiability condition $[A3]$ of a model for equidistant observations without noise
is sufficient for the identifiability of a model for nonsynchronous observations.
This is also the case for our model. 

\begin{proposition}\label{separability-prop}
Assume $[A1]$, $[A2]$, and $[V]$. Then there exists a positive constant $c$ such that
\begin{equation}\label{separability-ineq}
-\mathcal{Y}_1(\sigma)\geq \chi\int^T_0\left\{(|b^1_t|^2-|b^1_{t,\ast}|^2)^2+(|b^2_t|^2-|b^2_{t,\ast}|^2)^2+(b^1_t\cdot b^2_t-b^1_{t,\ast}\cdot b^2_{t,\ast})^2\right\}dt
\end{equation}
for any $\sigma$, where 
\begin{equation*}
\chi=c(1-\bar{\rho}^2)\frac{(v_{1,\ast}\wedge v_{2,\ast})^{1/2}}{v_{1,\ast}\vee v_{2,\ast}}	\left(\sup_{j,t}a^j_t\right)^{-1/2}\left(\sup_{j,t,\sigma}(|b^j(t,X_t,\sigma)|\vee |b^j(t,X_t,\sigma)|^{-1})\right)^{-5}.
\end{equation*}
In particular, $\inf_{\sigma\neq \sigma_{\ast}}((-\mathcal{Y}_1(\sigma))/|\sigma-\sigma_{\ast}|^2)>0$ almost surely under $[A1]$--$[A3]$ and $[V]$.
\end{proposition}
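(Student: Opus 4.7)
The plan is to exploit the Kullback--Leibler structure of the pre-limit of $-\mathcal{Y}_1$. By (\ref{Hn-diff-eq1}) together with Lemma~\ref{Hn-diff-lemma}, one has $-\mathcal{Y}_1(\sigma) = \lim_n \tfrac{1}{2} b_n^{-1/2} \sum_m D_m(\sigma)$ with $D_m(\sigma) := {\rm tr}(\tilde S_m^{-1}\tilde S_{m,\ast} - \mathcal{E}) + \log\det(\tilde S_m\tilde S_{m,\ast}^{-1})$, and $D_m(\sigma) = \sum_i(\lambda_i^{(m)} - 1 - \log\lambda_i^{(m)})$, where $\lambda_i^{(m)}$ are the eigenvalues of $\tilde S_{m,\ast}^{1/2}\tilde S_m^{-1}\tilde S_{m,\ast}^{1/2}$. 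Under $[A1]$ and the bounds on $b$, $(bb^\top)^{-1}$, $v_\ast$, together with Lemma~\ref{Sm-properties}, these eigenvalues lie in a compact interval $[c_1,c_2]\subset(0,\infty)$ uniformly in $n,m,\sigma$. The elementary inequality $x-1-\log x\geq c_3(x-1)^2$ on $[c_1,c_2]$ then yields $D_m(\sigma) \geq c_3\lVert \tilde S_{m,\ast}^{-1/2}(\tilde S_m - \tilde S_{m,\ast})\tilde S_{m,\ast}^{-1/2}\rVert_F^2$.

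Writing $\alpha^j_m := |\tilde b^j_m|^2 - |\tilde b^j_{m,\ast}|^2$ for $j=1,2$ and $\beta_m := \tilde b^1_m\cdot\tilde b^2_m - \tilde b^1_{m,\ast}\cdot\tilde b^2_{m,\ast}$, the matrix $\tilde S_m - \tilde S_{m,\ast}$ has diagonal blocks $\alpha^j_m D'_{j,m}$ and off-diagonal block $\beta_m\tilde G$ (the noise parts cancel). I would decompose the Frobenius norm block-by-block and use the Schur-complement form of $\tilde S_{m,\ast}^{-1}$ from the Neumann-series expansion of Section~\ref{Hn-limit-section}, together with Lemmas~\ref{D1-change}--\ref{Hn-lim-lemma2}, to compute each block's asymptotic contribution. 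The averaging lemma replaces $\tilde D_{j,m,\ast}$ by $\dot D_{j,m,\ast}$ and reduces sums such as $b_n^{-1/2}\sum_m (\alpha^j_m)^2\,{\rm tr}(\dot D_{j,m,\ast}^{-1}D'_{j,m}\dot D_{j,m,\ast}^{-1}D'_{j,m})$ and the analogue for $\beta_m^2$ to Riemann sums converging to positive multiples of $\int_0^T(\alpha^j_t)^2\,dt$ and $\int_0^T \beta_t^2\,dt$, with prefactors expressed through $a^j_t$, $v_{j,\ast}$ and the uniform bounds on $b$. The factor $(1-\bar\rho^2)$ appearing in $\chi$ emerges from the Schur-complement estimate $\lVert(\mathcal{E} - \tilde D_{1,m}^{-1/2}\tilde L\tilde D_{2,m}^{-1}\tilde L^\top\tilde D_{1,m}^{-1/2})^{-1}\rVert \leq (1-\bar\rho^2)^{-1}$. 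Passing to the limit yields (\ref{separability-ineq}).

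The main obstacle is the quantitative control of the cross interactions coming from the block inversion of $\tilde S_{m,\ast}^{-1}$: a priori the interplay between the $\alpha^j_m$-contributions and the $\beta_m$-contribution under conjugation by $\tilde S_{m,\ast}^{-1/2}$ could produce cancellation in the Frobenius norm. The resolution is to pass through a block Cholesky-type factorization $\tilde S_{m,\ast}^{1/2} = {\rm diag}(\tilde D_{1,m,\ast}^{1/2},\tilde D_{2,m,\ast}^{1/2})\, U_m$, where $\lVert U_m^{-1}\rVert \leq (1-\bar\rho^2)^{-1/2}$ by the norm bound on the cross block; after this conjugation, the three contributions $(\alpha^1_m)^2$, $(\alpha^2_m)^2$ and $\beta_m^2$ decouple up to the multiplicative factor $(1-\bar\rho^2)$, which is precisely the prefactor in $\chi$.

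For the "in particular" statement, observe that $-\mathcal{Y}_0(\sigma) = \tfrac{1}{2}\int_0^T \sum_i f(\mu_t^{(i)})\,dt$ with $f(x) = x - 1 - \log x$ and $\mu_t^{(i)}$ the eigenvalues of $(b_tb_t^\top)^{-1}(b_{t,\ast}b_{t,\ast}^\top)$; these are confined to a compact subinterval of $(0,\infty)$ under $[A1]$, so the companion bound $f(x)\leq c_5(x-1)^2$ gives $-\mathcal{Y}_0(\sigma) \leq c_6\int_0^T\lVert b_tb_t^\top - b_{t,\ast}b_{t,\ast}^\top\rVert_F^2\,dt \leq 2c_6 R(\sigma)$, where $R(\sigma)$ denotes the right-hand side of (\ref{separability-ineq}). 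Combining with $[A3]$ gives $R(\sigma)\geq (2c_6)^{-1}(-\mathcal{Y}_0(\sigma))\geq c_7|\sigma-\sigma_\ast|^2$ almost surely, and therefore $\inf_{\sigma\neq\sigma_\ast}(-\mathcal{Y}_1(\sigma))/|\sigma-\sigma_\ast|^2 \geq \chi c_7 > 0$ almost surely.
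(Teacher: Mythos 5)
Your proposal is correct and follows essentially the same route as the paper's proof: the elementary inequality $x-1-\log x\geq c(x-1)^2$ valid under a uniform upper bound on the eigenvalues of $\tilde{S}_{m,\ast}^{1/2}\tilde{S}_m^{-1}\tilde{S}_{m,\ast}^{1/2}$ (this is the paper's Lemma~\ref{log-det-est}), exact decoupling of the three squared differences once the full inverse is replaced by the block-diagonal $\tilde{D}_m^{-1}$ at the cost of the $(1-\bar{\rho}^2)$ factor, and Lemma~\ref{D1-change} together with (\ref{key-eq}) to identify the limits of the three traces as positive multiples of $\ell_n^{-1}$. The only deviations are cosmetic: the paper normalizes by the pure diffusion diagonal $\hat{D}_m=\tilde{D}_m-M_{m,\ast}$ rather than by $\tilde{S}_{m,\ast}$, and it obtains the final comparison with $-\mathcal{Y}_0(\sigma)$ by citing Lemma 6 of Ogihara--Yoshida rather than by your direct two-sided bound on $x-1-\log x$; note also that the trace computations require first localizing to $[A1']$, as the paper does at the outset.
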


\begin{proof}
{\colord It is sufficient to show the results under the additional condition $[A1']$ 
by localization techniques similar to the proof of Proposition~\ref{Hn-lim}.}

Let $\hat{D}_m=\tilde{D}_m-M_{m,\ast}$ and ${\bf B}=\sup_{j,t,\sigma}(|b^j(t,X_t,\sigma)|\vee |b^j(t,X_t,\sigma)|^{-1})$, then since
\begin{eqnarray}
&&u^{\top}\hat{D}_m^{-1/2}\tilde{S}_m\hat{D}_m^{-1/2}u \nonumber \\
&\geq& u^{\top}\left(
\begin{array}{ll}
|\tilde{b}^1_m|^2\mathcal{E} & \tilde{b}^1_m\cdot \tilde{b}^2_m\{|I^1_{i,m}\cap I^2_{j,m}||I^1_{i,m}|^{-1/2}|I^2_{j,m}|^{-1/2}\}_{i,j} \\
\tilde{b}^1_m\cdot \tilde{b}^2_m\{|I^1_{i,m}\cap I^2_{j,m}||I^1_{i,m}|^{-1/2}|I^2_{j,m}|^{-1/2}\}_{j,i} & |\tilde{b}^2_m|^2\mathcal{E} 
\end{array}
\right)u \nonumber
\end{eqnarray}
for any $u \in \mathbb{R}^{{\bf J}_{1,n}+{\bf J}_{2,n}}$, 
we have $\lVert (\hat{D}_m^{1/2}\tilde{S}_m^{-1}\hat{D}_m^{1/2})^{1/2}\rVert \leq C{\bf B}(1-\bar{\rho}^2)^{-1/2}$ by Lemma~\ref{inverse-norm-est},
and hence we obtain
\begin{eqnarray}
&&\lVert(\hat{D}_m^{1/2}\tilde{S}_m^{-1}\hat{D}_m^{1/2})^{1/2}(\hat{D}_m^{-1/2}\tilde{S}_{m,\ast}\hat{D}_m^{-1/2})(\hat{D}_m^{1/2}\tilde{S}_m^{-1}\hat{D}_m^{1/2})^{1/2}\rVert \nonumber \\
&=&\lVert \mathcal{E}+(\hat{D}_m^{1/2}\tilde{S}_m^{-1}\hat{D}_m^{1/2})^{1/2}(\hat{D}_m^{-1/2}(\tilde{S}_{m,\ast}-\tilde{S}_m)\hat{D}_m^{-1/2})(\hat{D}_m^{1/2}\tilde{S}_m^{-1}\hat{D}_m^{1/2})^{1/2}\rVert
\leq 1+C{\bf B}^2(1-\bar{\rho}^2)^{-1}. \nonumber
\end{eqnarray}
Then Lemma~\ref{log-det-est} yields
\begin{eqnarray}\label{separability-prop-eq1}
{\rm tr}(\tilde{S}^{-1}_m\tilde{S}_{m,\ast}-\mathcal{E})+\log\det \tilde{S}_m-\log\det \tilde{S}_{m,\ast}
\geq C{\bf B}^{-2}(1-\bar{\rho}^2){\rm tr}(\tilde{S}^{-1}_m(\tilde{S}_{m,\ast}-\tilde{S}_m)\tilde{S}^{-1}_m(\tilde{S}_{m,\ast}-\tilde{S}_m)). 
\end{eqnarray}
\begin{discuss}
{\colorr 非負定値対称行列$A$に対し, $(A^{1/2})^{-1}=(A^{-1})^{1/2}$となることは, 固有値と直交行列を用いた両辺の表現を見れば明らか.}
\end{discuss}

Therefore, we have 
\begin{eqnarray}
&&{\rm tr}(\tilde{S}^{-1}_m\tilde{S}_{m,\ast}-\mathcal{E})+\log\det \tilde{S}_m-\log\det \tilde{S}_{m,\ast} \nonumber \\
&\geq & C{\bf B}^{-2}(1-\bar{\rho}^2){\rm tr}(\tilde{D}_m^{-1}(\tilde{S}_{m,\ast}-\tilde{S}_m)\tilde{S}_m^{-1}(\tilde{S}_{m,\ast}-\tilde{S}_m)) 
\geq C{\bf B}^{-2}(1-\bar{\rho}^2){\rm tr}(\tilde{D}_m^{-1}(\tilde{S}_{m,\ast}-\tilde{S}_m)\tilde{D}_m^{-1}(\tilde{S}_{m,\ast}-\tilde{S}_m)) \nonumber \\
&=&C{\bf B}^{-2}(1-\bar{\rho}^2)\bigg\{\sum_{j=1}^2(|\tilde{b}^j_{m,\ast}|^2-|\tilde{b}^j_m|^2)^2{\rm tr}(\tilde{D}_{j,m}^{-1}D'_{j,m}\tilde{D}_{j,m}^{-1}D'_{j,m})
+2(\tilde{b}^1_{m,\ast}\cdot \tilde{b}^2_{m,\ast}-\tilde{b}^1_m\cdot \tilde{b}^2_m)^2{\rm tr}(\tilde{D}_{1,m}^{-1}\tilde{G}\tilde{D}_{2,m}^{-1}\tilde{G}^{\top})\bigg\}. \nonumber
\end{eqnarray}

Hence it is sufficient to show that $\limsup$ of three quantities ${\rm tr}(\tilde{D}_{j,m}^{-1}D'_{j,m}\tilde{D}_{j,m}^{-1}D'_{j,m})$ for $j=1,2$ and 
${\rm tr}(\tilde{D}_{1,m}^{-1}\tilde{G}\tilde{D}_{2,m}^{-1}\tilde{G}^{\top})$
are estimated from below by positive random variables.

By Lemma~\ref{D1-change} and (\ref{key-eq}) with a sampling scheme $S^{n,1}\equiv S^{n,2}$, we obtain
\begin{eqnarray}
b_n^{-1/2}{\rm tr}(\tilde{D}_{j,m}^{-1}D'_{j,m}\tilde{D}_{j,m}^{-1}D'_{j,m})
&=& b_n^{-1/2}{\rm tr}(\dot{D}_{j,m}^{-1}D'_{j,m}\dot{D}_{j,m}^{-1}D'_{j,m})+\bar{R}_n(\ell_n^{-1}) 
=b_n^{-5/2}(\hat{a}_m^j)^{-2}{\rm tr}(\dot{D}_{j,m}^{-2})+\bar{R}_n(\ell_n^{-1}) \nonumber \\
&=&\frac{b_n^{-5/2}}{(\hat{a}_m^j)^2v_{j,\ast}^2}I_2\bigg(\frac{b_n^{-1}|\tilde{b}^j_m|^2}{\hat{a}_m^jv_{j,\ast}}\bigg)+\bar{R}_n(\ell_n^{-1}) 
=\frac{\pi \ell_n^{-1}}{4(\hat{a}_m^j)^{1/2}v_{j,\ast}^{1/2}|\tilde{b}^j_m|^3}+\bar{R}_n(\ell_n^{-1}). \nonumber
\end{eqnarray}
\begin{discuss}
{\colorr 途中式:
\begin{equation*}
\frac{\ell_n^{-1}}{\hat{a}_m^jv_{j,\ast}^2}\frac{\pi}{4}\frac{v_{j,\ast}^{3/2}(\hat{a}_m^j)^{3/2}}{|\tilde{b}^j_m|^3}+o_p(\ell_n^{-1}) 
\end{equation*}
}
\end{discuss}
Moreover, Lemma~\ref{D1-change} and (\ref{key-eq}) yield
\begin{eqnarray}
b_n^{-1/2}{\rm tr}(\tilde{D}_{1,m}^{-1}\tilde{G}\tilde{D}_{2,m}^{-1}\tilde{G}^{\top})&=&b_n^{-5/2}\frac{\hat{a}_m^2}{(\hat{a}_m^1)^3}{\rm tr}(\dot{D}_{1,m}^{-1}\ddot{D}_{2,m}^{-1})+\bar{R}_n(\ell_n^{-1}) \nonumber \\
&\geq &\frac{b_n^{-5/2}\hat{a}_m^2}{(\hat{a}_m^1)^3v_{1,\ast}v_{2,\ast}}{\rm tr}\bigg(\bigg(\bigg(\bigg(\frac{|\tilde{b}^2_m|^2b_n^{-1}\hat{a}_m^2}{v_{2,\ast}(\hat{a}_m^1)^2}\bigg)\vee \frac{|\tilde{b}^1_m|^2b_n^{-1}}{\hat{a}_m^1v_{1,\ast}}\bigg)\mathcal{E}+M_{1,m}\bigg)^{-2}\bigg)+\bar{R}_n(\ell_n^{-1}) \nonumber \\
&=&\frac{\ell_n^{-1}\hat{a}_m^2}{v_{1,\ast}v_{2,\ast}(\hat{a}_m^1)^3}\frac{\pi}{4}\bigg(\bigg(\frac{|\tilde{b}^2_m|^2\hat{a}_m^2}{v_{2,\ast}(\hat{a}_m^1)^2}\bigg)\vee \frac{|\tilde{b}^1_m|^2}{\hat{a}_m^1v_{1,\ast}}\bigg)^{-3/2}+\bar{R}_n(\ell_n^{-1}). \nonumber
\end{eqnarray}
Similarly, we obtain
\begin{eqnarray}
b_n^{-1/2}{\rm tr}(\tilde{D}_{1,m}^{-1}\tilde{G}\tilde{D}_{2,m}^{-1}\tilde{G}^{\top})
&\geq &\frac{\ell_n^{-1}\hat{a}_m^1}{v_{1,\ast}v_{2,\ast}(\hat{a}_m^2)^3}\frac{\pi}{4}\bigg(\bigg(\frac{|\tilde{b}^1_m|^2\hat{a}_m^1}{v_{1,\ast}(\hat{a}_m^2)^2}\bigg)\vee \frac{|\tilde{b}^2_m|^2}{\hat{a}_m^2v_{2,\ast}}\bigg)^{-3/2}+\bar{R}_n(\ell_n^{-1}).
\end{eqnarray}
Therefore, we obtain (\ref{separability-ineq}).

In particular, by Lemma 6 and Remark 4 in~\cite{ogi-yos14}, there exists a positive-valued random variable $\mathcal{R}$ such that 
\begin{discuss}
{\colorr 
$[A1]$はOgiYosのLemma6の条件より弱いから$\mathcal{R}$のモーメントはわからないが正値でとれることはわかる。
} 
\end{discuss}
\begin{equation*}
-\mathcal{Y}_1(\sigma)\geq \chi \mathcal{R}(-\mathcal{Y}_0(\sigma))
\end{equation*}
for any $\sigma$. Therefore we have $\inf_{\sigma\neq \sigma_{\ast}}((-\mathcal{Y}_1(\sigma))/|\sigma-\sigma_{\ast}|^2)>0$ almost surely under $[A1]$--$[A3]$ and $[V]$.

\end{proof}

\begin{discuss}
{\colorr $b$:constのとき$\sigma_{\ast}$に対する一様な$[A3]$があれば, $\mathcal{R}$や$\chi$の一様な評価から一様な分離性が言える.
}
\end{discuss}

\section{Asymptotic mixed normality of the estimator}\label{mixed-normality-section}

In this section we prove the consistency and asymptotic mixed normality of $\hat{\sigma}_n$.
To obtain asymptotic mixed normality, we prove stable convergence of the score function $b_n^{-1/4}\partial_{\sigma}H_n(\sigma_{\ast},v_{\ast})$
by means of the martingale limit theorem for a mixed normal limit in Jacod~\cite{jac97}.
We also use the idea by Jacod et al.~\cite{jac-etal09} to adapt the limit theorem to models containing observation noise.

Consistency is an immediate consequence of Proposition~\ref{Hn-lim} and the identifiability condition.

\begin{proposition}\label{consistency}
Assume $[A1]$--$[A3]$ and $[V]$. Then $\hat{\sigma}_n\to^p \sigma_{\ast}$ as $n\to \infty$.
\end{proposition}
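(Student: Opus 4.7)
The plan is to combine the uniform convergence of the normalized quasi-likelihood ratio from Proposition~\ref{Hn-lim} with the identifiability bound established in Proposition~\ref{separability-prop}. This is the standard Wald-type consistency argument for M-estimators, so I do not expect a deep obstacle; the main care needed is in handling the closure of $\Lambda$ and in noting that $\mathcal{Y}_1$ depends on $\omega$ (so the identifiability constant is only a.s. positive, not uniformly).

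First, by the defining property of $\hat{\sigma}_n$ as a maximizer of $H_n(\cdot,\hat{v}_n)$ over $\mathrm{clos}(\Lambda)$, we have
\begin{equation*}
b_n^{-1/2}\bigl(H_n(\hat{\sigma}_n,\hat{v}_n)-H_n(\sigma_{\ast},\hat{v}_n)\bigr)\;\geq\;0.
\end{equation*}
Proposition~\ref{Hn-lim} (with $k=0$) gives
$\sup_{\sigma\in\Lambda}\bigl|b_n^{-1/2}(H_n(\sigma,\hat{v}_n)-H_n(\sigma_{\ast},\hat{v}_n))-\mathcal{Y}_1(\sigma)\bigr|\to^p 0$;
since $H_n(\cdot,\hat{v}_n)$ and $\mathcal{Y}_1$ extend continuously to $\mathrm{clos}(\Lambda)$ (using part~1 of $[A1]$), the supremum may be taken over $\mathrm{clos}(\Lambda)$. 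Combining these,
\begin{equation*}
\mathcal{Y}_1(\hat{\sigma}_n)\;\geq\; -\sup_{\sigma\in\mathrm{clos}(\Lambda)}\bigl|b_n^{-1/2}(H_n(\sigma,\hat{v}_n)-H_n(\sigma_{\ast},\hat{v}_n))-\mathcal{Y}_1(\sigma)\bigr|\;\to^p 0,
\end{equation*}
and since $\mathcal{Y}_1(\sigma_{\ast})=0$ with $\mathcal{Y}_1\leq 0$ everywhere (again by Proposition~\ref{separability-prop}), we get $\mathcal{Y}_1(\hat{\sigma}_n)\to^p 0$.

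Next I invoke identifiability. By Proposition~\ref{separability-prop}, under $[A1]$--$[A3]$ and $[V]$, the random variable
\begin{equation*}
\chi_0\;:=\;\inf_{\sigma\in\mathrm{clos}(\Lambda),\,\sigma\neq\sigma_{\ast}}\frac{-\mathcal{Y}_1(\sigma)}{|\sigma-\sigma_{\ast}|^2}
\end{equation*}
is strictly positive almost surely. Hence for any $\varepsilon>0$,
\begin{equation*}
\{|\hat{\sigma}_n-\sigma_{\ast}|>\varepsilon\}\;\subset\;\{-\mathcal{Y}_1(\hat{\sigma}_n)\geq \chi_0\varepsilon^2\}\;\subset\;\{\chi_0\leq \delta^{-1}\}\cup\{-\mathcal{Y}_1(\hat{\sigma}_n)\geq \varepsilon^2/\delta\}
\end{equation*}
for any $\delta>0$. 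Taking $n\to\infty$ and using $\mathcal{Y}_1(\hat{\sigma}_n)\to^p 0$ gives
$\limsup_n P[|\hat{\sigma}_n-\sigma_{\ast}|>\varepsilon]\leq P[\chi_0\leq \delta^{-1}]$,
which can be made arbitrarily small by sending $\delta\to 0$ since $\chi_0>0$ a.s. This proves $\hat{\sigma}_n\to^p\sigma_{\ast}$, completing the argument. The only mildly delicate point is that the identifiability constant is random, handled by the splitting above; otherwise everything reduces to quoting Propositions~\ref{Hn-lim} and~\ref{separability-prop}.
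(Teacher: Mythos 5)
Your proposal is correct and follows essentially the same route as the paper: both use the maximizer inequality $H_n(\hat{\sigma}_n,\hat{v}_n)\geq H_n(\sigma_{\ast},\hat{v}_n)$, the uniform convergence of $b_n^{-1/2}(H_n(\cdot,\hat{v}_n)-H_n(\sigma_{\ast},\hat{v}_n))$ to $\mathcal{Y}_1$ from Proposition~\ref{Hn-lim}, and the a.s.\ positivity of the random identifiability constant from Proposition~\ref{separability-prop}, handling its randomness by conditioning on a high-probability lower bound. Your splitting on $\{\chi_0\leq\delta^{-1}\}$ is just a reparametrization of the paper's choice of $\eta$ with $P[\chi_0\leq\eta]<\epsilon$, so no further comment is needed.
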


\begin{proof}
Let $\epsilon,\delta$ be arbitrary positive constants.
By Proposition~\ref{Hn-lim}, we have $\sup_{\sigma}|H_n(\sigma,\hat{v}_n)-H_n(\sigma_{\ast},\hat{v}_n)-\mathcal{Y}_1(\sigma)|\to^p0$ as $n\to\infty$.
Moreover, Proposition~\ref{separability-prop} ensures that there exists $\eta>0$ such that $P[\inf_{\sigma\neq \sigma_{\ast}}((-\mathcal{Y}_1(\sigma))/|\sigma-\sigma_{\ast}|^2)\leq \eta]<\epsilon$.
Since $H_n(\hat{\sigma}_n,\hat{v}_n)-H_n(\sigma_{\ast},\hat{v}_n)\geq 0$ {\colorg by the definition of $\hat{\sigma}_n$}, we obtain
\begin{eqnarray}
P[|\hat{\sigma}_n-\sigma_{\ast}|\geq \delta] &<& P[\mathcal{Y}_1(\hat{\sigma}_n)\leq -\eta\delta^2]+\epsilon
\leq P[\sup_{\sigma}|H_n(\sigma,\hat{v}_n)-H_n(\sigma_{\ast},\hat{v}_n)-\mathcal{Y}_1(\sigma)|\geq \eta \delta^2]+\epsilon < 2\epsilon \nonumber
\end{eqnarray}
for sufficiently large $n$.

\end{proof}

\begin{discuss}
{\colorr JacodのTheorem 3.2は, Gloter and JacodでのStable convergenceの証明を見る限り, $n\to \ell_n$としても適用できそう. 
JacodのThm3.2.の証明から問題なさそう. [4]の(1.15)と(2.12)を使うところとoptional Lenglartのくだりはしっかりみていないが
$n$と$l_n$の違いが問題という風には見えない.}
\end{discuss}

\begin{proposition}\label{st-conv}
Assume $[A1]$, $[A2]$, and $[V]$. Then $b_n^{-1/4}\partial_{\sigma}H_n(\sigma_{\ast},\hat{v}_n)\to^{s\mathchar`-\mathcal{L}}\Gamma_1^{1/2}\mathcal{N}$ as $n\to\infty$.
\end{proposition}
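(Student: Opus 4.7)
The plan is to prove stable convergence by writing $b_n^{-1/4}\partial_\sigma H_n(\sigma_\ast,\hat v_n)$ as a sum of martingale differences with respect to the filtration $(\mathcal{G}_{s_m})_{m}$ and applying the martingale CLT for stable convergence to a mixed normal limit (Jacod~\cite{jac97}), in the form adapted to noisy observations by Jacod et al.~\cite{jac-etal09}.

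First I would use Lemma~\ref{Hn-diff-lemma} to reduce the problem to showing $b_n^{-1/4}\partial_\sigma\tilde H_n(\sigma_\ast,v_\ast)\to^{s\mathchar`-\mathcal{L}}\Gamma_1^{1/2}\mathcal{N}$. Set $A_m=\partial_\sigma\tilde S_m^{-1}|_{\sigma_\ast}$. Since $\tilde S_m|_{\sigma_\ast}=\tilde S_{m,\ast}$ and $E_m[\tilde Z_m\tilde Z_m^{\top}]=\tilde S_{m,\ast}$, the identity $\partial_\sigma\log\det\tilde S_m=-\mathrm{tr}(\partial_\sigma\tilde S_m^{-1}\tilde S_m)$ gives $E_m[\tilde Z_m^{\top}A_m\tilde Z_m]+\partial_\sigma\log\det\tilde S_m|_{\sigma_\ast}=0$, so
\[
\xi_{n,m}:=-\tfrac{1}{2}b_n^{-1/4}\bar E_m[\tilde Z_m^{\top}A_m\tilde Z_m]
\]
satisfies $b_n^{-1/4}\partial_\sigma\tilde H_n(\sigma_\ast,v_\ast)=\sum_m\xi_{n,m}$, and $\{\xi_{n,m}\}_m$ is a martingale difference array with respect to $(\mathcal{G}_{s_m})$.

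The three hypotheses of the stable CLT then have to be verified. For the conditional variance, point 1 of Lemma~\ref{ZSZ-est} yields $E_m[\xi_{n,m}\xi_{n,m}^{\top}]=\tfrac{1}{2}b_n^{-1/2}\mathrm{tr}((A_m\tilde S_{m,\ast})^{\otimes 2})+o_p(\ell_n^{-1})$, and a direct calculation using $\partial_\sigma^2\log\det\tilde S_m=-\mathrm{tr}((\tilde S_m^{-1}\partial_\sigma\tilde S_m)^2)+\mathrm{tr}(\tilde S_m^{-1}\partial_\sigma^2\tilde S_m)$ shows
\[
\sum_m E_m[\xi_{n,m}\xi_{n,m}^{\top}] = -b_n^{-1/2}\sum_m E_m[\partial_\sigma^2\tilde H_n(\sigma_\ast,v_\ast)\text{-summand}]+o_p(1).
\]
The martingale difference between the summands and their $E_m$ has conditional variance $\bar R_n(\ell_n)=o(b_n)$ by Lemma~\ref{ZSZ-est}, so combining with Lemma~\ref{Hn-diff-lemma} (applied to $\partial_\sigma^2$) and Proposition~\ref{Hn-lim} with $k=2$ evaluated at $\sigma=\sigma_\ast$, the sum converges to $-\partial_\sigma^2\mathcal{Y}_1(\sigma_\ast)=\Gamma_1$. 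For the Lyapunov (negligibility) condition, point 1 of Lemma~\ref{ZSZ-est} together with $\ell_n=O(b_n/k_n)$ gives $\sum_m E[|\xi_{n,m}|^4]=\bar R_n(b_n^{-4}k_n^{6}\vee b_n^{-2}k_n^{3})$, which tends to $0$ thanks to $k_n b_n^{-2/3+\epsilon}\to 0$.

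The main obstacle will be the third condition: the orthogonality of $\sum_m\xi_{n,m}$ with all bounded $\mathcal{G}$-martingales. Since $\tilde Z_m$ is built from Wiener increments $W_{S_i^{n,k}}-W_{S_{i-1}^{n,k}}$ and the noise $\epsilon^{n,k}_i-\epsilon^{n,k}_{i-1}$, the summand splits as a Wiener-Wiener quadratic form, a noise-noise quadratic form, and a cross term. For bounded $\mathbf{F}^{(0)}$-martingales $M$ one needs to show $\sum_m E_m[\xi_{n,m}(M_{s_m}-M_{s_{m-1}})]\to^p 0$; by the martingale representation theorem and the independence of $\epsilon$ from $\mathbf{F}^{(0)}$, only the Wiener-Wiener component contributes, and the computation reduces, via Itô's formula on $\int_{s_{m-1}}^{s_m}(W_t-W_{s_{m-1}})^{\otimes 2}dt$-type expressions and the sampling-time measurability of the coefficient, to quantities of order $O(b_n^{-1/2}k_n)$ in total, which vanish. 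The orthogonality with the noise $\sigma$-field component is immediate from the independence $\epsilon\perp (X,W,\Pi)$ together with the centering of $\bar E_m$. This step is the technical analogue of the argument in \cite{jac-etal09} and is what distinguishes the noisy nonsynchronous case from \cite{ogi-yos14}.

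Finally, since $\hat v_n-v_\ast=O_p(b_n^{-1/2})$ by $[V]$ and $\partial_v\partial_\sigma H_n$ is controlled by Lemma~\ref{Hn-diff-lemma}, replacing $v_\ast$ by $\hat v_n$ contributes $O_p(b_n^{-1/4})$ which is negligible for the $b_n^{-1/4}$-rescaled score. Combining the three steps with Jacod's theorem yields $b_n^{-1/4}\partial_\sigma H_n(\sigma_\ast,\hat v_n)\to^{s\mathchar`-\mathcal{L}}\Gamma_1^{1/2}\mathcal{N}$.
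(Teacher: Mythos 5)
Your overall route is the same as the paper's: reduce to $\tilde H_n(\sigma_\ast,v_\ast)$ via Lemma~\ref{Hn-diff-lemma}, take $\mathcal{X}^n_m=-\tfrac12 b_n^{-1/4}\bar E_m[\tilde Z_m^{\top}\partial_\sigma\tilde S_{m,\ast}^{-1}\tilde Z_m]$ as the martingale difference array, verify the Lindeberg/Lyapunov condition and the convergence of $\sum_m E_m[(\mathcal{X}^n_m)^2]$ to $\Gamma_1$ exactly as you describe (both via point 1 of Lemma~\ref{ZSZ-est} and the identity linking the conditional variance to $-b_n^{-1/2}\partial_\sigma^2\tilde H_n$), and conclude with Jacod's stable CLT. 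Those parts are fine.

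The gap is in your treatment of the orthogonality condition for martingales built from the noise. You assert that this part is ``immediate from the independence $\epsilon\perp(X,W,\Pi)$ together with the centering of $\bar E_m$,'' but that reasoning fails: the noise--noise component of $\bar E_m[\tilde Z_m^{\top}A_m\tilde Z_m]$ is itself a (centered) function of $\{\epsilon^{n,k}_i\}$, and a bounded $\mathcal{G}$-martingale orthogonal to $W$ can also be a function of those same noise variables (they enter $\mathcal{G}_t$ progressively as the sampling times pass), so the product of the two has no reason to have vanishing $\mathcal{G}_{s_{m-1}}$-conditional expectation by independence alone. What actually makes this work in the paper is a two-step argument: first, a density reduction (via Jacod~\cite{jac79}, (4.15)) showing that it suffices to treat martingales of the form $N'_t=E[f({\bf X}_T)\prod_{j=1}^l g_j(\epsilon^{n_j,k_j}_{i_j})|\mathcal{F}_t]$; second, for such an $N'$, the observation that only the finitely many indices $\alpha,\beta$ in the set ${\bf T}$ of intervals meeting $\{S^{n_{j},k_{j}}_{i_{j}}\}_{j\le l}$ can contribute a nonzero conditional covariance, and each surviving term carries a factor $b_n^{-1/4}({\bf A}_m^{\top}\partial_\sigma\tilde S_{m,\ast}{\bf A}_m)^{-1}_{\alpha,\beta}$ that is small, so the total tends to zero. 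Relatedly, your split of the problem into ``bounded $\mathbf{F}^{(0)}$-martingales'' plus ``the noise $\sigma$-field component'' presupposes that every bounded $\mathcal{G}$-martingale orthogonal to $W$ decomposes that way, which itself needs the density argument. Without these two ingredients the orthogonality step, which you correctly identify as the crux, is not established.
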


\begin{proof}
It is sufficient to prove the results assuming the additional condition $[A1']$.

Since $b_n^{-1/4}\partial_{\sigma}\tilde{H}_n(\sigma_{\ast},v_{\ast})=-2^{-1}b_n^{-1/4}\sum_m\bar{E}_m[\tilde{Z}_m^{\top}\partial_{\sigma}\tilde{S}_m^{-1}\tilde{Z}_m] + o_p(1)$,
we only need to check assumptions of Theorem 3.2 in Jacod~\cite{jac97} for
$\mathcal{X}^n_m=-2^{-1}b_n^{-1/4}\bar{E}_m[\tilde{Z}_m^{\top}\partial_{\sigma}\tilde{S}_{m,\ast}^{-1}\tilde{Z}_m]$.
For any $\epsilon>0$, Lemma~\ref{ZSZ-est} yields
\begin{eqnarray}
\sum_{m=1}^{[\ell_nt]}E_m[|\mathcal{X}^n_m|^21_{\{|\mathcal{X}^n_m|>\epsilon\} }] 
&\leq &\frac{Cb_n^{-1}}{\epsilon^2}\sum_{m=1}^{[\ell_nt]}E_m[(\tilde{Z}_m^{\top}\partial_{\sigma}\tilde{S}_{m,\ast}^{-1}\tilde{Z}_m)^4]\to^p 0. \nonumber 
\end{eqnarray}
Moreover, it is easy to see that
$\sum_{m=1}^{[\ell_nt]}E_m[\mathcal{X}^n_m(W_{s_m}-W_{s_{m-1}})]\to^p 0$.
\begin{discuss}
{\colorr
\begin{equation*}
(\tilde{Z}_m)_i(\tilde{Z}_m)_j-E_m[(\tilde{Z}_m)_i(\tilde{Z}_m)_j]
\sim \Delta W \Delta W+ (\Delta W^2-t) + (\epsilon_i-\epsilon_{i-1})\Delta W + (\epsilon_i-\epsilon_{i-1})(\epsilon_j-\epsilon_{j-1})-M_{ij}
\end{equation*}

}
\end{discuss}

Let $N$ be a bounded martingale orthogonal to $W_t$. We will show $\sum_{m=1}^{[\ell_nt]}E_m[\mathcal{X}^n_m(N_{s_m}-N_{s_{m-1}})]\to^p 0$.
\begin{discuss}
{\colorr $S^{n,k}_i$はすべて$\mathcal{G}_0$に入っているから, これとorthogonalになるかどうかを考えなくてよい.}
\end{discuss}
Let ${\bf N}$ be the set of finite sums of random variables $f({\bf X}_T)\prod_{j=1}^lg_j(\epsilon^{n_j,k_j}_{i_j})$ 
where $f$ and $g_j$ are bounded Borel functions, ${\bf X}_T$ is an $\mathcal{F}^{(0)}_T$-measurable random variable, 
$n_1,\cdots, n_l\in\mathbb{N}$, $1\leq k_1,\cdots, k_l\leq 2$, and $i_1,\cdots, i_l\in\mathbb{Z}_+$.
Since ${\bf N}$ is dense in $L^1(\Omega, \mathcal{F}_T,P)$, Jacod~\cite{jac79} (4.15) ensures that the set ${\bf N}'$ of 
linear combinations of martingales $\{E[N|\mathcal{F}_t]\}_{0\leq t\leq T}$ with $N\in {\bf N}$
are dense in all bounded martingales orthogonal to $W$.
\begin{discuss}
{\colorr 任意のマルチンゲール空間$\{\sum_k\int^t_0a^k_tdM^k_t\}$はZornの補題から基底を持つことが分かる. よって
閉包が全体でないならあるマルチンゲール$N$があって, $N\perp M$ for any $M\in\mathcal{M}$.
よって$E[f({\bf X}_T)g(\epsilon_1,\cdots, \epsilon_k)|\mathcal{F}_t]$は$\mathcal{M}(W^{\perp})$でdense. 
}
\end{discuss}
Therefore, it is sufficient to show that
\begin{equation}\label{N'-conv}
\sum_{m=1}^{[\ell_nt]}E_m[\mathcal{X}^n_m(N'_{s_m}-N'_{s_{m-1}})]\to^p0
\end{equation} 
for $N'\in {\bf N}'$.

Martingales in the form $N'_t=\int^t_0 a^0_tdW_t+\sum_k\int^t_0a^k_tdM^k_t$ with a bounded step function $a^0_t$, bounded progressively measurable functions $\{a^k_t\}_t$
and bounded $\mathcal{F}^{(0)}_t$-martingales $\{M^k_t\}_t$ orthogonal to $W$ obviously satisfy (\ref{N'-conv}) and are dense in the set of all bounded $\mathcal{F}^{(0)}_t$-martingales. 
Therefore, (\ref{N'-conv}) holds for any bounded $\mathcal{F}^{(0)}$-martingale $N'$.

Moreover, let $N\in {\bf N}$, $N'_t=E[N|\mathcal{F}_t]$, and ${\bf T}=\{\alpha;\tilde{I}_{\alpha,m}\cap \{S^{n_{l'},k_{l'}}_{i_{l'}}\}_{l'=1}^l\neq\emptyset\}$, then we have
\begin{equation*}
N'_t=E[f({\bf X}_T)E[\prod_{j=1}^lg_j(\epsilon^{n_j,k_j}_{i_j})|\mathcal{F}^{(0)}_T\otimes \mathcal{F}^{(1)}_t]|\mathcal{F}_t]
=E[\prod_{j=1}^lg_j(\epsilon^{n_j,k_j}_{i_j})|\mathcal{F}^{(1)}_{\inf_{\tilde{I}_{\alpha,m}}}]E[f({\bf X}_T)|\mathcal{F}^{(0)}_t]
\end{equation*}
for $\alpha\not\in {\bf T}$ and $t\not\in \cup_{\alpha'}\tilde{I}_{\alpha',m}$. 
Therefore, we obtain
\begin{eqnarray}
&&|E_m[\mathcal{X}^n_m(N'_{s_m}-N'_{s_{m-1}})]| \nonumber \\
&=&\frac{1}{2}|E_m[\sum_{\alpha,\beta}({\bf A}_m^{\top}\partial_{\sigma}\tilde{S}_{m,\ast}{\bf A}_m)^{-1}_{\alpha,\beta}\bar{E}_m[(\tilde{\epsilon}_{\alpha,m}-\dot{\epsilon}_{\alpha,m})(\tilde{\epsilon}_{\beta,m}-\dot{\epsilon}_{\beta,m})](N'_{s_m}-N'_{s_{m-1}})]| \nonumber \\
&\leq &\frac{1}{2}|E_m[\sum_{\alpha,\beta\in {\bf T}}({\bf A}_m^{\top}\partial_{\sigma}\tilde{S}_{m,\ast}{\bf A}_m)^{-1}_{\alpha,\beta}\bar{E}_m[\tilde{\epsilon}_{\alpha,m}\tilde{\epsilon}_{\beta,m}-\dot{\epsilon}_{\alpha,m}\tilde{\epsilon}_{\beta,m}-\dot{\epsilon}_{\beta,m}\tilde{\epsilon}_{\alpha,m}](N'_{s_m}-N'_{s_{m-1}})]| \nonumber \\
&&+\frac{1}{2}|E_m[\sum_{\alpha,\beta\in {\bf T}}({\bf A}_m^{\top}\partial_{\sigma}\tilde{S}_{m,\ast}{\bf A}_m)^{-1}_{\alpha,\beta}\bar{E}_m[\dot{\epsilon}_{\alpha,m}\dot{\epsilon}_{\beta,m}](N'_{s_m}-N'_{s_{m-1}})]| \to^p 0. \nonumber 
\end{eqnarray}

Lemma~\ref{ZSZ-est} yields
\begin{equation*}
E_m[(\mathcal{X}^n_m)^2]=\frac{b_n^{-1/2}}{4}\{E_m[(\tilde{Z}_m\partial_{\sigma}\tilde{S}_{m,\ast}^{-1}\tilde{Z}_m)^2]-E_m[\tilde{Z}_m\partial_{\sigma}\tilde{S}_{m,\ast}^{-1}\tilde{Z}_m]^2\}=\frac{b_n^{-1/2}}{2}{\rm tr}(\tilde{S}_{m,\ast}\partial_{\sigma}\tilde{S}_{m,\ast}^{-1}\tilde{S}_{m,\ast}\partial_{\sigma}\tilde{S}_{m,\ast}^{-1})+\bar{R}_n(b_n^{-1/2}).
\end{equation*}
On the other hand, since $\partial_{\sigma}\log \det \tilde{S}_m(x,\sigma)=-{\rm tr}(\partial_{\sigma}\tilde{S}_m \tilde{S}_m^{-1})$, we have
\begin{equation*}
E_m[\tilde{Z}_m^{\top}\partial_{\sigma}^2\tilde{S}_m^{-1}\tilde{Z}_m+\partial_{\sigma}^2\log\det \tilde{S}_m]|_{\sigma=\sigma_{\ast}}
={\rm tr}(\partial_{\sigma}^2\tilde{S}_{m,\ast}^{-1}\tilde{S}_{m,\ast})
-{\rm tr}(\partial_{\sigma}^2\tilde{S}_{m,\ast}^{-1}\tilde{S}_{m,\ast})+{\rm tr}(\tilde{S}_{m,\ast}^{-1}\partial_{\sigma}\tilde{S}_{m,\ast}\tilde{S}_{m,\ast}^{-1}\partial_{\sigma}\tilde{S}_{m,\ast}).
\end{equation*}
Therefore we have
\begin{equation*}
\sum_{m=1}^{[\ell_nt]}E_m[(\mathcal{X}^n_m)^2]=-b_n^{-1/2}\sum_{m=1}^{[\ell_nt]}E_m[\tilde{Z}_m^{\top}\partial_{\sigma}^2\tilde{S}_m^{-1}\tilde{Z}_m+\partial_{\sigma}^2\log\det \tilde{S}_m)]\bigg|_{\sigma=\sigma_{\ast}} \to^p -\partial_{\sigma}^2\mathcal{Y}_1(\sigma_{\ast},t), 
\end{equation*}
where {\colorlg
\begin{eqnarray}
\mathcal{Y}_1(\sigma,t)&=&\int^t_0\bigg\{
\frac{\sum_{j=1}^2(|b^j_s|^2-|b^j_{s,\ast}|^2)(|b^{3-j}_s|^2\sqrt{\tilde{a}^1_s\tilde{a}^2_s}+\tilde{a}^j_s\sqrt{\det(b_sb_s^{\top})})-2(b^1_s\cdot b^2_s-b^1_{s,\ast}\cdot b^2_{s,\ast})b^1_s\cdot b^2_s\sqrt{\tilde{a}_s^1\tilde{a}_s^2}}
{2\sqrt{2}\sqrt{\det(b_sb_s^{\top})}\varphi(\tilde{a}^1_s|b^1_s|^2+\tilde{a}^2_s|b^2_s|^2,\tilde{a}^1_s\tilde{a}^2_s\det(b_sb_s^{\top}))} \nonumber \\
&&\quad \quad -\frac{\varphi(\tilde{a}^1_s|b^1_s|^2+\tilde{a}^2_s|b^2_s|^2,\tilde{a}^1_s\tilde{a}^2_s\det(b_sb_s^{\top}))-\varphi(\tilde{a}^1_s|b^1_{s,\ast}|^2+\tilde{a}^2_s|b^2_{s,\ast}|^2,\tilde{a}^1_s\tilde{a}^2_s\det(b_{s,\ast}b_{s,\ast}^{\top}))}{2\sqrt{2}}\bigg\}ds. \nonumber
\end{eqnarray}
}

Then Theorem 2.1 in Jacod~\cite{jac97} yields $b_n^{-1/4}\partial_{\sigma}H_n(\sigma_{\ast},\hat{v}_n)\to^{s\mathchar`- \mathcal{L}} \Gamma_1^{1/2}\mathcal{N}$.
\end{proof}

\noindent
{\bf Proof of Theorem~\ref{main}.}
Since the parameter space $\Lambda$ is open, there exists $\epsilon>0$ such that $O(\epsilon,\sigma_{\ast})=\{\sigma;|\sigma-\sigma_{\ast}|<\epsilon\}\subset \Lambda$.
Then we have
\begin{equation*}
-\partial_{\sigma}H_n(\sigma_{\ast},\hat{v}_n)=\int^1_0\partial_{\sigma}^2H_n(\sigma_{\ast},\hat{v}_n)(\sigma_{\ast}+t(\hat{\sigma}_n-\sigma_{\ast}))(\hat{\sigma}_n-\sigma_{\ast})dt
\end{equation*}
for $\hat{\sigma}_n\in \Lambda$, by $\partial_{\sigma}H_n(\hat{\sigma}_n,\hat{v}_n)=0$.

Hence we obtain $b_n^{1/4}(\hat{\sigma}_n-\sigma_{\ast})={\colorlg \tilde{\Gamma}_{1,n}^{-1}}b_n^{-1/4}\partial_{\sigma}H_n(\sigma_{\ast},\hat{v}_n)$ on $\{\det {\colorlg \tilde{\Gamma}_{1,n}}\neq 0$ and $\hat{\sigma}_n\in O(\epsilon,\sigma_{\ast})\}$,
where ${\colorlg \tilde{\Gamma}_{1,n}}=-b_n^{-1/2}\int^1_0\partial_{\sigma}^2H_n(\sigma_{\ast}+t(\hat{\sigma}_n-\sigma_{\ast}))dt$.
Then since Propositions~\ref{Hn-lim} and~\ref{consistency} yield $P[\det {\colorlg \tilde{\Gamma}_{1,n}}=0]\to 0$, $P[\hat{\sigma}_n\in O(\epsilon,\sigma_{\ast})^c]\to 0$
and ${\colorlg \tilde{\Gamma}_{1,n}^{-1}}1_{\{\det {\colorlg \tilde{\Gamma}_{1,n}}\neq 0\}}\to^p {\colorlg \Gamma_1^{-1}}$, 
we have $b_n^{1/4}(\hat{\sigma}_n-\sigma_{\ast})\to^{s\mathchar`-\mathcal{L}} {\colorlg \Gamma_1^{-1/2}}\mathcal{N}$ as $n\to\infty$ {\colorg by Proposition \ref{st-conv}}. 

{\colorlg 
Moreover, Proposition \ref{Hn-lim} and Theorem \ref{consistency} ensure that $\hat{\Gamma}_{1,n}\to^p\Gamma_1$, which completes the proof.
}
\qed 

\section{Proof of the LAN property}\label{LAN-section}

{\colord To obtain the LAN property of our model, the arguments in the proof of Theorem \ref{main} are essential.
Indeed, by using Propositions \ref{Hn-lim} and \ref{st-conv}, we obtain a LAMN-type property of the quasi-log-likelihood function $H_n$ with respect to $\sigma$:
$H_n(\sigma_{\ast}+b_n^{-1/4}u_1,v_{\ast})-H_n(\sigma_{\ast},v_{\ast})-u_1\cdot b_n^{-1/4}\partial_{\sigma}H_n(\sigma_{\ast},v_{\ast})-u_1^{\top}b_n^{-1/2}\partial_{\sigma}^2H_n(\sigma_{\ast},v_{\ast})u_1/2\to^p0$ as $n\to\infty$ for any $u_1\in\mathbb{R}^d$,
and $(b_n^{-1/4}\partial_{\sigma}H_n(\sigma_{\ast},v_{\ast}),-b_n^{-1/2}\partial_{\sigma}^2H_n(\sigma_{\ast},v_{\ast}))\to^{s\mathchar`-\mathcal{L}}(\Gamma_1^{1/2}\mathcal{N},\Gamma_1)$,
where $\mathcal{N}$ is a $d$-dimensional standard normal random variable independent of $\mathcal{F}$.
On the other hand, under the assumptions of Theorem \ref{LAN-theorem},}
the {\it true} log-likelihood ratio $\log(dP_{\sigma_{\ast}+b_n^{-1/4}u_1,v_{\ast}+b_n^{-1/2}u_2,n}/dP_{\sigma_{\ast},v_{\ast},n})$ for $u_1\in\mathbb{R}^d$
and $u_2\in\mathbb{R}^2$ is obtained as $-(Z_1^{\top}S_1^{-1}Z_1+\log\det S_1)/2$ if we set $k_n=b_n$.
We cannot apply the argument of Section~\ref{Hn-limit-section} to this quantity because the estimate $\ell_n\to\infty$ is essential there.
Therefore, we follow the approaches by Gloter and Jacod~\cite{glo-jac01a} to show the LAN property.
We set a `subexperiment' and a `superexperiment', which are obtained by respectively removing and adding observations from the original experiment.
{\colord The likelihood functions of these experiments have similar properties to $H_n$, and therefore we can prove the LAN properties for these experiments with the same limit distribution.}
{\colord We can prove that these results lead us to} the LAN property of the original one.

Let $\mathcal{Z}=(\mathbb{R}^8)^{\mathbb{N}}$, {\colord $\pi_i(z)=(x^{k,j}_i,t^k_i,e^k_i)_{j,k=1,2}$} for $i\in \mathbb{Z}_+$ and $z=(x^{k,j}_{i'},t^k_{i'},e^k_{i'})_{i'\in\mathbb{Z}_+,j,k=1,2}\in\mathcal{Z}$.
Let $\mathcal{H}=\mathfrak{B}(\{\pi_i^{-1}(A);i\in\mathbb{Z}_+, A\in \mathcal{B}(\mathbb{R}^8)\})$, $P'_{\sigma'_{\ast},v'_{\ast}}$ be the induced probability measure on $(\mathcal{Z},\mathcal{H})$
by \\
{\colord $((Y^j_{S^{n,k}_i}1_{\{i\leq {\bf J}_{k,n}\}},S^{n,k}_i1_{\{i\leq {\bf J}_{k,n}\}},\epsilon^{n,k}_i1_{\{i\leq {\bf J}_{k,n}\}})_{i\in\mathbb{Z}_+,j,k=1,2})$}
with a true value $(\sigma'_{\ast},v'_{\ast})$.
\begin{discuss}
{\colorr 強い解があるから任意の$(\sigma'_{\ast},v'_{\ast})$に対し, processがdefされる.}
\end{discuss}
{\colord We can ignore the event $\min_{j,m}k^j_m\leq 0$.} 

Let $\mathcal{H}'=\mathfrak{B}(t^k_i;i\in \mathbb{Z}_+,k=1,2)$, $j^k_0=-1$, $j^k_m=\max\{i;t^k_i<s_m\}\vee 0 \ (1\leq m\leq \ell_n)$, 
{\colord ${\bf l}(0)=1$, ${\bf l}(m)=\min\{k;t^k_i=\max_{i',k'}\{t^{k'}_{i'}<s_m\} \ {\rm for} \ {\rm some}\ i\}$ for $1\leq m\leq \ell_n$},
\begin{eqnarray}
\mathcal{H}^{n,0}&=&\mathfrak{B}(({\colord x^{k,k}_{i+1}+e^k_{i+1}-x^{k,k}_i-e^k_i})1_{\{i\not{\in}\{j^k_m\}_m\} };i\in\mathbb{Z}_+,k=1,2) \bigvee \mathcal{H}', \nonumber \\
\mathcal{H}^{n,1}&=&\mathfrak{B}({\colord x^{k,k}_i}+e^k_i;i\in\mathbb{Z}_+,k=1,2) \bigvee \mathcal{H}', \nonumber \\
\mathcal{H}^{n,2}&=&\mathcal{H}^{n,1}\bigvee {\colord \mathfrak{B}(x^{{\bf l}(m),j}_{j^{{\bf l}(m)}_m};1\leq m\leq \ell_n,j=1,2)}. \nonumber 
\end{eqnarray}
Then we can see $\mathcal{H}^{n,0}\subset\mathcal{H}^{n,1}\subset\mathcal{H}^{n,2}$ and
\begin{equation}\label{log-likelihood-eq2}
\log(dP_{\sigma_u,v_u}/dP_{\sigma_{\ast},v_{\ast}})=\log (dP'_{\sigma_u,v_u}/dP'_{\sigma_{\ast},v_{\ast}})|_{\mathcal{H}^{n,1}}.
\end{equation}
Moreover, we obtain
\begin{equation}\label{log-likelihood-eq}
\log \frac{dP'_{\sigma_u,v_u}}{dP'_{\sigma_{\ast},v_{\ast}}}\bigg|_{\mathcal{H}^{n,l}}({\colord (Y^j_{S^{n,k}_i}1_{\{i\leq {\bf J}_{k,n}\}},S^{n,k}_i1_{\{i\leq {\bf J}_{k,n}\}},\epsilon^{n,k}_i1_{\{i\leq {\bf J}_{k,n}\}})_{i\in\mathbb{Z}_+,j,k=1,2}})
=H^{(l)}_n(\sigma_u,v_u)-H^{(l)}_n(\sigma_{\ast},v_{\ast})
\end{equation}
for $l=0$, where 
{\colord $Z^{(0)}_m=Z_m$ and $S^{(0)}_m=S_m$ for $2\leq m\leq \ell_n$, $Z^{(0)}_1$ and $S^{(0)}_1$ are defined similarly,
$H_n^{(0)}(\sigma,v)=-\sum_{m=1}^{\ell_n}\{(Z^{(0)}_m)^{\top}(S^{(0)}_m)^{-1}(\sigma,v)Z^{(0)}_m+\log\det S^{(0)}_m(\sigma,v)\}/2$}, $\sigma_u=\sigma_{\ast}+b_n^{-1/4}u_1$ and $v_u=v_{\ast}+b_n^{-1/2}u_2$ for $u=(u_1,u_2)\in \mathbb{R}^d\times \mathbb{R}^2$.
{\colord $\mathcal{H}^{n,0}$ and $\mathcal{H}^{n,2}$ are $\sigma$-fields for `subexperiment' and `superexperiment', respectively, while $\mathcal{H}^{n,1}$ is the one for the original one.
Therefore, (\ref{log-likelihood-eq}) means that our quasi-likelihood function $H_n$ is equal to the log-likelihood function of `subexperiment' except the term for $m=1$.}

{\colord To obtain similar formula to (\ref{log-likelihood-eq}) for $l=2$, let ${\bf R}_m=S^{n,1}_{K^1_m}\vee S^{n,2}_{K^2_m}$,
$\tilde{{\bf Y}}_{m,-}^k=\tilde{Y}^k_{K^k_{m-1}+1}-Y^k_{{\bf R}_{m-1}}$,
\begin{equation*}
\tilde{{\bf Y}}^k_{m,+}=\left\{
\begin{array}{ll}
Y^k_{S^{n,k}_{K^k_m}}-\tilde{Y}^k_{K^k_m-1} & {\rm if} \ S^{n,k}_{K^k_m}={\bf R}_m \\
(\tilde{Y}^k(I^k_{k^k_m,m}), Y^k_{{\bf R}_m}-\tilde{Y}^k_{S^{n,k}_{K^k_m}})^{\top} & {\rm if} \ S^{n,k}_{K^k_m}<{\bf R}_m
\end{array}
\right.
\end{equation*}
${\bf Y}_{m,0}=\epsilon^{n,k}_{K^k_m}$ if $S^{n,3-k}_{K^{3-k}_m}<{\bf R}_m$, ${\bf Y}_{m,0}=(\epsilon^{n,1}_{K^1_m},\epsilon^{n,2}_{K^2_m})^{\top}$ if $S^{n,1}_{K^1_m}=S^{n,2}_{K^2_m}$, and
}
\begin{eqnarray}
Z^{(2)}_m&=&{\colord (((\tilde{{\bf Y}}_{m,-}^k)^{\top}, (\tilde{Y}^k(I^k_{i,m}))^{\top}_{1\leq i<k^k_m}, (\tilde{{\bf Y}}^k_{m,+})^{\top})_{k=1}^2,{\bf Y}^{\top}_{m,0})^{\top}} \nonumber 
\end{eqnarray}
{\colord for $2\leq m\leq \ell_n$. Then Observations $((\tilde{Y}^k_i)_{k,i},(S^{n,k}_i)_{k,i},(Y^j_{{\bf R}_m})_{j,m})$ are equivalent to $Z^{(2)}_m$, 
and hence (\ref{log-likelihood-eq}) holds for $l=2$, where
${\bf E}(v)=v_{3-k}$, $k^{(2),k}_m=k^k_m+1$, $k^{(2),3-k}_m=k^{3-k}_m$, and $I^k_{k^{(2),k}_m,m}=[S^{n,k}_{K^k_m},{\bf R}_m)$ if $S^{n,k}_{K^k_m}<{\bf R}_m$, 
${\bf E}(v)={\rm diag}(v_1,v_2)$ and $(k^{(2),1}_m,k^{(2),2}_m)=(k^1_m,k^2_m)$ if $S^{n,1}_{K^1_m}=S^{n,2}_{K^2_m}$,

}
\begin{eqnarray}
(M^{(2)}_{j,m})_{ii'}&=&2\delta_{ii'}-\delta_{|i-i'|=1}-\delta_{(i,i')=(1,1)}-\delta_{i=i'=k^{(2),j}_m}, \nonumber
\end{eqnarray}
\begin{equation*}
S^{(2)}_m(\sigma,v)=\left(
\begin{array}{lll}
{\rm diag}((|b^1|^2|I^1_{i,m}|)_{1\leq i\leq k^{(2),1}_m})+v_1M_{1,m}^{(2)} & \{b^1\cdot b^2|I^1_{i,m}\cap I^2_{j,m}|\}_{1\leq i\leq k^{(2),1}_m,1\leq j\leq k^{(2),2}_m} & \\
\{b^1\cdot b^2|I^1_{i,m}\cap I^2_{j,m}|\}_{1\leq j\leq k^{(2),2}_m,1\leq i\leq k^{(2),1}_m} & {\rm diag}((|b^2|^2|I^2_{j,m}|)_{1\leq j\leq k^{(2),2}_m})+v_2M_{2,m}^{(2)} & \\
 & & {\bf E}(v)
\end{array}
\right)
\end{equation*}
for $2\leq m \leq \ell_n$, $Z^{(2)}_1,M^{(2)}_1$, and $S^{(2)}_1$ are similarly defined, 
\begin{discuss}
{\colorr 先頭に$e^k_0$がつくなど}
\end{discuss}
and $H^{(2)}_n(\sigma,v)=-\sum_{m=1}^{\ell_n}\{(Z^{(2)}_m)^{\top}S^{(2)}_m(\sigma,v)^{-1}Z^{(2)}_m+\log\det S^{(2)}_m\}/2$.
\begin{discuss}
{\colorr 初期値は$\sigma_{\ast},v_{\ast}$に依存しないので$P_{Y_0}$は消える.
$\min_{j,m} k^j_m=0$の時も$\mathcal{H}^{n,0}$, $\mathcal{H}^{n,2}$はdefできて, $\log (dP/dP)|\mathcal{H}$はそのmeasureで無視できるからOK}
\end{discuss}

{\colord
The log-likelihood functions $H^{(0)}_n$ and $H^{(2)}_n$ of `subexperiment' and `superexperiment', respectively, have similar forms to that of $H_n$,
and hence we can prove convergence of likelihood ratios. 
Gloter and Jacod~\cite{glo-jac01a} showed that convergence of likelihood ratios of `subexperiment' and `superexperiment' imply convergence of that of the original experiment.
Here, we use a slight extension of their result. The proof is straightforward.
Let ${\bf U}^{n,l}_{\sigma,v}=dP'_{\sigma,v}/dP'_{\sigma_{\ast},v_{\ast}}|_{\mathcal{H}^{n,l}}$, $K\in \mathbb{N}$,
and $\{\sigma^k_n\}_{n\in\mathbb{N},1\leq k\leq K}\subset \Lambda$ and $\{v^l_n\}_{n\in\mathbb{N},1\leq k\leq K}\subset (0,\infty)\times (0,\infty)$ be arbitrary sequences.

\begin{theorem}\label{glo-jac-thm}
Suppose that $({\bf U}^{n,l}_{\sigma_n^1,v_n^1},\cdots, {\bf U}^{n,l}_{\sigma_n^K,v_n^K})$ converges in law under $P'^n_{\sigma_{\ast},v_{\ast}}$ to a limit $Y=(Y^1,\cdots, Y^K)$ with $0<Y^k<\infty$ a.s.
and $E[Y^k]=1$ for $l=0,2$ and $1\leq k\leq K$. Then the same convergence holds for $l=1$.
\end{theorem}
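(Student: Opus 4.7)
The plan is to reduce the multidimensional statement to a one-dimensional sandwich argument via convex mixtures of the measures $\{P'_{\sigma_n^k, v_n^k}\}_{k=1}^K$, and then to recover the joint law from its one-dimensional projections along nonnegative directions, exploiting the fact that every ${\bf U}^{n,l}_{\sigma_n^k, v_n^k}$ is nonnegative. First, since each ${\bf U}^{n,1}_{\sigma_n^k, v_n^k}$ has expectation $1$ under $P'_{\sigma_{\ast}, v_{\ast}}$, Markov's inequality yields joint tightness of the vector $({\bf U}^{n,1}_{\sigma_n^k, v_n^k})_{k=1}^K$. For each $a = (a_1, \ldots, a_K) \in [0,\infty)^K$ with $\sum_k a_k > 0$, define $\bar{U}^{n,l}(a) := \sum_k a_k {\bf U}^{n,l}_{\sigma_n^k, v_n^k}$; up to the factor $\sum_k a_k$, this is the likelihood ratio on $\mathcal{H}^{n,l}$ of the mixture measure $(\sum_k a_k)^{-1} \sum_k a_k P'_{\sigma_n^k, v_n^k}$ against $P'_{\sigma_{\ast}, v_{\ast}}$, so it inherits the tower relation $\bar{U}^{n,l}(a) = E[\bar{U}^{n,l'}(a) \mid \mathcal{H}^{n,l}]$ for $l \leq l'$ under $P'_{\sigma_{\ast}, v_{\ast}}$. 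By the continuous mapping theorem applied to the hypothesised joint convergence, $\bar{U}^{n,l}(a)$ converges in law to $\bar{Y}(a) := \sum_k a_k Y^k$ for $l = 0, 2$, and $\bar{Y}(a)$ is almost surely positive and finite with $E[\bar{Y}(a)] = \sum_k a_k$.

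Next I would carry out a concave-sandwich argument. For any bounded continuous concave $\phi : [0,\infty) \to \mathbb{R}$ with $\phi(0) = 0$, Jensen's inequality together with the tower relation yields
\begin{equation*}
E[\phi(\bar{U}^{n,0}(a))] \geq E[\phi(\bar{U}^{n,1}(a))] \geq E[\phi(\bar{U}^{n,2}(a))].
\end{equation*}
The two extremes both converge to $E[\phi(\bar{Y}(a))]$, so the middle term does as well. Specialising to $\phi_c(x) = x \wedge c$ for $c > 0$ and using $E[X \wedge c] = \int_0^c P(X > t)\, dt$ for nonnegative $X$ shows that the limits $E[\bar{U}^{n,1}(a) \wedge c] \to E[\bar{Y}(a) \wedge c]$ for every $c > 0$ pin down the tail distribution of the limiting law, and combined with the tightness obtained above this gives $\bar{U}^{n,1}(a) \to \bar{Y}(a)$ in law.

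Finally, let $Y' = (Y'^k)_{k=1}^K$ be any subsequential limit in law of $({\bf U}^{n,1}_{\sigma_n^k, v_n^k})_{k=1}^K$. Continuous mapping along the subsequence and the one-dimensional conclusion give $\sum_k a_k Y'^k =_d \sum_k a_k Y^k$ for every $a \in [0,\infty)^K$, hence $E[\exp(-\sum_k a_k Y'^k)] = E[\exp(-\sum_k a_k Y^k)]$ for all $a \in [0,\infty)^K$. Since both vectors take values in $[0,\infty)^K$, the joint Laplace transform on $[0,\infty)^K$ determines the joint law, so $Y' =_d Y$; every subsequential limit thus coincides with $Y$, giving the full convergence. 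The main obstacle is the one-dimensional step: convergence of a nonnegative martingale at two times to the same limit does not by itself imply convergence at an intermediate time, and the concave-sandwich substitute combined with the richness of $\{x \mapsto x \wedge c : c > 0\}$ on nonnegative laws is the key mechanism, mirroring the one-dimensional lemma of Gloter and Jacod~\cite{glo-jac01a}.
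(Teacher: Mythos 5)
Your proposal is correct, and it takes a genuinely different route from the paper's. The paper (following Gloter and Jacod~\cite{glo-jac01a}) extracts a subsequence along which the whole triple $({\bf U}^{n,0},{\bf U}^{n,1},{\bf U}^{n,2})$ converges jointly, uses the hypothesis $E[Y^k]=1$ together with $E[{\bf U}^{n,l}_{\sigma^k_n,v^k_n}]=1$ to get uniform integrability and thereby transfers the tower relations to a limiting filtered space, and then applies Jensen with the concave function $\sqrt{x}$: since $\sqrt{{\bf U}^{0,k}}\geq E[\sqrt{{\bf U}^{2,k}}\mid \mathcal{F}_0]$ while ${\bf U}^{0,k}$ and ${\bf U}^{2,k}$ share the law of $Y^k$, one gets $E[(\sqrt{{\bf U}^{2,k}}-\sqrt{{\bf U}^{0,k}})^2]=E[{\bf U}^{2,k}]-E[{\bf U}^{0,k}]=0$, so the limiting martingale is constant and the intermediate term ${\bf U}^{1,k}$ is forced to coincide with it almost surely, which yields the joint law directly. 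You never pass the martingale structure to the limit: you keep the Jensen sandwich $E[\phi(\bar U^{n,0}(a))]\geq E[\phi(\bar U^{n,1}(a))]\geq E[\phi(\bar U^{n,2}(a))]$ at finite $n$ for bounded continuous concave $\phi$, identify the limiting law of the middle term through the family $\phi_c(x)=x\wedge c$ (whose expectations $\int_0^c P(\cdot>t)\,dt$ determine a law on $[0,\infty)$ by right-continuity of the survival function), and recover the joint law from nonnegative linear projections via the Laplace transform. What your route buys is the elimination of the uniform-integrability and limit-space construction; indeed your argument never actually invokes $E[Y^k]=1$ (you state $E[\bar Y(a)]=\sum_k a_k$ but do not use it), whereas the paper's proof needs it to close the $L^1$ limit. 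What the paper's route buys is a stronger conclusion on the limit space (almost-sure equality ${\bf U}^{1,k}={\bf U}^{0,k}$, not just equality of laws), which makes the passage from coordinatewise to joint statements immediate and avoids your extra Laplace-transform step. Both arguments are complete; yours is the more elementary and self-contained of the two.
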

}

\begin{discuss}
{\colorr
\begin{proof}
任意の部分列に対してさらなる部分列を取れば
\begin{equation*}
({\bf U}^{n,0}_{\sigma_n^1,v_n^1},{\bf U}^{n,1}_{\sigma_n^1,v_n^1},{\bf U}^{n,2}_{\sigma_n^1,v_n^1},\cdots, {\bf U}^{n,0}_{\sigma_n^K,v_n^K}, {\bf U}^{n,1}_{\sigma_n^K,v_n^K}, {\bf U}^{n,2}_{\sigma_n^K,v_n^K})
\to^d \exists  ({\bf U}^0_1,{\bf U}^1_1,{\bf U}^2_1,\cdots, {\bf U}^0_K, {\bf U}^1_K, {\bf U}^2_K)
\end{equation*}
かつ$\mathcal{L}({\bf U}^0_1,\cdots, {\bf U}^0_K)=\mathcal{L}({\bf U}^2_1,\cdots, {\bf U}^2_K)=\mathcal{L}(Y)$.
${\bf U}^{n,l}=({\bf U}^{n,l}_{\sigma_n^1,v_n^1},\cdots, {\bf U}^{n,l}_{\sigma_n^K,v_n^K})$, 
${\bf U}^l=({\bf U}^l_1,\cdots, {\bf U}^l_K)$, $\psi_p$:conti on $\mathbb{R}^K$, $\phi$ : bdd conti on $\mathbb{R}^{2K}$.
\begin{equation*}
|E^n_{\ast}[(\phi({\bf U}^{n,0},{\bf U}^{n,1})(1-\psi_p({\bf U}^{n,l})){\bf U}^{n,l}]|\leq E^n_{\ast}[(1-\psi_p({\bf U}^{n,l})){\bf U}^{n,l}]\to 0 \quad (n,p\to\infty)
\end{equation*}
よって, $E^n_{\ast}[\phi({\bf U}^{n,0},{\bf U}^{n,1}){\bf U}^{n,l}]\to E[\phi({\bf U}^0,{\bf U}^1){\bf U}^l]$.
有限の$n$の時の関係式から${\bf U}^1=E[{\bf U}^2|\mathcal{F}_1]$, ${\bf U}^0=E[{\bf U}^1|\mathcal{F}_0]$.
\begin{equation*}
(E[\sqrt{{\bf U}^{j,k}}|\mathcal{F}_{j-1}])^2\leq E[{\bf U}^{j,k}|\mathcal{F}_{j-1}]={\bf U}^{j-1,k}.
\end{equation*}
ゆえに
\begin{equation*}
\sqrt{{\bf U}^{0,k}}\geq E[\sqrt{{\bf U}^{1,k}}|\mathcal{F}_0]\geq E[\sqrt{{\bf U}^{2,k}}|\mathcal{F}_0].
\end{equation*}
$E[\sqrt{{\bf U}^{0,k}}]=E[\sqrt{{\bf U}^{2,k}}]$より, $\sqrt{{\bf U}^{0,k}}=E[\sqrt{{\bf U}^{2,k}}|\mathcal{F}_0]$.
\begin{equation*}
E[(\sqrt{{\bf U}^{2,k}}-E[\sqrt{{\bf U}^{2,k}}|\mathcal{F}_0])^2]=E[{\bf U}^{2,k}-{\bf U}^{0,k}]=0.
\end{equation*}
よって$\sqrt{{\bf U}^{2,k}}=E[\sqrt{{\bf U}^{2,k}}|\mathcal{F}_0]=\sqrt{{\bf U}^{0,k}}$.
${\bf U}^{1,k}=E[{\bf U}^{2,k}|\mathcal{F}_1]={\bf U}^{0,k}$. ゆえに$\mathcal{L}({\bf U}^1)=\mathcal{L}({\bf U}^0)$.
\end{proof}

}
\end{discuss}

{\colord
We first prove the LAN properties of `subexperiment' and `superexperiment'. Then Theorem~\ref{glo-jac-thm} leads to the LAN property of the original one.
}
Taylor's formula yields
\begin{eqnarray}
&&H_n^{(l)}(\sigma_u,v_u)-H_n^{(l)}(\sigma_{\ast},v_{\ast}) \nonumber \\
&=&b_n^{-1/4}\partial_{\sigma}H_n^{(l)}(\sigma_{\ast},v_{\ast})\cdot u_1+2^{-1}b_n^{-1/2}u_1^{\top}\partial_{\sigma}^2H_n^{(l)}(\sigma_{\ast},v_{\ast})u_1+b_n^{-1/2}\partial_vH_n^{(l)}(\sigma_{\ast},v_{\ast})\cdot u_2 \nonumber \\
&&+2^{-1}b_n^{-1}u_2^{\top}\partial_v^2H_n^{(l)}(\sigma_{\ast},v_{\ast})u_2 
+ \int^1_0\int^1_0\sum_{i,j}\partial_{v_i}\partial_{\sigma_j}H_n^{(l)}(\sigma_{tu},v_{su})b_n^{-3/4}u_{2,i}u_{1,j}dsdt \nonumber \\
&&+\int^1_0\frac{(1-t)^3}{2}\bigg(\sum_{i,j,k}\partial_{\sigma_i}\partial_{\sigma_j}\partial_{\sigma_k}H_n^{(l)}(\sigma_{tu},v_{\ast})u_{1,i}u_{1,j}u_{1,k}b_n^{-3/4}
+\sum_{i,j,k}\partial_{v_i}\partial_{v_j}\partial_{v_k}H_n^{(l)}(\sigma_{\ast},v_{tu})u_{2,i}u_{2,j}u_{2,k}b_n^{-3/2}\bigg)dt. \nonumber
\end{eqnarray}

We examine the limit of each term on the right-hand side.
\begin{lemma}\label{LAN-lemma}
Assume $[A1'']$, $[A2]$, and $[V]$. Then 
\begin{enumerate}
\item $\sup_{\sigma}|b_n^{-1/2}\partial_{\sigma}^k(H_n^{(l)}(\sigma,v_{\ast})-H_n^{(l)}(\sigma_{\ast},v_{\ast}))-\partial_{\sigma}^k\mathcal{Y}_1(\sigma)|\to^p 0$,
\item $\sup_v|b_n^{-1}\partial_v^k(H_n^{(l)}(\sigma_{\ast},v)-H_n^{(l)}(\sigma_{\ast},v_{\ast}))-\partial_v^k\mathcal{Y}_2(v)|\to^p 0$,
\item $\sup_{\sigma,v}|b_n^{-3/4}\partial_{\sigma}\partial_vH_n^{(l)}(\sigma,v)|\to^p 0$
\end{enumerate}
as $n\to \infty$ for $0\leq k\leq 3$ and $l=0,2$.
\end{lemma}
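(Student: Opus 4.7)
My strategy is to reduce all three parts to analyses already carried out for $H_n$ in Sections~\ref{tildeH-section}--\ref{Hn-limit-section}, exploiting the structural similarity of $H_n^{(l)}$ to $H_n$. The key observation is that, for $l=0,2$, the matrices $S_m^{(l)}$ and vectors $Z_m^{(l)}$ differ from $S_m,Z_m$ only by $O(1)$ boundary corrections per block: $H_n^{(0)}$ restores the $m=1$ block dropped from $H_n$, while $H_n^{(2)}$ appends the boundary observations $\tilde{\mathbf Y}^k_{m,\pm}$ and $\mathbf Y_{m,0}$ to each $Z_m$. Adapting Lemma~\ref{Hn-diff-lemma} to $(Z_m^{(l)},S_m^{(l)})$ gives the corresponding asymptotic equivalence with a $\tilde H_n^{(l)}$ (with frozen coefficients); the total contribution of the $O(1)$-per-block corrections is at most $O_p(\ell_n)$, which is negligible at the $b_n^{1/2}$ scale. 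Moreover, Lemmas~\ref{D1-change} and~\ref{Hn-lim-lemma2} use only the bulk interior of the block matrices and therefore transfer verbatim to $\tilde S_m^{(l)}$. Consequently, Part~1 follows by replaying the expansions (\ref{Hn-diff-eq1})--(\ref{Hn-diff-eq2}) together with the argument of Proposition~\ref{Hn-lim}: the boundary modifications are invisible in the limit, giving the same function $\mathcal{Y}_1(\sigma)$ for both $l=0,2$.

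For Part~2 the situation simplifies since $[A1'']$ makes $b$ independent of $(t,x,\sigma)$. With $\sigma=\sigma_\ast$ fixed, I would write
\[
 \partial_v^k\bigl(H_n^{(l)}(\sigma_\ast,v)-H_n^{(l)}(\sigma_\ast,v_\ast)\bigr)=-\tfrac12\sum_m\partial_v^k\Bigl\{{\rm tr}\bigl(S_m^{(l),-1}(v)S_m^{(l)}(v_\ast)-\mathcal{E}\bigr)+\log\det\bigl(S_m^{(l)}(v)/S_m^{(l)}(v_\ast)\bigr)\Bigr\}
\]
modulo a centred fluctuation controlled by Lemma~\ref{ZSZ-est}. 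Expanding $S_m^{-1}$ as a Neumann series in the off-diagonal block $\tilde L$ as in (\ref{Hn-diff-eq2}), I would observe that $\partial_v$ acts only on $M_m(v)$ while each off-diagonal factor $\tilde L$ contributes an $|I^1\cap I^2|=O(b_n^{-1})$; hence only the $p=0$ (diagonal) term survives to leading order, decoupling the expression into the two independent block contributions ${\rm tr}(\tilde D_{j,m}^{-1}(v)\tilde D_{j,m}(v_\ast))$ and $\log\det(\tilde D_{j,m}(v)/\tilde D_{j,m}(v_\ast))$ for $j=1,2$. Lemma~\ref{D1-change} then permits the replacement $\tilde D_{j,m}\rightsquigarrow \dot D_{j,m}$, after which the trace identity (\ref{tr-lim-est}) with $I_1(a)=\pi/\sqrt{a(4+a)}$ and the explicit evaluation $\int_0^\pi\log(a+2(1-\cos x))dx=2\pi\log(\sqrt{a}+\sqrt{4+a})$ yield, via a Riemann sum with density $a^j_t$, the limit
\[
 -\int_0^T a^j_t\bigl\{v_{j,\ast}/v_j-1+\log(v_j/v_{j,\ast})\bigr\}dt,
\]
which after summation in $j$ is precisely $2\mathcal{Y}_2(v)$. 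Uniformity in $v$ and higher $v$-derivatives follow by Sobolev's inequality exactly as in the proof of Proposition~\ref{Hn-lim}.

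For Part~3, since $\partial_\sigma\partial_v S_m=0$,
\[
 \partial_\sigma\partial_v S_m^{-1}=S_m^{-1}(\partial_\sigma S_m)S_m^{-1}(\partial_v S_m)S_m^{-1}+S_m^{-1}(\partial_v S_m)S_m^{-1}(\partial_\sigma S_m)S_m^{-1},
\]
and each summand couples a $\partial_\sigma S_m$ (entries of order $b_n^{-1}$) with a $\partial_v S_m$ (entries of order $1$). A Lemma~\ref{ZSZ-est}-type moment bound combined with Sobolev's inequality gives $\sup_{\sigma,v}|\partial_\sigma\partial_v H_n^{(l)}|=O_p(b_n^{1/2})$, and multiplication by $b_n^{-3/4}$ produces $o_p(1)$. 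I anticipate the main obstacle to be in Part~2: specifically (i) justifying that the Neumann expansion in $\tilde L$ is uniformly summable in $v$ in a neighbourhood of $v_\ast$ (requiring geometric tail bounds analogous to those in the proof of Proposition~\ref{Hn-lim}), and (ii) recognising the resulting closed-form Riemann sum as the Kullback--Leibler divergence appearing in $\mathcal{Y}_2(v)$.
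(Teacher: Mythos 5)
Your high-level strategy — reducing $H_n^{(l)}$ to the analysis already done for $H_n$ — matches the paper's, but there is a genuine gap in Part 1, and a minor misattribution in Part 2.

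\textbf{Part 1, the gap.} You assert that ``Lemmas~\ref{D1-change} and~\ref{Hn-lim-lemma2} use only the bulk interior of the block matrices and therefore transfer verbatim to $\tilde S_m^{(l)}$,'' and characterize the changes in $H_n^{(2)}$ as ``$O(1)$ boundary corrections per block'' summing to a negligible $O_p(\ell_n)$. Neither claim is correct for $l=2$. The matrix $M^{(2)}_{j,m}$ differs from $M_{j,m}$ by the corner modifications $-\delta_{(i,i')=(1,1)}-\delta_{i=i'=k^{(2),j}_m}$: this is \emph{not} a local perturbation of the inverse. The decomposition (\ref{dotD-eq}) that underlies Lemma~\ref{D1-change} has to be rebuilt with the sequence $p'_j(\epsilon)$ (which starts at $1+\epsilon$, is increasing, and approaches $p_+$ from below) in place of $p_j(\epsilon)$, and in particular the diagonal entry $((\epsilon\mathcal{E}+M^{(2)}_{j,m})^{-1})_{11}$ blows up to order $b_n^{1/2}$ rather than staying $O(1)$. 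The estimate of $\Lambda_1$ in Lemma~\ref{Hn-lim-lemma2} — that near-boundary indices can be ignored — relies precisely on controlling such diagonal entries, so the carry-over is nontrivial and must be checked via the $p'_j$ identities of Lemma~\ref{ddotD-properties}. The paper does exactly this; your argument does not, and as written the claim that the interior lemmas apply verbatim to $l=2$ is false. (For $l=0$ the block structure is literally the same as $H_n$, so there the transfer is indeed immediate.)

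\textbf{Part 2, a misdirection rather than an error.} Your identification of the limit $\mathcal{Y}_2$ goes through $\dot D_{j,m}$ and the closed forms $I_1(a)=\pi/\sqrt{a(4+a)}$, $\int_0^\pi\log(a+2(1-\cos x))dx=2\pi\log(\sqrt{a}+\sqrt{4+a})$. These formulas do not actually ``yield'' the limit here: at $\sigma=\sigma_\ast$ one has ${\rm tr}(\tilde D_{j,m}^{-1}(v)\tilde D_{j,m}(v_\ast))=\tfrac{v_{j,\ast}}{v_j}k^j_m+\tfrac{v_{j,\ast}}{v_j}(c_j(v_\ast)-c_j(v)){\rm tr}((c_j(v)\mathcal{E}+M_{j,m})^{-1})$, and the $I_1$-contribution in the second term is $O(b_n^{-1/2}k_n)$, hence $o(\ell_n^{-1})$ after multiplication by $b_n^{-1}$; the same is true of the $\log$-integral correction to $\log\det$. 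The leading contribution is the trivial ${\rm tr}(M_{j,m}^{-1}M_{j,m})=k^j_m$ term, which the paper extracts directly without invoking $\dot D_{j,m}$ or any closed-form eigenvalue integral (it writes $\tfrac{v_{j,\ast}}{v_j}(\mathcal{E}-(\tilde D^{(l)}_{j,m})^{-1}(\tilde D^{(l)}_{j,m}-v_jv_{j,\ast}^{-1}\tilde D^{(l)}_{j,m,\ast}))$ and notes the correction is small). Your route works, but it makes the result look as though it depends on the spectral structure of $M_{j,m}$ when it does not; also watch the factor of $\tfrac12$ — the expression you display is $-2\mathcal{Y}_2(v)$, and the $-\tfrac12$ prefactor of $H_n^{(l)}$ then restores $\mathcal{Y}_2(v)$.

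\textbf{Part 3.} Your argument via $\partial_\sigma\partial_v S_m^{(l)}=0$ and the $b_n^{1/2}$-vs-$b_n^{3/4}$ scaling comparison is essentially the paper's.

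One further remark: under $[A1'']$ the diffusion coefficient is constant in $(t,x)$, so $H_n=\tilde H_n$ already; invoking a frozen-coefficient approximation $\tilde H_n^{(l)}$ is not needed in this setting.
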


\begin{proof}
{\it 1}. We obtain the results by a similar argument to the proof of Proposition~\ref{Hn-lim} together with Lemma~\ref{ddotD-properties},
the results in Section 8 of~\cite{glo-jac01a}, and similar estimates to Lemmas~\ref{D1-change} and~\ref{Sm-properties}.
{\colord For any $\epsilon>0$, $(\epsilon\mathcal{E}+M_{j,m}^{(2)})^{-1}$ has a similar decomposition to (\ref{dotD-eq}) by replacing $p_{i-1},\cdots,p_j$ by $p_{i-1}',\cdots, p_j'$.}
Therefore, estimate for the quantity corresponding to $\Lambda_1$ is obtained since
\begin{equation*}
{\colord ((\epsilon\mathcal{E}+M^{(2)}_{j,m})^{-1})_{11}=\frac{\prod_{l=1}^{k^j_m-1}p'_l(\epsilon)}{(p'_{k^j_m}(\epsilon)-1)\prod_{l=1}^{k^j_m-1}p'_l(\epsilon)}=O(b_n^{1/2}).}
\end{equation*}
\begin{discuss}
{\colorr and hence $b_n^{-1/2}{\rm tr}((\dot{D}^{(-1)}_{j,m})^{-1}G')\vee (b_n^{-1/2}{\rm tr}((\dot{D}^{(-1)}_{j,m})^{-1}\mathcal{E}'))=O_p(\ell_n^{-1})$.
Lemma~\ref{Hn-lim-lemma2}に対応する評価は$(\epsilon\mathcal{E}+M)^{-1}\leq (\epsilon\mathcal{E}+M-(E_{11}-\mathcal{E}))^{-1}\leq (\epsilon\mathcal{E}+M-(E_{11}-\mathcal{E})-(E_{kk}-\mathcal{E}))^{-1}$と挟み込めばよい.
traceの評価も\cite{glo-jac01a}の8章で与えられる.
}
\end{discuss}

\noindent
{\it 2}. We first obtain
\begin{eqnarray}
&&b_n^{-1}\partial_v^lH_n^{(l)}(\sigma,v) \nonumber \\
&=&-\frac{1}{2}b_n^{-1}\sum_m\left\{E_m[(Z_m^{(l)})^{\top}\partial_v^l(S_m^{(l)})^{-1}Z^{(l)}_m]+\partial_v^l\log\det S^{(l)}_m\right\} 
-\frac{1}{2}b_n^{-1}\sum_m\bar{E}_m[(Z_m^{(l)})^{\top}\partial_v^l(S_m^{(l)})^{-1}Z^{(l)}_m] \nonumber \\
&=&-\frac{1}{2}b_n^{-1}\sum_m\left\{{\rm tr}(\partial_v^l(S_m^{(l)})^{-1}S_{m,\ast}^{(l)})+\partial_v^l\log\det S_m^{(l)}\right\} 
+O_p\bigg(\bigg(b_n^{-2}\sum_m{\rm tr}(\partial_v^l(S_m^{(l)})^{-1}S_{m,\ast}^{(l)}\partial_v^l(S_m^{(l)})^{-1}S_{m,\ast}^{(l)})\bigg)^{1/2}\bigg) \nonumber \\
&=&-\frac{1}{2}b_n^{-1}\sum_m\left\{{\rm tr}(\partial_v^l(S_m^{(l)})^{-1}S_{m,\ast}^{(l)})+\partial_v^l\log\det S_m^{(l)}\right\} +o_p(1). \nonumber
\end{eqnarray}
{\colord Let $\tilde{D}^{(l)}_m=(\tilde{D}^{(l)}_{1,m},\tilde{D}^{(l)}_{2,m})$ for $l=0,2$, $k^{(0),j}_m=k^j_m$ for $j=1,2$, 
$\tilde{D}^{(l)}_{1,m}=((S^{(l)}_m)_{i,i'})_{1\leq i,i'\leq k^{(l),1}_m}$,
\begin{equation*}
\tilde{D}^{(0)}_{2,m}=((S^{(0)}_m)_{j,j'})_{k^{(0),1}_m< j,j'\leq k^{(0),1}_m+k^{(0),2}_m}, 
\quad \tilde{D}^{(2)}_{2,m}={\rm diag}(((S^{(2)}_m)_{j,j'})_{k^{(2),1}_m< j,j'\leq k^{(2),1}_m+k^{(2),2}_m},{\bf E}(v)),
\end{equation*}
\begin{equation*}
\hat{G}^{(l)}=(\tilde{D}^{(l)}_{1,m})^{-1/2}\{|I^1_{i,m}\cap I^2_{j,m}|1_{\{j\leq k^{(l),2}_m\}}\}_{1\leq i\leq k^{(l),1}_m,1\leq j\leq \tilde{k}^{(l),2}_m}(\tilde{D}^{(l)}_{2,m})^{-1/2}, 
\end{equation*}
where $\tilde{k}^{(0),2}_m=k^{(0),2}_m$ and $\tilde{k}^{(2),2}_m$ is the size of $\tilde{D}_{2,m}^{(2)}$. Then} we obtain
\begin{eqnarray}
{\rm tr}((S_m^{(l)})^{-1}S_{m,\ast}^{(l)})&=&{\rm tr}\bigg((\tilde{D}^{(l)}_m)^{-1/2}\left(
\begin{array}{ll}
\mathcal{E} & \hat{G}^{(l)} \\
(\hat{G}^{(l)})^{\top} & \mathcal{E}
\end{array}
\right)^{-1}(\tilde{D}^{(l)}_m)^{-1/2}(\tilde{D}^{(l)}_{m,\ast})^{1/2}\left(
\begin{array}{ll}
\mathcal{E} & \hat{G}^{(l)}_{\ast} \\
(\hat{G}^{(l)}_{\ast})^{\top} & \mathcal{E}
\end{array}
\right)(\tilde{D}^{(l)}_{m,\ast})^{1/2}\bigg) \nonumber \\
&=&\sum_{p=0}^{\infty}\big\{{\rm tr}((\tilde{D}^{(l)}_{1,m})^{-1/2}(\hat{G}^{(l)}(\hat{G}^{(l)})^{\top})^p(\tilde{D}^{(l)}_{1,m})^{-1/2}\tilde{D}^{(l)}_{1,m,\ast} \nonumber \\
&&-(\tilde{D}^{(l)}_{1,m})^{-1/2}(\hat{G}^{(l)}(\hat{G}^{(l)})^{\top})^p\hat{G}^{(l)}(\tilde{D}^{(l)}_{2,m})^{-1/2}(\tilde{D}^{(l)}_{2,m,\ast})^{1/2}(\hat{G}^{(l)}_{\ast})^{\top}(\tilde{D}^{(l)}_{1,m,\ast})^{1/2}) \nonumber \\
&&+{\rm tr}((\tilde{D}^{(l)}_{2,m})^{-1/2}((\hat{G}^{(l)})^{\top}\hat{G}^{(l)})^p(\tilde{D}^{(l)}_{2,m})^{-1/2}\tilde{D}^{(l)}_{2,m,\ast} \nonumber \\
&&-(\tilde{D}^{(l)}_{2,m})^{-1/2}(\hat{G}^{(l)})^{\top}(\hat{G}^{(l)}(\hat{G}^{(l)})^{\top})^p(\tilde{D}^{(l)}_{1,m})^{-1/2}((\tilde{D}^{(l)}_{1,m,\ast}))^{1/2}\hat{G}_{\ast}^{(l)}(\tilde{D}^{(l)}_{2,m,\ast})^{1/2})\big\}. \nonumber
\end{eqnarray}

Since $\lVert (\tilde{D}^{(l)}_{j,m})^{-1}\tilde{D}^{(l)}_{j,m,\ast}\rVert=O_p(1)$, terms involving $\hat{G}$ are $O_p(b_n^{1/2}\ell_n^{-1})$. Therefore we have
\begin{eqnarray}
&&b_n^{-1}{\rm tr}((S^{(l)}_m)^{-1}S_{m,\ast}^{(l)}) \nonumber \\
&=&b_n^{-1}\sum_{j=1}^2{\rm tr}((\tilde{D}^{(l)}_{j,m})^{-1}\tilde{D}^{(l)}_{j,m,\ast})+o_p(\ell_n^{-1}) \nonumber \\
&=&b_n^{-1}\sum_{j=1}^2\frac{v_{j,\ast}}{v_j}{\rm tr}(\mathcal{E}_{k^j_m}-(\tilde{D}^{(l)}_{j,m})^{-1}(\tilde{D}^{(l)}_{j,m}-v_jv_{j,\ast}^{-1}\tilde{D}^{(l)}_{j,m,\ast}))+o_p(\ell_n^{-1})
=\ell_n^{-1}\sum_{j=1}^2\hat{a}_m^j\frac{v_{j,\ast}}{v_j}+o_p(\ell_n^{-1}). \nonumber 
\end{eqnarray}
Similarly we have
$b_n^{-1}{\rm tr}(\partial_v^k(S^{(l)}_m)^{-1}S^{(l)}_{m,\ast})=\ell_n^{-1}\sum_{j=1}^2\hat{a}_m^j\partial_v^k\frac{v_{j,\ast}}{v_j}+o_p(\ell_n^{-1})$.
Moreover, we obtain
\begin{eqnarray}
&&b_n^{-1}\partial_v^k\log \frac{\det S_m^{(l)}}{\det S_{m,\ast}^{(l)}} \nonumber \\
&=&\sum_{j=1}^2b_n^{-1}\partial_v^k\log \det ((\tilde{D}^{(l)}_{j,m,\ast})^{-1}\tilde{D}^{(l)}_{j,m})
+b_n^{-1}\partial_v^k\log\det(\mathcal{E}-\hat{G}^{(l)}(\hat{G}^{(l)})^{\top})-b_n^{-1}\partial_v^k\log\det(\mathcal{E}-\hat{G}_{\ast}^{(l)}(\hat{G}^{(l)}_{\ast})^{\top}) \nonumber \\
&=&b_n^{-1}\sum_{j=1}^2\partial_v^k\log\det(v_jv_{j,\ast}^{-1}\mathcal{E}_{k^j_m}+(\tilde{D}^{(l)}_{j,m,\ast})^{-1}(\tilde{D}^{(l)}_{j,m}-v_jv_{j,\ast}^{-1}\tilde{D}^{(l)}_{j,m,\ast}))+o_p(\ell_n^{-1}) \nonumber \\
&=&\ell_n^{-1}\sum_{j=1}^2\hat{a}_m^j\partial_v^k\log (v_jv_{j,\ast}^{-1})+o_p(\ell_n^{-1}). \nonumber
\end{eqnarray}
\begin{discuss}
{\colorr $b_n^{-1}\log\det \tilde{S}_m$は収束しなそう.}
\end{discuss}

\noindent
{\it 3}. Since $\partial_v\log \det S_m^{(i)}=-{\rm tr}(\partial_vS^{(i)}_m(S^{(i)}_m)^{-1})$ and 
\begin{equation*}
{\colord b_n^{-3/4}}\partial_{\sigma}\partial_vH^{(i)}_n(\sigma,v)=-\frac{1}{2}{\colord b_n^{-3/4}}\sum_m\{{\rm tr}(\partial_{\sigma}\partial_v(S^{(i)}_m)^{-1}S^{(i)}_{m,\ast})+\partial_{\sigma}\partial_v\log \det S^{(i)}_m\} + o_p(1),
\end{equation*}
we have ${\colord b_n^{-3/4}}\partial_{\sigma}\partial_vH^{(i)}_n(\sigma,v)=o_p(1)$.
Sobolev's inequality and similar estimates for $\partial_{\sigma}\partial_v^2$ and $\partial_{\sigma}^2\partial_v$ yield the results.
\begin{discuss}
{\colorr 似たような式は繰り返し書くのではなく, このように書く.}
\end{discuss}
\end{proof}

The following lemma completes the proof of {\colord the LAN properties of `subexperiment' and `superexperiment'}.
\begin{lemma}\label{LAN-lemma2}
Assume $[A1'']$, $[A2]$, and $[V]$. Then $(b_n^{-1/4}\partial_{\sigma}\hat{H}^{(l)}_n(\sigma_{\ast},v_{\ast}),b_n^{-1/2}\partial_v\hat{H}^{(l)}_n(\sigma_{\ast},v_{\ast}))\to^{s\mathchar`-\mathcal{L}} {\rm diag}(\Gamma_1^{1/2},\Gamma_2^{1/2})\tilde{\mathcal{N}}$
for $l=0,2$, {\colord where $\tilde{\mathcal{N}}$ is a $(d+2)$-dimensional normal random variable independent of $\mathcal{F}$}.
\end{lemma}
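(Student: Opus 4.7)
The plan is to reduce the joint stable convergence to an application of Jacod's martingale central limit theorem (\cite{jac97}, Theorem 2.1/3.2), in the same spirit as Proposition~\ref{st-conv}. Using the Bartlett-type identity $\partial_\sigma \log\det S^{(l)}_m = -{\rm tr}(\partial_\sigma S^{(l)}_m (S^{(l)}_m)^{-1})$ (and analogously for $\partial_v$) together with Lemma~\ref{ZSZ-est}, I would write
\[
\bigl(b_n^{-1/4}\partial_\sigma H^{(l)}_n(\sigma_\ast,v_\ast),\; b_n^{-1/2}\partial_v H^{(l)}_n(\sigma_\ast,v_\ast)\bigr)
=\sum_{m=2}^{\ell_n}\mathcal{X}^{n,l}_m+o_p(1),
\]
where $\mathcal{X}^{n,l}_m=-\tfrac12\bigl(b_n^{-1/4}\bar E_m[(Z^{(l)}_m)^{\!\top}\partial_\sigma(S^{(l)}_m)^{-1}Z^{(l)}_m],\; b_n^{-1/2}\bar E_m[(Z^{(l)}_m)^{\!\top}\partial_v(S^{(l)}_m)^{-1}Z^{(l)}_m]\bigr)$ evaluated at $(\sigma_\ast,v_\ast)$. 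These are $\mathcal{G}_{s_m}$-martingale increments, so Jacod's theorem reduces the problem to verifying (i) a conditional Lindeberg condition, (ii) convergence of the conditional covariance to $\operatorname{diag}(\Gamma_1,\Gamma_2)$, and (iii) asymptotic orthogonality to any bounded $\mathcal{F}$-martingale (which upgrades stable convergence to independence from $\mathcal{F}$).

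For (i), the fourth-moment bound of point~1 of Lemma~\ref{ZSZ-est}, adapted to $(Z^{(l)}_m,S^{(l)}_m)$ by the same decomposition techniques used in Lemma~\ref{LAN-lemma}, gives $\sum_m E_m[|\mathcal{X}^{n,l}_m|^2\mathbf{1}_{\{|\mathcal{X}^{n,l}_m|>\varepsilon\}}]\to^p0$. For (ii), I would express the conditional variance via the usual identity
\[
\sum_m E_m[(\mathcal{X}^{n,l}_m)(\mathcal{X}^{n,l}_m)^{\!\top}]
= -\begin{pmatrix} b_n^{-1/2}\partial_\sigma^2 H^{(l)}_n & b_n^{-3/4}\partial_\sigma\partial_v H^{(l)}_n \\ b_n^{-3/4}\partial_v\partial_\sigma H^{(l)}_n & b_n^{-1}\partial_v^2 H^{(l)}_n\end{pmatrix}(\sigma_\ast,v_\ast)+o_p(1),
\]
which comes from the same Bartlett identity applied twice. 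The diagonal blocks converge to $\Gamma_1$ and $\Gamma_2$ by parts~1 and~2 of Lemma~\ref{LAN-lemma}; the off-diagonal block vanishes by part~3 of Lemma~\ref{LAN-lemma}, reflecting the different scalings $b_n^{-1/4}$ versus $b_n^{-1/2}$. Thus the limit is block-diagonal and deterministic since under $[A1'']$ the coefficients $\tilde b_m^j,a_t^j$ are constant, which is what makes the limit merely normal (LAN) rather than mixed normal.

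For (iii), the nullity argument from the proof of Proposition~\ref{st-conv} carries over: the set $\mathbf{N}$ of products $f({\bf X}_T)\prod_j g_j(\epsilon^{n_j,k_j}_{i_j})$ is total in $L^1(\Omega,\mathcal{F}_T,P)$, so by Jacod~\cite{jac79} (4.15) it suffices to check $\sum_m E_m[\mathcal{X}^{n,l}_m(N'_{s_m}-N'_{s_{m-1}})]\to^p0$ for bounded $\mathcal{F}^{(0)}$-martingales (where the Brownian part is handled by the step-function approximation, noting $\mathcal{X}^{n,l}_m$ is centered in both the $\epsilon$'s and the $W$-increments on $[s_{m-1},s_m)$) and for conditional expectations of elements of $\mathbf{N}$ (where the independence structure of the noise and the fact that $\mathcal{X}^{n,l}_m$ depends only on finitely many $\epsilon^{n,k}_i$ with $i$ in block $m$ allow one to split the sum over indices touching block $m$ versus not, exactly as in Proposition~\ref{st-conv}).

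The main technical obstacle is the combinatorial bookkeeping in (i) and the cross-block estimate in (ii) for the modified structures $Z^{(0)}_m,S^{(0)}_m$ (sub-experiment, missing the end-block observations) and $Z^{(2)}_m,S^{(2)}_m$ (super-experiment, with appended point observations $Y^j_{{\bf R}_m}$); in particular one must verify that the extra rows/columns in $S^{(2)}_m$ and the decomposition (\ref{dotD-eq}) involving $p'_j$ instead of $p_j$ (as used in the proof of Lemma~\ref{LAN-lemma}) still yield the same traces in the limit. Once this is in hand, Jacod's theorem delivers the claimed stable convergence to $\operatorname{diag}(\Gamma_1^{1/2},\Gamma_2^{1/2})\tilde{\mathcal{N}}$ with $\tilde{\mathcal{N}}$ independent of $\mathcal{F}$.
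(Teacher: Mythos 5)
Your proposal follows the same route as the paper: reduce to the martingale array $\tilde{\mathcal{X}}^n_m$ given by the centered quadratic forms scaled by $b_n^{-1/4}$ and $b_n^{-1/2}$, verify the conditional Lindeberg condition with the fourth-moment bound from Lemma~\ref{ZSZ-est}, identify the conditional covariance with the block-scaled negative Hessian (off-diagonal block killed by part~3 of Lemma~\ref{LAN-lemma}, diagonal blocks converging via parts~1 and~2), and transfer the orthogonality argument from Proposition~\ref{st-conv}, concluding by Jacod's theorem. Your remark that the constancy assumption $[A1'']$ makes the limit covariance non-random (LAN rather than LAMN) and that the matrix-structural adaptations to $S^{(0)}_m,S^{(2)}_m$ (in particular the $p_j\mapsto p'_j$ replacement in the analogue of (\ref{dotD-eq})) are the residual technical burden also matches the paper's treatment.
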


\begin{proof}
Let
$\tilde{\mathcal{X}}^n_m=-b_n^{-1/4}\bar{E}_m[(Z^{(l)}_m)^{\top}\partial_{\sigma}(S^{(l)}_{m,\ast})^{-1}Z^{(l)}_m]/2 
-b_n^{-1/2}\bar{E}_m[(Z^{(l)}_m)^{\top}\partial_v(S^{(l)}_{m,\ast})^{-1}Z^{(l)}_m]/2$,
then we have
\begin{eqnarray}
\sum_{m=1}^{[\ell_nt]}E_m[|\tilde{\mathcal{X}}^n_m|^21_{\{|\mathcal{X}^n_m|>\epsilon\} }] 
&\leq &\frac{C}{\epsilon^2}b_n^{-1}\sum_mE_m[((Z^{(l)}_m)^{\top}\partial_{\sigma}(S^{(l)}_{m,\ast})^{-1}Z^{(l)}_m)^4] 
+\frac{C}{\epsilon^2}b_n^{-2}\sum_mE_m[((Z^{(l)}_m)^{\top}\partial_v(S^{(l)}_{m,\ast})^{-1}Z^{(l)}_m)^4] \nonumber \\
&\leq &\frac{Cb_n^{-2}}{\epsilon^2}\sum_m\sum_{j=1}^4{\rm tr}((\partial_v(S^{(l)}_m)^{-1}S^{(l)}_{m,\ast})^j) + o_p(1)\to^p 0. \nonumber
\end{eqnarray}
\begin{discuss}
{\colorr
$U\tilde{Z}_m$が八個出てくるところはちゃんと上から押さえられるかチェックする.
}
\end{discuss}

Moreover, similarly to the proof of Proposition~\ref{st-conv}, we have
\begin{equation*}
\sum_{m=1}^{[\ell_nt]}E_m[\tilde{\mathcal{X}}^n_m(N_{s_m}-N_{s_{m-1}})]=\sum_{m=1}^{[\ell_nt]}E_m[\tilde{\mathcal{X}}^n_m(W_{s_m}-W_{s_{m-1}},W'_{s_m}-W'_{s_{m-1}})]=0
\end{equation*}
for any bounded martingale $N$ orthogonal to $(W_t, W'_t)_t$.

Therefore, by Theorem 3.2 in Jacod~\cite{jac97}, it is sufficient to show that
\begin{equation*}
\sum_{m=1}^{[\ell_nt]}E_m[(\tilde{\mathcal{X}}^n_m)^2]\to^p {\rm diag}(-\partial_{\sigma}^2\mathcal{Y}_1(\sigma_{\ast},t),-\partial_v^2\mathcal{Y}_2(v_{\ast},t)),
\end{equation*}
where $\mathcal{Y}_2(v,t)=-\int^t_0\sum_{j=1}^2a^j_s\{(v_{j,\ast}/v_j)-1+\log(v_j/v_{j,\ast})\}ds/2$.

Then we obtain the desired results by
\begin{eqnarray}
\sum_mE_m[(\tilde{\mathcal{X}}^n_m)^2]
&=&\frac{1}{4}b_n^{-1/2}\sum_mE_m[\bar{E}_m[(Z_m^{(l)})^{\top}(\partial_{\sigma}(S^{(l)}_{m,\ast})^{-1}+b_n^{-1/4}\partial_v(S^{(l)}_{m,\ast})^{-1})Z_m^{(l)}]^2] \nonumber \\
&=&\frac{b_n^{-1/2}}{2}\sum_m{\rm tr}(S^{(l)}_{m,\ast}(\partial_{\sigma}(S^{(l)}_{m,\ast})^{-1}+b_n^{-1/4}\partial_v(S^{(l)}_{m,\ast})^{-1})
S^{(l)}_{m,\ast}(\partial_{\sigma}(S^{(l)}_{m,\ast})^{-1}+b_n^{-1/4}\partial_v(S^{(l)}_{m,\ast})^{-1})) \nonumber \\
&=&-b_n^{-1/2}\partial_{\sigma}^2H^{(l)}_n(\sigma_{\ast},v_{\ast})-b_n^{-1}\partial_v^2H^{(l)}_n(\sigma_{\ast},v_{\ast})+\frac{b_n^{-3/4}}{2}\sum_m{\rm tr}(\partial_{\sigma}S^{(l)}_{m,\ast}(S^{(l)}_{m,\ast})^{-1}\partial_vS^{(l)}_{m,\ast}(S^{(l)}_{m,\ast})^{-1}) \nonumber \\
&\to^p& {\rm diag}(-\partial_{\sigma}^2\mathcal{Y}_1(\sigma_{\ast},t),-\partial_v^2\mathcal{Y}_2(v_{\ast},t)). \nonumber
\end{eqnarray}

\end{proof}

{\colord
\noindent
{\it Proof of Theorem~\ref{LAN-theorem}}.
Let ${\bf U}(u)=\exp(u^{\top}\Gamma^{1/2}\tilde{\mathcal{N}}-u^{\top}\Gamma u/2)$ for $u\in\mathbb{R}^{d+2}$. 
Let $Z^{(1)}=(\epsilon^{n,k}_0,(\tilde{Y}^k_i-\tilde{Y}^k_{i-1})_{i=1}^{{\bf J}_{k,n}})_{k=1,2}$,
$S^{(1)}(\sigma,v)$ be a symmetric matrix of size ${\bf J}_{1,n}+{\bf J}_{2,n}+2$ defined by
$(S^{(1)}(\sigma,v))_{11}=v_1$, $(S^{(1)}(\sigma,v))_{{\bf J}_{1,n}+2,{\bf J}_{1,n}+2}=v_2$,
\begin{equation*}
\begin{array}{ll}
(S^{(1)}(\sigma,v))_{ij}={\rm diag}(v_1M({\bf J}_{1,n}+1),v_2M({\bf J}_{2,n}+1))_{ij} & {\rm if} \ i\neq j \ {\rm and} \ \{i,j\}\cap \{1,{\bf J}_{1,n}+2\}\neq \emptyset, \\
(S^{(1)}(\sigma,v))_{ij}=|b^1(\sigma)|^2(S^{n,1}_{i-1}-S^{n,1}_{i-2})\delta_{ij}+v_1M({\bf J}_{1,n}+1)_{ij} & {\rm if} \ 2\leq i,j\leq {\bf J}_{1,n}+1, \\
(S^{(1)}(\sigma,v))_{ij}=|b^2(\sigma)|^2(S^{n,2}_{i'-1}-S^{n,2}_{i'-2})\delta_{ij}+v_2M({\bf J}_{2,n}+1)_{i'j'} & {\rm if} \ 2\leq i',j'\leq {\bf J}_{2,n}+1, \\
(S^{(1)}(\sigma,v))_{ij}=b^1\cdot b^2(\sigma)(S^{n,1}_{i-1}\wedge S^{n,2}_{j'-1}-S^{n,1}_{i-2}\vee S^{n,2}_{j'-2})_+ & {\rm if} \ 2\leq i\leq {\bf J}_{1,n}+1 \ {\rm and} \ 2\leq j'\leq {\bf J}_{2,n}+1, 
\end{array}
\end{equation*}
where $i'=i-{\bf J}_{1,n}-1$ and $j'=j-{\bf J}_{1,n}-1$.
Then we have (\ref{log-likelihood-eq}) for $l=1$ with $H^{(1)}_n(\sigma,v)=-((Z^{(1)})^{\top}(S^{(1)}(\sigma,v))^{-1}Z^{(1)}+\log\det S^{(1)}(\sigma,v))/2$.
Moreover, Theorem~\ref{glo-jac-thm} and Lemmas~\ref{LAN-lemma} and~\ref{LAN-lemma2} yield
\begin{equation}\label{LAN-proof-eq1}
(H_n^{(1)}(\sigma_{u^{(1)}},v_{u^{(1)}})-H_n^{(1)}(\sigma_{\ast},v_{\ast}),\cdots, H_n^{(1)}(\sigma_{u^{(k)}},v_{u^{(k)}})-H_n^{(1)}(\sigma_{\ast},v_{\ast}))\to^d (\log{\bf U}(u^{(1)}),\cdots, \log{\bf U}(u^{(k)}))
\end{equation}
as $n\to\infty$ for $u^{(1)},\cdots, u^{(k)}\in \mathbb{R}^{d+2}$.

Furthermore, similar estimates to the proof of Lemma \ref{LAN-lemma} yield $\sup_{\sigma,v}|b_n^{-3/4}\partial_{\sigma}\partial_vH_n^{(1)}(\sigma,v)|\to^p 0$,\\ 
$\sup_{\sigma}|b_n^{-3/4}\partial_{\sigma}^3H_n^{(1)}(\sigma,v_{\ast})|\to^p0$, and $\sup_v|b_n^{-3/2}\partial_v^3H_n^{(1)}(\sigma_{\ast},v)|\to^p0$.
Therefore we obtain 
\begin{equation}\label{LAN-proof-eq2}
H_n^{(1)}(\sigma_u,v_u)-H_n^{(1)}(\sigma_{\ast},v_{\ast})-(u\cdot {\bf V}_{1,n}-u^{\top}{\bf V}_{2,n}u/2)\to^p 0
\end{equation}
as $n\to \infty$ for any $u\in\mathbb{R}^{d+2}$, where
${\bf V}_{1,n}=(b_n^{-1/4}\partial_{\sigma}H_n^{(1)}(\sigma_{\ast},v_{\ast}),b_n^{-1/2}\partial_vH_n^{(1)}(\sigma_{\ast},v_{\ast}))$
and \\
${\bf V}_{2,n}=-{\rm diag}(b_n^{-1/2}\partial_{\sigma}^2H_n^{(1)}(\sigma_{\ast},v_{\ast}),b_n^{-1}\partial_v^2H_n^{(1)}(\sigma_{\ast},v_{\ast}))$.
(\ref{LAN-proof-eq1}) and (\ref{LAN-proof-eq2}) yield ${\bf V}_{1,n}\to^d \Gamma^{1/2}\tilde{\mathcal{N}}$ and ${\bf V}_{2,n}\to^p \Gamma$, and therefore 
we obtain the LAN property of the original experiment with $\Gamma_n={\bf V}_{2,n}$ and $\mathcal{N}_n=\Gamma^{-1/2}{\bf V}_{1,n}$ by (\ref{log-likelihood-eq2}).
\qed
}

\begin{discuss}
{\colorr $u=(1,0,0,0,0)$と$u=(2,0,0,0)$を組み合わせて第1成分の収束が言えるなどして${\bf V}_{j,n}$の収束が言える.
${\bf V}_{2,n}$の非対角成分は$u=(1,1,0,...)$と$u=(1,0,0,...)$と$u=(0,1,0,...)$を組み合わせればよい.
Jeganathan (1983)の論文のLAMNのdefを見れば$\mathcal{N}_n$と$\Gamma_n$が$\sigma_{\ast}$に依存してよいことが分かる.
}
\end{discuss}

\section{Proof of the results in Section~\ref{PLD-section}}\label{PLD-proof-section}

In this final section, we complete the proof of remaining results in Section~\ref{results-section}.
Proposition~\ref{PLD-prop} is proven by the scheme of Yoshida~\cite{yos06, yos11}.
Proposition~\ref{separability-prop} and moment estimates in Lemmas~\ref{Hn-diff-lemma} and~\ref{Hn-lim-lemma2} enable us to check the assumptions of Theorem 2 in \cite{yos11}.
Then the results on convergence of moments and the Bayes-type estimator are obtained by Proposition~\ref{PLD-prop}.

\noindent
{\it Outline of the proof of Proposition~\ref{PLD-prop}}. We apply Theorem 2 in Yoshida~\cite{yos11}.
It is sufficient to prove the following five conditions for any $L>0$ with some positive constant $\delta_1$ and $\delta_2$:
\begin{enumerate}
\item There exists $C_L>0$ such that $P[\inf_{\sigma\neq \sigma_{\ast}}(-\mathcal{Y}_1(\sigma)/|\sigma-\sigma_{\ast}|^2)\leq r^{-1}]\leq C_L/r^L$ 
and \\
$P[\{r^{-1}|u|^2\leq u^{\top}\Gamma_1u/4 {\rm \ for \ any \ } u\in\mathbb{R}^d\}^c]\leq C_L/r^L$ for any $r>0$.
\item $\sup_nE[(b_n^{-1/4}|\partial_{\sigma}H_n(\sigma_{\ast},\hat{v}_n)|)^L]<\infty$.
\item $\sup_nE[(b_n^{\delta_1}\sup_{\sigma}|b_n^{-1/2}(H_n(\sigma,\hat{v}_n)-H_n(\sigma_{\ast},\hat{v}_n))-\mathcal{Y}_1(\sigma)|)^L]<\infty$.
\item $\sup_nE[(b_n^{-1/2}\sup_{\sigma}|\partial_{\sigma}^3H_n(\sigma,\hat{v}_n)|)^L]<\infty$.
\item $\sup_nE[(b_n^{\delta_2}|b_n^{-1/2}\partial_{\sigma}^2H_n(\sigma_{\ast},\hat{v}_n)+\Gamma_1|)^L]<\infty$.
\end{enumerate}
By Taylor's formula for $\mathcal{Y}_1(\sigma)$ and relations $\mathcal{Y}_1(\sigma_{\ast})=\partial_{\sigma}\mathcal{Y}_1(\sigma_{\ast})=0$,
we obtain $\inf_{\sigma\neq \sigma_{\ast}}(-\mathcal{Y}_1(\sigma)/|\sigma-\sigma_{\ast}|^2) \leq \inf_{u\in \mathbb{R}^d\setminus\{0\}}u^{\top}\Gamma_1u/(2|u^2|)$. 
Then Proposition~\ref{separability-prop} and $[B3]$ yield point 1.
By Lemmas~\ref{Hn-diff-lemma} and~\ref{Hn-lim-lemma2} and a similar argument to the proof of Proposition~\ref{Hn-lim}, we obtain 3--5 and
$\sup_nE[(b_n^{-1/4}|\partial_{\sigma}H_n(\sigma_{\ast},\hat{v}_n)-\partial_{\sigma}\tilde{H}_n(\sigma_{\ast},v_{\ast})|)^L]<\infty$.
\begin{discuss}
{\colorr Lemma~\ref{Hn-diff-lemma}の式に$b_n^{L/4}$をかけて得られる.}
\end{discuss}
Moreover, by the Burkholder--Davis--Gundy inequality, we obtain
\begin{eqnarray}
&&E[|b_n^{-1/4}\partial_{\sigma}\tilde{H}_n(\sigma_{\ast},v_{\ast})|^L] \nonumber \\
&=&E\bigg[\bigg|\frac{b_n^{-1/4}}{2}\sum_m\bar{E}_m[\tilde{Z}_m^{\top}\partial_{\sigma}\tilde{S}_{m,\ast}^{-1}\tilde{Z}_m]\bigg|^L\bigg] 
\leq CE\bigg[\bigg(b_n^{-\frac{1}{2}}\sum_m\bar{E}_m[\tilde{Z}_m^{\top}\partial_{\sigma}\tilde{S}_{m,\ast}^{-1}\tilde{Z}_m]^2\bigg)^{L/2}\bigg] \nonumber \\
&\leq &CE\bigg[\bigg(b_n^{-\frac{1}{2}}\sum_mE_m[\bar{E}_m[\tilde{Z}_m^{\top}\partial_{\sigma}\tilde{S}_{m,\ast}^{-1}\tilde{Z}_m]^2]\bigg)^{L/2}\bigg]
+ CE\bigg[\bigg(b_n^{-1}\sum_m\bar{E}_m[\bar{E}_m[\tilde{Z}_m^{\top}\partial_{\sigma}\tilde{S}_{m,\ast}^{-1}\tilde{Z}_m]^2]^2\bigg)^{L/4}\bigg] \nonumber \\
&\leq &CE\bigg[\bigg(b_n^{-\frac{1}{2}}\sum_m{\rm tr}((\partial_{\sigma}\tilde{S}_{m,\ast}^{-1}\tilde{S}_{m,\ast})^2)\bigg)^{L/2}\bigg]
+CE\bigg[\bigg(b_n^{-1}\sum_mE_m[(\tilde{Z}_m^{\top}\partial_{\sigma}\tilde{S}_{m,\ast}^{-1}\tilde{Z}_m)^L]^{4/L}\bigg)^{L/4}\bigg] \nonumber \\
&=&O((b_n^{-1/2}\ell_nb_n^{1/2}\ell_n^{-1})^{L/2})+E[\bar{R}_n((b_n^{-5}k_n^8\ell_n)^{L/4})]=O(1), \nonumber
\end{eqnarray}
which implies point 2.

\qed

\noindent
{\it Proof of Theorem~\ref{moment-conv-thm}}.

We extend ${\bf Z}_n(u)$ to a continuous function on $\mathbb{R}^d$ satisfying $\lim_{|u|\to\infty}{\bf Z}(u)=0$ 
with the supremum norm of the extended function the same as for the original one.
Then by Theorem 5 and Remark 5 in Yoshida~\cite{yos11}, 
it is sufficient to show $\limsup_{n\to\infty}E[|b_n^{1/4}(\hat{\sigma}_n-\sigma_{\ast})|^p]<\infty$ for any $p>0$
and ${\bf Z}_n\to^{s\mathchar`- \mathcal{L}}{\bf Z}$ in $C(B(R))$ as $n\to\infty$ for any $R>0$,
where ${\bf Z}(u)=\exp(\mathcal{N}\cdot u-u^{\top}\Gamma_1u/2)$ and $B(R)=\{u;|u|\leq R\}$.

By Lemma~\ref{Hn-diff-lemma} and a similar argument to the proof of Proposition~\ref{Hn-lim}, we have 
\begin{equation*}
\sup_nE\bigg[\sup_{u\in C(B(R))}|\partial_u\log{\bf Z}_n(u)|\bigg]<\infty.
\end{equation*}
Then Propositions~\ref{Hn-lim} and~\ref{st-conv} and tightness criterion in $C$ space in Billingsley~\cite{bil99} yield
$\log {\bf Z}_n\to^{s\mathchar`- \mathcal{L}}\log{\bf Z}$ in $C(B(R))$. {\colord Then (\ref{PLD-ineq}) completes the proof.}

\qed\\

\noindent
{\it Proof of Theorem~\ref{bayes-thm}}.

By Theorem 10 in Yoshida~\cite{yos11}, it is sufficient to show
\begin{equation}\label{bayes-thm-eq1}
\sup_nE_n\bigg[\bigg(\int_{U_n}{\bf Z}_n(u)\pi(\sigma_{\ast}+b_n{-1/4}u)du\bigg)^{-1}\bigg]<\infty.
\end{equation}
By Proposition~\ref{Hn-lim}, we obtain $sup_nE[|H_n(\sigma_{\ast}+b_n^{-1/4}u)-H_n(\sigma_{\ast})|^p]\leq C_p|u|^p$
for any $U(\delta)$, where $U(\delta)=\{u\in\mathbb{R}^d;|u_i|\leq \delta (i=1,\cdots, d)\}$.
Then we have (\ref{bayes-thm-eq1}) by Lemma 2 in~\cite{yos11}.
\qed

\appendix
\section{Appendix}

\subsection{Results from linear algebra}
\begin{lemma}\label{tr-est}
Let $A$ and $B$ be matrices, with $A$ nonnegative definite and symmetric. Then
\begin{equation*}
|{\rm tr}(AB)|\leq {\rm tr}(A)\lVert B\rVert. 
\end{equation*}
\end{lemma}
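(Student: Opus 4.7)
My plan is to reduce the inequality to a weighted sum of quadratic forms via the spectral theorem applied to $A$. Since $A$ is symmetric and nonnegative definite, write $A = \sum_{i} \lambda_i v_i v_i^{\top}$, where $\{v_i\}$ is an orthonormal basis of eigenvectors and $\lambda_i \geq 0$. Using linearity and the cyclic property of the trace, one obtains
\begin{equation*}
{\rm tr}(AB) = \sum_i \lambda_i\, {\rm tr}(v_i v_i^{\top} B) = \sum_i \lambda_i\, v_i^{\top} B v_i .
\end{equation*}

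Next I would estimate each term $|v_i^{\top} B v_i|$ by the operator norm: since $|v_i| = 1$, Cauchy--Schwarz gives $|v_i^{\top} B v_i| \leq |v_i|\,|B v_i| \leq \lVert B \rVert$. Combining with $\lambda_i \geq 0$ and ${\rm tr}(A) = \sum_i \lambda_i$, we get
\begin{equation*}
|{\rm tr}(AB)| \leq \sum_i \lambda_i\, |v_i^{\top} B v_i| \leq \lVert B \rVert \sum_i \lambda_i = {\rm tr}(A)\, \lVert B \rVert .
\end{equation*}

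There is essentially no serious obstacle here; the only point worth flagging is that the norm $\lVert \cdot \rVert$ has to be interpreted as the spectral/operator norm (which is how it is used elsewhere in the paper, e.g.\ in Lemma~\ref{Sm-properties} and the companion Lemma~\ref{inverse-norm-est}), since the argument relies on $|Bv| \leq \lVert B \rVert\, |v|$ for unit vectors $v$. The nonnegativity hypothesis on $A$ is used critically to drop the absolute value around $\lambda_i$ before summing.
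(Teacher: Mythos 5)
Your proof is correct and is essentially the same argument as the paper's: the paper diagonalizes $A=U^{\top}\Lambda U$ and bounds $|{\rm tr}(\Lambda UBU^{\top})|\leq \sum_j\lambda_j|(UBU^{\top})_{jj}|\leq {\rm tr}(A)\lVert B\rVert$, which is just the spectral decomposition $A=\sum_i\lambda_i v_iv_i^{\top}$ in matrix notation, with $(UBU^{\top})_{jj}=v_j^{\top}Bv_j$. Your observation that $\lVert\cdot\rVert$ must be the operator norm and that nonnegativity of the $\lambda_i$ is what lets the triangle inequality close is accurate.
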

\begin{discuss}
{\colorr
$UAU^{\top}=\Lambda$として,
\begin{eqnarray}
|{\rm tr}(AB)|=|{\rm tr}(\Lambda UBU^{\top})|\leq \sum_j\lambda_j|(UBU^{\top})_{jj}|\leq \sum_j\lambda_j\lVert UBU^{\top}\rVert\leq {\rm tr}(A)\lVert B\rVert. \nonumber 
\end{eqnarray}
}
\end{discuss}

{\colord

\begin{lemma}\label{tr-est-mod}
Let $l\in\mathbb{N}$, $A^j$ and $B^j$ be real-valued matrices and $\{\lambda^j_k\}_k$ be eigenvalues of $A^j$ for $1\leq j\leq l$. 
Assume that $A^j$ is symmetric and all the elements of $B^j$ are nonnegative for $1\leq j\leq l$. Then
\begin{equation*}
\sum_{i_1,\cdots, i_{2l}}\prod_{j=1}^l\left(|A^j_{i_{2j-1},i_{2j}}|B^j_{i_{2j},i_{2j+1}}\right)\leq \prod_{j=1}^l\bigg(\lVert B^j \rVert \sum_k|\lambda^j_k|\bigg),
\end{equation*}
where $i_{2l+1}=i_1$.
\end{lemma}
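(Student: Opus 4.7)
The plan is to interpret the left-hand side as a trace. Writing $|A^j|$ for the entrywise absolute value of $A^j$, the cyclic indexing (with $i_{2l+1}=i_1$) gives
\[
\sum_{i_1,\cdots,i_{2l}}\prod_{j=1}^{l}\left(|A^{j}_{i_{2j-1},i_{2j}}|B^{j}_{i_{2j},i_{2j+1}}\right)
=\operatorname{tr}\bigl(|A^{1}|B^{1}|A^{2}|B^{2}\cdots |A^{l}|B^{l}\bigr).
\]
The assumption that the entries of $B^j$ are nonnegative will be used exactly when passing from $A^j$ to $|A^j|$ in a subsequent entrywise inequality, so this reformulation is safe.

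Next I would diagonalize each symmetric $A^j$ as $A^j=\sum_{k}\lambda^{j}_{k}u^{j}_{k}(u^{j}_{k})^{\top}$ with $\{u^{j}_{k}\}_{k}$ an orthonormal basis of eigenvectors, and apply the triangle inequality entrywise to obtain
\[
|A^{j}_{rs}|\le \sum_{k}|\lambda^{j}_{k}|\,|u^{j}_{k}|_{r}\,|u^{j}_{k}|_{s},
\]
where $|u^{j}_{k}|$ denotes the coordinate-wise absolute value of $u^{j}_{k}$. Equivalently, $|A^{j}|\le \sum_{k}|\lambda^{j}_{k}|\,|u^{j}_{k}|(|u^{j}_{k}|)^{\top}$ entrywise. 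Because every entry of each $B^{j}$ is nonnegative, this entrywise inequality is preserved after multiplication by the $B^j$'s and summation, so
\[
\operatorname{tr}\bigl(|A^{1}|B^{1}\cdots |A^{l}|B^{l}\bigr)
\le \sum_{k_1,\dots,k_l}\Bigl(\prod_{j=1}^{l}|\lambda^{j}_{k_j}|\Bigr)
\operatorname{tr}\bigl(|u^{1}_{k_1}|(|u^{1}_{k_1}|)^{\top}B^{1}\cdots |u^{l}_{k_l}|(|u^{l}_{k_l}|)^{\top}B^{l}\bigr).
\]

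The trace on the right collapses by the cyclic property and the identity $\operatorname{tr}(xy^{\top}M)=y^{\top}Mx$ into the product
\[
\prod_{j=1}^{l}\bigl((|u^{j}_{k_j}|)^{\top}B^{j}|u^{j+1}_{k_{j+1}}|\bigr),
\qquad u^{l+1}_{k_{l+1}}:=u^{1}_{k_1}.
\]
Each factor is bounded by $\|B^{j}\|\,\||u^{j}_{k_j}|\|\,\||u^{j+1}_{k_{j+1}}|\|=\|B^{j}\|$, since taking absolute values coordinate-wise preserves the Euclidean norm and $u^{j}_{k}$ is a unit vector. Substituting and factoring the sum over $(k_1,\dots,k_l)$ produces $\prod_{j=1}^{l}(\|B^{j}\|\sum_{k}|\lambda^{j}_{k}|)$, which is the desired bound.

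The main (minor) subtlety is ensuring that the entrywise inequality $|A^{j}|\le \sum_k|\lambda^{j}_{k}||u^{j}_{k}|(|u^{j}_{k}|)^{\top}$ survives the subsequent trace computation; this is where nonnegativity of the entries of $B^{j}$ is essential, because it guarantees that summing entrywise inequalities against $B^j$'s preserves the direction of the inequality. Everything else is spectral bookkeeping plus a cyclic trace identity.
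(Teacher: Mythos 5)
Your proof is correct and follows essentially the same route as the paper's: spectral decomposition of each $A^j$, the entrywise triangle inequality $|A^j_{rs}|\le\sum_k|\lambda^j_k||U^j_{k,r}||U^j_{k,s}|$ pushed through the sum using the nonnegativity of the entries of $B^j$, and then a bound of each resulting bilinear form by $\lVert B^j\rVert$ times the (unit) norms of the absolute-valued eigenvectors. The paper phrases this with explicit index sums rather than traces of rank-one outer products, but the argument is identical.
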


\begin{proof}
Let $U^j$ be an orthogonal matrix such that $A^j=(U^j)^{\top}{\rm diag}((\lambda^j_k)_k)U^j$. Then
\begin{eqnarray}
\sum_{i_1,\cdots, i_{2l}}\prod_{j=1}^l\left(|A^j_{i_{2j-1},i_{2j}}|B^j_{i_{2j},i_{2j+1}}\right)
&\leq &\sum_{k_1,\cdots,k_l}\sum_{i_1,\cdots, i_{2l}}\prod_{j=1}^l\left(|\lambda^j_{k_j}||U^j_{k_j,i_{2j-1}}||U^j_{k_j,i_{2j}}|B^j_{i_{2j},i_{2j+1}}\right) \nonumber \\
&\leq &\sum_{k_1,\cdots,k_l}\prod_{j=1}^l\bigg\{|\lambda^j_{k_j}|\lVert B^j\rVert \sum_i(U^j_{k_j,i})^2\bigg\}
=\prod_{j=1}^l\bigg(\lVert B^j \rVert \sum_k|\lambda^j_k|\bigg). \nonumber
\end{eqnarray}
\end{proof}

\begin{lemma}\label{log-det-exp}
Let $A$ be a symmetric matrix with $\lVert A\rVert<1$. Then $\log\det (\mathcal{E}+A)=\sum_{p=1}^{\infty}(-1)^{p-1}p^{-1}{\rm tr}(A^p)$.
\end{lemma}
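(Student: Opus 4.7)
The plan is to reduce the matrix identity to the scalar Taylor series $\log(1+x)=\sum_{p\geq 1}(-1)^{p-1}p^{-1}x^p$ by diagonalizing $A$. First I would use the spectral theorem: since $A$ is symmetric (and real), there exist an orthogonal matrix $U$ and real eigenvalues $\lambda_1,\ldots,\lambda_n$ of $A$ such that $A=U^\top \mathrm{diag}(\lambda_1,\ldots,\lambda_n)U$. The assumption $\lVert A\rVert<1$ gives $|\lambda_i|<1$ for every $i$, which will be crucial for convergence.

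Next I would compute both sides after diagonalization. On the one hand, $\mathcal{E}+A=U^\top \mathrm{diag}(1+\lambda_1,\ldots,1+\lambda_n)U$, so $\det(\mathcal{E}+A)=\prod_i(1+\lambda_i)$ and hence
\begin{equation*}
\log\det(\mathcal{E}+A)=\sum_{i=1}^n \log(1+\lambda_i).
\end{equation*}
On the other hand, $A^p=U^\top \mathrm{diag}(\lambda_1^p,\ldots,\lambda_n^p)U$, so ${\rm tr}(A^p)=\sum_{i=1}^n \lambda_i^p$.

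Finally I would invoke the scalar expansion $\log(1+\lambda_i)=\sum_{p=1}^\infty(-1)^{p-1}p^{-1}\lambda_i^p$, valid because $|\lambda_i|<1$, and exchange the two finite/infinite summations, which is justified by absolute convergence ($\sum_{p}|\lambda_i|^p/p<\infty$ for each $i$ and the outer sum in $i$ is finite). This yields
\begin{equation*}
\log\det(\mathcal{E}+A)=\sum_{i=1}^n\sum_{p=1}^\infty \frac{(-1)^{p-1}}{p}\lambda_i^p=\sum_{p=1}^\infty \frac{(-1)^{p-1}}{p}\sum_{i=1}^n \lambda_i^p=\sum_{p=1}^\infty \frac{(-1)^{p-1}}{p}{\rm tr}(A^p),
\end{equation*}
as required. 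There is no real obstacle here; the only point worth care is verifying that $1+\lambda_i>0$ (so $\log$ is defined and $\det(\mathcal{E}+A)>0$), which is immediate from $|\lambda_i|<1$, and the interchange of sums, which is immediate from absolute convergence.
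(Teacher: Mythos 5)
Your proof is correct and takes essentially the same approach as the paper: diagonalize $A$ via the spectral theorem, reduce $\log\det(\mathcal{E}+A)$ and ${\rm tr}(A^p)$ to sums over eigenvalues, and apply the scalar Taylor series for $\log(1+\lambda_i)$ together with the bound $|\lambda_i|\le\lVert A\rVert<1$. Your write-up is simply a bit more explicit about the orthogonal diagonalization and the interchange of sums.
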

\begin{proof}
Let $\{\lambda_j\}_{j=1}^k$ be eigenvalues of $A$. Then $\sup_j|\lambda_j|=\lVert A\rVert<1$, and hence
\begin{equation*}
\log\det(\mathcal{E}+A)=\sum_j\log(1+\lambda_j)=\sum_j\sum_{p=1}^{\infty}(-1)^{p-1}p^{-1}\lambda_j^p=\sum_{p=1}^{\infty}(-1)^{p-1}p^{-1}{\rm tr}(A^p).
\end{equation*}
\end{proof}
}

\begin{lemma}\label{inverse-norm-est}
Let $A$ and $B$ be symmetric, positive definite matrices. Assume that $v^{\top}Av\geq v^{\top}Bv$ for any vector $v$.
Then $\lVert A^{-1}\rVert\leq \lVert B^{-1}\rVert$ and $\lVert A^{-1/2}\rVert\leq \lVert B^{-1/2}\rVert$.
\end{lemma}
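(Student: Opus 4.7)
The plan is to reduce both claims to the single Löwner-order fact that $A \succeq B \succ 0$ implies $B^{-1} \succeq A^{-1} \succ 0$, and then read off the operator norms spectrally. Throughout, the hypothesis $v^{\top}Av \ge v^{\top}Bv$ is exactly the Löwner order $A \succeq B$, and since both matrices are symmetric positive definite, all inverses and square roots are well-defined.

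First I would establish $A^{-1} \preceq B^{-1}$. For this, set $C := A^{-1/2} B A^{-1/2}$, which is symmetric and positive definite. For any vector $w$, writing $v = A^{-1/2} w$ gives
\begin{equation*}
w^{\top} C w = v^{\top} B v \le v^{\top} A v = w^{\top} w,
\end{equation*}
so every eigenvalue of $C$ lies in $(0, 1]$. Consequently every eigenvalue of $C^{-1} = A^{1/2} B^{-1} A^{1/2}$ is at least $1$, i.e.\ $A^{1/2} B^{-1} A^{1/2} \succeq \mathcal{E}$. Conjugating by $A^{-1/2}$ yields $B^{-1} \succeq A^{-1}$.

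Second, I would convert this to the norm inequality. Since $A^{-1}$ and $B^{-1}$ are symmetric positive definite, their operator norms equal the largest eigenvalue, which in turn equals the Rayleigh quotient supremum:
\begin{equation*}
\lVert A^{-1} \rVert = \sup_{v\ne 0} \frac{v^{\top} A^{-1} v}{v^{\top} v} \le \sup_{v\ne 0} \frac{v^{\top} B^{-1} v}{v^{\top} v} = \lVert B^{-1} \rVert,
\end{equation*}
which is the first claim. For the square-root claim, I would use that for a symmetric positive definite matrix $M$, $\lVert M^{-1/2} \rVert^2 = \lVert M^{-1} \rVert$ by the spectral theorem (the spectrum of $M^{-1/2}$ is the set of positive square roots of the spectrum of $M^{-1}$). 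Applying this to both $A$ and $B$ and combining with the first claim gives $\lVert A^{-1/2} \rVert^2 \le \lVert B^{-1/2} \rVert^2$, and taking square roots finishes the proof.

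There is no real obstacle here; the only subtle point is the passage from $A \succeq B$ to $B^{-1} \succeq A^{-1}$, which is the classical monotonicity of inversion on the cone of positive definite matrices and is handled by the similarity argument above. Everything else is a direct application of the spectral theorem.
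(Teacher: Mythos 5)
Your proof is correct, but it takes a genuinely different route from the paper's. You establish the Löwner anti-monotonicity of inversion ($A \succeq B \Rightarrow B^{-1} \succeq A^{-1}$) via the similarity argument $C = A^{-1/2}BA^{-1/2} \preceq \mathcal{E}$, and then read off the norms as Rayleigh-quotient suprema. The paper avoids inverting anything: it observes that $\lVert A^{-1}\rVert^{-1} = \lambda_{\min}(A)$, and that $A \succeq B$ immediately gives $\lambda_{\min}(A) = \inf_{\lvert v\rvert=1} v^{\top}Av \ge \inf_{\lvert v\rvert=1} v^{\top}Bv = \lambda_{\min}(B)$, with the square-root claim handled by the same comparison applied to $(\lambda^A_j)^{1/2}$ and $(\lambda^B_j)^{1/2}$. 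The paper's argument is shorter and more elementary (a one-line variational comparison of smallest eigenvalues), while your argument proves the strictly stronger operator inequality $B^{-1} \succeq A^{-1}$ as a byproduct — more than the lemma requires, but a clean and reusable fact if one later needs it.
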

{\colord
\begin{proof}
Let $(\lambda^A_j)_j$ and $(\lambda^B_j)_j$ be eigenvalues of $A$ and $B$, respectively. Then for any unit vector $v$, 
there exists an orthogonal matrix $U$ such that
\begin{equation*}
\sum_j\lambda^A_jv_j^2\geq \sum_j\lambda^B_j(Uv)_j^2\geq \inf_j\lambda^B_j.
\end{equation*}
Therefore we obtain $\lVert A^{-1}\rVert^{-1}=\inf_j\lambda^A_j\geq \inf_j\lambda^B_j=\lVert B^{-1}\rVert^{-1}$
and $\lVert A^{-1/2}\rVert^{-1}=\inf_j(\lambda^A_j)^{1/2}\geq \inf_j(\lambda^B_j)^{1/2}=\lVert B^{-1/2}\rVert^{-1}$.
\end{proof}
}


\begin{lemma}\label{tr-lower-est}
Let $B$ be a symmetric, positive definite matrix and $A$ be a symmetric, nonnegative definite matrix. Then
${\rm tr}(AB)\geq {\rm tr}(A)\lVert B^{-1}\rVert^{-1}$.
\end{lemma}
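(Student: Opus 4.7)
The plan is to exploit the fact that $\lVert B^{-1}\rVert^{-1}$ equals the smallest eigenvalue $\lambda_{\min}(B)$ of $B$, because $B$ is symmetric and positive definite. Once this identification is in place, the statement reduces to the inequality ${\rm tr}(AB)\geq \lambda_{\min}(B)\,{\rm tr}(A)$, which is a standard fact about traces of products of symmetric nonnegative definite matrices.

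My first step would be to write $B = \lambda_{\min}(B)\,\mathcal{E} + C$, where $C := B - \lambda_{\min}(B)\,\mathcal{E}$. Then $C$ is symmetric and nonnegative definite, since all its eigenvalues are $\lambda_j(B) - \lambda_{\min}(B) \geq 0$. Substituting this decomposition gives
\begin{equation*}
{\rm tr}(AB) = \lambda_{\min}(B)\,{\rm tr}(A) + {\rm tr}(AC),
\end{equation*}
so the lemma is proved once I show ${\rm tr}(AC) \geq 0$. For this, I use that the trace of a product of two symmetric nonnegative definite matrices is nonnegative: writing $A = A^{1/2}A^{1/2}$ (the nonnegative square root exists since $A$ is symmetric and nonnegative definite), the cyclic property of trace gives ${\rm tr}(AC) = {\rm tr}(A^{1/2}CA^{1/2})$, and the matrix $A^{1/2}CA^{1/2}$ is symmetric and nonnegative definite, hence has nonnegative trace.

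There is no real obstacle here; the entire argument is elementary linear algebra. The only point worth being careful about is the identification $\lVert B^{-1}\rVert^{-1} = \lambda_{\min}(B)$, which follows immediately from the spectral decomposition of $B$ (the operator norm $\lVert B^{-1}\rVert$ equals the largest eigenvalue of $B^{-1}$, namely $1/\lambda_{\min}(B)$).
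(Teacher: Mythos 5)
Your proof is correct, but it takes a somewhat different route from the paper's. The paper diagonalizes $A$: picking an orthogonal $U$ with $UAU^{\top}={\rm diag}((\lambda^A_j)_j)$, it writes ${\rm tr}(AB)=\sum_j\lambda^A_j(UBU^{\top})_{jj}$ and then observes that each diagonal entry $(UBU^{\top})_{jj}\geq \lambda_{\min}(B)=\lVert B^{-1}\rVert^{-1}$ (since $(UBU^{\top})_{jj}=e_j^{\top}UBU^{\top}e_j$ is a Rayleigh quotient for $B$). Nonnegativity of the $\lambda^A_j$ then gives the bound directly. You instead decompose $B=\lambda_{\min}(B)\,\mathcal{E}+C$ with $C\succeq 0$ and reduce to the nonnegativity of ${\rm tr}(AC)$, which you establish via $A^{1/2}$ and cyclicity. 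Both arguments are elementary and rest on the spectral theorem; yours isolates a clean reusable lemma (trace of a product of two PSD matrices is nonnegative), while the paper's is arguably a line shorter by avoiding the square root of $A$. Either is perfectly acceptable.
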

\begin{proof}
Let $\{\lambda^A_j\}_j$ and $\{\lambda^B_j\}_j$ be eigenvalues of $A$ and $B$, respectively, and $U$ be an orthogonal matrix satisfying $UAU^{\top}={\rm diag}((\lambda^A_j)_j)$. Then since
$(UBU^{\top})_{jj}\geq \inf_j\lambda^B_j=\lVert B^{-1}\rVert^{-1}$, we obtain
\begin{eqnarray}
{\rm tr}(AB)=\sum_j\lambda^A_j(UBU^{\top})_{jj}\geq \sum_j\lambda^A_j\lVert B^{-1}\rVert^{-1}= {\rm tr}(A)\lVert B^{-1}\rVert^{-1}. \nonumber
\end{eqnarray}
\end{proof}

\begin{lemma}\label{log-det-est}
Let $\eta>0$ and $A$ be a symmetric matrix. Assume that $\mathcal{E}+A$ is positive definite and $\lVert \mathcal{E}+A\rVert\leq \eta$.
Then ${\rm tr}(A)-\log\det(\mathcal{E}+A)\geq {\rm tr}(A^2)/(4\eta+4)$.
\end{lemma}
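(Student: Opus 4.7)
The plan is to diagonalise and reduce the matrix inequality to a one-variable inequality. Since $A$ is symmetric there is an orthonormal basis in which $A={\rm diag}(\lambda_1,\ldots,\lambda_k)$ with $\lambda_j\in\mathbb{R}$. Positive definiteness of $\mathcal{E}+A$ forces $\lambda_j>-1$, and the norm bound gives $1+\lambda_j=|1+\lambda_j|\leq\eta$, so $\lambda_j\in(-1,\eta-1]$. Because ${\rm tr}(A)=\sum_j\lambda_j$, ${\rm tr}(A^2)=\sum_j\lambda_j^2$, and $\log\det(\mathcal{E}+A)=\sum_j\log(1+\lambda_j)$, the claim reduces termwise to the scalar inequality
\begin{equation*}
g(\lambda):=\lambda-\log(1+\lambda)\geq\frac{\lambda^2}{4\eta+4}\qquad\text{for all }\lambda\in(-1,\eta-1].
\end{equation*}

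To prove this, I would start from the identity $g(\lambda)=\int_0^\lambda\frac{t}{1+t}\,dt$. On the range $\lambda\in[0,\eta-1]$ the bound $1/(1+t)\geq 1/(1+\lambda)$ for $t\in[0,\lambda]$ yields $g(\lambda)\geq\lambda^2/(2(1+\lambda))$; since $1+\lambda\leq\eta\leq 2\eta+2$ this gives $g(\lambda)\geq\lambda^2/(4\eta+4)$. On the range $\lambda\in(-1,0]$ I would change variables by $s=-t$ to write $g(\lambda)=\int_0^{-\lambda}\frac{s}{1-s}\,ds$ and then use $1-s\leq 1$ for $s\in[0,-\lambda]$ to obtain $g(\lambda)\geq\lambda^2/2$, which dominates $\lambda^2/(4\eta+4)$ whenever $\eta>0$. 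Summing over $j$ delivers the matrix inequality.

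The only delicate point is that the straightforward Taylor remainder $g''(\lambda)=(1+\lambda)^{-2}$ blows up as $\lambda\downarrow-1$, so a single uniform Taylor bound on $g''$ over $(-1,\eta-1]$ is unavailable; this is precisely why the negative and nonnegative regimes must be handled separately. Once the split is made, however, each case is an elementary monotonicity estimate, and the constant $4\eta+4=4(\eta+1)$ is designed with enough slack to absorb both regimes uniformly.
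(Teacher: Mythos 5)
Your proof is correct and follows essentially the same route as the paper: diagonalize $A$, reduce the matrix inequality to the scalar inequality $x-\log(1+x)\geq x^2/(4\eta+4)$ on the admissible eigenvalue range, and verify it by elementary calculus. The only cosmetic difference is that the paper establishes the scalar inequality by a single sign-of-derivative argument for $f(x)=x-x^2/(4\eta+4)-\log(1+x)$ (showing $f'<0$ on $(-1,0)$ and $f'>0$ on $(0,2\eta+1)$, so the minimum is $f(0)=0$), whereas you split into the regimes $\lambda\geq 0$ and $\lambda<0$ and bound the integral $\int_0^\lambda t/(1+t)\,dt$ separately; both are valid and equally elementary.
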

\begin{discuss}
{\colorr さらに以下の命題も成り立つ.	
Assume that $\mathcal{E}+A$ is positive definite and $\lVert (\mathcal{E}+A)^{-1}\rVert\leq \eta<\infty$.
Then there exists a constant $c_{\eta}^+>0$ which depends only on $\eta$ such that 
$0\leq {\rm tr}(A)-\log\det(\mathcal{E}+A)\leq c_{\eta}^+{\rm tr}(A^2)$.
}
\end{discuss}
\begin{proof}
We easily obtain the results by using the fact that
$\log\det(\mathcal{E}+A)=\sum_k\log(1+\lambda_k)$ and that $x-x^2/(4\eta+4)\geq \log(1+x)$ for $-1<x\leq \eta +1$,
{\colord where $(\lambda_j)_j$ are eigenvalues of $A$.}
\begin{discuss}
{\colorr $f(x)=x-x^2/(4\eta+4)-\log(1+x)$とおくと, $f'(x)=1-x/(2\eta+2)-1/(1+x)=((2\eta+1)x-x^2)/(1+x)/(2\eta+2)$より, 
$-1<x\leq \eta+1$での$f(x)$の最小値は$f(0)=0$.}
\end{discuss}
\end{proof}

\begin{lemma}\label{ZSZ-est-lemma}
Let $A$ be a symmetric matrix, $B$ a matrix of suitable size and $(\lambda_j)_j$ eigenvalues of $B^{\top}AB$. Then\\
{\colord
1. $|(B^{\top}AB)_{ii}|\leq \lVert A\rVert (B^{\top}B)_{ii}$ for any $i$.\\
2. $\sum_j|\lambda_j|\leq \lVert A\rVert {\rm tr}(B^{\top}B)$.
}
\end{lemma}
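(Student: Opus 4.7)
The plan is to prove both parts by elementary spectral arguments: part 1 amounts to testing $B^{\top}AB$ against a standard basis vector, and part 2 amounts to testing it against an eigenbasis of $B^{\top}AB$ itself (not of $A$).

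For part 1, I would simply observe that $(B^{\top}AB)_{ii} = e_i^{\top}B^{\top}AB e_i = (Be_i)^{\top}A(Be_i)$, where $e_i$ is the $i$-th coordinate vector. Since $A$ is symmetric, the Rayleigh quotient bound gives $|x^{\top}Ax|\le \lVert A\rVert\, x^{\top}x$ for any $x$, and applying this with $x=Be_i$ yields $|(B^{\top}AB)_{ii}|\le \lVert A\rVert (Be_i)^{\top}(Be_i) = \lVert A\rVert (B^{\top}B)_{ii}$.

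For part 2, the key observation is that $B^{\top}AB$ is itself symmetric, so it admits a real orthonormal eigenbasis $\{v_j\}$ with $B^{\top}ABv_j=\lambda_j v_j$. Then $\lambda_j = v_j^{\top}B^{\top}ABv_j = (Bv_j)^{\top}A(Bv_j)$, so by the same Rayleigh bound $|\lambda_j|\le \lVert A\rVert\, \lVert Bv_j\rVert^2 = \lVert A\rVert\, v_j^{\top}B^{\top}Bv_j$. Summing over $j$ and using the fact that the trace of a symmetric operator is the sum of its diagonal entries in any orthonormal basis, I get $\sum_j v_j^{\top}B^{\top}Bv_j = \operatorname{tr}(B^{\top}B)$, which gives the desired inequality $\sum_j|\lambda_j|\le \lVert A\rVert \operatorname{tr}(B^{\top}B)$.

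There is essentially no obstacle here; the only subtlety worth flagging is that one must diagonalize $B^{\top}AB$ rather than $A$, since diagonalizing $A$ first would force one to control $\sum_j |\mu_j| (B^{\top}B)_{jj}$ with the eigenvalues of $A$, which does not factor cleanly. Using the eigenbasis of $B^{\top}AB$ lets the Rayleigh bound and the basis-invariance of the trace do all the work in one line each.
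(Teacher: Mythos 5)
Your proof is correct and follows essentially the same route as the paper: part 1 diagonalizes $A$ (which is precisely the content of the Rayleigh bound you invoke), and part 2 passes to an orthonormal eigenbasis of the symmetric matrix $B^{\top}AB$, then applies the part-1 estimate entrywise and sums. The paper packages the step in part 2 as an application of part 1 with $B$ replaced by $BV$ (with $V$ the orthogonal diagonalizer), but the computation is identical to yours.
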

\begin{proof}
{\it 1.} Let $U$ be an orthogonal matrix and let $\{\lambda_j\}_j$ be eigenvalues of $A$ such that $U^{\top}AU={\rm diag}((\lambda_j)_j)$. Then we obtain
\begin{equation*}
|(B^{\top}AB)_{ii}|=|\sum_j\lambda_j((U^{\top}B)_{ji})^2|\leq \lVert A\rVert \sum_j((U^{\top}B)_{ji})^2 = \lVert A\rVert (B^{\top}B)_{ii}.
\end{equation*}
{\colord
{\it 2.} There exists an orthogonal matrix $V$ such that $\lambda_j=(V^{\top}B^{\top}ABV)_{jj}$ for any $j$. Then
\begin{equation*}
\sum_j|\lambda_j|=\sum_j|(V^{\top}B^{\top}ABV)_{jj}|\leq \lVert A\rVert \sum_j(V^{\top}B^{\top}BV)_{jj}=\lVert A\rVert {\rm tr}(B^{\top}B)
\end{equation*}
by {\it 1}.
}
\end{proof}

\subsection{Proof of Lemma~\ref{ZSZ-est}}
Let ${\bf A}_m$ be a $(k^1_m+k^2_m)\times (k^1_m+k^2_m)$ matrix with elements $({\bf A}_m)_{ij}=1_{i\geq j}1_{\{i\leq k^1_m \ {\rm or} \ j>k^1_m\} }$,
{\colord ${\bf 1}$ be a matrix with all elements equal to $1$}, $M_{m,\ast}=M_m(v_{\ast})$ and $\hat{{\bf S}}=({\bf A}_m^T)^{-1}M_{m,\ast}^{-2}{\bf A}_m^{-1}$.

\begin{lemma}\label{ZSZ-est-aux-lemma}
Let $1\leq m\leq \ell_n$, $q,q'\in\mathbb{N}$ such that $q'\geq 2q$, $A_m:\{1,\cdots ,k^1_m+k^2_m\}^{q'}\to\{0,1\}$ be a random map and $\iota:\{1,\cdots,q'\}\to \{1,\cdots, 2q\}$ be an injection. 
Assume that there exists a sequence $\{\mathcal{K}_n\}_n$ of positive numbers such that {\colord $\sum_{j_1,\cdots, j_{q'}=1}^{k^1_m+k^2_m}A_m(j_1,\cdots, j_{q'})=\bar{R}_n( \mathcal{K}_n)$}.
Then 
\begin{equation*}
\sum_{i_1,\cdots,i_{2q}}\prod_{j=1}^{q}\hat{{\bf S}}_{i_{2j-1},i_{2j}}A_m(i_{\iota(1)},\cdots, i_{\iota(q')})=\bar{R}_n(k_n^{q+q'-[q'/2]\cdot 2}\mathcal{K}_n).
\end{equation*}

\end{lemma}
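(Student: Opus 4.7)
The plan is to analyze the sum pair-by-pair, exploiting that $\hat{\bf S} = ({\bf A}_m M_{m,\ast}^2 {\bf A}_m^\top)^{-1}$ is block-diagonal (with two blocks of sizes $k^1_m, k^2_m$ arising from the block structures of ${\bf A}_m$ and $M_{m,\ast}$), symmetric, and positive semidefinite, so that $|\hat{\bf S}_{i,i'}| \leq \hat{\bf S}_{i,i}^{1/2}\hat{\bf S}_{i',i'}^{1/2}$. Since the off-block entries of $\hat{\bf S}$ vanish, only configurations in which each pair $(i_{2l-1}, i_{2l})$ lies within a single block contribute.

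After relabelling so that $I := \{\iota(1),\ldots,\iota(q')\} = \{1,\ldots,q'\}$, I perform the inner summation over the free indices $I^c$. Each of the $q$ pairs $(2l-1, 2l)$ is either free-free (both in $I^c$), mixed (one in $I$, one in $I^c$), or constrained-constrained (both in $I$). Writing $(A_\kappa^{-1})_{\alpha\beta} = \delta_{\alpha\beta} - \delta_{\beta,\alpha-1}$ (so that ${\bf A}_m^{-1}$ acts as a first-difference operator), summation over a free index telescopes: a free-free pair in block $\kappa$ contributes $\sum_{i,i'}\hat{\bf S}_{i,i'} = v_{\kappa,\ast}^{-2}(M_{\kappa,m}^{-2})_{1,1}$, a mixed pair with fixed constrained index $j_0$ contributes $v_{\kappa,\ast}^{-2}[(M_{\kappa,m}^{-2})_{1,j_0} - (M_{\kappa,m}^{-2})_{1,j_0+1}]$, and a constrained-constrained pair contributes the single entry $\hat{\bf S}_{j_0,j_0'}$, which is bounded by $\max_i \hat{\bf S}_{i,i}$ via positive semidefiniteness.

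From the explicit Green's function $(M_{\kappa,m}^{-1})_{\alpha\beta} = \min(\alpha,\beta)(n-\max(\alpha,\beta))/n$ with $n = k^\kappa_m + 1$, a direct computation yields $(M_{\kappa,m}^{-2})_{1,j} = j(n-j)(2n-j)/(6n)$, from which each of the three pair contributions, as well as $\max_i \hat{\bf S}_{i,i}$, is bounded in absolute value by $C\bar{k}_n$ uniformly in the fixed constrained indices (using the eigenvalue expansion of $M_{\kappa,m}$ to handle the diagonal of $\hat{\bf S}$ in the interior, and the closed form to handle corner entries). Multiplying the $q$ factors gives the inner sum bounded by a constant times $\bar{k}_n^q$; summing over the $\bar{R}_n(\mathcal{K}_n)$ configurations with $A_m = 1$ yields $\bar{R}_n(k_n^q \mathcal{K}_n)$. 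Since $q \leq q + q' - 2[q'/2]$ for every $q' \in \mathbb{Z}_+$, this bound is in particular $\bar{R}_n(k_n^{q+q'-2[q'/2]}\mathcal{K}_n)$, as claimed.

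The main obstacle lies in the uniform $O(\bar{k}_n)$ control on the row sums and corner entries of $M_{\kappa,m}^{-p}$ and on $\max_i \hat{\bf S}_{i,i}$: although the closed-form Green's function makes each deterministic bound a straightforward calculation, one must verify it uniformly in the constrained index $j_0$ and in the random block sizes $k^\kappa_m$, and then absorb the resulting factors of $\bar{k}_n/k_n$ into the $\bar{R}_n$ framework (which allows polynomial corrections in the ratios $r_n/b_n$, $\bar{k}_n/k_n$, and $k_n/\underbar{k}_n$).
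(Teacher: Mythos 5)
Your proposal is correct, and its combinatorial skeleton is the same as the paper's: classify the $q$ pairs induced by the factors $\hat{\bf S}_{i_{2j-1},i_{2j}}$ according to how many of their indices lie in the image of $\iota$, sum out the free indices first, bound the resulting kernel uniformly in the constrained indices, and finish with $\sum_{j}A_m(j)=\bar R_n(\mathcal{K}_n)$. The execution differs in one genuine respect. The paper never touches the Green's function of $M_{j,m}$ explicitly; it works with the identity $({\bf A}_mM_{m,\ast}{\bf A}_m^{\top})^{-1}{\bf 1}={\bf K}_m{\bf 1}$ and the uniform row-sum bound $\sum_j|(({\bf A}_mM_{m,\ast}{\bf A}_m^{\top})^{-1})_{ij}|\leq 2$ to get ${\rm tr}(\hat{\bf S}{\bf 1})=\bar R_n(k_n)$ and $|((\hat{\bf S}{\bf 1})^{\alpha}\hat{\bf S})_{ij}|=\bar R_n(k_n^{\alpha+1})$, and it absorbs the mixed (one free, one constrained) pairs two at a time into factors $(\hat{\bf S}{\bf 1}\hat{\bf S})_{j,j'}=\bar R_n(k_n^2)$; when $q'$ is odd the leftover mixed pair is handled with the cruder absolute-value bound $\sum_i|\hat{\bf S}_{i,j_0}|=\bar R_n(\bar k_n^2)$, which is exactly the origin of the extra factor $k_n^{\,q'-2[q'/2]}$ in the statement. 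You instead treat each mixed pair individually via the exact signed telescoping $\sum_i\hat{\bf S}_{i,j_0}=v_{\kappa,\ast}^{-2}\{(M_{\kappa,m}^{-2})_{1,j_0}-(M_{\kappa,m}^{-2})_{1,j_0+1}\}=O(\bar k_n)$, which is legitimate because no other factor depends on the summed free index; this yields the slightly sharper bound $\bar R_n(k_n^{q}\mathcal{K}_n)$, which of course implies the stated one. Your closed forms for $(M_{\kappa,m}^{-1})_{\alpha\beta}$ and $(M_{\kappa,m}^{-2})_{1,j}$ check out, and the required $O(\bar k_n)$ bounds are indeed uniform in the constrained index and in the block sizes, so the remaining verification you flag is routine; the paper's route via row sums of $({\bf A}_mM_{m,\ast}{\bf A}_m^{\top})^{-1}$ buys the same uniformity with less computation, at the cost of the weaker exponent.
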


\begin{discuss}
{\colorr
$b$: constの時は$A_m$の和が$\sigma_{\ast}$に関して一様に抑えられていれば$\sigma_{\ast}$に関する一様評価もでる.
}
\end{discuss}

\begin{proof}
Let ${\bf K}_m={\rm diag}(((k^1_m+1)v_{1,\ast})^{-1}\mathcal{E}, ((k^2_m+1)v_{2,\ast})^{-1}\mathcal{E})$.
Then since $({\bf A}_mM_{m,\ast}{\bf A}_m^{\top})^{-1}{\bf 1}={\bf K}_m{\bf 1}$ and \\
$(\sum_j|(({\bf A}_mM_{m,\ast}{\bf A}_m^{\top})^{-1})_{i,j}|)\vee (\sum_j|(({\bf A}_mM_{m,\ast}{\bf A}_m^{\top})^{-1})_{j,i}|)\leq 2$ for any $i$, 
we obtain 
\begin{equation*}
{\rm tr}(\hat{{\bf S}}{\bf 1})={\rm tr}(({\bf A}_mM_{m,\ast}{\bf A}_m^{\top})^{-1}{\bf A}_m{\bf A}_m^{\top}({\bf A}_mM_{m,\ast}{\bf A}_m^{\top})^{-1}{\bf 1})={\rm tr}({\bf A}_m{\bf A}_m^{\top}{\bf K}_m{\bf 1}{\bf K}_m)=\bar{R}_n(k_n),
\end{equation*}
and 
\begin{eqnarray}
|(\hat{{\bf S}}{\bf 1})^l\hat{{\bf S}})_{i,j}|
&\leq & \bigg|\sum_{j_1,\cdots, j_{2l+2}}(({\bf A}_mM_{m,\ast}{\bf A}_m^{\top})^{-1})_{i,j_1}\bigg(\prod_{1\leq k\leq l}({\bf A}_m{\bf A}_m^{\top})_{j_{2k-1},j_{2k}}({\bf K}_m{\bf 1}{\bf K}_m)_{j_{2k},j_{2k+1}}\bigg) \nonumber \\
&&\times ({\bf A}_m{\bf A}_m^{\top})_{j_{2l+1},j_{2l+2}}(({\bf A}_mM_{m,\ast}{\bf A}_m^{\top})^{-1})_{j_{2l+2},j}\bigg| \nonumber \\
&=&\bar{R}_n(\underbar{k}_n^{-2l}\bar{k}_n^{3l+1})=\bar{R}_n(k_n^{l+1}) \nonumber
\end{eqnarray}
for $l=0,1$.

If both $i_{2j-1}$ and $i_{2j}$ are outside the image of $\iota$, we have 
\begin{equation*}
\sum_{i_{2j-1},i_{2j}}\hat{{\bf S}}_{i_{2j-1},i_{2j}}A_m(i_{\iota (1)},\cdots, i_{\iota (q')})={\rm tr}(\hat{{\bf S}}{\bf 1})A_m(i_{\iota (1)},\cdots, i_{\iota (q')}).
\end{equation*}
Moreover, if both $i_{2j-1}$ and $i_{2k-1}$ are in the image of $\iota$ and neither $i_{2j}$ nor $i_{2k}$ is in it, then we have
\begin{equation*}
\sum_{i_{2j},i_{2k}}\hat{{\bf S}}_{i_{2j-1},i_{2j}}\hat{{\bf S}}_{i_{2k-1},i_{2k}}A_m(i_{\iota (1)},\cdots, i_{\iota (q')})=(\hat{{\bf S}}{\bf 1}\hat{{\bf S}})_{i_{2j-1},i_{2k-1}}A_m(i_{\iota (1)},\cdots, i_{\iota (q')}).
\end{equation*}
Therefore there exist $\alpha_k\in \{0,1\}$ for $1\leq k\leq [q'/2]$, $0\leq s\leq [(2q-q')/2]$ and a bijection $\iota':\{1,\cdots,q'\}\to \{1,\cdots, q'\}$ 
such that $\sum_{k=1}^{[q'/2]}\alpha_k+[q'/2]+s=q-(q'-[q'/2]\cdot 2)$ and
\begin{eqnarray}
&&\sum_{i_1,\cdots,i_{2q}}\prod_{j=1}^{q}\hat{{\bf S}}_{i_{2j-1},i_{2j}}A_m(i_{\iota(1)},\cdots, i_{\iota(q')}) \nonumber \\
&\leq &\bar{R}_n(\bar{k}_n^{2(q'-[q'/2]\cdot 2)})\sum_{j_1,\cdots, j_{q'}}\prod_{k=1}^{[q'/2]}|((\hat{{\bf S}}{\bf 1})^{\alpha_k}\hat{{\bf S}})_{j_{\iota'(2k-1)},j_{\iota'(2k)}}|{\rm tr}(\hat{{\bf S}}{\bf 1})^sA_m(j_1,\cdots, j_{q'}) \nonumber \\
&=&\bar{R}_n(k_n^{2(q'-[q'/2]\cdot 2)}\cdot k_n^{q-(q'-[q'/2]\cdot 2)}\mathcal{K}_n)=\bar{R}_n(k_n^{q+q'-[q'/2]\cdot 2}\mathcal{K}_n). \nonumber
\end{eqnarray}
\begin{discuss}
{\colorr $\bar{R}_n(a_n)\times \bar{R}_n(b_n)=\bar{R}_n(a_nb_n)$はシュワルツからでる.
$i_{2j-1},i_{2j}$が両方${\rm Im}(\iota)$に入らないものの数を$s$, 片方だけ入るものはペアにして$\alpha_k=1$とし, 両方入るものは$\alpha_k=0$とする.
$q'$が奇数の時は$\sum_i|\hat{S}_{ij}|$がでてくるので$\bar{R}_n(k_n^2)$で評価する.
}
\end{discuss}
\end{proof}

\noindent
{\it Proof of Lemma~\ref{ZSZ-est}.}

Let $\{\tilde{\epsilon}_{i,m}\}_{1\leq i\leq k^1_m+k^2_m}$ and $\{\dot{\epsilon}_{i,m}\}_{1\leq i\leq k^1_m+k^2_m}$ be sequences of random variables defined by 
$\tilde{\epsilon}_{i,m}=\epsilon^{n,1}_{i+K^1_{m-1}+1}$ and $\dot{\epsilon}_{i,m}=\epsilon^{n,1}_{K^1_{m-1}+1}$ for $i\leq k^1_m$
and $\tilde{\epsilon}_{i,m}=\epsilon^{n,2}_{i-k^1_m+K^2_{m-1}+1}$ and $\dot{\epsilon}_{i,m}=\epsilon^{n,2}_{K^2_{m-1}+1}$ for $i>k^1_m$.
Moreover, let $\tilde{Z}_{1,m}=(((\tilde{b}_{m,\ast}^1\cdot (W_{S^{n,1}_i}-W_{S^{n,1}_{i-1}}))_{i=K^1_{m-1}+2}^{K^1_m})^{\top},((\tilde{b}_{m,\ast}^2\cdot (W_{S^{n,2}_j}-W_{S^{n,2}_{j-1}}))_{j=K^2_{m-1}+2}^{K^2_m})^{\top})^{\top}$,
$\tilde{Z}_{2,m}=(((\epsilon^{n,1}_i-\epsilon^{n,1}_{i-1})_{i=K^1_{m-1}+2}^{K^1_m})^{\top},((\epsilon^{n,2}_j-\epsilon^{n,2}_{j-1})_{j=K^2_{m-1}+2}^{K^2_m})^{\top})^{\top}$ and
$\tilde{S}_{1,m,\ast}=\tilde{S}_{m,\ast}-M_{m,\ast}$.

Let $\tilde{U}_{1,m,\ast}$ be an orthogonal matrix and let $\Lambda_{1,m,\ast}$ be a diagonal matrix satisfying $\tilde{U}_{1,m,\ast}\tilde{S}_{1,m,\ast}\tilde{U}_{1,m,\ast}^{\top}=\Lambda_{1,m,\ast}$. 
Then since $\tilde{Z}_{1,m}|_{\mathcal{G}_{s_{m-1}}}\sim N(0,\tilde{S}_{1,m,\ast})$, we have $\tilde{U}_{1,m,\ast}\tilde{Z}_{1,m}|_{\mathcal{G}_{s_{m-1}}}\sim N(0,\Lambda_{1,m,\ast})$.
Therefore, for any $q\in\mathbb{N}$ and $1\leq j_1,\cdots, j_{2q}\leq k^1_m+k^2_m$, we obtain
{\colord
\begin{equation}\label{normal-prod-eq}
E_m[\prod_{k=1}^{2q}(\tilde{U}_{1,m,\ast}\tilde{Z}_{1,m})_{j_k}]=\sum_{(l_{2k-1},l_{2k})_{k=1}^q}\prod_{k=1}^{q}(\Lambda_{1,m,\ast})_{l_{2k-1},l_{2k}},
\end{equation}
}where the summations on the right-hand side of both equations are over all $q$-pairs $(l_{2k-1},l_{2k})_{k=1}^q$ of variables $j_1,\cdots, j_{2q}$.

\begin{discuss}
{\colorr
$(j_1,\cdots, j_{2p})$の中に要素$\alpha_{k'}$が$\beta_{k'}$個あるとすると, 左辺$=\prod_{k'}\lambda_{\alpha_{k'}}^{\beta_{k'}/2}(\beta_{k'}-1)!!$ if all $\beta_{k'}$ are even, and 左辺$=0$ otherwise.
右辺は$\prod_{k'}\lambda_{\alpha_{k'}}^{\beta_{k'}/2}\times $ ($\beta_{k'}$個の要素をペアに分ける場合の数). ペアわけの場合の数は${}_{\beta_{k'}}C_2\cdot {}_{\beta_{k'}-2}C_2\cdots {}_2C_2/(\beta_{k'}/2)!=(\beta_{k'}-1)!!$
}
\end{discuss}

\noindent
1. Let ${\bf S}''=({\bf A}_m^{\top})^{-1}{\bf S}' {\bf A}_m^{-1}$, $\phi(A,B)_{i_1,\cdots, i_4}=(A_{i_1,i_2}B_{i_3,i_4}+A_{i_1,i_3}B_{i_2,i_4}+A_{i_1,i_4}B_{i_2,i_3}+A_{i_2,i_3}B_{i_1,i_4}+A_{i_2,i_4}B_{i_1,i_3}+A_{i_3,i_4}B_{i_1,i_2})/2$
for square matrices $A$ and $B$ of the same size, and $\delta_{i_1,\cdots, i_q}$ be a $\{0,1\}$-valued function that is equal to $1$ if and only if $i_1=\cdots =i_q$. 
Then we have 
\begin{eqnarray}
&&E_m[(\tilde{Z}_{2,m}^{\top}{\bf S}'\tilde{Z}_{2,m})^2] \nonumber \\
&=&E_m[(({\bf A}_m\tilde{Z}_{2,m})^{\top}{\bf S}'' ({\bf A}_m\tilde{Z}_{2,m}))^2] 
=\sum_{i_1,\cdots, i_4}{\bf S}''_{i_1,i_2}{\bf S}''_{i_3,i_4}E_m[\prod_{j=1}^4(\tilde{\epsilon}_{i_j,m}-\dot{\epsilon}_{i_j,m})] \nonumber \\
&=&\sum_{i_1,\cdots, i_4}{\bf S}''_{i_1,i_2}{\bf S}''_{i_3,i_4}\bigg\{\phi({\bf M}_1, {\bf M}_1)_{i_1,\cdots,i_4} +(E[(\dot{\epsilon}_{i_1,m})^4]-3E[(\dot{\epsilon}_{i_1,m})^2]^2)1_{\{\max_{1\leq j\leq 4}i_j\leq k^1_m \ {\rm or} \ \min_{1\leq j\leq 4}i_j>k^1_m\}} \nonumber \\
&&\quad + 2\phi({\bf M}_1,{\bf M}_2)_{i_1,\cdots,i_4}+\phi({\bf M}_2,{\bf M}_2)_{i_1,\cdots,i_4}+(E[(\tilde{\epsilon}_{i_1,m})^4]-3E[(\tilde{\epsilon}_{i_1,m})^2]^2)\delta_{i_1,i_2,i_3,i_4}\bigg\}, \nonumber 
\end{eqnarray}
where ${\bf M}_1={\rm diag}(v_{1,\ast}{\bf 1},v_{2,\ast}{\bf 1})$ and ${\bf M}_2={\rm diag}(v_{1,\ast}\mathcal{E},v_{2,\ast}\mathcal{E})$. 
\begin{discuss}
{\colorr
まず$\dot{\epsilon}$と$\tilde{\epsilon}$を展開すると, $\dot{\epsilon}^4$, $\tilde{\epsilon}^4,\dot{\epsilon}^2\tilde{\epsilon}^2$のみ残る.
\begin{itemize}
\item $\dot{\epsilon}^4$ : 
\begin{itemize}
\item[・] $i_1,i_2$が同グループ, $i_3,i_4$が同グループで$i_1,i_2$のグループと異なるとすると, 異なるペアに対する$AB$が0になるから$\phi(M_1,M_1)$は$v^2$. $E[\cdot]$も一つの項だけなので$v^2$.
\item[・] $i_1,\cdots, i_4$のうち, 一つだけグループが違う場合はともに$0$になることがすぐわかる.
\item[・] 全部同じなら$\phi(M_1,M_1)$は全部出てくるから第2項と打ち消してOK.
\end{itemize}
\item $\tilde{\epsilon}^4$ : $i_1,\cdots, i_4$の4つが同じ場合と2ペアの場合に分けて考えればすぐわかる.
\item $\tilde{\epsilon}^2\dot{\epsilon}^2$ : 同じものが少なくとも2つないと両辺0になる.
\begin{itemize}
\item[・] 同じものが4つの時は, $E[]$はペアの取り方で$6v^2$. $2\phi(M_1,M_2)$も全部出てくるから$6v^2$.
\item[・] 同じものが3つで他が同グループなら, 3つからペアの選び方で$E[]$は$3v^2$. $2\phi(M_1,M_2)$は異なるものが$M_2$に入らない項だけでて, $3v^2$.
\item[・] 同じものが2つで他の2つが同グループで異なるとき, $E[]$は同じもの2つが$\tilde{\epsilon}$でないといけないから$v^2$. $2\phi(M_1,M_2)$も同じで$v^2$.
\item[・] ペアが2つの時は, $\dot{\epsilon}$と$\tilde{\epsilon}$がどちらでもよいから$E[]$は$2v^2$, $2\phi(M_1,M_2)$はペアの取り方が同じならOKなので$2v^2$.
\end{itemize}	
\end{itemize}
}
\end{discuss}

Hence, we obtain
\begin{eqnarray}\label{ZSZ-est-eq1}
E_m[(\tilde{Z}_m^{\top}{\bf S}'\tilde{Z}_m)^2] &=&E_m[(\tilde{Z}_{2,m}^{\top}{\bf S}'\tilde{Z}_{2,m})^2]
+\sum_{i_1,\cdots,i_4}{\bf S}'_{i_1,i_2}{\bf S}'_{i_3,i_4}(\phi(\tilde{S}_{1,m,\ast},\tilde{S}_{1,m,\ast})+2\phi(\tilde{S}_{1,m,\ast},M_{m,\ast}))_{i_1,\cdots, i_4} \nonumber \\
&=&2{\rm tr}(\tilde{S}_{m,\ast}{\bf S}'\tilde{S}_{m,\ast}{\bf S}')+{\rm tr}(\tilde{S}_{m,\ast}{\bf S}')^2+\sum_{j=1}^2(E[(\epsilon^{n,j}_0)^4]-3v_{j,\ast}^2){\rm tr}({\bf S}''\mathcal{E}_{(j)}{\bf S}''\mathcal{E}_{(j)})+\sum_iC_{n,i}|{\bf S}''_{ii}|^2, \nonumber \\
\end{eqnarray}
by (\ref{normal-prod-eq}), where $C_{n,i}=E[(\dot{\epsilon}_{i,m})^4]-3E[(\dot{\epsilon}_{i,m})^2]^2$ and $\mathcal{E}_{(j)}$ is a $(k^1_m+ k^2_m)\times (k^1_m+ k^2_m)$-matrix with elements $(\mathcal{E}_{(1)})_{kl}=\delta_{k\leq k^1_m}\delta_{l\leq k^1_m}$ 
and $(\mathcal{E}_{(2)})_{kl}=\delta_{k> k^1_m}\delta_{l> k^1_m}$.
\begin{discuss}
{\colorr 改善できるかもしれないが, ここで$E[(\epsilon^{n,k}_j)^4]$が$j$に依らないことを使っている.
最初の変形で$\phi(\tilde{S}_{1,m,\ast},M_{m,\ast})$の部分は, $(\tilde{Z}_{1,m}^{\top}{\bf S}'\tilde{Z}_{2,m}+\tilde{Z}_{2,m}^{\top}{\bf S}'\tilde{Z}_{1,m})^2$と$2\tilde{Z}_{1,m}^{\top}{\bf S}'\tilde{Z}_{1,m}\tilde{Z}_{2,m}^{\top}{\bf S}'\tilde{Z}_{2,m}$に対応していることに注意.
$\phi$の双線形性から$\phi(M_1,M_1)+2\phi(M_1,M_2)+\phi(M_2,M_2)=\phi(M_1+M_2,M_1+M_2)$. また, ${\bf A}_m^{-1}(M_1+M_2)({\bf A}_m^{\top})^{-1}=M_{m,\ast}$.
よって, $E_m[(\tilde{Z}_{2,m}^{\top}{\bf S}'\tilde{Z}_{2,m})^2]=\sum_{i_1,\cdots, i_4}{\bf S}'_{i_1,i_2}{\bf S}'_{i_3,i_4}\{\phi(M_{m,\ast},M_{m,\ast})+\cdots \}$.
再び双線形性より, $E_m[(\tilde{Z}_m^{\top}{\bf S}'\tilde{Z}_m)^2]=\sum_{i_1,\cdots, i_4}{\bf S}'_{i_1,i_2}{\bf S}'_{i_3,i_4}\{\phi(\tilde{S}_{m,\ast},\tilde{S}_{m,\ast})+\cdots \}$.
}
\end{discuss}

Lemma~\ref{ZSZ-est-lemma} and the fact that $\max_i\sum_j|(({\bf A}_mM_{m,\ast}{\bf A}_m^{\top})^{-1})_{ij}|\leq 2$ yield
\begin{eqnarray}\label{ZSZ-est-eq2}
\sum_i|{\bf S}''_{ii}|^2&\leq &\lVert M_{m,\ast}{\bf S}' M_{m,\ast}\rVert \max_i((({\bf A}_m^{\top})^{-1}M_{m,\ast}^{-2}{\bf A}_m^{-1})_{ii})\cdot \lVert \tilde{S}_{m,\ast}{\bf S}'\tilde{S}_{m,\ast}\rVert {\rm tr}(({\bf A}_m^{\top})^{-1}\tilde{S}_{m,\ast}^{-2}{\bf A}_m^{-1}) \nonumber \\
&\leq & Cr_n^2\max_i\left(\sum_{j,k}({\bf A}_mM_{m,\ast}{\bf A}_m^{\top})^{-1}_{ij}({\bf A}_m{\bf A}_m^{\top})_{jk}({\bf A}_mM_{m,\ast}{\bf A}_m^{\top})^{-1}_{ki}\right){\rm tr}\bigg(\left(
\begin{array}{ll}
M_{1,m} & 0 \\
0 & M_{2,m}
\end{array}
\right)\tilde{S}_{m,\ast}^{-2}\bigg) \nonumber \\
&\leq & Cr_n^2\bar{k}_n{\rm tr}(\tilde{S}_{m,\ast}^{-1})=\bar{R}_n(b_n^{-3/2}k_n^2)=\bar{R}_n(1). 
\end{eqnarray}
\begin{discuss}
{\colorr 2つめの不等号では, $(1,1)$成分を増加させた. (正定値対称行列$\tilde{S}_{m,\ast}^{-2}$の対角成分は正)}
\end{discuss}

Moreover, Lemmas~\ref{tr-est},\ref{inverse-norm-est} and~\ref{ZSZ-est-lemma} yield
\begin{eqnarray}\label{ZSZ-est-eq3}
{\rm tr}({\bf S}''\mathcal{E}_{(1)}{\bf S}''\mathcal{E}_{(1)})&\leq & 
{\rm tr}\bigg({\bf S}''\left(
\begin{array}{ll}
{\bf 1}+\mathcal{E} & 0 \\
0 & 0 
\end{array}
\right){\bf S}''\mathcal{E}_{(1)}\bigg) \nonumber \\
&\leq &C{\rm tr}(\tilde{S}_{m,\ast}^{-1/2}{\bf A}_m^{-1}\mathcal{E}_{(1)}({\bf A}_m^{\top})^{-1}\tilde{S}_{m,\ast}^{-1/2})\bigg\lVert \tilde{S}_{m,\ast}^{1/2}{\bf S}'\left(
\begin{array}{ll}
M_{1,m} & 0 \\
0 & 0 
\end{array}
\right){\bf S}'\tilde{S}_{m,\ast}^{1/2}\bigg\rVert \nonumber \\
&\leq &Cr_n\underbar{r}_n^{-1}(\tilde{S}_{m,\ast}^{-1})_{11}\leq 
Cr_n\underbar{r}_n^{-1}(M_{m,\ast}^{-1})_{11}=\bar{R}_n(1),
\end{eqnarray}
since $(M_{m,\ast}^{-1})_{11}\leq v_{1,\ast}^{-1}$ by (\ref{invD-diagonal-est}).
\begin{discuss}
{\colorr 2つめの不等号の右辺のノルムの評価では, $\tilde{S}_{m,\ast}^{1/2}{\bf S}^{-1}=\tilde{S}_{m,\ast}^{1/2}\tilde{S}_{m,\ast}^{-1}\tilde{S}_{m,\ast}{\bf S}^{-1}$等を使って評価する.
最後の不等式はLemma~\ref{ZSZ-est-lemma}, \ref{inverse-norm-est}より, $(\tilde{S}_{m,\ast}^{-1})_{11}\leq \lVert (M_{m,\ast}^{-1/2}\tilde{S}_{m,\ast}M_{m,\ast}^{-1/2})^{-1}\rVert (M_{m,\ast}^{-1})_{11}\leq (M_{m,\ast}^{-1})_{11}$を使う.}
\end{discuss}

(\ref{ZSZ-est-eq1})--(\ref{ZSZ-est-eq3}) and similar estimates for ${\rm tr}({\bf S}''\mathcal{E}_{(2)}{\bf S}''\mathcal{E}_{(2)})$ 
yield $E_m[(\tilde{Z}_m^{\top}{\bf S}'\tilde{Z}_m)^2]=2{\rm tr}(({\bf S}'\tilde{S}_{m,\ast})^2)+{\rm tr}({\bf S}'\tilde{S}_{m,\ast})^2+\bar{R}_n(1)$.

We next prove the estimate for $E_m[(\tilde{Z}_m^{\top}{\bf S}'\tilde{Z}_m)^q]$.
Let $p\in\mathbb{N}$ satisfy $q\leq 2p$. Then it is sufficient to show that $E_m[(\tilde{Z}_m^{\top}{\bf S}'\tilde{Z}_m)^{2p}]=\bar{R}_n(b_n^{-2p}k_n^{4p})$.

{\colord Note that}
\begin{eqnarray}
E_m[(\tilde{Z}_{2,m}^{\top}M_{m,\ast}^{-2}\tilde{Z}_{2,m})^{2p}]&=&\sum_{i_1,\cdots, i_{4p}}\hat{{\bf S}}_{i_1,i_2}\cdots \hat{{\bf S}}_{i_{4p-1},i_{4p}}E_m\bigg[\prod_{j=1}^{4p}(\tilde{\epsilon}_{i_j,m}-\dot{\epsilon}_{i_j,m})\bigg], \nonumber 
\end{eqnarray}
and there exist $\{0,1\}$-valued maps $\{A_{l,m}\}_l$, constants $C_l$, positive integers $\{q'_l\}$ not greater than $4p$ and injections $\{\iota_l\}$ 
such that $E_m[\prod_{j=1}^{4p}(\tilde{\epsilon}_{i_j,m}-\dot{\epsilon}_{i_j,m})]=\sum_lC_lA_{l,m}(i_{\iota_l(1)},\cdots, i_{\iota_l(q'_l)})$
and $\sum_{j_1,\cdots, j_{q'_l}}A_{l,m}(j_1,\cdots, j_{q'_l})=\bar{R}_n(k_n^{[q'_l/2]})$ for any $l$.
\begin{discuss}
{\colorr $q'_l$は$\tilde{\epsilon}_{i_j,m}$の数. $\dot{\epsilon}_{i_j,m}$は$\epsilon^{n,1}_i$と$\epsilon^{n,2}_j$で二通りでるが, $A_{l,m}$を二つに分ければよい.
$\tilde{\epsilon}_{i,m}$の方も$\epsilon^{n,1}$と$\epsilon^{n,2}$で分けるか. $\{0,1\}$-valuedにするためにモーメントが重なった時も分ける.}
\end{discuss}
{\colord Then Lemma~\ref{ZSZ-est-aux-lemma} yields 
\begin{equation}\label{ZSZ-est-eq4}
E_m[(\tilde{Z}_{2,m}^{\top}M_{m,\ast}^{-2}\tilde{Z}_{2,m})^{2p}]=\bar{R}_n(k_n^{4p}),
\end{equation} 
and therefore Lemma~\ref{ZSZ-est-lemma} yields
\begin{equation*}
E_m[(\tilde{Z}_{2,m}^{\top}{\bf S}'\tilde{Z}_{2,m})^{2p}]\leq \lVert M_{m,\ast}{\bf S}'M_{m,\ast}\rVert^{2p}E_m[(\tilde{Z}_{2,m}^{\top}M_{m,\ast}^{-2}\tilde{Z}_{2,m})^{2p}]=\bar{R}_n(b_n^{-2p}k_n^{4p}).
\end{equation*}
}

Moreover, (\ref{normal-prod-eq}) yields
\begin{equation*}
E_m[(\tilde{Z}_{1,m}^{\top}{\bf S}'\tilde{Z}_{1,m})^{2p}]\leq C_p\sum_{\substack{\gamma=(\gamma_1,\cdots, \gamma_L);\\ L\in\mathbb{N},\gamma_k\geq 1,\sum_k\gamma_k=2p}}\prod_k{\rm tr}(({\bf S}'\tilde{S}_{1,m,\ast})^{\gamma_k})= \bar{R}_n(b_n^{-p}k_n^{2p}).
\end{equation*}

Furthermore, by calculating the expectation of $\tilde{Z}_{1,m}$ and using (\ref{normal-prod-eq}) and Lemma~\ref{ZSZ-est-lemma}, we have
\begin{eqnarray}
E_m[(\tilde{Z}_{1,m}^{\top}{\bf S}'\tilde{Z}_{2,m})^{2p}]&=&\bigg(\sum_{(l_{2k-1},l_{2k})_{k=1}^q}1\bigg)E_m[(\tilde{Z}_{2,m}^{\top}{\bf S}'\tilde{S}_{1,m,\ast}{\bf S}'\tilde{Z}_{2,m})^p] \nonumber \\
&\leq &(2p-1)!!\lVert M_{m,\ast}{\bf S}'\tilde{S}_{1,m,\ast}{\bf S}'M_{m,\ast}\rVert^p E_m[(\tilde{Z}_{2,m}^{\top}M_{m,\ast}^{-2}\tilde{Z}_{2,m})^p]=\bar{R}_n(b_n^{-p}k_n^{2p}). \nonumber
\end{eqnarray}
Then we obtain $E_m[(\tilde{Z}_m^{\top}{\bf S}'\tilde{Z}_m)^{2p}]=\bar{R}_n(b_n^{-2p}k_n^{4p})$.

For the estimate of $E_m[(\tilde{Z}_m^{\top}{\bf S}'\tilde{Z}_m)^4]$, we have $E_m[(\tilde{Z}_{1,m}^{\top}{\bf S}'\tilde{Z}_{1,m})^4]=\bar{R}_n(b_n^{-2}k_n^4)$ 
and $E_m[(\tilde{Z}_{1,m}^{\top}{\bf S}'\tilde{Z}_{2,m})^4]=\bar{R}_n(b_n^{-2}k_n^4)$ by the above results.
Moreover, we have
\begin{equation}\label{ZSZ4-est-eq}
{\colord E_m[(\tilde{Z}_{2,m}^{\top}{\bf S}'\tilde{Z}_{2,m})^4]
= \sum_{i_1,\cdots,i_8}\prod_{k=1}^4 (({\bf A}_m^{\top})^{-1}{\bf S}'{\bf A}_m^{-1})_{i_{2k-1},i_{2k}}E_m[\prod_{k=1}^8(\tilde{\epsilon}_{i_k,m}-\dot{\epsilon}_{i_k,m})]}
\end{equation}
and there exist $\{0,1\}$-valued maps $\{A'_l\}_l$, constants $C'_l$, positive integers $\{q''_l\}$ not greater than $8$ and injections $\{\iota'_l\}$ 
such that $\sum_{j_1,\cdots, j_{q''_l}}A'_l(j_1,\cdots, j_{q''_l})={\colord \bar{R}_n(k_n^{[q''_l/2]\wedge 3})}$ and 
\begin{equation}\label{ZSZ4-est-eq2}
E_m[\prod_{j=1}^8(\tilde{\epsilon}_{i_j,m}-\dot{\epsilon}_{i_j,m})]=\sum_{(l_{2k-1},l_{2k})_{k=1}^4}\prod_{k=1}^4\delta_{l_{2k-1},l_{2k}} +\sum_lC_lA'_l(i_{\iota_l(1)},\cdots, i_{\iota_l(q'_l)}),
\end{equation}
where the summation in the first term of the right-hand side is over all $4$-pairs $(l_{2k-1},l_{2k})_{k=1}^4$ of variables $i_1,\cdots, i_8$.

\begin{discuss}
{\colorr
\begin{eqnarray}
&&E_m[\prod_{k=1}^8(\tilde{\epsilon}_{i_k,m}-\dot{\epsilon}_{i_k,m})] \nonumber \\
&=&E_m[\sum_{p=0}^8\epsilon_0^p\sum_{(j_1,\cdots, j_p)\subset (i_1,\cdots, i_8)}\prod_{k=1}^{8-p}\epsilon_{j_k}] \nonumber \\
&=&\alpha_1+\alpha_2\sum_{j_1,j_2}\delta_{j_1,j_2}+\alpha_3\sum_{j_1,j_2,j_3}\delta_{j_1,j_2,j_3}+\sum_{j_1,\cdots, j_4}(\alpha_4\delta_{j_1,j_2}\delta_{j_3,j_4}+\alpha_5\delta_{j_1,\cdots, j_4}) +\sum_{j_1,\cdots,j_5}(\alpha_6\delta_{j_1,j_2}\delta_{j_3,j_4,j_5}+\alpha_7\delta_{j_1,\cdots,j_5}) \nonumber \\
&&+\sum_{j_1,\cdots, j_6}(\alpha_8\delta_{j_1,j_2}\delta_{j_3,j_4}\delta_{j_5,j_6}+\alpha_9\delta_{j_1,\cdots, j_4}\delta_{j_5,j_6}+\alpha_{10}\delta_{j_1,j_2,j_3}\delta_{j_4,j_5,j_6}) \nonumber \\
&&+\sum_{j_1,\cdots, j_8}(\alpha_{11}\delta_{j_1,j_2}\delta_{j_3,j_4}\delta_{j_5,j_6}\delta_{j_7,j_8}+\alpha_{12}\delta_{j_1,j_2,j_3}\delta_{j_4,j_5,j_6}\delta_{j_7,j_8}+\alpha_{13}\delta_{j_1,j_2,j_3}\delta_{j_4,\cdots,j_8}+\alpha_{14}\delta_{j_1,\cdots,j_4}\delta_{j_5,j_6}\delta_{j_7,j_8} \nonumber \\
&&+\alpha_{15}\delta_{j_1,\cdots, j_4}\delta_{j_5,\cdots, j_8}+\alpha_{16}\delta_{j_1,\cdots,j_6}\delta_{j_7,j_8}+\alpha_{17}\delta_{j_1,\cdots, j_8}), \nonumber
\end{eqnarray}
where $\alpha_j\in \mathbb{R}$ is a constant depending on moments of $\epsilon$ for $1\leq j\leq 17$.
}
\end{discuss}

{\colord
Let $\tilde{{\bf A}}_m=M_{m,\ast}{\bf A}_m^{\top}$, then a simple calculation shows that
\begin{equation*}
(\tilde{{\bf A}}_m^{-1})_{i,j}=\left\{
\begin{array}{ll}
(k^{{\bf k}(i)}_m+1)^{-1}(j-k^1_m1_{\{{\bf k}(i)=2\}})v_{{\bf k}(i),\ast}^{-1} & {\bf k}(i)={\bf k}(j) \ {\rm and} \ i\geq j, \\
-(k^{{\bf k}(i)}_m+1)^{-1}(k^{{\bf k}(i)}_m-j+1+k^1_m1_{\{{\bf k}(i)=2\}})v_{{\bf k}(i),\ast}^{-1} & {\bf k}(i)={\bf k}(j) \ {\rm and} \ i< j, \\
0 & {\rm otherwise}.
\end{array}
\right.
\end{equation*}
Therefore, we have 
\begin{eqnarray}\label{ZSZ4-est-eq3}
|(({\bf A}_m^{\top})^{-1}{\bf S}'({\bf A}_m^{-1})_{ij}|
&=&|(\tilde{{\bf A}}_m^{-1}M_{m,\ast}{\bf S}'M_{m,\ast}(\tilde{{\bf A}}_m^{\top})^{-1})_{ij}| \nonumber \\
&\leq & \sum_{k_1,k_2}|(\tilde{{\bf A}}_m^{-1})_{i,k_1}(M_{m,\ast}{\bf S}'M_{m,\ast})_{k_1k_2}(\tilde{{\bf A}}_m^{-1})_{j,k_2}| \nonumber \\
&\leq & \bigg(\sum_k((\tilde{{\bf A}}_m^{-1})_{i,k})^2\bigg)^{1/2}\lVert M_{m,\ast}{\bf S}'M_{m,\ast} \rVert \bigg(\sum_k((\tilde{{\bf A}}_m^{-1})_{j,k})^2\bigg)^{1/2}=\bar{R}_n(b_n^{-1}k_n).
\end{eqnarray}
Similarly, we have 
\begin{equation}\label{ZSZ4-est-eq4}
|(({\bf A}_m^{\top})^{-1}{\bf S}'{\bf A}_m^{-1}{\bf 1}({\bf A}_m^{\top})^{-1}{\bf S}'{\bf A}_m^{-1})_{ij}|=\bar{R}_n(b_n^{-2}k_n^2).
\end{equation}
}

\begin{discuss}
{\colorr (\ref{ZSZ4-est-eq4})左辺
\begin{equation*}
\leq \bigg(\max_i\sum_k(\tilde{{\bf A}}_m^{-1})_{ik}^2\bigg)\lVert M_{m,\ast}{\bf S}'M_{m,\ast} \rVert ^2\lVert (\tilde{{\bf A}}_m^{-1})^{\top}{\bf 1}\tilde{{\bf A}}_m^{-1} \rVert =\bar{R}_n(b_n^{-2}k_n^{-2}).
\end{equation*}
∵
$((\tilde{{\bf A}}_m^{-1})^{\top}{\bf 1}\tilde{{\bf A}}_m^{-1})_{ij}=\sum_k(\tilde{{\bf A}}_m^{-1})_{ki}\sum_{k'}(\tilde{{\bf A}}_m^{-1})_{k'j}$,
$\sum_k(\tilde{{\bf A}}_m^{-1})_{ki}=(k^1_m+1)^{-1}\{i(k^1_m-i+1)-(k^1_m-i+1)(i-1)\}=(k^1_m+1)^{-1}(k^1_m-i+1)$.
}
\end{discuss}

{\colord Then (\ref{ZSZ4-est-eq})--(\ref{ZSZ4-est-eq4}) and a similar argument to the proof of Lemma~\ref{ZSZ-est-aux-lemma} yield}
\begin{eqnarray}
E_m[(\tilde{Z}_{2,m}^{\top}{\bf S}'\tilde{Z}_{2,m})^4]&=&\sum_{i_1,\cdots, i_8}\prod_{k=1}^4(({\bf A}_m^{-1})^{\top}{\bf S}'{\bf A}_m^{-1})_{i_{2k-1},i_{2k}}\sum_{(l_{2k-1},l_{2k})_{k=1}^4}\prod_{k=1}^4\delta_{l_{2k-1},l_{2k}}
+\bar{R}_n((b_n^{-4}k_n^7)\vee (b_n^{-2}k_n^4)) \nonumber \\
&=&\bar{R}_n((b_n^{-4}k_n^7)\vee (b_n^{-2}k_n^4)). \nonumber
\end{eqnarray}

\begin{discuss}
{\colorg ここの評価はfinal versionの前にもう一度チェック. 間違っていても全体の流れは問題ない. simulation resultsはやり直さないといけないかもしれないが.}
{\colorr
${\rm tr}({\bf S}'{\bf A}_m^{-1}({\bf A}_m^{-1})^{\top})=\bar{R}_n(b_n^{-1/2}k_n)$, ${\rm tr}({\bf S}'{\bf A}_m^{-1}{\bf 1}({\bf A}_m^{-1})^{\top})={\bf S}'_{11}=\bar{R}_n(1)$
by (\ref{invD-diagonal-est})よりOK.

$b$:constの時, ${\rm tr}(S_m^{-1})$や$\lVert S_m^{-1}\rVert$の上からの$\sigma_{\ast}$の一様評価から$\sigma_{\ast}$に関する一様評価も得られる.
}
\end{discuss}

\noindent
2. Let $\{\tilde{I}_{i,m}\}_{i=1}^{k^1_m+k^2_m}$ and $\{{\bf k}(i)\}_{i=1}^{k^1_m+k^2_m}$ be defined by $\tilde{I}_{i,m}=I^1_{i,m}$, ${\bf k}(i)=1$ for $1\leq i\leq k^1_m$ 
and $\tilde{I}_{i,m}=I^2_{i-k^1_m,m}$, ${\bf k}(i)=2$ for $k^1_m<i\leq k^1_m+k^2_m$.
Since 
$(Z_m-\tilde{Z}_{2,m}\pm \tilde{Z}_{1,m})_i=\int_{\tilde{I}_{i,m}}(b^{{\bf k}(i)}_{t,\ast}\pm \tilde{b}^{{\bf k}(i)}_{m,\ast})dW_t+\mu^{{\bf k}(i)}_{s_{m-1}}|\tilde{I}_{i,m}|+\int_{\tilde{I}_{i,m}}(\mu^{{\bf k}(i)}_t-\mu^{{\bf k}(i)}_{s_{m-1}})dt$, 
we have
$(Z_m-\tilde{Z}_m)^{\top}{\bf S}'(Z_m+\tilde{Z}_m)=\Psi_{m,1}+\Psi_{m,2}+\Psi_{m,3}$, where 
\begin{eqnarray}
\Psi_{m,1}&=&2(Z_m-\tilde{Z}_m)^{\top}{\bf S}'\tilde{Z}_{2,m}+\sum_{k=1}^2\sum_{i,j}{\bf S}'_{i,j}
\int_{\tilde{I}_{i,m}}(b_{t,\ast}^{{\bf k}(i)}+(-1)^k \tilde{b}_{m,\ast}^{{\bf k}(i)})\int_{\tilde{I}_{j,m}\cap [0,t)}(b_{s,\ast}^{{\bf k}(j)}+(-1)^{k-1} \tilde{b}_{m,\ast}^{{\bf k}(j)})dW_sdW_t \nonumber \\
&& + \sum_{k=1}^2\sum_{i,j}{\bf S}'_{i,j}\mu_{s_{m-1}}^{{\bf k}(i)}|\tilde{I}_{i,m}|\int_{\tilde{I}_{j,m}}(b_{t,\ast}^{{\bf k}(j)}+(-1)^k \tilde{b}_{m,\ast}^{{\bf k}(j)})dW_t, \nonumber 
\end{eqnarray}
$E_m[\Psi_{m,2}]=0$, $\Psi_{m,2}=\bar{R}_n(b_n^{-1}k_n^{3/2})$ and $\Psi_{m,3}=\bar{R}_n(b_n^{-3/2}k_n^2)$.
\begin{discuss}
{\colorr
$b$:constの時, $\sigma_{\ast}$に関して一様に評価される.
$\int_{\tilde{I}_{i,m}}(b\pm \tilde{b})dW_t$同士の二次変分に相当する$\int_{\tilde{I}_{i,m}\cap \tilde{I}_{j,m}}(b+\tilde{b})(b-\tilde{b})dt$は, 期待値との差が$\Psi_{m,2}$に入り, 期待値が$\Psi_{m,3}$に入る.
$\mu |I|$同士は$\Psi_{m,3}$, $\int \cdot dW_t$と$\int_I\Delta \mu dt$は期待値が$E[\mu_t-\mu_s|\mathcal{G}_s]=O_p(t-s)$より, $O(r_n^2)+O(r_n^{3/2}\ell_n^{-1}$なので$\Psi_{m,3}$へ. 
($E[]=A_iB_j+\tilde{A}_i\tilde{B}_j+\hat{A}_i\hat{B}_j$と書けることを使って$\lVert {\bf S}'\rVert$の評価を使う. $I_i\cap I_j+\sqrt{|I_i|}(I_j\setminus I_i)$) $\bar{E}$項が$\Psi_{m,2}$へ.
${\rm tr}({\bf S}')$の評価を使うより$\lVert {\bf S}'\rVert$を使う方が, ベクトルのオペレータノルムの評価が無駄にならない分, レートが良くなる.
}
\end{discuss}

Then the Burkholder--Davis--Gundy inequality yields
\begin{eqnarray}
E_{\Pi}\bigg[\bigg(\sum_m(Z_m-\tilde{Z}_m)^{\top}{\bf S}'(Z_m+\tilde{Z}_m)\bigg)^q\bigg] 
&\leq & CE_{\Pi}\bigg[\bigg(\sum_m (\Psi_{m,1}+\Psi_{m,2})^2 \bigg)^{\frac{q}{2}}\bigg]+\bar{R}_n(b_n^{-\frac{q}{2}}k_n^q) \nonumber \\
&\leq & CE_{\Pi}\bigg[\bigg(\sum_m E_m[\Psi_{m,1}^2]\bigg)^{\frac{q}{2}}\bigg]+CE_{\Pi}\bigg[\bigg(\sum_m \bar{E}_m[\Psi_{m,1}^2]^2\bigg)^{\frac{q}{4}}\bigg]+\bar{R}_n(b_n^{-\frac{q}{2}}k_n^q). \nonumber
\end{eqnarray}

We can rewrite $(Z_{1,m}-\tilde{Z}_{1,m})_i={\bf L}^1_i+{\bf L}^2_i+{\bf L}^3_i+\bar{R}_n((r_n^{1/2}\ell_n^{-3/2})\vee r_n)$, where 
${\bf L}^1_i=\sum_j\xi^1_j\int_{\tilde{I}_{i,m}}(t-s_{m-1})dW_t^j$, ${\bf L}^2_i=\sum_{j,k}\xi^2_{j,k}\int_{\tilde{I}_{i,m}}(W^k_t-W^k_{s_{m-1}})dW_t^j$
and ${\bf L}^3_i=\sum_{j,k,l}\xi^3_{j,k,l}\int_{\tilde{I}_{i,m}}\int^t_{s_{m-1}}(W^l_s-W^l_{s_{m-1}})dW^k_sdW^j_t$
for some $\mathcal{G}_{s_{m-1}}$-measurable random variables $\xi^1_j$, $\xi^2_{j,k}$, and $\xi^3_{j,k,l}$ with bounded moments.
\begin{discuss}
{\colorr $\int_I\mu_tdt$の項は$r_n$オーダーがでるので残差に含まれる}
\end{discuss}
Let ${\bf L}^j=({\bf L}^j_i)_i$. Then, for any $p\in \mathbb{N}$, {\colord Lemma~\ref{ZSZ-est-lemma} and (\ref{ZSZ-est-eq4}) yield}
\begin{eqnarray}\label{Psi2p-est}
E_m[\Psi_{m,1}^{2p}]
&\leq &CE_m[(\tilde{Z}_{2,m}^{\top}{\bf S}'(Z_m-\tilde{Z}_m)(Z_m-\tilde{Z}_m)^{\top}{\bf S}'\tilde{Z}_{2,m})^p]
+CE_m[((Z_{1,m}+\tilde{Z}_{1,m})^{\top}{\bf S}'(Z_{1,m}-\tilde{Z}_{1,m}))^{2p}] \nonumber \\
&&+\bar{R}_n((b_n^{-1}k_n^{3/2})^{2p}) \nonumber \\
&\leq &CE_m[\lVert M_{m,\ast}{\bf S}'(Z_m-\tilde{Z}_m)(Z_m-\tilde{Z}_m)^{\top}{\bf S}'M_{m,\ast}\rVert^p (\tilde{Z}_{2,m}^{\top}M_{m,\ast}^{-2}\tilde{Z}_{2,m})^p]
+C\sum_{j=1}^3E_m[(\tilde{Z}_{1,m}^{\top}{\bf S}'{\bf L}^j)^{2p}]  \nonumber \\
&&+ CE_m[(({\bf L}^2)^{\top}{\bf S}'{\bf L}^2)^{2p}]  + \bar{R}_n((b_nk_nr_n(\ell_n^{-3/2}\vee r_n^{1/2}))^{2p})+\bar{R}_n((b_n^{-1}k_n^{3/2})^{2p}) \nonumber \\
&=&C\sum_{j=1}^3E_m[(\tilde{Z}_{1,m}^{\top}{\bf S}'{\bf L}^j)^{2p}]+CE_m[(({\bf L}^2)^{\top}{\bf S}'{\bf L}^2)^{2p}]+\bar{R}_n(b_n^{-2p}k_n^{4p})+\bar{R}_n(b_n^{-3p}k_n^{5p}). 
\end{eqnarray}
\begin{discuss}
{\colorr $X_t$のセミマルチンゲール分解を使う. $b^{(1)}$が分解することも使っている. $b^{(0)}$は差のモーメント評価だけでよい.}
\end{discuss}

Moreover, we can see that there exists a positive constant $C_p$ such that
{\colord 
\begin{eqnarray}
&&E_m\bigg[\sum_{\substack{i_1,\cdots, i_{2p}\\ j_1,\cdots, j_{2p}}}\prod_{k=1}^{2p}(W^{p_{1,k}}(\tilde{I}_{i_k,m})\int_{\tilde{I}_{j_k,m}} (W^{p_{2,k}}_t-W^{p_{2,k}}_{s_{m-1}})dW^{p_{3,k}}_t)\bigg] \nonumber \\
&\leq& C_p\sum_{\substack{i_1,\cdots,i_{2p}\\ j_1,\cdots,j_{2p}}}\sum_{(l_{2q-1},l_{2q})_{q=1}^{2p-\alpha},\alpha}\bigg(\prod^{2p-\alpha}_{q=1}|\tilde{I}_{l_{2q-1},m}\cap \tilde{I}_{l_{2q,m}}|\bigg)r_n^{2\alpha}(s_m-s_{m-1})^{-p+\alpha} \nonumber
\end{eqnarray}
for any $\{p_{l,k}\}_{1\leq l\leq 3,1\leq k\leq 2p}\subset \{1,2\}$,
where the summation in the right-hand side is taken over $0\leq \alpha\leq p$ and $(2p-\alpha)$ disjoint pairs $(l_{2q-1},l_{2q})_{q=1}^{2p-\alpha}$ in the variables $i_1,\cdots, i_{2p},j_1,\cdots,j_{2p}$.
Here we used the fact that all $6p$ factors $(W^{p_{1,k}}(\tilde{I}_{i_k,m}),\int_{\tilde{I}_{j_k,m}}\cdot \ dW_t^{p_{3,k}},(W_t^{p_2,k}-W_{s_{m-1}}^{p_{2,k}})_{t\in \tilde{I}_{j_k,m}})_{k=1}^{2p}$ should be separated into $3p$ pairs in the non-zero terms.
$2\alpha$ represents the number of pairs with the form $(W^{p_{1,k}}(\tilde{I}_{i_k,m}),(W_t^{p_{2,k'}}-W_{s_{m-1}}^{p_{2,k'}})_{t\in \tilde{I}_{j_{k'},m}})$ or $(\int_{\tilde{I}_{j_k,m}}\cdot \ dW_t^{p_{3,k}},(W_t^{p_{2,k'}}-W_{s_{m-1}}^{p_{2,k'}})_{t\in \tilde{I}_{j_{k'},m}})$.
}
\begin{discuss}
{\colorr
なぜならば$W(I_i),\int_{I_j}\cdot dW_t,W_t-W_{s_{m-1}}$を$W(K_k)$の和の形で表現し, $2p$乗を展開すると, nonzeroの項は全ての$W(K_i)$の指数が偶数の項のみ. 
このような項に対し, 右辺のペアわけがあって上から抑えられる ($E[W(K_k)^{q''}]$の評価と$(s_m-s_{m-1}),r_n$に対応する項の並べ替えから$C_p$がでる). 右辺の同じ項を何回使うかが重要. 右辺の同じ項が使われるには, 各$K_k$が対応する
$I_{l_{2q-1}}\cap I_{l_{2q}}, I_{k_{q'}},[s_{m-1},s_m)$に入る必要がある. 並び替えはできないのでこのような項をまとめて右辺で評価できる.
}
\end{discuss}
Therefore we obtain 
\begin{equation}\label{Psi2p-est2}
E_m[(\tilde{Z}_{1,m}^{\top}{\bf S}'{\bf L}^2)^{2p}]=\bar{R}_n((b_n^{1/2}\ell_n^{-1})^{2p-\alpha}(b_n^{3/2}\ell_n^{-1}k_n)^{\alpha}r_n^{2\alpha}\ell_n^{-p+\alpha})=\bar{R}_n((b_n^{-1}k_n^{3/2})^{2p}).
\end{equation}
Similar arguments for $E_m[(\tilde{Z}_{1,m}^{\top}{\bf S}'{\bf L}^1)^{2p}]$, $E_m[(\tilde{Z}_{1,m}^{\top}{\bf S}'{\bf L}^3)^{2p}]$ and $E_m[(({\bf L}^2)^{\top}{\bf S}'{\bf L}^2)^{2p}]$ yield
{\colord $E_m[\Psi_{m,1}^{2p}]=\bar{R}_n(((b_n^{-1}k_n^2)\vee (b_n^{-3/2}k_n^{5/2}))^{2p})$}, and consequently we obtain 
$E_{\Pi}[(\sum_m\Psi_{m,1}^4)^{q/4}]={\colord \bar{R}_n((b_n^{-3}k_n^7)^{q/4})}$. 

Furthermore, since $b^{{\bf k}(i)}_{t,\ast}+\tilde{b}^{{\bf k}(i)}_{m,\ast}=2\tilde{b}^{{\bf k}(i)}_{m,\ast}+(b^{{\bf k}(i)}_{t,\ast}-\tilde{b}^{{\bf k}(i)}_{m,\ast})$ 
and $b^{{\bf k}(i)}_{t,\ast}-\tilde{b}^{{\bf k}(i)}_{m,\ast}=\sum_j\tilde{\xi}^1_j(W^j_t-W^j_{s_{m-1}})+\tilde{\xi}^2(t-s_{m-1})+\sum_{j,k}\tilde{\xi}^3_{j,k}\int^t_{s_{m-1}}(W^j_s-W^j_{s_{m-1}})dW^k_s+\bar{R}_n(\ell_n^{-3/2})$
for some $\mathcal{G}_{s_{m-1}}$-measurable random variables $\tilde{\xi}^1_j$, $\tilde{\xi}^2$ and $\tilde{\xi}^3_{j,k}$ with bounded moments, an argument similar to the one above and Lemmas~\ref{tr-est-mod} and~\ref{ZSZ-est-lemma} yield
\begin{eqnarray}
E_m[\Psi_{m,1}^2]&\leq & C\lVert \tilde{S}_{m,\ast}^{1/2}{\bf S}'M_{m,\ast}{\bf S}'\tilde{S}_{m,\ast}^{1/2}\rVert E_m[(Z_m-\tilde{Z}_m)^{\top}\tilde{S}_{m,\ast}^{-1}(Z_m-\tilde{Z}_m)] {\colord +\bar{R}_n(b_nk_nr_n\ell_n^{-2})} \nonumber \\
&&+C\sum_{i_1,i_2,j_1,j_2}|{\bf S}'_{i_1,j_1}||{\bf S}'_{i_2,j_2}||\tilde{I}_{i_1,m}\cap \tilde{I}_{i_2,m}|(\ell_n^{-1}|\tilde{I}_{j_1,m}\cap \tilde{I}_{j_2,m}|+{\colord r_n^2}+r_n^{3/2}\ell_n^{-3/2}) \nonumber \\
&& +\bigg\lVert {\bf S}'\{\sum_{k_1,k_2=1}^2\int_{\tilde{I}_{i,m}\cap \tilde{I}_{j,m}}E_m[(b^{{\bf k}(i)}_{t,\ast}+(-1)^{k_1}\tilde{b}^{{\bf k}(i)}_{m,\ast})(b^{{\bf k}(i)}_{t,\ast}+(-1)^{k_2}\tilde{b}^{{\bf k}(i)}_{m,\ast})]dt\}_{i,j}{\bf S}'\bigg\rVert \bar{R}_n(k_nr_n^2) \nonumber \\
&=&{\colord \bar{R}_n(b_n^{-1}k_n^{3/2})+\bar{R}_n(b_n^{-2}k_n^3)}+\bar{R}_n((b_n^{3/2}\ell_n^{-1})^2r_n(\ell_n^{-1}r_n+k_nr_n^{3/2}\ell_n^{-3/2}))+\bar{R}_n(b_n^{-1}k_n) \nonumber \\
&=&\bar{R}_n((b_n^{-2}k_n^3)\vee (b_n^{-3}k_n^{9/2})). \nonumber
\end{eqnarray}
\begin{discuss}
{\colorr
ここは評価を変えたらもう少しレートを変えられるかも. 

最初の不等式の右辺第二三項は$\int_{I_{i_1}\cap I_{i_2}}E[(b\pm \tilde{b})(b\pm \tilde{b})\int_{I_{j_1}}(b\mp\tilde{b})dW_s\int_{I_{j_2}}(b\mp\tilde{b})dW_s]dt$で
$2\tilde{b}$と$b-\tilde{b}$だった場合４通りに分けて評価する. 一つでも$b-\tilde{b}$の残差$\bar{R}_n(\ell_n^{-3/2})$が出る場合は,
$\lVert {\bf S}'|\tilde{I}_{i_1}\cap \tilde{I}_{i_2}|{\bf S}'\rVert k_nr_n\ell_n^{-2}=\bar{R}_n(b_n^{-2}k_n^3)$.

Lemma \ref{tr-est-mod}より
\begin{equation*}
\sum_{i_1,i_2,j_1,j_2}|S_{i_1,j_1}||S_{i_2,j_2}|G_{i_1,i_2}G_{j_1,j_2}\leq \sum_{i,j}|\lambda_i||\lambda_j|\lVert G\rVert^2\sup_k(\sum_i|U_{i,k}|^2)^2\leq (\sum_i|\lambda_i|)^2
\end{equation*}
を使う.
}
\end{discuss}
Therefore, we have $E_{\Pi}[(\sum_mE_m[\Psi_{m,1}^2])^{q/2}]=\bar{R}_n(((b_n^{-1}k_n^2)\vee (b_n^{-2}k_n^{7/2}))^{q/2})$,
which completes the proof of point 2.

\noindent
Then point 3 is easily obtained by the proof of point 2 since we only need the estimate for $E_{\Pi}[(\sum_m[\Psi_{m,1}^2])]$ if $q=2$.
\qed

\begin{discuss}
{\colorr $b$:constの時この証明もＯＫ}
\end{discuss}

{\colorlg
\subsection{An additional lemma}

\begin{lemma}\label{sum-integral-lemma}
Let $e_n$ be a sequence of positive numbers, $\mathcal{S}$ be an open set in a Euclidean space, $A_n(\lambda)$ and $B_n(\lambda)$ be sequences of positive-valued random variables,
and $C_n(\lambda)$ be a sequence of non-negative-valued random variables for $\lambda\in \mathcal{S}$.
Assume that $A_n(\lambda)$, $B_n(\lambda)$, and $C_n(\lambda)$ are $C^3$ with respect to $\lambda$, 
$e_n\to \infty$, $\sup_{0\leq k\leq 3,\lambda\in\mathcal{S}}(|\partial_{\lambda}^kA_n|\vee |\partial^k_{\lambda}B_n|)=O_p(e_n^{-1})$,
 and $\sup_{0\leq k\leq 3,\lambda\in\mathcal{S}}|\partial_{\lambda}^kC_n|=O_p(e_n^{-2})$ as $n\to\infty$,
$C_n<A_nB_n$ a.s. for any $n\in\mathbb{N}$, $and \lim_{n\to\infty}(e_n^2(A_nB_n-C_n))>0$ a.s. Then
\begin{eqnarray}
\sup_{\lambda\in\mathcal{S}}\bigg|\partial_{\lambda}^k\bigg(\sum_{p=1}^{\infty}\int^{\pi}_0\frac{C_n^p}{f_p(A_n,x)f_p(B_n,x)}dx
-\frac{\pi C_n}{\sqrt{2}P_n\sqrt{A_nB_n-C_n}}\bigg)\bigg|&=&O_p(e_n^{-\frac{3}{2}}), \label{sum-integral-lemma-eq1} \\
\sup_{\lambda\in\mathcal{S}}\bigg|\partial_{\lambda}^k\bigg(\sum_{p=1}^{\infty}\int^{\pi}_0\frac{C_n^pf_1(A_n,x)}{f_p(A_n,x)f_p(B_n,x)}dx
-\frac{\pi C_n(A_n+\sqrt{A_nB_n-C_n})}{\sqrt{2}P_n\sqrt{A_nB_n-C_n}}\bigg)\bigg|&=&O_p(e_n^{-\frac{5}{2}}), \label{sum-integral-lemma-eq2} \\
\sup_{\lambda\in\mathcal{S}}\bigg|\partial_{\lambda}^k\bigg(\sum_{p=1}^{\infty}\frac{1}{p}\int^{\pi}_0\frac{C_n^p}{f_p(A_n,x)f_p(B_n,x)}dx
-\pi(\sqrt{A_n}+\sqrt{B_n})+\frac{\pi}{\sqrt{2}}P_n\bigg)\bigg|&=&O_p(e_n^{-1}) \label{sum-integral-lemma-eq3}
\end{eqnarray}
for $0\leq k\leq 3$, where $f_p(a,x)=(a+2(1-\cos x))^p$ and
\begin{equation*}
P_n=\sqrt{A_n+B_n+\sqrt{(A_n-B_n)^2+4C_n}}+\sqrt{A_n+B_n-\sqrt{(A_n-B_n)^2+4C_n}}.
\end{equation*}

\end{lemma}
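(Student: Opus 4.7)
My plan is to recognize each of the three series as a (possibly logarithmic) geometric series in the variable $C_n/(f_1(A_n,x)f_1(B_n,x))$, sum it in closed form, reduce the $x$-integral to elementary ones via partial fractions, and extract the leading term by Taylor expansion. Setting $u = 2(1-\cos x)$, the ratio $C_n/((A_n+u)(B_n+u))$ is bounded by $C_n/(A_nB_n) < 1$, so the geometric series converges uniformly on $[0,\pi]$ and the sum-integral can be exchanged. This reduces (\ref{sum-integral-lemma-eq1}) to $\int_0^\pi C_n/[(A_n+u)(B_n+u) - C_n]\,dx$; (\ref{sum-integral-lemma-eq2}), using the telescoping $(A_n+u) C_n^p/(f_p(A_n,x)f_p(B_n,x)) = C_n^p/(f_{p-1}(A_n,x)f_p(B_n,x))$, to $\int_0^\pi C_n(A_n+u)/[(A_n+u)(B_n+u) - C_n]\,dx$; and (\ref{sum-integral-lemma-eq3}), via $\sum_p z^p/p = -\log(1-z)$, to $\int_0^\pi[\log(A_n+u) + \log(B_n+u) - \log(\alpha+u) - \log(\beta+u)]\,dx$, where I factor the quadratic denominator as $(\alpha+u)(\beta+u)$ with $\alpha + \beta = A_n + B_n$, $\alpha\beta = A_nB_n - C_n$ (both roots real and positive by Vieta, the product bounded below by a positive multiple of $e_n^{-2}$ by hypothesis).

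Next I will apply the formulas $\int_0^\pi dx/(a+u) = \pi/\sqrt{a(4+a)}$ and the paper's logarithmic identity to obtain closed forms. For (\ref{sum-integral-lemma-eq1}), partial fractions followed by the rationalization $[\sqrt{\beta(4+\beta)} - \sqrt{\alpha(4+\alpha)}]/(\beta - \alpha) = (4+\alpha+\beta)/[\sqrt{\alpha(4+\alpha)} + \sqrt{\beta(4+\beta)}]$ yields
\begin{equation*}
\frac{\pi C_n(4 + \alpha + \beta)}{\sqrt{\alpha(4+\alpha)\beta(4+\beta)}\,[\sqrt{\alpha(4+\alpha)} + \sqrt{\beta(4+\beta)}]},
\end{equation*}
a form that is manifestly symmetric in $(\alpha,\beta)$ and hence smooth in $(A_n + B_n, A_nB_n - C_n)$. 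Taylor expanding $\sqrt{a(4+a)} = 2\sqrt{a}\sqrt{1 + a/4} = 2\sqrt{a} + O(a^{3/2})$ and using $\sqrt{\alpha} + \sqrt{\beta} = P_n/\sqrt{2}$ (from $P_n^2 = 2(\alpha+\beta + 2\sqrt{\alpha\beta}) = 2(\sqrt{\alpha}+\sqrt{\beta})^2$) together with $\sqrt{\alpha\beta} = \sqrt{A_nB_n - C_n}$ isolates the leading term $\pi C_n/(\sqrt{2}\,P_n\sqrt{A_nB_n - C_n})$; the remainder carries one extra factor of $\alpha$ or $\beta$, i.e.\ $O_p(e_n^{-1})$, relative to the leading $O_p(e_n^{-1/2})$ order, producing $O_p(e_n^{-3/2})$. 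The same machinery gives (\ref{sum-integral-lemma-eq2}) (with residues $(A_n - \alpha)$, $(A_n - \beta)$ and the identity $\beta\sqrt{\alpha} - \alpha\sqrt{\beta} = \sqrt{\alpha\beta}(\sqrt{\beta} - \sqrt{\alpha})$ producing the factor $A_n + \sqrt{A_nB_n - C_n}$), and (\ref{sum-integral-lemma-eq3}), where $\log(\sqrt{a} + \sqrt{4+a}) = \log 2 + \sqrt{a}/2 + O(a)$ yields the leading contribution $\pi(\sqrt{A_n} + \sqrt{B_n} - \sqrt{\alpha} - \sqrt{\beta}) = \pi(\sqrt{A_n} + \sqrt{B_n}) - (\pi/\sqrt{2})P_n$ with $O_p(e_n^{-1})$ remainder.

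For the uniform-in-$\lambda$ derivative estimates I will work entirely with the symmetric representations in $(A_n + B_n, A_nB_n - C_n)$, so every intermediate expression is a smooth composition of $(A_n, B_n, C_n)$ under the standing lower bound $A_nB_n - C_n \geq c_\omega e_n^{-2}$; the chain rule combined with the hypothesized estimates $\sup_{k\leq 3,\lambda}(|\partial_\lambda^k A_n| \vee |\partial_\lambda^k B_n|) = O_p(e_n^{-1})$ and $\sup_{k\leq 3,\lambda}|\partial_\lambda^k C_n| = O_p(e_n^{-2})$ then propagates the bounds. The main obstacle will be verifying that the Taylor remainder retains its $e_n^{-1/2}$ improvement over the leading term after three $\lambda$-differentiations: each derivative threatens to inject a factor of $e_n^{+1/2}$ through terms like $\partial_\lambda \sqrt{A_nB_n - C_n} = \partial_\lambda(A_nB_n - C_n)/(2\sqrt{A_nB_n - C_n})$, so the bookkeeping must carefully exploit that every remainder contains an extra factor of $\alpha$, $\beta$, $A_n$, or $B_n$ which stays uniformly $O_p(e_n^{-1})$ even after differentiation. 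This is especially delicate in (\ref{sum-integral-lemma-eq3}), whose $O_p(e_n^{-1/2})$ leading term arises through a cancellation between $\sqrt{A_n} + \sqrt{B_n}$ and $P_n/\sqrt{2}$, so derivatives acting on this cancellation must be tracked to comparable order; I expect this cancellation to be cleanly exposed by writing $\pi(\sqrt{A_n} + \sqrt{B_n}) - (\pi/\sqrt{2})P_n = \pi \cdot 2(\sqrt{A_nB_n} - \sqrt{A_nB_n - C_n})/[\sqrt{A_n} + \sqrt{B_n} + P_n/\sqrt{2}]$, which is smooth in $(A_n, B_n, C_n)$ under the standing bounds.
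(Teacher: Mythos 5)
Your argument is correct and shares its mathematical skeleton with the paper's: sum the geometric series in $z = C_n/(f_1(A_n,x)f_1(B_n,x))$ (and $-\log(1-z)$ for the third identity), reduce to an integral of a rational function of $\cos x$, and Taylor expand the resulting closed form in the small parameters $A_n$, $B_n$, $C_n$. The organizational difference is worth a note. The paper substitutes $t = \tan(x/2)$, obtains a biquadratic denominator that it factors only \emph{approximately} as $16(t^2-\alpha_1)(t^2-\alpha_2)\bigl(1 + O_p(e_n^{-1})\bigr)$, and evaluates by the residue theorem; you stay in $u = 2(1-\cos x)$, factor the quadratic \emph{exactly} as $(\alpha + u)(\beta + u)$ with $\alpha + \beta = A_n + B_n$ and $\alpha\beta = A_nB_n - C_n$, and reuse the formula $\int_0^\pi dx/(a + 2(1-\cos x)) = \pi/\sqrt{a(4+a)}$ already recorded as $I_1(a)$ in Section~\ref{noise-cov-property-subsection}. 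Your route avoids the approximate factorization and delivers closed forms that are manifestly symmetric in $(\alpha,\beta)$, so they depend smoothly on $(A_n + B_n, A_nB_n - C_n, C_n)$; this is exactly the observation one needs to propagate the derivative bounds through order three, since $\alpha$ and $\beta$ individually have a branch point at $\alpha = \beta$. The paper is rather terse on precisely this point (it dispatches $1 \le k \le 3$ with a ``similarly''), so your explicit attention to the symmetric-function structure is a genuine clarification rather than a gap; the only residual bookkeeping is to check that each Taylor \emph{remainder}, not only the leading term, also admits such a symmetric representation before asserting the $O_p$ rate, and that check is routine with the rationalizations you have already written down.
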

\begin{proof}
An elementary calculation yields
\begin{eqnarray}\label{sum-integral-lemma-eq4}
&&\sum_{p=1}^{\infty}\int^{\pi}_0\frac{C_n^p}{(A_n+2(1-\cos x))^p(B_n+2(1-\cos x))^p}dx \nonumber \\
&=&\int^{\infty}_{-\infty}\bigg(1-\frac{C_n}{(A_n+\frac{4t^2}{1+t^2})(B_n+\frac{4t^2}{1+t^2})}\bigg)^{-1}\frac{C_n}{(A_n+\frac{4t^2}{1+t^2})(B_n+\frac{4t^2}{1+t^2})}\frac{1}{1+t^2}dt \nonumber \\
&=&\int^{\infty}_{-\infty}\frac{C_n}{(A_n+\frac{4t^2}{1+t^2})(B_n+\frac{4t^2}{1+t^2})-C_n}\frac{1}{1+t^2}dt \nonumber \\
&=&\int^{\infty}_{-\infty}\frac{C_n(1+t^2)}{((A_n+4)t^2+A_n)((B_n+4)t^2+B_n)-C_n(1+t^2)^2}dt \nonumber \\
&=&\int^{\infty}_{-\infty}\frac{C_n(1+t^2)}{(A_nB_n+4A_n+4B_n-C_n+16)t^4+2(A_nB_n+2A_n+2B_n-C_n)t^2+A_nB_n-C_n}dt. 
\end{eqnarray}

\begin{discuss}
{\colorr
$t=\tan(x/2)$とおくと$\cos x=2*\cos^2(x/2)-1=2/(1+t^2)-1$.
$dt/dx=1/2/\cos^2 (x/2)=(1+t^2)/2$
}
\end{discuss}

We only consider the case $(A_n-B_n)^2+16C_n>0$ a.s.
We can easily obtain the results for the other case with a slight modification.

Let
\begin{equation*}
\alpha_1=\frac{-2A_n-2B_n-\sqrt{4(A_n-B_n)^2+16C_n}}{16} \quad {\rm and} \quad  \alpha_2=\frac{-2A_n-2B_n+\sqrt{4(A_n-B_n)^2+16C_n}}{16},
\end{equation*}
then we have $\alpha_1<\alpha_2<0$ by the assumptions. Moreover, we can calculate the right-hand side of (\ref{sum-integral-lemma-eq4}) as
\begin{eqnarray}
&&(1+O_p(e_n^{-1}))\int^{\infty}_{-\infty}\frac{C_n(1+t^2)}{16(t^2-\alpha_1)(t^2-\alpha_2)}dt \nonumber \\
&=&(1+O_p(e_n^{-1}))\frac{2\pi i}{16}\bigg[\frac{C_n(1+\alpha_1)}{2\sqrt{-\alpha_1}i(\alpha_1-\alpha_2)}+\frac{C_n(1+\alpha_2)}{2\sqrt{-\alpha_2}i(\alpha_2-\alpha_1)}\bigg] \nonumber \\
&=&(1+O_p(e_n^{-1}))\frac{\pi}{16}\frac{C_n(\sqrt{-\alpha_2}-\sqrt{-\alpha_1})(1+\sqrt{\alpha_1\alpha_2})}{\sqrt{\alpha_1\alpha_2}(\alpha_1-\alpha_2)}
=\frac{\pi}{16}\frac{C_n}{\sqrt{\alpha_1\alpha_2}(\sqrt{-\alpha_1}+\sqrt{-\alpha_2})}+O_p(e_n^{-3/2}). \nonumber 
\end{eqnarray}
\begin{discuss}
{\colorr 途中式：
\begin{eqnarray}
&&(1+O_p(e_n^{-1}))\frac{\pi}{16}\frac{C_n(\sqrt{-\alpha_2}(1+\alpha_1)-\sqrt{-\alpha_1}(1+\alpha_2))}{\sqrt{\alpha_1\alpha_2}(\alpha_1-\alpha_2)}, \nonumber \\
&&(1+O(e_n^{-1}))\frac{\pi}{16}\frac{C_n(1+\sqrt{\alpha_1\alpha_2})}{\sqrt{\alpha_1\alpha_2}(\sqrt{-\alpha_1}+\sqrt{-\alpha_2})} \nonumber
\end{eqnarray}

$k\neq 0$の時は, (\ref{sum-integral-lemma-eq4})の右辺は
\begin{eqnarray}
\int^{\infty}_{-\infty}&&\frac{C_n(1+t^2)}{16t^4+4(A_n+B_n)t^2+A_nB_n-C_n} \nonumber \\
&&\times \bigg(1-\frac{(A_nB_n+4A_n+4B_n-C_n)t^4+2(A_nB_n-C_n)t^2}{(A_nB_n+4A_n+4B_n-C_n+16)t^4+2(A_nB_n+2A_n+2B_n-C_n)t^2+A_nB_n-C_n}\bigg)dt \nonumber
\end{eqnarray}
第一因子の微分はOK. 第二因子を一度でも微分したものが無視できることを言いたい.
第二因子第二項の分子を微分したものは
\begin{equation*}
\leq \frac{|\partial_{\lambda}^k(A_nB_n+4A_n+4B_n-C_n)|}{16}+\frac{|2\partial_{\lambda}^k(A_nB_n-C_n)|}{4(A_n+B_n)}=O_p(e^{-1}).
\end{equation*}
分母を$F_1t^4+F_2t^2+F_3$とおくと, 分母を微分したものは
\begin{equation*}
\leq \bigg(\frac{|A_nB_n+4A_n+4B_n-C_n|}{16}+\frac{|2(A_nB_n-C_n)|}{4(A_n+B_n)}\bigg)
\frac{|\partial_{\lambda}^k(F_1t^4+F_2t^2+F_3)|}{F_1t^4+F_2t^2+F_3}\cdots
\end{equation*}
(分母を２回以上微分したら複数因子がつく)

第二因子以降は$O_p(1)$となるので全体として$O_p(e_n^{-1})$.
}
\end{discuss}

Therefore, we obtain (\ref{sum-integral-lemma-eq1}) with $k=0$ by noting that $16\alpha_1\alpha_2=(A_nB_n-C_n)(1+O_p(e_n^{-1}))$.
\begin{discuss}
{\colorr
\begin{equation*}
\frac{\pi C_n}{16\sqrt{\alpha_1\alpha_2}(\sqrt{-\alpha_1}+\sqrt{-\alpha_2})}
=\frac{\pi C_n}{4\sqrt{A_nB_n-C_n}(\sqrt{-\alpha_1}+\sqrt{-\alpha_2})}
\sqrt{\frac{16+A_nB_n-C_n+4A_n+4B_n}{16}}
\end{equation*}
右辺第一因子の微分はOKで第二因子の微分は$\partial_{\lambda}X/(2\sqrt{1+X})=O_p(e_n^{-1})$の形になるので,
(\ref{sum-integral-lemma-eq1})が$1\leq k\leq 3$に対して
}
\end{discuss}

We similarly have (\ref{sum-integral-lemma-eq1}) with $1\leq k\leq 3$ 
and (\ref{sum-integral-lemma-eq2}) with $0\leq k\leq 3$.

\begin{discuss}
{\colorr
\begin{eqnarray}
&&\sum_{p=1}^{\infty}\int^{\pi}_0\frac{C_n^p(A_n+2(1-\cos x))}{(A_n+2(1-\cos x))^p(B_n+2(1-\cos x))^p}dx \nonumber \\
&=&(1+O_p(e_n^{-1}))\int^{\infty}_{-\infty}\frac{C_n(A_n+A_nt^2+4t^2)}{16(t^2-\alpha_1)(t^2-\alpha_2)}dt \nonumber \\
&=&(1+O_p(e_n^{-1}))\frac{2\pi i}{16}\bigg[\frac{C_n(A_n+A_n\alpha_1+4\alpha_1)}{2\sqrt{-\alpha_1}i(\alpha_1-\alpha_2)}+\frac{C_n(A_n+A_n\alpha_2+4\alpha_2)}{2\sqrt{-\alpha_2}i(\alpha_2-\alpha_1)}\bigg] \nonumber \\
&=&(1+O_p(e_n^{-1}))\frac{\pi}{16}\frac{C_n(\sqrt{-\alpha_2}(A_n+A_n\alpha_1+4\alpha_1)-\sqrt{-\alpha_1}(A_n+A_n\alpha_2+4\alpha_2))}{\sqrt{\alpha_1\alpha_2}(\alpha_1-\alpha_2)} \nonumber \\
&=&(1+O_p(e_n^{-1}))\frac{\pi}{16}\frac{C_n(\sqrt{-\alpha_2}-\sqrt{-\alpha_1})(A_n+(4+A_n)\sqrt{\alpha_1\alpha_2})}{\sqrt{\alpha_1\alpha_2}(\alpha_1-\alpha_2)} \nonumber \\
&=&\frac{\pi}{16}\frac{C_n(A_n+4\sqrt{\alpha_1\alpha_2})}{\sqrt{\alpha_1\alpha_2}(\sqrt{-\alpha_1}+\sqrt{-\alpha_2})}+O_p(e_n^{-5/2}). \nonumber 
\end{eqnarray}

微分に対する評価は最初も最後も上の議論と同様にできる
}
\end{discuss}

\if0
\begin{eqnarray}
&&\int^{\pi}_0\frac{1}{(A_n+2(1-\cos x))^p(B_n+2(1-\cos x))^p}dx=\int^{\infty}_{-\infty}\frac{(1+t^2)^{2p-1}}{(A_n+A_nt^2+4t^2)(B_n+B_nt^2+4t^2)}dt \nonumber \\
&=&\int^{\infty}_{-\infty}\frac{(1+t^2)^{2p-1}}{(A_n+4)^p(B_n+4)^p(t^2+A_n/(A_n+4))^p(t^2+B_n/(B_n+4))^p}dt \nonumber \\
&=&\frac{2\pi i}{(A_n+4)^p(B_n+4)^p}\bigg[\lim_{t\to \sqrt{\frac{A_n}{A_n+4}}i}\bigg(\frac{d}{dt}\bigg)^{p-1}\bigg(\frac{(1+t^2)^{2p-1}}{(t+\sqrt{A_n/(A_n+4)}i)^p(t^2+B_n/(B_n+4))^p}\bigg) \nonumber \\
&&+\lim_{t\to \sqrt{\frac{B_n}{B_n+4}}i}\bigg(\frac{d}{dt}\bigg)^{p-1}\bigg(\frac{(1+t^2)^{2p-1}}{(t+\sqrt{B_n/(B_n+4)}i)^p(t^2+A_n/(A_n+4))^p}\bigg)\bigg] \nonumber \\
&=&\frac{2\pi i}{(A_n+4)^p(B_n+4)^p}\bigg[\cdots
\end{eqnarray}
分母の両因子が残って複雑な表現になってしまうか..
\fi

Furthermore, we obtain
$\int^{\infty}_{-\infty}(1+t^2)^{-1}\log(t^2+\alpha^2)dt=2\pi \log(\alpha + 1)$
for any $\alpha>0$ by the residue theorem.

\begin{discuss}
{\colorr
（Rによる数値計算でもこの公式は確認済み）
\begin{equation*}
\int^{\infty}_{-\infty}\frac{\log(t\pm\alpha i)}{1+t^2}dt=\pm 2\pi i \frac{\log (\pm i \pm \alpha i)}{\pm 2i}=\pi\bigg(\log(\alpha +1)\pm \frac{\pi i }{2}\bigg)
\end{equation*}
}
\end{discuss}

Therefore, we obtain
\begin{eqnarray}
&&\partial_{\lambda}^k\sum_{p=1}^{\infty}\frac{1}{p}\int^{\pi}_0\frac{C_n^p}{f_p(A_n,x)f_p(B_n,x)}dx \nonumber \\
&=&-\partial_{\lambda}^k\int^{\infty}_{-\infty}\frac{1}{1+t^2}\log\bigg(1-\frac{C_n}{(A_n+4t^2/(1+t^2))(B_n+4t^2/(1+t^2))}\bigg)dt \nonumber \\
&=&-\partial_{\lambda}^k\int^{\infty}_{-\infty}\frac{1}{1+t^2}\log\bigg(\frac{(t^2-\alpha_1)(t^2-\alpha_2)}{(t^2+A_n/(A_n+4))(t^2+B_n/(B_n+4))}\bigg)dt+O_p(e_n^{-1}) \nonumber \\
&=&-2\pi\partial_{\lambda}^k\bigg(\log(1+\sqrt{-\alpha_1})+\log(1+\sqrt{-\alpha_2})-\log\bigg(1+\sqrt{\frac{A_n}{A_n+4}}\bigg)-\log\bigg(1+\sqrt{\frac{B_n}{B_n+4}}\bigg)\bigg)+O_p(e_n^{-1}) \nonumber \\
&=&2\pi\partial_{\lambda}^k(\sqrt{A_n}/2+\sqrt{B_n}/2-\sqrt{-\alpha_1}-\sqrt{-\alpha_2})+O_p(e_n^{-1}), \nonumber
\end{eqnarray}
which completes the proof.

\begin{discuss}
{\colorr 途中式：
\begin{eqnarray}
&=&-\int^{\infty}_{-\infty}\frac{1}{1+t^2}\log\bigg(\frac{(A_n+A_nt^2+4t^2)(B_n+B_nt^2+4t^2)-C_n(1+t^2)^2}{(A_n+A_nt^2+4t^2)(B_n+B_nt^2+4t^2)}\bigg)dt \nonumber 
\end{eqnarray}

微分を評価するときは
\begin{eqnarray}
\partial_{\lambda}^k\int^{\infty}_{-\infty}\frac{1}{1+t^2}
\log\bigg(1-\frac{(A_nB_n+4A_n+4B_n-C_n)t^4+2(A_nB_n-C_n)t^2}{(A_nB_n+4A_n+4B_n-C_n+16)t^4+2(A_nB_n+2A_n+2B_n-C_n)t^2+A_nB_n-C_n}\bigg)dt \nonumber
\end{eqnarray}
が$O_p(e_n^{-1})$であることを示す必要があるが, 上の場合の議論と同様に
\begin{equation*}
\leq \int^{\infty}_{-\infty}\frac{1}{1+t^2}\frac{O_p(e_n^{-1})}{1-O_p(e_n^{-1})}dt
\end{equation*}
がわかるのでよい
}
\end{discuss}

\end{proof}

}

\begin{discuss}
\newpage
{\colorr

＜共変動の推定＞
\begin{eqnarray}
\int^T_0b(X_t,\hat{\sigma}_n)dt-\int^T_0b(X_{\eta(t)},\hat{\sigma}_n)dt&=&\sum_k\int_{K_k}(b(X_t,\hat{\sigma}_n)-b(X_{\inf K_k},\hat{\sigma}))dt \nonumber \\
&=&\int^1_0\int^T_0\partial_{\sigma}b(X_t,\hat{\sigma}_n)dtdu(\hat{\sigma}_n-\sigma_{\ast}) \nonumber \\
&=&\int^T_0\partial_{\sigma}b(X_t,\sigma_{\ast})dt(\hat{\sigma}_n-\sigma_{\ast})+o_p(b_n^{1/4}) \nonumber \\
&\to^{s\mathchar`-\mathcal{L}}& \int^T_0\partial_{\sigma}b(X_t,\sigma_{\ast})dt\Gamma^{-1/2}\zeta. \nonumber
\end{eqnarray}
実際はノイズがあるが恐らく問題ない.

$F:\sigma\mapsto \int^T_0b^1\cdot b^2(X_t,\sigma)dt$. 共分散推定量だけでは一次元だから$F^{-1}$を作れない. $\dim \Lambda =1$ならいけるか

コメント
\begin{itemize}
\item 時間ずらしが$k_n$オーダーである程度大胆にできるから, トリプルバークホルダーのような細かい評価がいらなくなったか.
分母のオーダーで得できるおかげ.
\item 実証はデータを二つに分けて推定が安定するかなどのシンプルな方法で妥当性をチェックすることも考える.
\item 高頻度に限らず社債などのデータの穴のあるようなものに対して応用できるか.
\end{itemize}

}
{\colorr

Next
\begin{itemize}
\item シミュレーションやり直す. CIRのシミュレーションもやる.
\item 多次元にするか. 多次元も拡張できるとRemark するだけにするか.
\end{itemize}
投稿後
\begin{itemize}
\item $\varphi$のシンプルな表現はおそらくある. それを見つける.
\end{itemize}
余裕があったら
\begin{itemize}
\item データでアベレージをとっても情報量が変わらないかも。計算を高速化できるか。
\item Gloter Jacodで$bb^{\top}$の下からの評価がないのはなぜか? $t=0$の付近で$c=\theta^2$で他で分離性が成り立つようなとき, Fisher Informationが発散するように見える.
\item この論文の強みをアピールするには, asymptotic efficiencyの議論をもう少し丁寧にやるか. convolution theoremを出したり, minimax不等式を出したり.
仮定をuniformにすることはとりあえず問題はなさそう. outline of proofを書くか. 一様なPLDのチェックは結構大変. convolution theoremをきっちり描いて,
minimaxは今の形でもう少し強調するにとどめるか.
convolutionやminimaxの証明はやや冗長になってしまって, だれるか. いろんなことをやりすぎになるかもしれない. LAMNの重要さはわかるし, minimax不等式も
基本的にわかるだろう. 古典的な流れから自然に来ているからルーティンの部分はあまり長く説明しなくてもわかるだろう.
\end{itemize}
}
\end{discuss}

{\colord
\section*{Acknowledgement}
The author is grateful to Yuta Koike for very useful discussions on numerical analysis of quadratic covariation estimators.
He had a great contribution on constructing the `cce' function in the `yuima' R package as well.  
This work was supported by Japan Society for the Promotion of Science KAKENHI Grant Number 15K21598.
}

\begin{small}
\bibliographystyle{abbrv}
\bibliography{referenceLibrary_mathsci,referenceLibrary_other}

\begin{thebibliography}{10}

\bibitem{ada-fou03}
R.~A. Adams and J.~J.~F. Fournier.
\newblock {\em Sobolev spaces}, volume 140 of {\em Pure and Applied Mathematics
  (Amsterdam)}.
\newblock Elsevier/Academic Press, Amsterdam, second edition, 2003.

\bibitem{ait-etal10}
Y.~A{\"{\i}}t-Sahalia, J.~Fan, and D.~Xiu.
\newblock High-frequency covariance estimates with noisy and asynchronous
  financial data.
\newblock {\em J. Amer. Statist. Assoc.}, 105(492):1504--1517, 2010.

\bibitem{bar-etal08}
O.~E. Barndorff-Nielsen, P.~R. Hansen, A.~Lunde, and N.~Shephard.
\newblock Designing realized kernels to measure the ex post variation of equity
  prices in the presence of noise.
\newblock {\em Econometrica}, 76(6):1481--1536, 2008.

\bibitem{bar-etal11}
O.~E. Barndorff-Nielsen, P.~R. Hansen, A.~Lunde, and N.~Shephard.
\newblock Multivariate realised kernels: consistent positive semi-definite
  estimators of the covariation of equity prices with noise and non-synchronous
  trading.
\newblock {\em J. Econometrics}, 162(2):149--169, 2011.

\bibitem{bib-etal14}
M.~Bibinger, N.~Hautsch, P.~Malec, and M.~Reiss.
\newblock Estimating the quadratic covariation matrix from noisy observations:
  local method of moments and efficiency.
\newblock {\em Ann. Statist.}, 42(4):80--114, 2014.

\bibitem{bil99}
P.~Billingsley.
\newblock {\em Convergence of probability measures}.
\newblock Wiley Series in Probability and Statistics: Probability and
  Statistics. John Wiley \& Sons, Inc., New York, second edition, 1999.
\newblock A Wiley-Interscience Publication.

\bibitem{chr-etal10}
K.~Christensen, S.~Kinnebrock, and M.~Podolskij.
\newblock Pre-averaging estimators of the ex-post covariance matrix in noisy
  diffusion models with non-synchronous data.
\newblock {\em J. Econometrics}, 159(1):116--133, 2010.

\bibitem{chr-etal13}
K.~Christensen, M.~Podolskij, and M.~Vetter.
\newblock On covariation estimation for multivariate continuous {I}t\^o
  semimartingales with noise in non-synchronous observation schemes.
\newblock {\em J. Multivariate Anal.}, 120:59--84, 2013.

\bibitem{cox-etal85}
J.~C. Cox, J.~E. Ingersoll, Jr., and S.~A. Ross.
\newblock A theory of the term structure of interest rates.
\newblock {\em Econometrica}, 53(2):385--407, 1985.

\bibitem{dou-lou99}
P.~Doukhan and S.~Louhichi.
\newblock A new weak dependence condition and applications to moment
  inequalities.
\newblock {\em Stochastic Process. Appl.}, 84(2):313--342, 1999.

\bibitem{gen-jac94}
V.~Genon-Catalot and J.~Jacod.
\newblock Estimation of the diffusion coefficient for diffusion processes:
  random sampling.
\newblock {\em Scand. J. Statist.}, 21(3):193--221, 1994.

\bibitem{glo-jac01a}
A.~Gloter and J.~Jacod.
\newblock Diffusions with measurement errors. {I}. {L}ocal asymptotic
  normality.
\newblock {\em ESAIM Probab. Statist.}, 5:225--242 (electronic), 2001.

\bibitem{glo-jac01b}
A.~Gloter and J.~Jacod.
\newblock Diffusions with measurement errors. {II}. {O}ptimal estimators.
\newblock {\em ESAIM Probab. Statist.}, 5:243--260 (electronic), 2001.

\bibitem{gob01}
E.~Gobet.
\newblock Local asymptotic mixed normality property for elliptic diffusion: a
  {M}alliavin calculus approach.
\newblock {\em Bernoulli}, 7(6):899--912, 2001.

\bibitem{hay-yos05}
T.~Hayashi and N.~Yoshida.
\newblock On covariance estimation of non-synchronously observed diffusion
  processes.
\newblock {\em Bernoulli}, 11(2):359--379, 2005.

\bibitem{hay-yos08}
T.~Hayashi and N.~Yoshida.
\newblock Asymptotic normality of a covariance estimator for nonsynchronously
  observed diffusion processes.
\newblock {\em Ann. Inst. Statist. Math.}, 60(2):367--406, 2008.

\bibitem{hay-yos11}
T.~Hayashi and N.~Yoshida.
\newblock Nonsynchronous covariation process and limit theorems.
\newblock {\em Stochastic Process. Appl.}, 121(10):2416--2454, 2011.

\bibitem{jac79}
J.~Jacod.
\newblock {\em Calcul stochastique et probl\`emes de martingales}, volume 714
  of {\em Lecture Notes in Mathematics}.
\newblock Springer, Berlin, 1979.

\bibitem{jac97}
J.~Jacod.
\newblock {\em On continuous conditional {G}aussian martingales and stable
  convergence in law}, volume 1655 of {\em Lecture Notes in Math.}
\newblock Springer, Berlin, 1997.

\bibitem{jac-etal09}
J.~Jacod, Y.~Li, P.~A. Mykland, M.~Podolskij, and M.~Vetter.
\newblock Microstructure noise in the continuous case: the pre-averaging
  approach.
\newblock {\em Stochastic Process. Appl.}, 119(7):2249--2276, 2009.

\bibitem{jeg82}
P.~Jeganathan.
\newblock On the asymptotic theory of estimation when the limit of the
  log-likelihood ratios is mixed normal.
\newblock {\em Sankhy\=a Ser. A}, 44(2):173--212, 1982.

\bibitem{jeg83}
P.~Jeganathan.
\newblock Some asymptotic properties of risk functions when the limit of the
  experiment is mixed normal.
\newblock {\em Sankhy\=a Ser. A}, 45(1):66--87, 1983.

\bibitem{mal-man02}
P.~Malliavin and M.~E. Mancino.
\newblock Fourier series method for measurement of multivariate volatilities.
\newblock {\em Finance Stoch.}, 6(1):49--61, 2002.

\bibitem{mal-man09}
P.~Malliavin and M.~E. Mancino.
\newblock A {F}ourier transform method for nonparametric estimation of
  multivariate volatility.
\newblock {\em Ann. Statist.}, 37(4):1983--2010, 2009.

\bibitem{ogi14}
T.~Ogihara.
\newblock Local asymptotic mixed normality property for nonsynchronously
  observed diffusion processes.
\newblock {\em Bernoulli}, in press, 2014.

\bibitem{ogi-yos14}
T.~Ogihara and N.~Yoshida.
\newblock Quasi-likelihood analysis for nonsynchronously observed diffusion
  processes.
\newblock {\em Stochastic Process. Appl.}, 124(9):2954--3008, 2014.

\bibitem{pod-vet09}
M.~Podolskij and M.~Vetter.
\newblock Estimation of volatility functionals in the simultaneous presence of
  microstructure noise and jumps.
\newblock {\em Bernoulli}, 15(3):634--658, 2009.

\bibitem{uch10}
M.~Uchida.
\newblock Contrast-based information criterion for ergodic diffusion processes
  from discrete observations.
\newblock {\em Ann. Inst. Statist. Math.}, 62(1):161--187, 2010.

\bibitem{uch-yos13}
M.~Uchida and N.~Yoshida.
\newblock Quasi likelihood analysis of volatility and nondegeneracy of
  statistical random field.
\newblock {\em Stochastic Process. Appl.}, 123(7):2851--2876, 2013.

\bibitem{yos06}
N.~Yoshida.
\newblock Polynomial type large deviation inequalities and convergence of
  statistical random fields.
\newblock {\em ISM Research Memorandum 1021}, 2006.

\bibitem{yos11}
N.~Yoshida.
\newblock Polynomial type large deviation inequalities and quasi-likelihood
  analysis for stochastic differential equations.
\newblock {\em Ann. Inst. Statist. Math.}, 63(3):431--479, 2011.

\bibitem{zha-etal05}
L.~Zhang, P.~A. Mykland, and Y.~A{\"{\i}}t-Sahalia.
\newblock A tale of two time scales: determining integrated volatility with
  noisy high-frequency data.
\newblock {\em J. Amer. Statist. Assoc.}, 100(472):1394--1411, 2005.

\end{thebibliography}
\end{small}

\begin{note}
{\colorg

＜＜補足情報＞＞

＜ノイズが正規分布でない場合＞
\begin{equation*}
X\sim p_X, Y\sim p_Y
\end{equation*}
のとき,
\begin{eqnarray}
X+Y\sim \int p_X(y)p_Y(x-y)dy\sim \int \exp\bigg(-\frac{1}{2}yS^{-1}y-\frac{1}{2}\log \det S\bigg)\exp\bigg(-\frac{1}{2}\sum_i\log p_Y(x-y)\bigg)dy. \nonumber 
\end{eqnarray}
ノイズが正規分布でない場合に実際にたたみこみ積を計算するのは困難か. \\

＜$a_k$の積の挙動＞

$c_{k,l}=1/(a_{l-k+1}\cdots a_l)$とおくと, $1=(2+\tilde{r}_n)a^{-1}_{k+1}-a^{-1}_ka^{-1}_{k+1}$より
\begin{equation*}
c_{k+2,l}-(2+\tilde{r}_n)c_{k+1,l}+c_{k,l}=0, \quad (0\leq k\leq l-2).
\end{equation*}
二次方程式$\alpha^2-(2+\tilde{r}_n)\alpha+1=0$の解は$\alpha=1+\tilde{r}_n/2\pm\sqrt{\tilde{r}_n+\tilde{r}_n^2/4}$.
このとき,
\begin{equation*}
c_{k,l}-\alpha c_{k-1,l}=\alpha^{-1}(c_{k-1,l}-\alpha c_{k-2,l})=\alpha^{k-1}(a_l^{-1}-\alpha).
\end{equation*}
よって
\begin{eqnarray}
c_{k,l}=\sum_{m=1}^k(c_{m,l}-\alpha c_{m-1,l})\alpha^{k-m}+\alpha^k=\alpha^k+(a^{-1}_l-\alpha)\alpha^{k+1}\frac{1}{\alpha^2-1}(1-\alpha^{-2k}). \nonumber 
\end{eqnarray}
また,
\begin{equation*}
a_ka_{k+1}-(2+\tilde{r}_n)a_k+1=0
\end{equation*}
より, $(a_{k+1}-\alpha_+)(a_k-\alpha_-)>0$. よって帰納的に$a_k>\alpha_+\ (k\in\mathbb{N})$がわかる.

シミュレーションの結果から$c_{k_n/2,k_n}/c{k_n/2,k_n/2}$は$0$に収束しそうだが, $c_{k_n/3,2k_n/3}/c{k_n/3,k_n}$は$1$に収束しそう.

$a_k\leq 1+1/k+k\tilde{r}_n$なら,
\begin{equation*}
a_{k+1}\leq 2+\tilde{r}_n-\frac{k}{k+1}\frac{1}{1+k^2\tilde{r}_n/(k+1)}\leq 2+\tilde{r}_n-\frac{k}{k+1}(1-k^2\tilde{r}_n/(k+1))\leq \frac{k+2}{k+1}+(k+1)\tilde{r}_n.
\end{equation*}
よって任意の$k$に対し, $a_k\leq 1+1/k+k\tilde{r}_n$. 同様に, $(1+x)^{-1}\geq 1-x+x^2$を用いて, ある定数$c$に対して,
\begin{equation*}
1+1/k+k\tilde{r}_n-ck^3\tilde{r}^2_n\leq a_k
\end{equation*}
がわかる.

}
\end{note}

\end{document}